\newcommand{\+}{\protect\nobreakdash-}
\renewcommand{\:}{\colon}
\newcommand{\rarrow}{\longrightarrow}
\newcommand{\ot}{\otimes}
\newcommand{\ocn}{\odot}
\newcommand{\tim}{\rightthreetimes}
\newcommand{\lrarrow}{\mskip.5\thinmuskip\relbar\joinrel\relbar\joinrel
 \rightarrow\mskip.5\thinmuskip\relax}
\newcommand{\bu}{{\text{\smaller\smaller$\scriptstyle\bullet$}}}
\DeclareMathOperator{\Hom}{Hom}
\DeclareMathOperator{\Ext}{Ext}
\DeclareMathOperator{\Tot}{Tot}
\newcommand{\Modl}{{\operatorname{\mathsf{--Mod}}}}
\newcommand{\Modr}{{\operatorname{\mathsf{Mod--}}}}
\newcommand{\Discr}{{\operatorname{\mathsf{Discr--}}}}
\newcommand{\Contra}{{\operatorname{\mathsf{--Contra}}}}
\newcommand{\Sets}{\mathsf{Sets}}
\newcommand{\Fil}{\mathsf{Fil}}
\newcommand{\Com}{\mathsf{Com}}
\newcommand{\Hot}{\mathsf{Hot}}
\newcommand{\Ac}{\mathsf{Ac}}
\newcommand{\Ab}{\mathsf{Ab}}
\newcommand{\proj}{\mathsf{proj}}
\renewcommand{\flat}{\mathsf{flat}}
\renewcommand{\cot}{\mathsf{cot}}
\newcommand{\bctr}{\mathsf{bctr}}
\newcommand{\bco}{\mathsf{bco}}
\newcommand{\id}{{\mathrm{id}}}
\newcommand{\N}{\mathcal N}
\newcommand{\R}{\mathfrak R}
\newcommand{\fP}{\mathfrak P}
\newcommand{\fQ}{\mathfrak Q}
\newcommand{\fF}{\mathfrak F}
\newcommand{\fG}{\mathfrak G}
\newcommand{\fH}{\mathfrak H}
\newcommand{\fI}{\mathfrak I}
\newcommand{\fJ}{\mathfrak J}
\newcommand{\fM}{\mathfrak M}
\newcommand{\fA}{\mathfrak A}
\newcommand{\fB}{\mathfrak B}
\newcommand{\fC}{\mathfrak C}
\newcommand{\fU}{\mathfrak U}
\newcommand{\fS}{\mathfrak S}
\newcommand{\fT}{\mathfrak T}
\newcommand{\sA}{\mathsf A}
\newcommand{\sB}{\mathsf B} 
\newcommand{\sC}{\mathsf C}
\newcommand{\sD}{\mathsf D}
\newcommand{\sE}{\mathsf E}
\newcommand{\sF}{\mathsf F}
\newcommand{\sH}{\mathsf H}
\newcommand{\sK}{\mathsf K}
\newcommand{\sL}{\mathsf L}
\newcommand{\sS}{\mathsf S}
\newcommand{\sT}{\mathsf T}
\newcommand{\sX}{\mathsf X}
\newcommand{\boZ}{\mathbb Z}
\newcommand{\Section}[1]{\bigskip\section{#1}\medskip}
\theoremstyle{plain}
\newtheorem{thm}{Theorem}[section]
\newtheorem{lem}[thm]{Lemma}
\newtheorem{prop}[thm]{Proposition}
\newtheorem{cor}[thm]{Corollary}
\theoremstyle{definition}
\newtheorem{rem}[thm]{Remark}
\newtheorem{qst}[thm]{Question}
\begin{document}

\title{A contramodule generalization of Neeman's \\
flat and projective module theorem}

\author{Leonid Positselski}

\address{Institute of Mathematics, Czech Academy of Sciences \\
\v Zitn\'a~25, 115~67 Praha~1 \\ Czech Republic}

\email{positselski@math.cas.cz}

\begin{abstract}
 This paper builds on top of~\cite{Pflcc}.
 We consider a complete, separated topological ring $\R$ with
a countable base of neighborhoods of zero consisting of
open two-sided ideals.
 The main result is that the homotopy category of projective left
$\R$\+contramodules is equivalent to the derived category of the exact
category of flat left $\R$\+contramodules, and also to the homotopy
category of flat cotorsion left $\R$\+contramodules.
 In other words, a complex of flat $\R$\+contramodules is contraacyclic
(in the sense of Becker) if and only if it is an acyclic complex with
flat $\R$\+contramodules of cocycles, and if and only if it is coacyclic
as a complex in the exact category of flat $\R$\+contramodules.
 These are contramodule generalizations of theorems of Neeman and of
Bazzoni, Cort\'es-Izurdiaga, and Estrada.
\end{abstract}

\maketitle

\tableofcontents

\section*{Introduction}
\medskip

 The \emph{flat and projective periodicity theorem} of Benson and
Goodearl~\cite[Theorem~2.5]{BG} claims that, in the context of
modules over an arbitrary ring $R$, if $0\rarrow F\rarrow P\rarrow F
\rarrow0$ is a short exact sequence with a flat module $F$ and
a projective module $P$, then the module $F$ is actually projective.
 This result was rediscovered by Neeman~\cite[Remark~2.15]{Neem},
who proved the following much stronger claims.

 Let $F^\bu$ be a complex of flat left $R$\+modules.
 Then the following three conditions are equivalent:
\begin{enumerate}
\renewcommand{\theenumi}{\roman{enumi}}
\item for every complex of projective left $R$\+modules $P^\bu$,
any morphism of complexes of $R$\+modules $P^\bu\rarrow F^\bu$ is
homotopic to zero;
\item $F^\bu$ is an acyclic complex of $R$\+modules with flat
$R$\+modules of cocycles;
\item $F^\bu$ is a directed colimit of contractible complexes of
projective left $R$\+modules.
\end{enumerate}
 See~\cite[Theorem~8.6]{Neem}.

 A further development came in the paper by Bazzoni, Cort\'es-Izurdiaga,
and Estrada~\cite{BCE}, where the \emph{cotorsion periodicity
theorem}~\cite[Theorem~1.2(2), Proposition~4.8(2), or
Theorem~5.1(2)]{BCE} was proved.
 It claims that if $0\rarrow M\rarrow C\rarrow M\rarrow0$ is
a short exact sequence of $R$\+modules and the $R$\+module $C$ is
cotorsion, then the $R$\+module $M$ is cotorsion, too.
 As a corollary, it was essentially shown in~\cite[Theorem~5.3]{BCE}
that the following two conditions on a complex of flat left $R$\+modules
$F^\bu$ are equivalent to~(i\+-iii):
\begin{enumerate}
\renewcommand{\theenumi}{\roman{enumi}}
\setcounter{enumi}{3}
\item for every complex of cotorsion left $R$\+modules $C^\bu$,
any morphism of complexes of $R$\+modules $F^\bu\rarrow C^\bu$ is
homotopic to zero;
\item for every complex of flat cotorsion left $R$\+modules $G^\bu$,
any morphism of complexes of $R$\+modules $F^\bu\rarrow G^\bu$ is
homotopic to zero.
\end{enumerate}

 The aim of this paper is to prove the following generalization of
the theorems of Neeman and of Bazzoni, Cort\'es-Izurdiaga, and Estrada.
 Let $\R$ be a complete, separated topological ring with a countable
base of neighborhoods of zero consisting of open two-sided ideals.
 Let $\fF^\bu$ be a complex of flat left $\R$\+contramodules.
 Then the following six conditions are equivalent:
\begin{enumerate}
\renewcommand{\theenumi}{\roman{enumi}${}^{\mathrm{c}}$}
\item for every complex of projective left $\R$\+contramodules
$\fP^\bu$, any morphism of complexes of $\R$\+contramodules
$\fP^\bu\rarrow\fF^\bu$ is homotopic to zero;
\item $\fF^\bu$ is an acyclic complex of $\R$\+contramodules with flat
$\R$\+contramodules of cocycles;
\item $\fF^\bu$ can be obtained from contractible complexes of flat
$\R$\+contramodules using extensions and directed colimits;
\item $\fF^\bu$ is an $\aleph_1$\+directed colimit of total complexes
of short exact sequences of complexes of countably presentable flat
$\R$\+contramodules;
\item for every complex of cotorsion left $\R$\+contramodules $\fC^\bu$,
any morphism of complexes of $\R$\+contramodules $\fF^\bu\rarrow\fC^\bu$
is homotopic to zero;
\item for every complex of flat cotorsion left $\R$\+contramodules
$\fG^\bu$, any morphism of complexes of $\R$\+contramodules $\fF^\bu
\rarrow\fG^\bu$ is homotopic to zero.
\end{enumerate}

 The equivalence of the three conditions (ii${}^{\mathrm{c}}$)
$\Longleftrightarrow$ (iii${}^{\mathrm{c}}$) $\Longleftrightarrow$
(iv${}^{\mathrm{c}}$) was established in
the paper~\cite[Theorem~13.2]{Pflcc}.
 The aim of the present paper is to prove the equivalence of
the four conditions (i${}^{\mathrm{c}}$) $\Longleftrightarrow$
(ii${}^{\mathrm{c}}$) $\Longleftrightarrow$ (v${}^{\mathrm{c}}$)
$\Longleftrightarrow$ (vi${}^{\mathrm{c}}$).

 The study of arbitrary (unbounded) complexes of projective modules
goes back to J\o rgensen's paper~\cite{Jor}.
 The terminology ``contraderived category'' was introduced by
the present author in~\cite{Psemi,Pkoszul}, inspired by Keller's
terminology ``coderived category'' in~\cite{Kel2} (see also~\cite{KLN}).
 The contemporary point of view on the topic came with Becker's
paper~\cite[Proposition~1.3.6]{Bec}.
 Let us spell out some definitions.

 Let $\sB$ be an abelian (or exact) category with enough projective
objects.
 A complex $B^\bu$ in $\sB$ is said to be \emph{contraacyclic}
(in the sense of Becker) if, for every complex of projective objects
$P^\bu$ in $\sB$, any morphism of complexes $P^\bu\rarrow B^\bu$ is
homotopic to zero.
 The triangulated Verdier quotient category $\sD^\bctr(\sB)=
\Hot(\sB)/\Ac^\bctr(\sB)$ of the homotopy category $\Hot(\sB)$ of
complexes in $\sB$ by the thick subcategory of contraaacyclic complexes
$\Ac^\bctr(\sB)\subset\Hot(\sB)$ is called the (\emph{Becker})
\emph{contraderived category} of the abelian/exact category~$\sB$.

 Dually, let $\sA$ be an exact category with enough injective objects.
 A complex $A^\bu$ in $\sA$ is said to be \emph{coacyclic} (in
the sense of Becker) if, for every complex of injective objects $J^\bu$
in $\sA$, any morphism of complexes $A^\bu\rarrow J^\bu$ is homotopic
to zero.
 The quotient category $\sD^\bco(\sA)=\Hot(\sA)/\Ac^\bco(\sA)$ of
the homotopy category $\Hot(\sA)$ by the thick subcategory of coacyclic
complexes $\Ac^\bco(\sA)\subset\Hot(\sA)$ is called the (\emph{Becker})
\emph{coderived category} of the exact category~$\sA$.
 We refer to~\cite[Section~7]{Pksurv} and~\cite[Remark~9.2]{PS4} for
a general discussion of the history and philosophy of the coderived
and contraderived categories.
 In the context of the present paper, the discussion
in~\cite[Section~7]{Pphil} is relevant.

 Generally speaking, for the abelian category $R\Modl$ of modules
over a ring $R$, the coderived category $\sD^\bco(R\Modl)$ is quite
different from the contraderived category $\sD^\bctr(R\Modl)$.
 For the abelian category of $\R$\+contramodules $\R\Contra$,
the construction of the coderived category does not even make sense,
as there are no nonzero injective objects in $\R\Contra$ in general.
 However, the coderived category \emph{of the exact category of
flat $R$\+modules} agrees with the derived and contraderived categories
of the same exact category~\cite[Sections~7.4 and~7.6]{Pphil}.
 In this paper we generalize these results to the exact category
$\R\Contra_\flat$ of flat contramodules over a complete, separated
topological ring with a countable base of neighborhoods of zero
consisting of open two-sided ideals.

 For the abelian category of $R$\+modules $\sB=R\Modl$,
the contraderived category is equivalent to the homotopy category of
complexes of projective objects, $\Hot(R\Modl_\proj)\simeq
\sD^\bctr(R\Modl)$ \,\cite[Proposition~8.1]{Neem}.
 The equivalence of two conditions~(i) and~(ii) above (which is a part
of~\cite[Theorem~8.6]{Neem}) can be restated by saying that a complex of
flat $R$\+modules is contraacyclic if and only if it is acyclic as
a complex in the exact category of flat $R$\+modules $R\Modl_\flat$.
 Besides, it is obvious that a complex in the exact category
$R\Modl_\flat$ is contraacyclic if and only if it is contraacyclic as
a complex in $R\Modl$, because the projective objects of $R\Modl_\flat$
coincide with those of $R\Modl$.
 It follows that the contraderived category of $R$\+modules
is equivalent to the derived category of the exact category of
flat $R$\+modules, $\Hot(R\Modl_\proj)\simeq\sD(R\Modl_\flat)
=\sD^\bctr(R\Modl_\flat)\simeq\sD^\bctr(R\Modl)$.

 On the other hand, the injective objects of the exact category
$R\Modl_\flat$ are the flat cotorsion $R$\+modules.
 So the equivalence of two conditions~(ii) and~(v) above (which follows
immediately from~\cite[Theorem~5.3]{BCE}) can be restated by saying
that the classes of acyclic and coacyclic complexes in the exact
category $R\Modl_\flat$ coincide.
 It also follows from~\cite[Theorem~5.3]{BCE} that the coderived
category of the exact category $R\Modl_\flat$ is equivalent to
the homotopy category of complexes of flat cotorsion $R$\+modules,
$\Hot(R\Modl_\flat^\cot)\simeq\sD^\bco(R\Modl_\flat)$.
 Thus we have $\Hot(R\Modl_\proj)\simeq\sD^\bctr(R\Modl_\flat)
=\sD(R\Modl_\flat)=\sD^\bco(R\Modl_\flat)\simeq\Hot(R\Modl_\flat^\cot)$.

 For the abelian category of $\R$\+contramodules $\sB=\R\Contra$,
the contraderived category is also equivalent to the homotopy category
of complexes of projective objects, $\Hot(\R\Contra_\proj)\simeq
\sD^\bctr(\R\Contra)$.
 This is a quite general result, valid for any locally presentable
abelian category $\sB$ with enough projective objects (which means,
in particular, for the category of left contramodules over any
complete, separated topological ring $\R$ with a base of neighborhoods
of zero formed by open right ideals) \cite[Corollary~7.4]{PS4}.
 When $\R$ has a \emph{countable} base of neighborhoods of zero
consisting of open \emph{two-sided} ideals, the results of
the present paper allow us to say more.

 The equivalence of two conditions (i${}^{\mathrm{c}}$)
and~(ii${}^{\mathrm{c}}$) above (which is claimed in our
Theorem~\ref{main-theorem}) can be restated by saying that a complex
of flat $\R$\+contramodules is contraacyclic if and only if it is
acyclic as a complex in the exact category of flat $\R$\+contramodules
$\R\Contra_\flat$.
 Similarly to the module case, it is obvious that a complex in
the exact category $\R\Contra_\flat$ is contraacyclic if and only if
it is contraacyclic as a complex in $\R\Contra$, because the projective
objects in $\R\Contra_\flat$ and $\R\Contra$ coincide.
 It follows that the contraderived category of $\R$\+contramodules
is equivalent to the derived category of the exact category of
flat contramodules, $\Hot(\R\Contra_\proj)\simeq\sD(\R\Contra_\flat)
=\sD^\bctr(\R\Contra_\flat)\simeq\sD^\bctr(\R\Contra)$.

 The equivalence of two conditions (ii${}^{\mathrm{c}}$)
and~(vi${}^{\mathrm{c}}$) above (which is another assertion of our
Theorem~\ref{main-theorem}) can be restated by saying
that a complex of flat $\R$\+contramodules is coacyclic in
$\R\Contra_\flat$ if and only if it is acyclic in this exact category.
 Furthermore, according to our
Corollary~\ref{becker-coderived-of-flats}, the coderived category of
the exact category $\R\Contra_\flat$ is equivalent to the homotopy
category of complexes of flat cotorsion $\R$\+contramodules,
$\Hot(\R\Contra_\flat^\cot)\simeq\sD^\bco(\R\Contra_\flat)$.
 So we have $\Hot(\R\Contra_\proj)\simeq\sD^\bctr(\R\Contra_\flat)
=\sD(\R\Contra_\flat)=\sD^\bco(\R\Contra_\flat)\simeq
\Hot(\R\Contra_\flat^\cot)$, just as in the module case.

 We refer to the paper~\cite{PS7} for a discussion of another
generalization of the results of Neeman's paper~\cite{Neem}
and the paper of Bazzoni, Cort\'es-Izurdiaga, and Estrada~\cite{BCE},
from modules to curved DG\+modules.
 In the present paper, we discuss contramodule generalizations
of~\cite[Theorem~8.6]{Neem} and~\cite[Theorem~5.3]{BCE}.

 Let us explain the connection with the Benson--Goodearl periodity
theorem.
 Let $0\rarrow F\rarrow P\rarrow F\rarrow0$ be a short exact
sequence of modules with a flat $R$\+module $F$ and a projective
$R$\+module~$P$.
 Splicing copies of this short exact sequence on top of each other
in both the positive and negative cohomological directions, we
obtain an unbounded acyclic complex of projective $R$\+modules $P^\bu$
with flat $R$\+modules of cocycles.
 So the complex $F^\bu=P^\bu$ satisfies~(ii), and consequently
it also satisfies~(i).
 Thus the identity morphism of complexes $P^\bu\rarrow F^\bu$ must
be homotopic to zero.
 Hence the complex $P^\bu$ is contractible, and it follows that
its modules of cocycles are projective.
 This is the argument of~\cite[Remark~2.15]{Neem}.

 Conversely, given an acyclic complex of projective $R$\+modules with
flat $R$\+modules of cocycles, one can chop it up into short exact
sequence pieces, shift them to one and the same cohomological
degree, and take the countable direct sum.
 This produces a short exact sequence of the form $0\rarrow F
\rarrow P\rarrow F\rarrow 0$.
 This argument shows that the Benson--Goodearl periodicity theorem
is essentially equivalent to the assertion that in any acyclic complex
of projective modules with flat modules of cocycles, the modules of
cocycles are actually projective~\cite[proof of Proposition~7.6]{CH},
\cite[Propositions~1 and~2]{EFI}, \cite[Proposition~2.4]{BCE}.

 Similarly, one can start with the equivalence of two conditions
(i${}^{\mathrm{c}}$) and~(ii${}^{\mathrm{c}}$) and deduce the following
version of flat and projective periodicity theorem for contramodules.
 If $0\rarrow\fF\rarrow\fP\rarrow\fF\rarrow0$ is a short exact sequence
of $\R$\+contramodules with a flat contramodule $\fF$ and a projective
contramodule $\fP$, then the contramodule $\fF$ is actually projective.
 A direct argument deducing this assertion from the Benson--Goodearl
periodicity theorem for modules was spelled out
in~\cite[Proposition~12.1]{Pflcc}.
 The equivalence of two conditions (i${}^{\mathrm{c}}$)
and~(ii${}^{\mathrm{c}}$), proved in Theorem~\ref{main-theorem}
in the present paper, is a much stronger result.

 Analogous arguments are applicable to the cotorsion periodicity of
Bazzoni, Cort\'es-Izurdiaga, and Estrada.
 The cotorsion periodicity theorem for $R$\+modules, stated above
in this introduction, is essentially equivalent to the assertion that
in any acyclic complex of cotorsion $R$\+modules, the $R$\+modules
of cocycles are actually cotorsion~~\cite[Theorem~1.2(2),
Proposition~4.8(2), and Theorem~5.1(2)]{BCE}.
 Similarly, the cotorsion periodicity theorem for $\R$\+contramodules
claims that if $0\rarrow\fM\rarrow\fC\rarrow\fM\rarrow0$ is a short
exact sequence of $\R$\+contramodules with a cotorsion contramodule
$\fC$, then $\fM$ is also a cotorsion
contramodule~\cite[Theorem~12.3]{Pflcc}.
 This theorem can be restated by saying that in any acyclic complex
of cotorsion $\R$\+contramodules $\fC^\bu$, the contramodules of cocycles
are cotorsion~\cite[Corollary~12.4]{Pflcc}.

 We refer to the introduction to the paper~\cite{BHP} and
to the preprint~\cite[Sections~7.8 and~7.10]{Pphil} for a further
discussion of periodicity theorems.
 Before we finish this introduction, let us say a few words about
contramodules.

 \emph{Contramodules} are modules with infinite summation operations.
 Contramodule categories, dual-analogous to the categories of comodules,
or torsion modules, or discrete, or smooth modules, can be assigned to
various algebraic structures~\cite{Prev}.
 In most cases, the category of contramodules is abelian.
 In particular, to any complete, separated topological ring $\R$ with
a base of neighborhoods of zero formed by open right ideals, one
assigns the abelian category of left $\R$\+contramodules $\R\Contra$.

 The categories of topological modules are usually \emph{not} abelian;
and indeed, contramodules are \emph{not} topological modules.
 The $\R$\+contramodules are $\R$\+modules endowed with infinite
summation operations with the coefficient families converging to zero
in the topology of~$\R$ \,\cite[Section~2.1]{Prev}, \cite[Sections~1.2
and~5]{PR}, \cite[Section~6]{PS1}.
 There is a naturally induced topology on a left $\R$\+contramodule,
but it need not be even separated.
 So $\R$\+contramodules are nontopological modules over
a topological ring~$\R$, forming an abelian category (in fact,
a locally presentable abelian category with enough projective objects;
cf.~\cite[Sections~6\+-7]{PS4}).

 When the topological ring $\R$ has a countable base of neighborhoods
of zero, the contramodule theory simplifies quite a bit.
 In particular, all left $\R$\+contramodules are complete (though
they still need not be separated) in their induced topologies.
 Furthermore, there is a well-behaved full subcategory of \emph{flat\/
$\R$\+contramodules} $\R\Contra_\flat\subset\R\Contra$.
 Flat left $\R$\+contramodules are separated and complete in their
induced topologies.
 The full subcategory $\R\Contra_\flat$ is resolving and closed under
directed colimits in $\R\Contra$.
 Moreover, the directed colimits are exact functors in
$\R\Contra_\flat$ \,\cite[Sections~6\+-7]{PR} (but not in $\R\Contra$
\,\cite[Examples~4.4]{PR}).

 When $\R$ has a countable base of neighborhoods of zero consisting of
open \emph{two-sided} ideals, flat $\R$\+contramodules can be
described as certain projective systems of flat modules over
the discrete quotient rings of~$\R$.
 The latter fact, however, plays almost no role in the present paper,
where we study complexes of flat $\R$\+contramodules using
$\aleph_1$\+accessibility and $\aleph_1$\+presentability results for
contramodule categories that were obtained in the paper~\cite{Pflcc}
(which was, in turn, based on the paper~\cite{Pacc}).
 The description of flat $\R$\+contramodules in terms of systems of flat
modules over discrete quotient rings of $\R$ was used in~\cite{Pflcc}.

 We also consider the exact category of \emph{cotorsion\/
$\R$\+contramodules} $\R\Contra^\cot$.
 According to~\cite[Theorem~7.17 and Corollary~7.21]{Pphil}, for any
associative ring $R$, the inclusion of exact/abelian categories
$R\Modl^\cot\rarrow R\Modl$ (where $R\Modl^\cot$ is the full subcategory
of cotorsion left $R$\+modules) induces equivalences of their derived
and contraderived categories $\sD(R\Modl^\cot)\simeq\sD(R\Modl)$
and $\sD^\bctr(R\Modl^\cot)\simeq\sD^\bctr(R\Modl)$.
 For a complete, separated topological ring $\R$ with a countable base
of neighborhoods of zero consisting of two-sided ideals, it was shown
in~\cite[Corollary~12.8]{Pflcc} that the inclusion of exact/abelian
categories $\R\Contra^\cot\rarrow\R\Contra$ induces an equivalence of
the derived categories $\sD(\R\Contra^\cot)\simeq\sD(\R\Contra)$.
 In this paper we show that the same inclusion induces an equivalence
of the contraderived categories $\sD^\bctr(\R\Contra^\cot)\simeq
\sD^\bctr(\R\Contra)$.

 Geometrically, flat contramodules over a \emph{commutative} topological
ring with a countable topology base of open ideals can be identified
with flat pro-quasi-coherent pro-sheaves on an ind-affine
$\aleph_0$\+ind-scheme~\cite[Section~7.11.3]{BD2},
\cite[Examples~3.8]{Psemten}, \cite[Example~8.9]{Pflcc}.
 This constitutes a geometric motivation for the present work.

 This paper consists roughly of two parts.
 Sections~\ref{accessible-secn}\+-\ref{countably-presentable-secn}
present the preliminary material, and then in
Sections~\ref{acyclic-are-contraacyclic-secn}\+-%
\ref{contraacyclic-are-acyclic-secn} we prove contramodule versions
of results related to the flat/projective periodicity theorem of
Benson--Goodearl and Neeman.
 Section~\ref{cotorsion-pairs-secn} spells out additional
preliminaries for the second part, and then in
Sections~\ref{complexes-of-cotorsion-secn}\+-%
\ref{contraderived-of-cotorsion-secn} contramodule versions of results
related to the cotorsion periodicity theorem of Bazzoni,
Cort\'es-Izurdiaga, and Estrada are proved.
 The final Section~\ref{five-constructions-secn} summarizes the results
obtained in Sections~\ref{acyclic-are-contraacyclic-secn}\+-%
\ref{contraacyclic-are-acyclic-secn}
and~\ref{complexes-of-cotorsion-secn}\+-%
\ref{contraderived-of-cotorsion-secn}. {\uchyph=0\par}

\subsection*{Acknowledgement}
 I~am grateful to Jan \v St\!'ov\'\i\v cek for helpful discussions.
 The author is supported by the GA\v CR project 23-05148S and
the Czech Academy of Sciences (RVO~67985840).

\Section{Accessible Subcategories} \label{accessible-secn}

 We use the book~\cite{AR} as a general reference source on accessible
and locally presentable categories.
 In particular, we refer to~\cite[Definition~1.4, Theorem~1.5,
Corollary~1.5, Definition~1.13(1), and Remark~1.21]{AR} for a discussion
of \emph{$\kappa$\+directed colimits} (indexed by $\kappa$\+directed
posets) vs.\ \emph{$\kappa$\+filtered colimits} (indexed by
$\kappa$\+filtered small categories).
 Here $\kappa$~is a regular cardinal.

 Let $\sK$ be a category with $\kappa$\+directed (equivalently,
$\kappa$\+filtered) colimits.
 An object $S\in\sK$ is said to be \emph{$\kappa$\+presentable} if
the functor $\Hom_\sK(S,{-})\:\sK\rarrow\Sets$ from $\sK$ to
the category of sets $\Sets$ preserves $\kappa$\+directed colimits.
 When the category $\sK$ is (pre)additive, an object $S\in\sK$ is
$\kappa$\+presentable if and only if the functor
$\Hom_\sK(S,{-})\:\sK\rarrow\Ab$ from $\sK$ to the category of abelian
groups $\Ab$ preserves $\kappa$\+directed colimits.

 The category $\sK$ is said to be
\emph{$\kappa$\+accessible}~\cite[Definition~2.1]{AR} if there exists
a \emph{set} of $\kappa$\+presentable objects $\sS\subset\sK$ such that
every object of $\sK$ is a $\kappa$\+directed colimit of objects
from~$\sS$.
 If this is the case, then the $\kappa$\+presentable objects of $\sK$
are precisely all the retracts (direct summands) of the objects
from~$\sS$.
 The category $\sK$ is said to be \emph{locally
$\kappa$\+presentable}~\cite[Definition~1.17 and Theorem~1.20]{AR}
if $\sK$ is $\kappa$\+accessible and all colimits exists in~$\sK$.

 In the case of the countable cardinal $\kappa=\aleph_0$,
the $\aleph_0$\+presentable objects are known as \emph{finitely
presentable}~\cite[Definition~1.1]{AR}, the $\aleph_0$\+accessible
categories are called \emph{finitely
accessible}~\cite[Remark~2.2(1)]{AR}, and the locally
$\aleph_0$\+presentable categories are known as \emph{locally
finitely presentable}~\cite[Definition~1.9 and Theorem~1.11]{AR}.
 In the case of the cardinal $\kappa=\aleph_1$, we will call
the $\aleph_1$\+presentable objects \emph{countably presentable}.

 Given a category $\sK$ with $\kappa$\+directed colimits and a class
of objects $\sT\subset\sK$, we denote by $\varinjlim_{(\kappa)}\sT
\subset\sK$ the class of all colimits of $\kappa$\+directed diagrams
of objects from $\sT$ in~$\sK$.
 Generally speaking, the class of objects $\varinjlim_{(\kappa)}
\varinjlim_{(\kappa)}\sT$ may differ from $\varinjlim_{(\kappa)}\sT$
\,\cite[Examples~3.5 and~3.8]{PPT}, but under the assumptions of
the following proposition they coincide.

\begin{prop} \label{accessible-subcategory}
 Let\/ $\sK$ be a $\kappa$\+accessible category and\/ $\sT$ be a set
of (some) $\kappa$\+presentable objects in\/~$\sK$.
 Then the full subcategory\/ $\varinjlim_{(\kappa)}\sT\subset\sK$ is
closed under $\kappa$\+directed colimits in\/~$\sK$.
 The category\/ $\sL=\varinjlim_{(\kappa)}\sT$ is $\kappa$\+accessible,
and the $\kappa$\+presentable objects of\/ $\sL$ are precisely all
the retracts of the objects from\/~$\sT$.
 An object $L\in\sK$ belongs to\/ $\varinjlim_{(\kappa)}\sT$ if and only
if, for every $\kappa$\+presentable object $S\in\sK$, every morphism
$S\rarrow L$ in\/ $\sK$ factorizes through an object from\/~$\sT$.
\end{prop}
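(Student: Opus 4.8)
The plan is to prove the three assertions in sequence, since each builds on the previous one. First I would establish that the full subcategory $\sL=\varinjlim_{(\kappa)}\sT$ is closed under $\kappa$\+directed colimits in $\sK$. The key tool here is the fact that in a $\kappa$\+accessible category, a $\kappa$\+directed colimit of $\kappa$\+directed colimits can be amalgamated into a single $\kappa$\+directed colimit. Concretely, given a $\kappa$\+directed diagram $(L_i)_{i\in I}$ in $\sK$ with each $L_i\in\varinjlim_{(\kappa)}\sT$, each $L_i$ is itself a $\kappa$\+directed colimit of objects from $\sT$; I would assemble these into one big $\kappa$\+directed system indexed by a suitable $\kappa$\+directed poset built from $I$ together with the individual indexing posets, and check that its colimit is $\varinjlim_i L_i$. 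This realizes $\varinjlim_i L_i$ as a $\kappa$\+directed colimit of objects from $\sT$, hence as an object of $\varinjlim_{(\kappa)}\sT=\sL$. This also shows $\varinjlim_{(\kappa)}\varinjlim_{(\kappa)}\sT=\varinjlim_{(\kappa)}\sT$ under these hypotheses.

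Next I would verify that $\sL$ is $\kappa$\+accessible with the stated $\kappa$\+presentable objects. Since $\sT$ is a \emph{set} of $\kappa$\+presentable objects of $\sK$, and $\kappa$\+directed colimits computed in $\sL$ agree with those computed in $\sK$ (by the closure just proved), every object of $\sT$ remains $\kappa$\+presentable in $\sL$, and by definition every object of $\sL$ is a $\kappa$\+directed colimit of objects from $\sT$. This is exactly the definition of $\kappa$\+accessibility for $\sL$. The identification of the $\kappa$\+presentable objects of $\sL$ as the retracts of objects from $\sT$ then follows from the general fact, quoted in the excerpt from~\cite{AR}, that in a $\kappa$\+accessible category the $\kappa$\+presentable objects are precisely the retracts of the objects in any generating set of $\kappa$\+presentable objects.

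The main work is the final ``if and only if'' characterization. The ``only if'' direction is the easy one: if $L=\varinjlim_j T_j$ with $T_j\in\sT$ along a $\kappa$\+directed diagram, then any morphism $S\rarrow L$ from a $\kappa$\+presentable object $S$ factors through one of the structural maps $T_j\rarrow L$, precisely because $\Hom_\sK(S,{-})$ preserves $\kappa$\+directed colimits. For the ``if'' direction — which I expect to be the principal obstacle — I would argue as follows. Because $\sK$ is $\kappa$\+accessible, the object $L$ is itself a $\kappa$\+directed colimit $L=\varinjlim_{s\in\Sigma}S_s$ of $\kappa$\+presentable objects $S_s\in\sK$. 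The factorization hypothesis provides, for each $s$, a factorization of the canonical map $S_s\rarrow L$ through some object $T_{(s)}\in\sT$. The difficulty is that these factorizations are not canonical and need not be compatible as $s$ varies, so one cannot directly assemble them into a $\kappa$\+directed diagram over $\sT$ whose colimit is~$L$.

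To overcome this I would pass to the canonical diagram approach: consider the comma-type category $\sT/L$ (or the full subcategory of the slice $\sK/L$ on morphisms $T\rarrow L$ with $T\in\sT$) and show that it is $\kappa$\+filtered and that the tautological cocone exhibits $L$ as its colimit. The factorization hypothesis is exactly what is needed to show this category is nonempty and connected enough to be $\kappa$\+filtered: given any $\kappa$\+small family of objects $T_a\rarrow L$ in $\sT/L$, one forms the induced map from a $\kappa$\+presentable object (such as a coproduct or a colimit over a $\kappa$\+small diagram of the $T_a$, which remains $\kappa$\+presentable) into $L$ and factors it through some $T\in\sT$, thereby producing the required upper bound; the canonicity of the colimit computation in the $\kappa$\+accessible category $\sK$ then forces $\varinjlim_{(T\to L)\in\sT/L}T=L$. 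Once $\sT/L$ is $\kappa$\+filtered with colimit $L$, this exhibits $L$ as a $\kappa$\+filtered (equivalently $\kappa$\+directed) colimit of objects from $\sT$, so $L\in\varinjlim_{(\kappa)}\sT$, completing the proof. The care needed in verifying the $\kappa$\+filteredness of $\sT/L$, and in handling the comparison of morphisms (not just objects) in the filtered colimit, is where the argument is most delicate.
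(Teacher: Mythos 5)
Your overall plan correctly identifies the ``if'' implication of the factorization criterion as the crux (which is exactly what the paper says: its proof is a pointer to the literature, remarking that only that implication is nontrivial and the rest ``follows easily''), and the comma-category argument you sketch for it is the standard route, essentially that of~\cite[Proposition~1.2]{Pacc}. But your Step~1 contains a genuine gap. The ``key tool'' you invoke --- that a $\kappa$-directed colimit of $\kappa$-directed colimits of objects of $\sT$ can be amalgamated into a single $\kappa$-directed colimit over $\sT$ --- is \emph{false} in that generality, and the paper warns of precisely this just before the proposition: $\varinjlim_{(\kappa)}\varinjlim_{(\kappa)}\sT$ can differ from $\varinjlim_{(\kappa)}\sT$ \,\cite[Examples~3.5 and~3.8]{PPT}. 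The concrete obstruction: if $L=\varinjlim_{i\in I}L_i$ and $L_i=\varinjlim_{j\in J_i}T_{ij}$, the transition morphisms $L_i\rarrow L_{i'}$ are arbitrary morphisms of $\sK$, not induced by morphisms of the diagrams $(T_{ij})_j$, so there is no ``one big $\kappa$-directed system'' to assemble. What rescues the statement is the $\kappa$-presentability of the objects of $\sT$ --- which your Step~1 never uses, a telltale sign the argument is incomplete. The clean logical order is the reverse of yours: prove the factorization criterion first; closure under $\kappa$-directed colimits is then immediate, since for $L=\varinjlim_i L_i$ with $L_i\in\sL$, any $S\rarrow L$ from a $\kappa$-presentable $S$ factors through some $L_i$ (presentability of~$S$) and then through some $T\in\sT$ (the easy ``only if'' direction applied to~$L_i$). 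Fortunately your Step~3 does not depend on Step~1, so this reordering salvages the structure.

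Within Step~3 itself there is one unjustified move and one omission. You propose to form ``a coproduct or a colimit over a $\kappa$-small diagram of the $T_a$'', but a $\kappa$-accessible category need not have coproducts or any $\kappa$-small colimits, so this object may not exist. The repair uses the canonical presentation $L=\varinjlim_{s\in\Sigma}S_s$ that you already introduced: since $|A|<\kappa$, the poset $\Sigma$ is $\kappa$-directed, and each $T_a$ is $\kappa$-presentable, all the maps $T_a\rarrow L$ factor through a single stage $S_s$; then factor $S_s\rarrow L$ through some $T\in\sT$, and the composites $T_a\rarrow S_s\rarrow T$ give the upper bound in $\sT/L$. Second, you verify only half of $\kappa$-filteredness (cocones on $\kappa$-small families of \emph{objects}); you must also coequalize $\kappa$-small families of parallel morphisms $(T_1\to L)\rightrightarrows(T_2\to L)$. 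This works by the same device: factor $T_2\rarrow L$ through some $S_s$; the induced parallel maps $T_1\rarrow S_s$ have equal composites to $L$, hence become equal at some stage $S_{s'}$ by $\kappa$-presentability of $T_1$ (the $\Hom$ sets into the $S_s$ form a $\kappa$-directed colimit computing $\Hom_\sK(T_1,L)$); factoring $S_{s'}\rarrow L$ through $\sT$ yields the coequalizing morphism in $\sT/L$. Finally, the assertion that ``canonicity forces'' $\varinjlim_{\sT/L}T=L$ also deserves an argument --- one compares $\sT/L$ with the canonical diagram of $\kappa$-presentable objects over $L$ (whose colimit is $L$ by $\kappa$-accessibility of $\sK$) and uses the factorization-plus-merging properties just established to see the comparison is cofinal. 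With these repairs your sketch becomes a correct proof along the lines the paper intends.
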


\begin{proof}
 This well-known result goes back, at least,
to~\cite[Proposition~2.1]{Len}, \cite[Section~4.1]{CB},
and~\cite[Proposition~5.11]{Kra} (notice that what we call finitely
accessible categories were called ``locally finitely presented
categories'' in~\cite{CB,Kra}).
 The nontrivial part is the ``if'' implication in the last assertion;
all the other assertions follow easily.
 See~\cite[Proposition~1.2]{Pacc} for some details.
\end{proof}

 Let $\alpha$~be an ordinal and $\sK$ be a category.
 An \emph{$\alpha$\+indexed chain} (of objects and morphisms) in $\sK$
is a commutative diagram $(K_i\to K_j)_{0\le i<j<\alpha}$ in $\sK$
indexed by the directed poset~$\alpha$.

 Given an additive/abelian category $\sK$, we denote by $\Com(\sK)$
the additive/abelian category of (unbounded) complexes in~$\sK$.

\begin{prop} \label{complexes-presentable-accessible}
 Let $\kappa$ be an uncountable regular cardinal and $\lambda<\kappa$
be a smaller infinite cardinal. \par
\textup{(a)} Let\/ $\sK$ be an additive category with
$\kappa$\+directed colimits.
 Then all complexes of $\kappa$\+presentable objects in\/ $\sK$ are
$\kappa$\+presentable as objects of the category of complexes\/
$\Com(\sK)$. \par
\textup{(b)} Let\/ $\sK$ be a $\kappa$\+accessible additive category
where the colimits of $\lambda$\+indexed chains exist.
 Then the category\/ $\Com(\sK)$ of complexes in\/ $\sK$ is
$\kappa$\+accessible.
 The\/ $\kappa$\+presentable objects of\/ $\Com(\sK)$ are precisely
all the complexes of $\kappa$\+presentable objects in\/~$\sK$.
\end{prop}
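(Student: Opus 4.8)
The plan is to prove~(a) directly from the definition of $\kappa$\+presentability, exploiting that $\kappa$ is \emph{uncountable} while a complex is indexed by the \emph{countable} set $\boZ$, and to reduce~(b) to the statement that every complex in $\sK$ is a $\kappa$\+directed colimit of complexes of $\kappa$\+presentable objects. For part~(a), recall that $\kappa$\+directed colimits in $\Com(\sK)$ are computed termwise and that a morphism of complexes is a family of termwise morphisms commuting with the differentials. Let $K^\bu$ have $\kappa$\+presentable terms and let $L^\bu=\varinjlim_{i}L_i^\bu$ be a $\kappa$\+directed colimit; I want the canonical map $\varinjlim_i\Hom_{\Com(\sK)}(K^\bu,L_i^\bu)\rarrow\Hom_{\Com(\sK)}(K^\bu,L^\bu)$ to be bijective. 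For surjectivity, a chain map $f\:K^\bu\rarrow L^\bu$ has each component $f^n$ factoring through some $L_{i_n}^n$ (as $K^n$ is $\kappa$\+presentable); the countably many indices $i_n$ have a common upper bound because $\kappa>\aleph_0$, so all components factor through a single $L_i^\bu$. The lifted components need not commute with the differentials, but each of the countably many squares becomes commutative after mapping into $L^\bu$, hence already at some further index (again by $\kappa$\+presentability of the $K^n$); bounding these countably many indices once more produces a genuine chain map into $L_i^\bu$ lifting~$f$. Injectivity is the same argument applied to countably many equalizations. Thus part~(a) rests entirely on $\aleph_0<\kappa$.

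For part~(b), the ambient category in which I would work is the category $\sK^{[d]}$ of $\boZ$\+graded objects $(X^n)_{n\in\boZ}$ of $\sK$ equipped with a degree-one endomorphism $d=(d^n\:X^n\rarrow X^{n+1})$, with no relation imposed; here $\Com(\sK)$ is the full subcategory cut out by $d^2=0$, a condition preserved by $\kappa$\+directed colimits. I claim $\sK^{[d]}$ is $\kappa$\+accessible, with $\kappa$\+presentable objects precisely those $(X,d)$ all of whose terms are $\kappa$\+presentable (the identification of the presentables again rests on $\aleph_0<\kappa$, exactly as in part~(a)). Granting this, let $\sS$ be a set of representatives of the complexes with $\kappa$\+presentable terms; this is a set, since up to isomorphism the $\kappa$\+presentable objects of $\sK$ form a set, being retracts of a fixed generating set. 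The objects of $\sS$ are $\kappa$\+presentable in $\sK^{[d]}$ and, being square-zero, lie in $\Com(\sK)$. The plan is then to show that $\Com(\sK)=\varinjlim_{(\kappa)}\sS$ inside $\sK^{[d]}$ and to invoke Proposition~\ref{accessible-subcategory}.

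Both the $\kappa$\+accessibility of $\sK^{[d]}$ and the equality $\Com(\sK)=\varinjlim_{(\kappa)}\sS$ reduce, through Proposition~\ref{accessible-subcategory}, to a \emph{core construction}: given an object $(X,d)$ and a $\kappa$\+presentable piece of data mapping into it, build a map $(P,\delta)\rarrow(X,d)$ with $\kappa$\+presentable terms through which the data factors, and — when $(X,d)$ is square-zero — with $\delta^2=0$. One constructs $(P,\delta)$ by closing up under the differential, ascending in degree: from a $\kappa$\+presentable $p^n\:P^n\rarrow X^n$ one factors $d^np^n$ through a $\kappa$\+presentable object of the colimit diagram of $\sK$ to obtain $P^{n+1}$ and $\delta^n$, and if $X$ is square-zero one enlarges $P^{n+1}$ once more so that $\delta^n\delta^{n-1}=0$, which is possible because that composite dies in $X^{n+1}$ and $P^{n-1}$ is $\kappa$\+presentable. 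Carried out over all degrees, together with the analogous merging of $<\kappa$ such objects, this supplies the $\kappa$\+presentable generators, the factorization criterion of Proposition~\ref{accessible-subcategory}, and the upper bounds and coequalizers witnessing $\kappa$\+filteredness.

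The main obstacle — and the only place the hypothesis on $\lambda$\+indexed chains enters — is that this closure must be run simultaneously on a whole $<\kappa$\+sized body of data and iterated to a genuine fixed point, all while keeping every term $\kappa$\+presentable. As $\sK$ is merely $\kappa$\+accessible, the needed merges of $<\kappa$ objects and the transfinite closure are not available as plain coproducts; instead one realizes them through colimits of $\lambda$\+indexed chains, which exist by hypothesis, and since $\lambda<\kappa$ with $\kappa$ regular, such a colimit of $\kappa$\+presentable objects is again $\kappa$\+presentable, so the output lies in~$\sS$. Once the core construction is in place, Proposition~\ref{accessible-subcategory} delivers the $\kappa$\+accessibility of $\Com(\sK)$ and identifies its $\kappa$\+presentable objects as the retracts of objects of~$\sS$; taking retracts termwise and using the closure of $\kappa$\+presentable objects under retracts, these are exactly the complexes of $\kappa$\+presentable objects, in agreement with part~(a). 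I expect the delicate simultaneous closure under the differential subject to $d^2=0$, rather than any individual factorization, to be the real work.
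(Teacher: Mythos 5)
Your part~(a) is exactly the paper's argument in disguise: the paper disposes of~(a) by observing that $\kappa$\+directed colimits commute with countable limits in $\Sets$ or $\Ab$ (citing \cite[Lemma~1.1]{Pflcc}), and your element-wise procedure --- bound the countably many indices $i_n$, then bound the countably many indices equalizing the differential squares, using that $\kappa$ is uncountable and regular --- is precisely that commutation unwound. For part~(b) the paper offers no argument of its own; it cites \cite[Theorem~6.2]{Pacc} and the proof of \cite[Corollary~10.4]{Pacc}. Your proposal is in substance a blind reconstruction of the argument behind that citation: pass to the inserter-type category $\sK^{[d]}$ of graded objects with a degree-one endomorphism, identify its $\kappa$\+presentables by the part-(a) computation, cut out the condition $d^2=0$ via the factorization criterion of Proposition~\ref{accessible-subcategory}, and use colimits of $\lambda$\+indexed chains to stabilize the closure under the differential while staying $\kappa$\+presentable (a $\lambda$\+sized chain of $\kappa$\+presentables has $\kappa$\+presentable colimit because $\lambda<\kappa$ and $\kappa$ is regular). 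So the strategy is sound and matches the cited proof; it buys a self-contained treatment where the paper outsources.

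Two points need repair when you execute the ``core construction.'' First, merging fewer than~$\kappa$ approximations cannot be realized by colimits of $\lambda$\+indexed chains, as you suggest --- incomparable objects do not line up along a chain; the merges must instead be taken as upper bounds inside a fixed $\kappa$\+directed (or canonical $\kappa$\+filtered) diagram presenting each term $X^n$, which is also where your ``$\delta^n\delta^{n-1}$ dies at a later stage'' step lives, so keep every approximation inside those presentation diagrams throughout. Second, only colimits of chains of length exactly~$\lambda$ are assumed; since $\boZ$ has no least element, a single ascending pass cannot even begin, and the forced iteration-to-a-fixed-point must be organized as a chain of length~$\lambda$ whose limit stages $\mu<\lambda$ are handled by upper bounds in the presentation diagrams (colimits of shorter chains need not exist), with the one genuine chain colimit taken only at the top, per degree. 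With these adjustments your plan goes through, in agreement with \cite[Theorem~6.2]{Pacc}.
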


\begin{proof}
 Part~(a) holds because $\kappa$\+directed colimits commute with
countable limits in the category $\Sets$ or~$\Ab$,
cf.~\cite[Lemma~1.1]{Pflcc}.
 Part~(b) is a particular case of~\cite[Theorem~6.2]{Pacc};
cf.~\cite[proof of Corollary~10.4]{Pacc}.
\end{proof}

\Section{Contraderived Categories} \label{contraderived-secn}

 We refer to the survey paper~\cite{Bueh} for general background on
exact categories in the sense of Quillen.
 The Yoneda Ext groups in an exact category $\sK$ can be defined, e.~g.,
as $\Ext^n_\sK(X,Y)=\Hom_{\sD(\sK)}(X,Y[n])$ for all $X$, $Y\in\sK$
and $n\ge0$.
 Here $\sD(\sK)$ denotes the derived category of~$\sK$.

 Let $\sK$ be an exact category, and let $\sF$, $\sC\subset\sK$ be
two classes of objects.
 One denotes by $\sF^{\perp_1}\subset\sK$ the class of all objects
$X\in\sK$ such that $\Ext^1_\sK(F,X)=0$ for all $F\in\sF$.
 Dually, the notation ${}^{\perp_1}\sC\subset\sK$ stands for the class
of all objects $Y\in\sK$ such that $\Ext^1_\sK(Y,C)=0$ for all
$C\in\sC$.
 Similarly, $\sF^{\perp_{\ge1}}\subset\sK$ is the class of all objects
$X\in\sK$ such that $\Ext^n_\sK(F,X)=0$ for all $n\ge1$ and $F\in\sF$,
while ${}^{\perp_{\ge1}}\sC\subset\sK$ is the class of all objects
$Y\in\sK$ such that $\Ext^n_\sK(Y,C)=0$ for all $n\ge1$ and $C\in\sC$.

 Given an exact category $\sK$, any full subcategory $\sE\subset\sK$
closed under extensions in $\sK$ can be endowed with an exact category
structure in which the (admissible) short exact sequences in $\sE$ are
the short exact sequences in $\sK$ with the terms belonging to~$\sE$.
 We will say that the exact structure on $\sE$ is \emph{inherited} from
the exact structure on~$\sK$.

 A full subcategory $\sF\subset\sK$ is said to be \emph{self-generating}
if for every admissible epimorphism $K\rarrow F$ in $\sK$ with $F\in\sF$
there exists a morphism $G\rarrow K$ in $\sK$ with $G\in\sF$ such that
the composition $G\rarrow K\rarrow F$ is an admissible epimorphism
$G\rarrow F$ in~$\sK$.
 A full subcategory $\sF\subset\sK$ is said to be \emph{generating}
if for every object $K\in\sK$ there exists an object $G\in\sF$ together
with an admissible epimorphism $G\rarrow K$ in~$\sK$.
 Clearly, any generating full subcategory is self-generating.

 A full subcategory $\sF\subset\sK$ is said to be \emph{self-resolving}
if it is self-generating and closed under extensions and kernels of
admissible epimorphisms.
 We will say that a full subcategory $\sF\subset\sK$ is \emph{resolving}
if it is generating and closed under extensions and kernels of
admissible epimorphisms.
 Clearly, any resolving full subcategory is self-resolving.

\begin{lem} \label{right-orthogonal-to-self-resolving}
 Let\/ $\sK$ be an exact category and\/ $\sF\subset\sK$ be
a self-generating full subcategory closed under kernels of
admissible epimorphisms.
 Then one has\/ $\sF^{\perp_1}=\sF^{\perp_{\ge1}}\subset\sK$.
\end{lem}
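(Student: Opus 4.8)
The inclusion $\sF^{\perp_{\ge1}}\subseteq\sF^{\perp_1}$ is immediate from the definitions, so the entire content lies in the reverse inclusion. The plan is to fix an object $X\in\sF^{\perp_1}$ and prove, by induction on $n\ge1$, that $\Ext^n_\sK(F,X)=0$ for every $F\in\sF$. The base case $n=1$ is precisely the assumption $X\in\sF^{\perp_1}$. For the inductive step I assume that $\Ext^n_\sK(F',X)=0$ for all $F'\in\sF$, fix $F\in\sF$ and a class $\xi\in\Ext^{n+1}_\sK(F,X)$, and aim to show $\xi=0$.

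Using the defining identification $\Ext^{n+1}_\sK(F,X)=\Hom_{\sD(\sK)}(F,X[n+1])$ together with the standard description of this group by admissible Yoneda $(n+1)$\-extensions, I would represent $\xi$ by an admissible exact sequence $0\rarrow X\rarrow E_n\rarrow\dots\rarrow E_0\rarrow F\rarrow0$. Splicing it at the object $Z_0=\ker(E_0\rarrow F)$ expresses $\xi$ as a Yoneda product $\xi=\eta\cdot\zeta$, where $\eta\in\Ext^1_\sK(F,Z_0)$ is the class of the last short exact sequence $0\rarrow Z_0\rarrow E_0\rarrow F\rarrow0$ and $\zeta\in\Ext^n_\sK(Z_0,X)$ is the class of the remaining admissible $n$\-extension. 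This is where the hypotheses on $\sF$ enter: the deflation $E_0\rarrow F$ is an admissible epimorphism onto an object of $\sF$, so self\-generation supplies a morphism $G_0\rarrow E_0$ with $G_0\in\sF$ for which the composite $p\colon G_0\rarrow E_0\rarrow F$ is again an admissible epimorphism. Since $\sF$ is closed under kernels of admissible epimorphisms and $G_0,F\in\sF$, the kernel $F_1=\ker(p)$ lies in $\sF$. We thus obtain a short exact sequence $0\rarrow F_1\rarrow G_0\rarrow F\rarrow0$ in $\sF$ together with a morphism of short exact sequences onto $0\rarrow Z_0\rarrow E_0\rarrow F\rarrow0$ which is the identity on $F$ and some $f\colon F_1\rarrow Z_0$ on the subobjects. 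Writing $\eta'\in\Ext^1_\sK(F,F_1)$ for the class of the top row, this comparison means exactly that $f_*\eta'=\eta$.

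It then remains to combine the two facts. By naturality of the Yoneda product — equivalently, by associativity of composition in $\sD(\sK)$, where $f_*$ is postcomposition with $f[1]$ and $f^*$ is precomposition with $f$ — one has $\xi=(f_*\eta')\cdot\zeta=\eta'\cdot(f^*\zeta)$, with $f^*\zeta\in\Ext^n_\sK(F_1,X)$. Since $F_1\in\sF$, the inductive hypothesis gives $\Ext^n_\sK(F_1,X)=0$, hence $f^*\zeta=0$ and therefore $\xi=0$. As $\xi$ was arbitrary, $\Ext^{n+1}_\sK(F,X)=0$, which closes the induction. Note that closure of $\sF$ under extensions is nowhere used, consistently with the stated hypotheses.

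The step I expect to be the main obstacle is precisely this last transport of $\xi$ across the comparison map. The naive alternative — applying $\Hom_{\sD(\sK)}(-,X)$ to the long exact sequence of $0\rarrow F_1\rarrow G_0\rarrow F\rarrow0$ — is circular, as it only shows that $\Ext^{n+1}_\sK(F,X)$ injects into $\Ext^{n+1}_\sK(G_0,X)$ with $G_0\in\sF$, which does not force vanishing. The essential idea is instead to rewrite $\xi$ through the \emph{subobject} $F_1\in\sF$, where the lower $\Ext$ already vanishes by induction; this is exactly what the identity $(f_*\eta')\cdot\zeta=\eta'\cdot(f^*\zeta)$ accomplishes. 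Setting up that identity cleanly, and guaranteeing that the final short exact sequence of a chosen representative of $\xi$ can be refined to one lying in $\sF$ via self\-generation and kernel closure, is the delicate part of the argument.
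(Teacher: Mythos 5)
Your proof is correct and is essentially the same argument the paper invokes by citation: the induction on~$n$ via Yoneda extensions, using self-generation to replace the terminal admissible epimorphism $E_0\rarrow F$ by one from an object $G_0\in\sF$ and kernel closure to get $F_1\in\sF$, then rewriting the class through $F_1$ where the lower $\Ext$ vanishes by the inductive hypothesis, is precisely the dimension-shifting argument of \cite[Lemma~6.17]{Sto-ICRA} and \cite[Lemma~1.3]{BHP} to which the paper's one-line proof refers. The delicate points (the identity $f_*\eta'=\eta$ from the morphism of short exact sequences over $\id_F$, the associativity computation $\xi=(f_*\eta')\cdot\zeta=\eta'\cdot(f^*\zeta)$, and the observation that closure under extensions is never used) are all handled correctly.
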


\begin{proof}
 The argument from~\cite[Lemma~6.17]{Sto-ICRA} applies.
 See~\cite[Lemma~1.3]{BHP} for some additional details.
\end{proof}

 Let $\sK$ be a category with directed colimits and $\alpha$~be
an ordinal.
 An $\alpha$\+indexed chain $(K_i\to K_j)_{0\le i<j<\alpha}$ in $\sK$
is said to be \emph{smooth} if $K_j=\varinjlim_{i<j}K_i$ for every
limit ordinal $j<\alpha$.

 A smooth chain $(F_i\to F_j)_{0\le i<j<\alpha}$ in an abelian category
$\sK$ with infinite coproducts is said to be an \emph{$\alpha$\+indexed
filtration} (of the object $F=\varinjlim_{i<\alpha}F_i$) if $F_0=0$ and
the morphism $F_i\to F_{i+1}$ is a monomorphism for all
$0\le i<i+1<\alpha$.
 If this is the case, then the object $F=\varinjlim_{i<\alpha} F_i$
is said to be \emph{filtered by} the objects $S_i=F_{i+1}/F_i$,
\,$0\le i<i+1<\alpha$.
 In an alternative terminology, one says that $F$ is
a \emph{transfinitely iterated extension} (\emph{in the sense of
the directed colimit}) of the objects~$S_i$.

 Given a class of objects $\sS\subset\sK$, we denote by
$\Fil(\sS)\subset\sK$ the class of all objects filtered by
(objects isomorphic to) the objects from~$\sS$.
 A class of objects $\sF\subset\sK$ is said to be \emph{deconstructible}
if there exists a \emph{set} of objects $\sS\subset\sK$ such that
$\sF=\Fil(\sS)$.
 The following result is known classically as the \emph{Eklof
lemma}~\cite[Lemma~1]{ET}.

\begin{lem} \label{eklof-lemma}
 Let\/ $\sK$ be an abelian category with infinite coproducts.
 Then, for any class of objects\/ $\sC\subset\sK$, the class of
objects\/ ${}^{\perp_1}\sC\subset\sK$ is closed under transfinitely
iterated extensions in\/~$\sK$.
 In other words, $\Fil({}^{\perp_1}\sC)\subset{}^{\perp_1}\sC$.
\end{lem}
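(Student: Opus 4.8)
The plan is to fix an object $C\in\sC$ and a filtered object $F=\varinjlim_{i<\alpha}F_i$ coming from a smooth chain with $F_0=0$, monomorphisms $F_i\rarrow F_{i+1}$, and successive quotients $S_i=F_{i+1}/F_i\in{}^{\perp_1}\sC$, and to deduce $\Ext^1_\sK(F,C)=0$ by splitting an arbitrary short exact sequence $0\rarrow C\rarrow E\rarrow F\rarrow0$ with inclusion $\iota\:C\rarrow E$ and epimorphism $\rho\:E\rarrow F$. For each $i<\alpha$ I would form the pullback $E_i=E\times_F F_i$ of $\rho$ along the inclusion $F_i\rarrow F$; this produces a short exact sequence $0\rarrow C\rarrow E_i\rarrow F_i\rarrow0$ together with a smooth chain of subobjects of $E$ (each $E_i\rarrow E_{i+1}$ is a monomorphism, being a base change of the monomorphism $F_i\rarrow F_{i+1}$). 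The goal is then to construct, by transfinite induction on $i<\alpha$, a compatible family of splittings $g_i\:F_i\rarrow E_i$ of $\rho$, meaning $\rho g_i=\id_{F_i}$ and $g_j|_{F_i}=g_i$ for $i\le j$. Once this is done, the morphisms $g_i\:F_i\rarrow E$ form a cocone over the diagram defining $F$, so the universal property of $F=\varinjlim_i F_i$ yields a morphism $g\:F\rarrow E$ with $g|_{F_i}=g_i$ for all $i$; since $\rho g$ and $\id_F$ then agree after precomposition with every colimit injection $F_i\rarrow F$, we get $\rho g=\id_F$, and the sequence splits.

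The successor step is where the hypothesis $S_i\in{}^{\perp_1}\sC$ is used, and it is the heart of the argument. Given $g_i\:F_i\rarrow E_i$, the splitting identifies the subobject $E_i\subset E_{i+1}$ with $\iota(C)\oplus g_i(F_i)$, so passing to the quotient $Y=E_{i+1}/g_i(F_i)$ keeps $\iota(C)$ embedded (as $\iota(C)\cap g_i(F_i)=0$) with cokernel $E_{i+1}/E_i\cong F_{i+1}/F_i=S_i$; this is a short exact sequence $0\rarrow C\rarrow Y\rarrow S_i\rarrow0$. By $\Ext^1_\sK(S_i,C)=0$ it splits, so there is a retraction $r\:Y\rarrow C$ of the inclusion $C\rarrow Y$. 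Composing $r$ with the projection $E_{i+1}\rarrow Y$ gives a morphism $\sigma\:E_{i+1}\rarrow C$ with $\sigma\iota=\id_C$ and $\sigma g_i=0$. The endomorphism $\id_{E_{i+1}}-\iota\sigma$ annihilates $\iota(C)=\ker\rho$, hence factors as $g_{i+1}\rho$ for a unique $g_{i+1}\:F_{i+1}\rarrow E_{i+1}$ (using $\coker\iota=F_{i+1}$), and one checks $\rho g_{i+1}=\id_{F_{i+1}}$; the vanishing $\sigma g_i=0$ then gives, by a short diagram chase, $g_{i+1}|_{F_i}=g_i$, as required.

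At a limit ordinal $j<\alpha$ the smoothness of the chain gives $F_j=\varinjlim_{i<j}F_i$, and since $E_i\subseteq E_j$ for $i<j$, the compatible family $(g_i\:F_i\rarrow E_j)_{i<j}$ induces the desired $g_j\:F_j\rarrow E_j$ by the colimit universal property, with $\rho g_j=\id_{F_j}$ for the same reason as in the concluding step above. The base case $F_0=0$ is trivial, so completing the induction would establish $\Fil({}^{\perp_1}\sC)\subseteq{}^{\perp_1}\sC$.

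I expect the main obstacle to be purely formal rather than conceptual: all the element-style manipulations in the successor step (the splitting $E_i\cong\iota(C)\oplus g_i(F_i)$, the computation of $Y$, and the identity $g_{i+1}|_{F_i}=g_i$) must be carried out diagrammatically so that the argument is valid in an arbitrary abelian category with infinite coproducts, not merely for modules. I would also take care at limit steps not to assert that $E=\varinjlim_i E_i$ (which can fail without exactness of directed colimits); this is never needed, since the section is assembled using only the universal property of $F=\varinjlim_i F_i$ together with the inclusions $E_i\subseteq E$. This is the classical Eklof lemma, so alternatively one may simply invoke~\cite[Lemma~1]{ET}.
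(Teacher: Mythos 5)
Your proof is correct and is essentially the argument the paper relies on: the paper's own ``proof'' is just a citation to \cite[Lemma~4.5]{PR} (going back to \cite[Lemma~1]{ET}), where the same transfinite induction producing compatible sections $g_i\:F_i\rarrow E_i$ over the pullbacks $E_i=E\times_F F_i$, with the successor step driven by $\Ext^1_\sK(S_i,C)=0$, is carried out. Your two cautionary remarks---phrasing the successor step diagrammatically and never asserting $E=\varinjlim_i E_i$ (only the universal property of $F=\varinjlim_i F_i$ is used)---are precisely the points that make the classical module-theoretic argument valid in an arbitrary abelian category with infinite coproducts, so nothing is missing.
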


\begin{proof}
 In the stated generality, a proof can be found
in~\cite[Lemma~4.5]{PR}; see also~\cite[Proposition~1.3]{PS4}
and~\cite[Lemma~7.5]{PS6}.
\end{proof}

 The following lemma is well known and easy.

\begin{lem} \label{Ext-from-disk-complex}
 Let\/ $\sB$ be an abelian category, $A\in\sB$ be an object,
and $n\in\boZ$ be an integer.
 Denote by $D_{n,n+1}^\bu(A)$ the contractible two-term complex\/
$\dotsb\rarrow0\rarrow A\overset\id\rarrow A\rarrow0\rarrow\dotsb$
concentrated in the cohomological degrees~$n$ and $n+1$.
 Then, for any complex $B^\bu$ in\/ $\sB$ and all integers $i\ge0$
there is a natural isomorphism of abelian groups
$$
 \Ext^i_{\Com(\sB)}(D_{n,n+1}^\bu(A),B^\bu)
 \simeq\Ext^i_\sB(A,B^n).
$$
\end{lem}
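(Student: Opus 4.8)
The plan is to identify $D_{n,n+1}^\bu({-})$ as the left adjoint of the degree\+$n$ evaluation functor and then to read off all higher $\Ext$ groups from the induced adjunction on derived categories.

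First I would settle the case $i=0$ by a direct inspection. A morphism of complexes $f\:D_{n,n+1}^\bu(A)\rarrow B^\bu$ has nonzero components only in degrees $n$ and $n+1$, and commutation with the differentials forces $f^{n+1}=d_B^n\circ f^n$, the square in degree $n+1$ being automatic since $d_B^{n+1}d_B^n=0$, while $f^n\:A\rarrow B^n$ is otherwise unconstrained. Hence $f\mapsto f^n$ is an isomorphism $\Hom_{\Com(\sB)}(D_{n,n+1}^\bu(A),B^\bu)\cong\Hom_\sB(A,B^n)$, natural in $A$ and in $B^\bu$. Writing $\mathrm{ev}_n\:\Com(\sB)\rarrow\sB$ for the functor $B^\bu\mapsto B^n$, this says precisely that $D_{n,n+1}^\bu({-})\:\sB\rarrow\Com(\sB)$ is left adjoint to $\mathrm{ev}_n$. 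Both functors are exact: $\mathrm{ev}_n$ because short exact sequences in $\Com(\sB)$ are exact in every degree, and $D_{n,n+1}^\bu$ because it is built degreewise out of the identity functor and the zero functor on $\sB$.

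Since the adjoint functors $D_{n,n+1}^\bu$ and $\mathrm{ev}_n$ are exact, they preserve acyclic complexes, hence quasi-isomorphisms, and therefore induce triangulated functors between the derived categories $\sD(\sB)$ and $\sD(\Com(\sB))$; moreover the termwise adjunction, being compatible with homotopy, descends to an adjunction of these induced functors. Recalling the description $\Ext^i_\sK(X,Y)=\Hom_{\sD(\sK)}(X,Y[i])$, and using that the induced right adjoint sends $B^\bu$ to $\mathrm{ev}_n(B^\bu)=B^n$ and commutes with the shift, I obtain
$$
\Ext^i_{\Com(\sB)}(D_{n,n+1}^\bu(A),B^\bu)=\Hom_{\sD(\Com(\sB))}(D_{n,n+1}^\bu(A),B^\bu[i])\cong\Hom_{\sD(\sB)}(A,B^n[i])=\Ext^i_\sB(A,B^n),
$$
naturally in both variables, which is the assertion.

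The computation is elementary, and the only point that genuinely requires attention is the passage from the degreewise adjunction to an adjunction of the induced functors on derived categories; this is where exactness of both $D_{n,n+1}^\bu$ and $\mathrm{ev}_n$ is used, ensuring that neither deriving nor any hypothesis of enough projective or injective objects in $\sB$ is needed. (Should $\sB$ happen to have enough projectives, one can argue even more directly: applying the exact functor $D_{n,n+1}^\bu$ --- which preserves projectives precisely because its right adjoint $\mathrm{ev}_n$ is exact --- to a projective resolution $P_\bu\rarrow A$ yields a projective resolution of $D_{n,n+1}^\bu(A)$ in $\Com(\sB)$, and the $i=0$ adjunction identifies both $\Ext$ groups with the cohomology of $\Hom_\sB(P_\bu,B^n)$.) Given the standard nature of this adjunction argument, one could instead simply cite it.
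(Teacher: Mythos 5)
Your proof is correct. The paper gives no argument of its own for this lemma---it simply cites \cite[Lemma~3.1(5)]{Gil} and \cite[Lemma~1.8]{Pal}---and your identification of $D_{n,n+1}^\bu({-})$ as the exact left adjoint of the exact evaluation functor $\mathrm{ev}_n\:\Com(\sB)\rarrow\sB$, with the adjunction descended to the derived categories, is precisely the standard argument behind those citations; it is moreover consistent with the paper's convention $\Ext^i_\sK(X,Y)=\Hom_{\sD(\sK)}(X,Y[i])$ and, as you note, needs no assumption of enough projectives in $\sB$, which is the generality the paper requires.
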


\begin{proof}
 See, e.~g., \cite[Lemma~3.1(5)]{Gil} or~\cite[Lemma~1.8]{Pal}.
\end{proof}

 Given an additive category $\sB$, we denote by $\Hot(\sB)$
the triangulated category of (unbounded) complexes in $\sB$ with
morphisms up to cochain homotopy.
 The notation $B^\bu[i]$, \,$i\in\boZ$ stands for the cohomological
degree shift of a complex~$B^\bu$; so $(B^\bu[i])^n=B^{n+i}$ for
all $n\in\boZ$.
 The following lemma is well known.

\begin{lem} \label{Ext-1-as-homotopy-Hom}
 Let\/ $\sB$ be an abelian category, and let $A^\bu$ and $B^\bu$ be
two complexes in\/~$\sB$.
 Assume that\/ $\Ext^1_\sB(A^n,B^n)=0$ for all $n\in\boZ$.
 Then there is a natural isomorphism of abelian groups
$$
 \Ext^1_{\Com(\sB)}(A^\bu,B^\bu)\simeq\Hom_{\Hot(\sB)}(A^\bu,B^\bu[1]).
$$
\end{lem}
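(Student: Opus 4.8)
The plan is to realize both groups as the set of equivalence classes of \emph{degreewise-split} extensions of $A^\bu$ by $B^\bu$ in $\Com(\sB)$, the whole point of the hypothesis being that $\Ext^1_\sB(A^n,B^n)=0$ forces every extension to be degreewise split.

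I would begin from the Yoneda description of the left-hand side: $\Ext^1_{\Com(\sB)}(A^\bu,B^\bu)$ is the group (under Baer sum) of equivalence classes of short exact sequences $0\rarrow B^\bu\rarrow E^\bu\rarrow A^\bu\rarrow0$ in the abelian category $\Com(\sB)$. For each $n\in\boZ$ the degree\+$n$ component $0\rarrow B^n\rarrow E^n\rarrow A^n\rarrow0$ is a short exact sequence in $\sB$, and since $\Ext^1_\sB(A^n,B^n)=0$ it splits. Fixing splittings $s^n\colon A^n\rarrow E^n$ identifies $E^n$ with $B^n\oplus A^n$ as graded objects, and writes the differential of $E^\bu$ in triangular form with diagonal blocks $d_B$ and $d_A$ and a single off-diagonal component $h_n\colon A^n\rarrow B^{n+1}$. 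The relation $d_E^2=0$ unwinds to $d_B^{n+1}h_n+h_{n+1}d_A^n=0$, which says precisely that $h=(h_n)$ is a degree-one cocycle in the Hom-complex, i.e.\ a cochain map $A^\bu\rarrow B^\bu[1]$.

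The decisive step is well-definedness of the resulting class. Replacing $s^n$ by another splitting amounts to adding a morphism $g^n\colon A^n\rarrow B^n$, and a short matrix computation shows that $h_n$ is then replaced by $h_n+d_B^ng^n-g^{n+1}d_A^n$; the correction term is exactly the degree-one coboundary of $g$ in the Hom-complex, so $h$ changes by a null-homotopic map and its class in $\Hom_{\Hot(\sB)}(A^\bu,B^\bu[1])$ is unchanged. Running the same computation on an isomorphism of extensions shows that Yoneda-equivalent extensions yield the same class, and inspecting the construction gives naturality in $A^\bu$ and in $B^\bu$. This produces a well-defined homomorphism $\Ext^1_{\Com(\sB)}(A^\bu,B^\bu)\rarrow\Hom_{\Hot(\sB)}(A^\bu,B^\bu[1])$.

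For the inverse I would go the other way: from a cochain map $h\colon A^\bu\rarrow B^\bu[1]$ build $E^\bu=B^\bu\oplus A^\bu$ with the differential twisted by $h$, noting that the cocycle condition on $h$ is equivalent to $d_E^2=0$ and that the obvious inclusion and projection form a (degreewise-split) short exact sequence. One then checks that this assignment descends to homotopy classes, is inverse to the map above, and carries addition of cochain maps to the Baer sum. The genuine mathematical content is therefore small and standard — it is the statement that $\Com(\sB)$ with its degreewise-split exact structure is Frobenius with stable category $\Hot(\sB)$, so that the split $\Ext^1$ equals $\Hom_{\Hot(\sB)}(A^\bu,B^\bu[1])$ — and the hypothesis only serves to identify this split $\Ext^1$ with the ambient abelian $\Ext^1_{\Com(\sB)}$. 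The one place demanding care, and the likely source of errors, is the sign bookkeeping: making the correction term from a change of splitting match the Hom-complex coboundary, and confirming that the two constructions are mutually inverse and additive.
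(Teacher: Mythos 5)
Your proposal is correct, and it is essentially the argument behind the paper's proof: the paper simply cites \cite[Lemma~2.1]{Gil}, \cite[Lemma~1.6]{BHP}, and \cite[Lemma~1.9]{Pal}, and the proofs there proceed exactly as you do --- the hypothesis $\Ext^1_\sB(A^n,B^n)=0$ forces every extension of complexes to be degreewise split, the off-diagonal entry of the triangular differential is a degree-one cocycle in the $\Hom$\+complex, a change of splitting alters it by a coboundary, and the inverse is the twisted-differential construction $E^\bu=B^\bu\oplus A^\bu$. Your framing via the degreewise-split (Frobenius) exact structure on $\Com(\sB)$ with stable category $\Hot(\sB)$ is also accurate, and your flagged concern about sign conventions is the only point of bookkeeping, not a gap.
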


\begin{proof}
 See, e.~g., \cite[Lemma~2.1]{Gil}, \cite[Lemma~1.6]{BHP}
or~\cite[Lemma~1.9]{Pal}.
\end{proof}

 Let $\sB$ be an abelian (or exact) category with enough projective
objects.
 A complex $B^\bu$ in $\sB$ is said to be \emph{contraacyclic}
(\emph{in the sense of Becker}~\cite[Proposition~1.3.6(1)]{Bec}) if,
for every complex of projective objects $P^\bu$ in $\sB$, the complex
of abelian groups $\Hom_\sB^\bu(P^\bu,B^\bu)$ is acyclic.
 The full subcategory of contraacyclic complexes is denoted by
$\Ac^\bctr(\sB)\subset\Hot(\sB)$ or (depending on context)
$\Ac^\bctr(\sB)\subset\Com(\sB)$.
 The triangulated Verdier quotient category
$$
 \sD^\bctr(\sB)=\Hot(\sB)/\Ac^\bctr(\sB)
$$
is called the (\emph{Becker}) \emph{contraderived category} of~$\sB$.

 We are interested in (termwise admissible) short exact sequences 
$0\rarrow K^\bu\rarrow L^\bu\rarrow M^\bu\rarrow0$ of complexes
in~$\sB$.
 Any such short exact sequence can be viewed as a bicomplex with
three rows in~$\sB$.
 Then one can consider its total complex, which we will denote by
$\Tot(K^\bu\to L^\bu\to M^\bu)$.

\begin{lem} \label{absolutely-acyclic-are-contraacyclic}
 Let\/ $\sB$ be an exact category with enough projective objects.
 Then \par
\textup{(a)} for any short exact sequence\/ $0\rarrow K^\bu\rarrow
L^\bu\rarrow M^\bu\rarrow0$ of complexes in\/ $\sB$, the total complex\/
$\Tot(K^\bu\to L^\bu\to M^\bu)$ is contraacyclic; \par
\textup{(b)} assuming that infinite products exists in\/ $\sB$,
the class of all contraacyclic complexes is closed under infinite
products.
\end{lem}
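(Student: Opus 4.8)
The plan is to reduce both parts to exactness properties of the category $\Ab$ of abelian groups, exploiting that $\Hom_\sB(P,{-})$ is exact whenever $P$ is projective and that arbitrary products are exact in $\Ab$. In each part I would fix an arbitrary complex of projective objects $P^\bu$ in $\sB$ and test acyclicity of the relevant $\Hom$\+complex against it, since that is exactly what the definition of contraacyclicity demands.

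For part~(a), write $T^\bu=\Tot(K^\bu\to L^\bu\to M^\bu)$, so that $T^n=K^n\oplus L^{n-1}\oplus M^{n-2}$ involves only finite biproducts. Since $\Hom_\sB(P^j,{-})$ preserves finite biproducts, and in $\Ab$ the products over the degree index~$j$ (implicit in the $\Hom$\+complex) commute with finite biproducts, forming the $\Hom$\+complex commutes with this three-row total complex, giving a natural isomorphism
$$
 \Hom_\sB^\bu(P^\bu,T^\bu)\simeq
 \Tot\bigl(\Hom_\sB^\bu(P^\bu,K^\bu)\to\Hom_\sB^\bu(P^\bu,L^\bu)\to
 \Hom_\sB^\bu(P^\bu,M^\bu)\bigr).
$$
Next, as each $P^j$ is projective, the functor $\Hom_\sB(P^j,{-})$ is exact and so sends the termwise short exact sequence $0\rarrow K^n\rarrow L^n\rarrow M^n\rarrow0$ to a short exact sequence in $\Ab$; taking products over~$j$, an exact operation in $\Ab$, yields a short exact sequence of complexes of abelian groups
$$
 0\rarrow\Hom_\sB^\bu(P^\bu,K^\bu)\rarrow\Hom_\sB^\bu(P^\bu,L^\bu)
 \rarrow\Hom_\sB^\bu(P^\bu,M^\bu)\rarrow0.
$$
It then remains to invoke the classical fact that the total complex of a short exact sequence of complexes is acyclic: in the spectral sequence of this three-row bicomplex, taking first the differential running between the three complexes (i.e.\ along the short exact sequences), the homology vanishes in each internal degree by exactness, so $E_1=0$; since the bicomplex is concentrated in three rows the spectral sequence converges, and the total complex is acyclic. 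Hence $\Hom_\sB^\bu(P^\bu,T^\bu)$ is acyclic, and as $P^\bu$ was arbitrary, $T^\bu$ is contraacyclic.

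For part~(b), let $(B_\alpha^\bu)_\alpha$ be a family of contraacyclic complexes and let $B^\bu=\prod_\alpha B_\alpha^\bu$ be their product, formed termwise (it exists because $\sB$ has infinite products). Since $\Hom_\sB(P^j,{-})$ carries products to products, and products commute with products, there is a natural isomorphism of complexes of abelian groups
$$
 \Hom_\sB^\bu\Bigl(P^\bu,\,\prod_\alpha B_\alpha^\bu\Bigr)\simeq
 \prod_\alpha\Hom_\sB^\bu(P^\bu,B_\alpha^\bu).
$$
Each factor on the right is acyclic by the contraacyclicity of $B_\alpha^\bu$; because products are exact in $\Ab$, cohomology commutes with the product, so the left-hand side is acyclic as well, and thus $B^\bu$ is contraacyclic. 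I do not expect a genuine obstacle here: the only points demanding care are the commutation of the $\Hom$\+complex with the finite total complex in~(a) and the standard acyclicity of the total complex of a short exact sequence of complexes, both of which are routine once the problem has been transported into $\Ab$.
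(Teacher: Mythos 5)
Your proof is correct and follows essentially the same route as the paper's own: for part~(a), the paper likewise observes that projectivity of the terms $P^j$ makes the sequence $0\rarrow\Hom_\sB^\bu(P^\bu,K^\bu)\rarrow\Hom_\sB^\bu(P^\bu,L^\bu)\rarrow\Hom_\sB^\bu(P^\bu,M^\bu)\rarrow0$ exact and then invokes the classical fact that the total complex of a short exact sequence of complexes of abelian groups is acyclic, while part~(b) it simply declares obvious (your product argument is exactly the intended one). The extra details you supply---the commutation of the $\Hom$-complex with the three-row totalization and the spectral-sequence justification of the classical acyclicity fact---are points the paper leaves implicit, and both are handled correctly.
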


\begin{proof}
 These observations seem to go back to~\cite[Section~3.5]{Pkoszul}
and~\cite[Proposition~4.3]{KLN}.
 Part~(b) is obvious.
 In part~(a), the point is that, for every complex of projective
objects $P^\bu\in\sB$, the short sequence of complexes of abelian
groups $0\rarrow\Hom_\sB^\bu(P^\bu,K^\bu)\rarrow
\Hom_\sB^\bu(P^\bu,L^\bu)\rarrow\Hom_\sB^\bu(P^\bu,M^\bu)\rarrow0$
is exact.
 The total complex of any short exact sequence of complexes of
abelian groups is exact.
 For a generalization, see
Lemma~\ref{hom-from-absolutely-acyclic-in-cotorsion-pair} below.
\end{proof}

\begin{lem} \label{contraacyclic-in-abelian-are-acyclic}
 Let\/ $\sB$ be an \emph{abelian} category with enough projective
objects.
 Then any contraacyclic complex in\/~$\sB$ is acyclic.
\end{lem}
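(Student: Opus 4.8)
The plan is to detect the cohomology objects of $B^\bu$ by testing against stalk complexes of projective objects. First I would fix a projective object $P\in\sB$ and form the stalk complex $Q^\bu$ with $Q^0=P$ and $Q^k=0$ for $k\ne0$. Since all its terms are projective, $Q^\bu$ is a legitimate complex of projective objects, so the defining condition of contraacyclicity applies to it. Because the differential of $Q^\bu$ vanishes, a direct inspection of the Hom complex identifies $\Hom_\sB^\bu(Q^\bu,B^\bu)$ with the complex $\Hom_\sB(P,B^\bu)$, i.e.\ $\dotsb\rarrow\Hom_\sB(P,B^{k})\rarrow\Hom_\sB(P,B^{k+1})\rarrow\dotsb$ with differentials induced by those of~$B^\bu$.

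The key step is to interchange $\Hom_\sB(P,{-})$ with the passage to cohomology. Here I would use that $\sB$ is abelian and $P$ is projective, so the functor $\Hom_\sB(P,{-})$ is exact; being exact, it preserves kernels and cokernels and hence commutes with the formation of cohomology objects, giving a natural isomorphism $H^{k}(\Hom_\sB(P,B^\bu))\simeq\Hom_\sB(P,H^{k}(B^\bu))$ for every $k\in\boZ$. Now contraacyclicity of $B^\bu$ forces the complex $\Hom_\sB^\bu(Q^\bu,B^\bu)=\Hom_\sB(P,B^\bu)$ to be acyclic, and therefore $\Hom_\sB(P,H^{k}(B^\bu))=0$ for all $k\in\boZ$ and all projective objects $P\in\sB$.

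Finally I would invoke the hypothesis that $\sB$ has enough projective objects. Fixing $k$, choose an epimorphism $P\rarrow H^{k}(B^\bu)$ with $P$ projective; the previous step shows this epimorphism is the zero morphism, and an epimorphism that equals zero has zero cokernel equal to its own target, whence $H^{k}(B^\bu)=0$. As $k$ is arbitrary, $B^\bu$ is acyclic. I do not expect a serious obstacle: the argument is a one-line computation combined with the defining property of enough projectives. The only point requiring attention is the interchange of $\Hom_\sB(P,{-})$ with cohomology, which relies both on the exactness of $\Hom_\sB(P,{-})$ for projective~$P$ and on the existence of cohomology objects --- this is precisely where the \emph{abelian} hypothesis (as opposed to merely exact) enters, since in a general exact category the objects $H^{k}(B^\bu)$ need not exist.
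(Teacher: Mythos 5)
Your proof is correct, and it matches the paper's approach in substance: the paper disposes of this lemma by citation (the dual of \cite[Lemma~A.2]{Psemten}, or \cite[Lemma~B.7.3(b)]{Pcosh}), and the standard argument behind those references is precisely your test against stalk complexes of projective objects, using exactness of $\Hom_\sB(P,{-})$ for projective $P$ to commute it past cohomology and then the existence of an epimorphism $P\rarrow H^k(B^\bu)$ from a projective to conclude $H^k(B^\bu)=0$. You also correctly pinpoint where the \emph{abelian} hypothesis is used (the existence of the cohomology objects $H^k(B^\bu)$), which is consistent with the paper's remark that the assertion is not known for exact categories in general.
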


\begin{proof}
 This is the dual assertion to~\cite[Lemma~A.2]{Psemten}
or a particular case of~\cite[Lemma~B.7.3(b)]{Pcosh}.
 It is \emph{not known} whether the assertion of this lemma holds for
exact categories in general; see~\cite[Remark~B.7.4]{Pcosh} for
a discussion.
\end{proof}

 A complex in an exact category $\sE$ is said to be
\emph{absolutely acyclic}~\cite[Section~2.1]{Psemi},
\cite[Section~3.3]{Pkoszul}, \cite[Appendix~A]{Pcosh} if it belongs
to the minimal thick subcategory of $\Hot(\sE)$ containing the total
complexes of (termwise admissible) short exact sequences of complexes
in~$\sE$.
 Equivalently, a complex in $\sE$ is absolutely acyclic if and only if
it belongs to the minimal full subcategory of $\Com(\sE)$ containing
contractible complexes and closed under extensions and direct
summands~\cite[Proposition~8.12]{PS5}.

 It is clear from Lemma~\ref{absolutely-acyclic-are-contraacyclic}(a)
that all absolutely acyclic complexes in $\sB$ are contraacyclic.

 An exact category $\sE$ is said to have \emph{homological
dimension\/~$\le\nobreak d$} (where $d\ge-1$ is an integer) if
$\Ext^{d+1}_\sE(X,Y)=0$ for all $X$, $Y\in\sE$.

\begin{lem} \label{psemi-remark21}
 In an exact category of finite homological dimension, all acyclic
complexes are absolutely acyclic.
\end{lem}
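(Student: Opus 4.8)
The plan is to reduce everything to the intrinsic characterization of absolute acyclicity recalled above: the absolutely acyclic complexes form the smallest class of objects of $\Com(\sE)$ that contains the contractible complexes and is closed under extensions and direct summands. Thus it suffices to exhibit a given acyclic complex $A^\bu$ as an object assembled from contractible complexes by \emph{finitely} many extensions (and passages to direct summands). The building blocks come from the canonical truncation. Writing $Z^n\subseteq A^n$ for the cocycle objects (which are admissible subobjects of $A^n$ precisely because $A^\bu$ is acyclic), the subcomplexes $\tau^{\le n}A^\bu=(\dotsb\rarrow A^{n-1}\rarrow Z^n\rarrow0)$ form a smooth chain in $\Com(\sE)$ whose successive quotients are $\tau^{\le n}A^\bu/\tau^{\le n-1}A^\bu\cong D_{n-1,n}^\bu(Z^n)$, the contractible two-term complexes of Lemma~\ref{Ext-from-disk-complex}.

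First I would dispose of the bounded case, where no dimension hypothesis is needed: a bounded acyclic complex carries a \emph{finite} filtration of the above kind with contractible subquotients, hence is a finite iterated extension of contractibles and therefore absolutely acyclic. The entire difficulty lies in the unbounded directions, where the canonical-truncation filtration is infinite and the class of absolutely acyclic complexes is \emph{not} known to be closed under transfinitely iterated extensions. So passing to the colimit of the truncation chain is not a legitimate move, and the finiteness of the homological dimension must be used to collapse the infinite process into a finite one.

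This is where the hypothesis $\Ext^{d+1}_\sE(-,-)=0$ enters. Splicing $d+1$ consecutive short exact sequences $0\rarrow Z^k\rarrow A^k\rarrow Z^{k+1}\rarrow0$ produces a $(d+1)$\+fold Yoneda extension $0\rarrow Z^n\rarrow A^n\rarrow\dotsb\rarrow A^{n+d}\rarrow Z^{n+d+1}\rarrow0$ whose class lies in $\Ext^{d+1}_\sE(Z^{n+d+1},Z^n)=0$. The vanishing of all these classes should let me realize $A^\bu$, up to passage to a direct summand, as the total complex of a bicomplex with only $d+1$ rows, each row a contractible complex. Granting such a presentation, the conclusion is immediate: a total complex of a bicomplex with finitely many rows carries the finite filtration by its rows, whose associated graded complexes are exactly those rows; since each row is contractible, the total complex is a finite iterated extension of contractibles, hence absolutely acyclic. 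Splitting a general $A^\bu$ via the truncation sequence $0\rarrow\tau^{\le0}A^\bu\rarrow A^\bu\rarrow\tau^{\ge1}A^\bu\rarrow0$ and using closure under extensions then reduces the two-sided case to the one-sided ones.

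The main obstacle is precisely the construction in the previous paragraph: converting the vanishing of the $(d+1)$\+fold Yoneda extension classes into an honest finite-width resolution of the acyclic complex $A^\bu$ by contractible (``disk'') complexes. In other words, the crux is a Cartan--Eilenberg-type resolution of $A^\bu$ that terminates after $d+1$ rows, the termination being forced by $\Ext^{d+1}_\sE(-,-)=0$. Everything else — the canonical truncation, the identification of the subquotients with the disk complexes $D_{n-1,n}^\bu(Z^n)$, the bounded case, and the final reassembly — is routine.
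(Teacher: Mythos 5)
You do not prove the lemma: the step on which everything hinges is the one you yourself label ``the main obstacle'' and leave unconstructed, namely converting the vanishing $\Ext^{d+1}_\sE(Z^{n+d+1},Z^n)=0$ of the spliced Yoneda classes into a presentation of $A^\bu$ (up to direct summands) as the totalization of a bicomplex with $d+1$ contractible rows. Everything you do carry out is correct but peripheral: the characterization of absolute acyclicity as the smallest class containing the contractibles and closed under extensions and direct summands, the bounded case via the canonical truncations with subquotients $D^\bu_{n-1,n}(Z^n)$, the warning that transfinitely iterated extensions are not permitted, the reduction of the two-sided case to the one-sided ones, and the observation that a total complex of finitely many contractible rows is a finite iterated extension of contractibles. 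None of this touches the unbounded one-sided case, which is where the lemma lives. For comparison: the paper itself offers no argument at all, but cites \cite[Remark~2.1]{Psemi} (with further detail in \cite[Proposition~6.2]{Pflcc}), and it is precisely the construction carried out in those references that is absent from your proposal.

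Worse, the route you sketch for the crux appears unworkable at the stated level of generality. A Cartan--Eilenberg-type resolution of $A^\bu$ by contractible (``disk'') complexes is classically assembled from resolutions of the cocycle objects $Z^n$ via the horseshoe lemma, which requires enough projective (or dually injective) objects --- and an exact category of finite homological dimension need have none: the category of vector bundles on $\mathbb{P}^1$ has homological dimension~$1$ and no nonzero projective or injective objects, yet the lemma applies to it. Without projectives, the only canonical termwise admissible epimorphism onto $A^\bu$ from a contractible complex is the cone-type map $\bigoplus_n D^\bu_{n,n+1}(A^n)\rarrow A^\bu$, whose kernel is isomorphic to $A^\bu[-1]$, so iterating it makes no progress and nothing forces termination after $d+1$ steps. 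Moreover, the vanishing of each class in $\Ext^{d+1}_\sE(Z^{n+d+1},Z^n)$ only provides, for each $n$ separately, a zigzag trivializing that single $(d+1)$-extension; assembling these into one bicomplex with contractible rows, coherently in~$n$, is exactly what does not follow formally, and your ``up to a direct summand'' hedge does not supply it. Whatever the details of the argument in \cite{Psemi} and \cite{Pflcc}, it must avoid resolutions by projectives entirely; the instructive degenerate case is $d=0$, where acyclicity forces every sequence $0\rarrow Z^n\rarrow A^n\rarrow Z^{n+1}\rarrow0$ to split, so that $A^\bu\cong\bigoplus_n D^\bu_{n,n+1}(Z^{n+1})$ (a degreewise finite direct sum) is contractible --- and one should expect the general case to be an induction or explicit finite construction of that intrinsic kind, not a resolution argument. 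To repair your proposal you would need either to supply such an argument or to add hypotheses (enough projectives) that the lemma does not assume.
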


\begin{proof}
 This is~\cite[Remark~2.1]{Psemi}.
 For additional information, see~\cite[Proposition~6.2]{Pflcc}.
\end{proof}

 Let us denote by $\sB_\proj\subset\sB$ the full subcategory of
projective objects in an abelian/exact category~$\sB$.

\begin{thm} \label{becker-contraderived-of-lpacepo}
 For any locally presentable abelian category\/ $\sB$ with enough
projective objects, the inclusion of additive/abelian categories\/
$\sB_\proj\rarrow\sB$ induces a triangulated equivalence
$$
 \Hot(\sB_\proj)\simeq\sD^\bctr(\sB).
$$
\end{thm}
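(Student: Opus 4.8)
The plan is to realize $\sD^\bctr(\sB)$ as a Verdier quotient whose kernel $\Ac^\bctr(\sB)$ is the right orthogonal of $\Hot(\sB_\proj)$, to deduce full faithfulness of $\Hot(\sB_\proj)\rarrow\sD^\bctr(\sB)$ from a general orthogonality lemma, and to obtain essential surjectivity by constructing, for every complex, a ``contraprojective resolution'' via a complete cotorsion pair in $\Com(\sB)$.

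First I would record the orthogonality built into the definition. Rewriting the acyclicity of $\Hom_\sB^\bu(P^\bu,N^\bu)$ in homotopy-categorical terms, a complex $N^\bu$ is contraacyclic exactly when $\Hom_{\Hot(\sB)}(P^\bu,N^\bu[m])=0$ for every $P^\bu\in\Com(\sB_\proj)$ and every $m\in\boZ$; thus $\Ac^\bctr(\sB)$ is precisely the right orthogonal of the triangulated subcategory $\Hot(\sB_\proj)\subset\Hot(\sB)$ (complexes of projectives being closed under shifts and cones). Full faithfulness of the composite $\Hot(\sB_\proj)\rarrow\Hot(\sB)\rarrow\sD^\bctr(\sB)$ then follows from the standard fact that, for a thick subcategory $\sN$ of a triangulated category $\sD$ and an object $X$ with $\Hom_\sD(X,N)=0$ for all $N\in\sN$, the localization map $\Hom_\sD(X,Y)\rarrow\Hom_{\sD/\sN}(X,Y)$ is an isomorphism for every $Y$; here $X=P^\bu$ and $\sN=\Ac^\bctr(\sB)$.

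The core is essential surjectivity: every complex $B^\bu$ should become isomorphic in $\sD^\bctr(\sB)$ to a complex of projectives. I would produce, for each $B^\bu$, a termwise short exact sequence $0\rarrow N^\bu\rarrow P^\bu\rarrow B^\bu\rarrow0$ in $\Com(\sB)$ with $P^\bu\in\Com(\sB_\proj)$ and $N^\bu\in\Ac^\bctr(\sB)$. Such sequences exist because $(\Com(\sB_\proj),\Ac^\bctr(\sB))$ is a cotorsion pair in the abelian category $\Com(\sB)$: for a complex of projectives $P^\bu$ one has $\Ext^1_\sB(P^n,B^n)=0$, so Lemma~\ref{Ext-1-as-homotopy-Hom} gives $\Ext^1_{\Com(\sB)}(P^\bu,B^\bu)\simeq\Hom_{\Hot(\sB)}(P^\bu,B^\bu[1])$, whence $\Com(\sB_\proj)^{\perp_1}=\Ac^\bctr(\sB)$ after shifting. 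Granting the precover sequence, Lemma~\ref{absolutely-acyclic-are-contraacyclic}(a) shows that $\Tot(N^\bu\to P^\bu\to B^\bu)$ is contraacyclic; since this total complex is, up to shift, the cone of the natural map $\mathrm{Cone}(N^\bu\rarrow P^\bu)\rarrow B^\bu$, that map is an isomorphism in $\sD^\bctr(\sB)$. Feeding this into the homotopy triangle $N^\bu\rarrow P^\bu\rarrow\mathrm{Cone}(N^\bu\rarrow P^\bu)\rarrow N^\bu[1]$ yields a triangle $N^\bu\rarrow P^\bu\rarrow B^\bu\rarrow N^\bu[1]$ in $\sD^\bctr(\sB)$, and as $N^\bu$ is contraacyclic this exhibits $P^\bu$ as isomorphic to $B^\bu$ in $\sD^\bctr(\sB)$, as required.

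The substantial point, and the step I expect to be the main obstacle, is the completeness of this cotorsion pair, i.e.\ the actual existence of the special precover sequences; this is exactly where local presentability and the existence of enough projectives enter. I would generate the pair by a set: using a generating set $\sG$ of projective objects (available since $\sB$ is locally presentable with enough projectives) together with a presentability bound, one shows that every complex of projectives lies in $\Fil(\sS)$ for a fixed set $\sS\subset\Com(\sB_\proj)$ of ``small'' complexes of projectives. Then $\sS^{\perp_1}=\Com(\sB_\proj)^{\perp_1}=\Ac^\bctr(\sB)$ by the Eklof lemma (Lemma~\ref{eklof-lemma}), and the Eklof--Trlifaj-type completeness theorem for cotorsion pairs generated by a set in a locally presentable abelian category provides the required precovers; since complexes of projectives are closed under extensions and direct summands, the left term of each precover is again a complex of projectives. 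Verifying this deconstructibility of $\Com(\sB_\proj)$ carefully in the locally presentable (not necessarily Grothendieck) setting — rather than the formal homological manipulations above — is where the real work lies.
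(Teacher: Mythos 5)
Your proposal is correct and takes essentially the same route as the paper, whose proof is a citation to \cite[Corollary~7.4]{PS4}: there, too, the equivalence is deduced from the hereditary complete cotorsion pair $(\Com(\sB_\proj),\allowbreak\Ac^\bctr(\sB))$ in $\Com(\sB)$ (recorded in the paper as Theorem~\ref{contraderived-cotorsion-pair-for-lpacepo}), generated by a set via the Eklof--Trlifaj machinery, with the key deconstructibility input being exactly Proposition~\ref{complexes-of-projectives-filtered-by}. Your one slight overstatement --- that every complex of projectives lies in $\Fil(\sS)$, whereas in general it is only a \emph{direct summand} of an object of $\Fil(\sS)$ --- is harmless, since your own appeal to $\sF=\Fil(\sS)^\oplus$ through Theorem~\ref{eklof-trlifaj-theorem}(b) already accommodates the summands.
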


\begin{proof}
 This is~\cite[Corollary~7.4]{PS4}.
 A far-reaching generalization can be found
in~\cite[Corollary~6.14]{PS5}.
 See also Theorem~\ref{contraderived-cotorsion-pair-for-lpacepo} below.
\end{proof}

 Our final lemma in this section is standard and easy, but
helpful to keep in mind.

\begin{lem} \label{pkoszul-lemma16}
 Let\/ $\sH$ be a triangulated category, $\sX\subset\sH$ be a thick
subcategory, and\/ $\sF\subset\sH$ be a full triangulated subcategory.
 Assume that for every object $H\in\sH$ there exists an object
$F\in\sF$ together with a morphism $F\rarrow H$ whose cone belongs
to\/~$\sX$.
 Then the inclusion functor\/ $\sF\rarrow\sH$ induces a triangulated
equivalence of the Verdier quotient categories
$$
 \sF/(\sX\cap\sF)\simeq\sH/\sX.
$$
\end{lem}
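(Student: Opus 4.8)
The plan is to show that the triangulated functor $G\:\sF/(\sX\cap\sF)\rarrow\sH/\sX$ induced by the inclusion $\sF\rarrow\sH$ is an equivalence. First I would check that $G$ is well defined: the intersection $\sX\cap\sF$ is a thick subcategory of the triangulated category $\sF$ (being closed under shifts, cones, and direct summands inside $\sF$), so the Verdier quotient on the left makes sense; and since the inclusion carries $\sX\cap\sF$ into $\sX$, the universal property of Verdier localization supplies $G$ together with its triangulated structure. Throughout I would work with the calculus of fractions: writing $S_\sH$ for the class of morphisms of $\sH$ with cone in $\sX$ and $S_\sF=S_\sH\cap\sF$ for those of $\sF$ with cone in $\sX\cap\sF$, both are multiplicative systems compatible with the triangulation, so morphisms in $\sH/\sX$ and in $\sF/(\sX\cap\sF)$ are represented by roofs $A\overset{s}\larrow C\overset{g}\rarrow B$ with denominator $s$ in $S_\sH$, respectively $S_\sF$, standing for the morphism $g\,s^{-1}$.

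Essential surjectivity is immediate from the hypothesis: given $H\in\sH$, a morphism $F\rarrow H$ with $F\in\sF$ and cone in $\sX$ lies in $S_\sH$, hence becomes an isomorphism $G(F)\simeq H$ in $\sH/\sX$. The structural observation underlying full faithfulness is the following: a morphism $a\:F'\rarrow F''$ between objects of $\sF$ that lies in $S_\sH$ automatically lies in $S_\sF$. Indeed, the cone of $a$ lies in $\sX$ by assumption and in $\sF$ because $\sF$ is a full triangulated subcategory and $a$ is a morphism in $\sF$; therefore the cone lies in $\sX\cap\sF$. Combined with the closure of $\sX$ under extensions (the octahedron axiom), this lets me lower any $\sF$\+approximation of an object of $\sH$ into $\sF$ without leaving the multiplicative system.

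For fullness, I would take a morphism $F_1\rarrow F_2$ in $\sH/\sX$ represented by a roof $F_1\overset{s}\larrow H\overset{g}\rarrow F_2$ with $s\in S_\sH$, and approximate its apex: by hypothesis there is $t\:F'\rarrow H$ with $F'\in\sF$ and cone in $\sX$. Then $st\:F'\rarrow F_1$ and $gt\:F'\rarrow F_2$ are morphisms of $\sF$, and $st$ lies in $S_\sF$ by the observation above (its cone is an extension of the cones of $s$ and $t$, hence in $\sX$, and it lies in $\sF$). The roof $F_1\overset{st}\larrow F'\overset{gt}\rarrow F_2$ defines a morphism in $\sF/(\sX\cap\sF)$ whose image under $G$ is $gt\,(st)^{-1}=g\,s^{-1}$, the morphism we started with. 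Faithfulness is dual: a morphism of $\sF/(\sX\cap\sF)$ is represented by a roof $F_1\overset{s}\larrow F'\overset{g}\rarrow F_2$ with $s\in S_\sF$, and if its image $g\,s^{-1}$ vanishes in $\sH/\sX$, the vanishing criterion for fractions produces $u\:F''\rarrow F'$ in $S_\sH$ with $gu=0$. Approximating $F''$ by some $v\:F'''\rarrow F''$ with $F'''\in\sF$ and cone in $\sX$, the composite $uv\:F'''\rarrow F'$ lies in $S_\sF$ (again by the observation) and satisfies $g\,(uv)=0$, so the same criterion inside $\sF/(\sX\cap\sF)$ shows the original morphism was already zero.

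I expect the only genuinely structural point to be the observation of the second paragraph---that $\sF$\+valued approximations produce denominators whose cones lie in $\sX\cap\sF$ rather than merely in $\sX$---since this is exactly what allows every fraction computed in $\sH/\sX$ to be represented inside $\sF$. The remaining difficulty is purely the bookkeeping of the fraction calculus: fixing the direction of the roofs, checking that $S_\sH$ and $S_\sF$ are multiplicative systems compatible with the triangulation, and applying the vanishing criterion correctly. There is no deep obstacle here, the hypothesis having been tailored precisely to make this approximate\+and\+lower argument go through.
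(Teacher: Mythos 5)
Your proof is correct, and it is essentially the paper's proof: the paper disposes of this lemma by citing \cite[Proposition~10.2.7(ii)]{KS} and \cite[Lemma~1.6(a)]{Pkoszul}, and the argument given in those references is exactly your calculus-of-fractions argument---approximate the apex of each roof by an object of $\sF$, note that a morphism of $\sF$ whose cone lies in $\sX$ has its cone in $\sX\cap\sF$ (since the inclusion $\sF\rarrow\sH$ is exact and $\sF$ is strictly full), and use the octahedron axiom plus the standard vanishing criterion for fractions. Your identification of the ``lowering'' observation as the one structural point is accurate; everything else is indeed bookkeeping.
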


\begin{proof}
 This is~\cite[Proposition~10.2.7(ii)]{KS}
or~\cite[Lemma~1.6(a)]{Pkoszul}.
\end{proof}

\Section{Topological Rings} \label{topological-rings-secn}

 The definition of a contramodule over a topological ring goes back
to~\cite[Remark~A.3]{Psemi}.
 We refer to~\cite[Section~2.1]{Prev}, \cite[Sections~1.2 and~5]{PR},
and~\cite[Sections~7\+-8]{Pflcc} for further details and references
in connection with the definitions and assertions presented below
in this section.
 The pedagogical exposition in~\cite[Sections~6\+-7]{PS1} is
particularly recommended.

 Let $A$ be a topological abelian group with a base of neighborhoods
of zero formed by open subgroups.
 The \emph{completion} of $A$ is defined as the abelian group
$\fA=\varprojlim_{U\subset A}A/U$, where $U$ ranges over the open
subgroups of~$A$.
 The abelian group $\fA$ is endowed with the topology of projective
limit of the discrete abelian groups~$A/U$.
 There is a natural morphism of topological abelian groups
$\lambda_A\:A\rarrow\fA$ called the \emph{completion map}.
 The topological abelian group $A$ is called \emph{separated} if
the completion map~$\lambda_A$ is injective, and \emph{complete} if
the map~$\lambda_A$ is surjective.
 The topological abelian group $\fA=\varprojlim_{U\subset A}A/U$
is always separated and complete.

 Given an abelian group $A$ and a set $X$, we denote by
$A[X]=A^{(X)}=\bigoplus_{x\in X}A$ the direct sum of copies of
the group $A$ indexed by the set~$X$.
 The elements of $A[X]$ are interpreted as finite formal linear
combitations $\sum_{x\in X}a_xx$ of elements of $X$ with
the coefficients $a_x\in A$.
 So one has $a_x=0$ for all but a finite subset of indices~$x$.

 Given a complete, separated topological abelian group $\fA$ and
a set $X$, we put $\fA[[X]]=\varprojlim_{\fU\subset\fA}(\fA/\fU)[X]$.
 Here $\fU$ ranges over the all the open subgroups of~$\fA$.
 The elements of $\fA[[X]]$ are interpreted as infinite formal
linear combinations $\sum_{x\in X}a_xx$ of elements of $X$ with
\emph{zero-convergent} families of coefficients $a_x\in\fA$.
 Here the zero-convergence condition means that, for every open
subgroup $\fU\subset\fA$, one has $a_x\in\fU$ for all but a finite
subset of indices $x\in X$.

 For any map of sets $f\:X\rarrow Y$, the induced map $\fA[[f]]\:
\fA[[X]]\rarrow\fA[[Y]]$ takes a formal linear combination
$\sum_{x\in X}a_xx$ to the formal linear combination $\sum_{y\in Y}
b_yy$ with the coefficients defined by the rule
$b_y=\sum_{x\in X}^{f(x)=y}a_x$.
 Here the infinite summation sign in the latter formula denotes
the limit of finite partial sums in the topology of~$\fA$.
 So the assignment $X\longmapsto\fA[[X]]$ is a functor
$\Sets\rarrow\Ab$, but we will mostly consider it as a functor
$\Sets\rarrow\Sets$.

 Now let $\R$ be a complete, separated topological ring with
a base of neighborhoods of zero formed by open right ideals.
 Then the functor $\R[[{-}]]\:\Sets\rarrow\Sets$ has a natural
structure of a \emph{monad} on the category of sets (in the sense
of~\cite[Chapter~VI]{MacLane}).
 For any set $X$, the monad unit $\epsilon_{\R,X}\:X\rarrow\R[[X]]$
is the map taking every element $x\in X$ to the formal linear
combination $\sum_{y\in X}r_yy$ with $r_x=1$ and $r_y=0$ for $y\ne x$.
 The monad multiplication $\phi_{\R,X}\:\R[[\R[[X]]]]\rarrow\R[[X]]$
is the ``opening of parentheses'' map assigning a formal linear
combination to a formal linear combination of formal linear
combinations.
 In order to open the parentheses, one needs to compute pairwise
products of elements of the ring $\R$ and certain infinite sums of
elements of $\R$, which are understood as the limits of finite partial
sums in the topology of~$\R$.
 The conditions imposed on~$\R$ (including, in particular,
the requirement of a topology base of open right ideals) guarantee
the convergence.

 In the general category-theoretic terminology, one usually speaks about
``algebras over monads'', but in our context we prefer to call them
\emph{modules over monads}.
 A \emph{left $\R$\+contramodule} is defined as a module over the monad
$X\longmapsto\R[[X]]$ on the category of sets.
 In other words, a left $\R$\+contramodule $\fP$ is a set endowed with
a \emph{left contraaction map} $\pi_\fP\:\R[[\fP]]\rarrow\fP$ 
satisfying the following \emph{contraassociativity} and
\emph{contraunitality} axioms.
 Firstly, the ``opening of parentheses'' map
$\phi_{\R,\fP}\:\R[[\R[[\fP]]]]\rarrow\R[[\fP]]$ and the map
$\R[[\pi_\fP]]\:\R[[\R[[\fP]]]]\rarrow\R[[\fP]]$ induced by
the map~$\pi_\fP$ must have equal compositions with
the contraaction map~$\pi_\fP$,
$$
 \R[[\R[[\fP]]]]\rightrightarrows\R[[\fP]]\rarrow\fP.
$$
 Secondly, the composition of the map $\epsilon_{\R,\fP}\:\fP\rarrow
\R[[\fP]]$ with the map $\pi_\fP\:\R[[\fP]]\allowbreak\rarrow\fP$
must be equal to the identity map~$\id_\fP$,
$$
 \fP\rarrow\R[[\fP]]\rarrow\fP.
$$

 In particular, given a discrete ring $R$, the similar (but simpler)
construction produces a monad structure on the functor
$X\longmapsto R[X]\:\Sets\rarrow\Sets$.
 Modules over this monad on $\Sets$ are the same things as the usual
left $R$\+modules.
 Now we have two monads on $\Sets$ associated with a topological ring
$\R$ satisfying the conditions above: the monad $X\longmapsto\R[[X]]$
(depending on the topology on~$\R$) and the monad $X\longmapsto\R[X]$
(defined irrespectively of any topology on~$\R$).
 The inclusion of the set of all finite formal linear combinations
into the set of zero-convergent infinite ones is a natural injective
map $\R[X]\rarrow\R[[X]]$.
 Given a left $\R$\+contramodule structure on a set $\fP$,
the composition $\R[\fP]\rarrow\R[[\fP]]\rarrow\fP$ endows $\fP$
with its underlying left $\R$\+module structure.
 In other words, the finite aspects of the contramodule infinite
summation operations define a module structure.

 The category of left $\R$\+contramodules $\R\Contra$ is abelian
with exact functors of infinite products.
 The forgetful functor $\R\Contra\rarrow\R\Modl$ from the category
of left $\R$\+contramodules to the category of left $\R$\+modules
is exact and preserves infinite products.
 We will denote the $\Hom$ and $\Ext$ groups computed in the abelian
category $\R\Contra$ by $\Hom^\R(\fP,\fQ)$ and $\Ext^{\R,*}(\fP,\fQ)$.

 Furthermore, for any set $X$ the contraaction map $\pi_{\R[[X]]}=
\phi_{\R,X}\:\R[[\R[[X]]]]\rarrow\R[[X]]$ defines
a left $\R$\+contramodule structure on the set $\R[[X]]$.
 The resulting left $\R$\+contramodule $\R[[X]]$ is called
the \emph{free} $\R$\+contramodule spanned by the set~$X$.
 For any left $\R$\+contramodule $\fP$, the group of all
$\R$\+contramodule morphisms $\R[[X]]\rarrow\fP$ is naturally bijective
to the group of all maps $X\rarrow\fP$,
$$
 \Hom^\R(\R[[X]],\fP)\simeq\Hom_\Sets(X,\fP).
$$
 It follows easily that there are enough projective objects in
the abelian category $\R\Contra$.
 The projective objects of $\R\Contra$ are precisely all the direct
summands of free $\R$\+contramodules.
 In particular, the free left $\R$\+contramodule with one generator
$\R=\R[[\{*\}]]$ is a projective generator of $\R\Contra$.
 The abelian category $\R\Contra$ is locally $\kappa$\+presentable,
where $\kappa$ is any cardinal greater than the cardinality of
a base of neighborhoods of zero in~$\R$.

 A right $\R$\+module $\N$ is said to be \emph{discrete} if the action
map $\N\times\R\rarrow\N$ is continuous in the given topology of $\R$
and the discrete topology of~$\N$.
 The full subcategory of discrete right $\R$\+modules $\Discr\R\subset
\Modr\R$ is closed under subobjects, quotients, and infinite direct
sums in the category of right $\R$\+modules $\Modr\R$.
 The category $\Discr\R$ is a Grothendieck abelian category.

 The \emph{contratensor product} $\N\ocn_\R\fP$ of a discrete right
$\R$\+module $\N$ and a left $\R$\+contramodule $\fP$ is an abelian
group constructed as the cokernel of (the difference of) the natural
pair of maps
$$
 \N\ot_\boZ\R[[\fP]]\rightrightarrows\N\ot_\boZ\fP.
$$
 In this pair of parallel abelian group maps, the first map
$\N\ot_\boZ\R[[\fP]]\rarrow\N\ot_\boZ\fP$ is the map $\N\ot_\boZ\pi_\fP$
induced by the contraaction map $\pi_\fP\:\R[[\fP]]\rarrow\fP$.
 The second map $\N\ot_\boZ\R[[\fP]]\rarrow\N\ot_\boZ\fP$ is
the composition $\N\ot_\boZ\R[[\fP]]\rarrow\N[\fP]\rarrow\N\ot_\boZ\fP$
of the map $\N\ot_\boZ\R[[\fP]]\rarrow\N[\fP]$ induced by the discrete
right action of $\R$ in $\N$ with a natural surjective map
$\N[\fP]\rarrow\N\ot_\boZ\fP$.
 Here the map $\N\ot_\boZ\R[[\fP]]\rarrow\N[\fP]$ is defined by
the rule $n\ot\sum_{p\in\fP}r_pp\longmapsto\sum_{p\in\fP}(nr_p)p$ for
all $n\in\N$ and $\sum_{p\in\fP}r_pp\in\R[[\fP]]$.
 The sum in the right-hand side is finite, because the family of
elements~$r_p$ is zero-convergent in $\R$, while the right action of
$\R$ in $\N$ is discrete; so $nr_p=0$ in $\N$ for all but a finite
subset of elements $p\in\fP$.
 A natural surjective map $A[B]\rarrow A\ot_\boZ B$ is defined in
the obvious way for any abelian groups $A$ and~$B$.

 The contratensor product functor $\ocn_\R\:\Discr\R\times\R\Contra
\rarrow\Ab$ is right exact and preserves coproducts (in other words,
preserves all colimits) in both of its arguments.
 For any set $X$ and any discrete right $\R$\+module $\N$, there is
a natural isomorphism of abelian groups
$$
 \N\ocn_\R\R[[X]]\simeq\N[X]=\N^{(X)}.
$$
 A left $\R$\+contramodule $\fF$ is called \emph{flat} if
the contratensor product functor ${-}\ocn_\R\nobreak\fF\:\allowbreak
\Discr\R\rarrow\Ab$ is exact.
 It follows that the class of all flat $\R$\+contramodules is preserved
by the directed colimits in $\R\Contra$ (because the directed colimits
are exact in~$\Ab$), and all the projective $\R$\+contramodules
are flat.
 We will denote the full subcategory of flat left $\R$\+contramodules
by $\R\Contra_\flat\subset\R\Contra$.

 Given a closed subgroup $\fA\subset\R$ and a left $\R$\+contramodule
$\fP$, we denote by $\fA\tim\fP\subset\fP$ the image of the composition
$\fA[[\fP]]\rarrow\R[[\fP]]\rarrow\fP$ of the obvious inclusion
$\fA[[\fP]]\hookrightarrow\R[[\fP]]$ with the contraaction map
$\pi_\fP\:\R[[\fP]]\rarrow\fP$.
 For any closed left ideal $\fJ\subset\R$ and any left
$\R$\+contramodule $\fP$, the subgroup $\fJ\tim\fP$ is a subcontramodule
in~$\fP$.
 For any open right ideal $\fI\subset\R$ and any left
$\R$\+contramodule $\fP$, there is a natural isomorphism of abelian
groups
$$
 (\R/\fI)\ocn_\R\fP\simeq\fP/(\fI\tim\fP).
$$
 For any closed right ideal $\fJ\subset\R$ and any set $X$, one has
$$
 \fJ\tim\R[[X]]=\fJ[[X]]\subset\R[[X]].
$$
 In particular, for an open right ideal $\fI\subset\R$, one has
$$
 (\R/\fI)\ocn_\R\R[[X]]\simeq\R[[X]]/\fI[[X]]\simeq(\R/\fI)[X].
$$

 For an open two-sided ideal $\fI\subset\R$ and any left
$\R$\+contramodule $\fP$, the quotient contramodule $\fP/(\fI\tim\fP)$
is a module over the discrete quotient ring $R=\R/\fI$ of~$\R$.
 This is the unique maximal quotient contramodule of $\fP$ whose
left $\R$\+contramodule structure comes from an $R$\+module structure.
 Over a complete, separated topological ring $\R$ with a base of
neighborhoods of zero formed by open \emph{two-sided} ideals,
a left $\R$\+contramodule $\fF$ is flat if and only if the left\/
$\R/\fI$\+module $\fF/(\fI\tim\fF)$ is flat for every open
two-sided ideal $\fI\subset\R$.

\Section{Countably Presentable Contramodules}
\label{countably-presentable-secn}

 The exposition in this section is based on~\cite[Section~6]{PR}
and~\cite[Sections~8\+-10]{Pflcc}.
 In the case of a topological ring with a base of neighborhoods of
zero formed by \emph{two-sided} ideals, \cite[Section~E.1]{Pcosh} is
a relevant reference.

 Let $\R$ be a complete, separated topological ring with a base of
neighborhoods of zero formed by open right ideals.
 Given a left $\R$\+contramodule $\fP$, consider the natural map
$$
 \lambda_{\R,\fP}\:\fP\lrarrow\varprojlim\nolimits_{\fI\subset\R}
 \fP/(\fI\tim\fP).
$$
 Here $\fI$ ranges over all the open right ideals in~$\R$.
 A contramodule $\fP$ is called \emph{separated} if
the map~$\lambda_{\R,\fP}$ is injective (equivalently, this means
that $\bigcap_{\fI\subset\R}(\fI\tim\fP)=0$ in~$\fP$).
 A contramodule $\fP$ is called \emph{complete} if
the map~$\lambda_{\R,\fP}$ is surjective.

\begin{lem}
 Let\/ $\R$ be a complete, separated topological ring with
a \emph{countable} base of neighborhoods of zero consisting of open
right ideals.  Then \par
\textup{(a)} all left\/ $\R$\+contramodules are complete (but
\emph{not} necessarily separated); \par
\textup{(b)} all flat left\/ $\R$\+contramodules are separated.
\end{lem}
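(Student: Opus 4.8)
The plan is to establish (a) for all contramodules first, and then to derive (b) from (a) together with a single flatness input. Throughout I replace the given countable base by the descending chain of its finite intersections, so that $\fI_1\supseteq\fI_2\supseteq\dotsb$ is a chain of open right ideals and completeness of $\fP$ means surjectivity of $\lambda_{\R,\fP}\:\fP\to\varprojlim_n\fP/(\fI_n\tim\fP)$. For a free contramodule $\fG=\R[[X]]$ one has $\fI_n\tim\fG=\fI_n[[X]]$ and $\fG/(\fI_n\tim\fG)\cong(\R/\fI_n)[X]$, whence $\varprojlim_n\fG/(\fI_n\tim\fG)\cong\R[[X]]=\fG$; thus free (hence projective) contramodules are separated and complete. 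For (a) I present an arbitrary $\fP$ by the contraaction epimorphism as $0\to\fH\to\fG\to\fP\to0$ with $\fG$ free. Writing $L_n=(\fH+\fI_n\tim\fG)/(\fI_n\tim\fG)$, the transition maps $L_{n+1}\to L_n$ are surjective, so $\varprojlim^1_nL_n=0$ for this countable system; applying $\varprojlim_n$ to the exact sequences $0\to L_n\to\fG/(\fI_n\tim\fG)\to\fP/(\fI_n\tim\fP)\to0$ and using completeness of $\fG$ produces a surjection $\fG\to\varprojlim_n\fP/(\fI_n\tim\fP)$ that factors through $\fG\to\fP$. Hence $\lambda_{\R,\fP}$ is surjective, which is (a).

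For (b) I keep a presentation $0\to\fH\to\fG\to\fF\to0$ with $\fG$ free and $\fF$ flat. Since $\fP\mapsto\widehat\fP=\varprojlim_n\fP/(\fI_n\tim\fP)$ is a functor and $\lambda$ is a natural transformation $\id\to\widehat{(-)}$, the two presentations fit into a commutative ladder. The heart of the matter is that flatness makes the lower row short exact; concretely, I claim $\fH/(\fI_n\tim\fH)\to\fG/(\fI_n\tim\fG)$ is injective for every $n$, i.e.\ $\fH\cap(\fI_n\tim\fG)=\fI_n\tim\fH$. This injectivity is precisely the vanishing of the first left derived functor of $(\R/\fI_n)\ocn_\R(-)$ at $\fF$ (its value is $(\fH\cap(\fI_n\tim\fG))/(\fI_n\tim\fH)$, since the free $\fG$ is acyclic). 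By the balancing of the contratensor product this derived functor agrees with the first derived functor of ${-}\ocn_\R\fF$ evaluated at $\R/\fI_n$, and the latter vanishes because $\fF$ is flat, i.e.\ ${-}\ocn_\R\fF$ is exact.

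Granting this, the transition maps of $(\fH/(\fI_n\tim\fH))_n$ are again surjective, so $\varprojlim^1=0$ and $\varprojlim_n$ converts the three-term exact sequences into a short exact lower row $0\to\widehat\fH\to\widehat\fG\to\widehat\fF\to0$. In the ladder the middle map $\lambda_{\R,\fG}$ is an isomorphism (free contramodules are separated and complete) while $\lambda_{\R,\fH}$ is surjective by (a), so the snake lemma yields $\ker\lambda_{\R,\fF}\cong\coker\lambda_{\R,\fH}=0$, that is $\bigcap_n(\fI_n\tim\fF)=0$, which is (b). The single nontrivial ingredient is the purity/balancing statement of the previous paragraph: it is exactly the step that converts flatness (exactness in the discrete-module variable) into vanishing of the derived contratensor in the contramodule variable, and I expect it to be the main obstacle. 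I would either quote it from the contramodule preliminaries of \cite{PR} or verify it directly by comparing the two sides through a free resolution of $\fF$ and the identity $\N\ocn_\R\R[[X]]\cong\N[X]$; the remaining steps are only the completeness from (a) and Mittag--Leffler vanishing, both of which genuinely use countability of the base.
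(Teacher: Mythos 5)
The paper's own ``proof'' is a citation to~\cite[Lemmas~8.1 and~8.3]{Pflcc}, which in turn rests on~\cite[Section~6]{PR}; your reconstruction of part~(a) is correct and is essentially the argument found there. Free contramodules are separated and complete because $\varprojlim_n(\R/\fI_n)[X]=\R[[X]]$ (the chain $\fI_n$ being cofinal among open subgroups), and for an arbitrary $\fP$ presented as $0\to\fH\to\fG\to\fP\to0$ the countable system $L_n=\im\bigl(\fH\to\fG/(\fI_n\tim\fG)\bigr)$ has surjective transition maps, so the Mittag--Leffler vanishing of $\varprojlim^1$ gives surjectivity of $\lambda_{\R,\fP}$. (Your implicit identification $\fP/(\fI_n\tim\fP)\cong\fG/(\fH+\fI_n\tim\fG)$ is fine: $\fI_n\tim\fP$ is the image of $\fI_n\tim\fG$, by lifting coefficient families along the surjection.) The ladder-and-snake reduction in~(b), from separatedness of a flat $\fF$ to the purity claim $\fH\cap(\fI_n\tim\fG)=\fI_n\tim\fH$, is also the correct skeleton and matches how separatedness is actually derived in the literature.

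The genuine gap is your justification of that purity claim ``by the balancing of the contratensor product.'' There is no such balancing available: $\ocn_\R\:\Discr\R\times\R\Contra\to\Ab$ is right exact in both variables, but $\Discr\R$ need not contain projective objects, and there is no known class of discrete modules acyclic for the functors ${-}\ocn_\R\fG$, so ``the first derived functor of ${-}\ocn_\R\fF$ in the discrete variable'' is not defined by resolutions, and exactness of ${-}\ocn_\R\fF$ does not formally yield $L_1\bigl((\R/\fI_n)\ocn_\R{-}\bigr)(\fF)=0$. Your proposed direct verification also fails: computing from a free resolution of $\fF$ via $\fN\ocn_\R\R[[X]]\cong\fN[X]$ does produce a homological $\delta$\+functor in $\fN$, and flatness of $\fF$ then shows the connecting maps into degree~$0$ vanish; but to conclude vanishing in degree~$1$ you would need to efface by discrete modules with vanishing first derived contratensor, and the natural candidates are direct sums of the cyclic modules $\R/\fI$ --- whose vanishing is precisely the claim being proved. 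The argument is circular. A telltale symptom: a formal balancing argument would use no countability whatsoever, whereas the purity statement (equivalently, exactness of $\fN\ocn_\R{-}$ on short exact sequences with flat cokernel, and the resolving property of $\R\Contra_\flat$ quoted in Lemma~\ref{flat-contramodules-resolving}) is only known under the countable base hypothesis; its proof in~\cite[Section~6]{PR} is a successive-approximation argument that genuinely uses the countable chain and the completeness of all contramodules from part~(a). So your closing remark that countability enters only through completeness and Mittag--Leffler misplaces its key use. Your fallback of quoting the lemma from~\cite{PR} is the right move and rescues the proof (it is in effect what~\cite[Lemma~8.3]{Pflcc} does); just take care to quote the contratensor-exactness lemma there, which is established prior to and independently of separatedness, so that no circularity arises.
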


\begin{proof}
 This is~\cite[Lemmas~8.1 and~8.3]{Pflcc}.
\end{proof}

\begin{lem} \label{flat-contramodules-resolving}
 Let\/ $\R$ be a complete, separated topological ring with
a \emph{countable} base of neighborhoods of zero consisting of open
right ideals.
 Then the full subcategory of flat left\/ $\R$\+contramodules\/
$\R\Contra_\flat$ is resolving in the abelian category\/ $\R\Contra$.
\end{lem}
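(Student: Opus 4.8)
The plan is to verify the three defining properties of a resolving subcategory separately: that $\R\Contra_\flat$ is generating, closed under extensions, and closed under kernels of admissible epimorphisms in $\R\Contra$. Since $\R\Contra$ is abelian, every epimorphism is admissible, so the last condition means closure under taking kernels of surjections between flat contramodules. The generating property is immediate: by the discussion in Section~\ref{topological-rings-secn}, the abelian category $\R\Contra$ has enough projective objects and every projective $\R$\+contramodule is flat, so each $\fP\in\R\Contra$ admits a surjection from a free (hence flat) contramodule $\R[[X]]\rarrow\fP$. The work is therefore concentrated in the two closure properties, and I would derive both from a homological characterization of flatness.

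The key device is the sequence of left derived functors $L_i(\N\ocn_\R{-})\:\R\Contra\rarrow\Ab$ of the right exact functor $\N\ocn_\R{-}$, computed by free resolutions in $\R\Contra$; here $\N$ ranges over discrete right $\R$\+modules. These form a homological $\delta$\+functor, so every short exact sequence of contramodules produces a long exact sequence of the $L_i(\N\ocn_\R{-})$. The characterization I would establish is
$$\fF \text{ is flat} \iff L_i(\N\ocn_\R\fF)=0 \ \text{ for all } i\ge1 \text{ and all discrete } \N.$$
Granting this, both closure properties follow formally. For an extension $0\rarrow\fF'\rarrow\fF\rarrow\fF''\rarrow0$ with $\fF'$, $\fF''$ flat, the long exact sequence squeezes each $L_i(\N\ocn_\R\fF)$, $i\ge1$, between two vanishing terms, so $\fF$ is flat; for a kernel sequence $0\rarrow\fF'\rarrow\fG\rarrow\fF''\rarrow0$ with $\fG$, $\fF''$ flat, the same long exact sequence gives $L_i(\N\ocn_\R\fF')\cong L_{i+1}(\N\ocn_\R\fF'')=0$ for $i\ge1$, so $\fF'$ is flat. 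The easy implication of the characterization is a diagram chase: given a free resolution $\fP_\bu\rarrow\fF$, each $\N\ocn_\R\fP_j\cong\N^{(X_j)}$ is an exact functor of $\N$, so a monomorphism $\N'\hookrightarrow\N$ yields a short exact sequence of complexes $0\rarrow\N'\ocn_\R\fP_\bu\rarrow\N\ocn_\R\fP_\bu\rarrow\N''\ocn_\R\fP_\bu\rarrow0$; passing to the homology long exact sequence and using the assumed vanishing of $L_1(\N''\ocn_\R\fF)$ shows that $\N'\ocn_\R\fF\rarrow\N\ocn_\R\fF$ is injective, i.e.\ $\fF$ is flat.

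The main obstacle is the converse implication, that flatness of $\fF$ forces the higher derived contratensor products to vanish --- equivalently, that a free resolution of a flat contramodule remains exact after applying $\N\ocn_\R{-}$ for every discrete $\N$. This is a balancedness statement for the contratensor product, and the expected route is to reduce it to the case of projective contramodules (where it is trivial) by writing a flat contramodule as a directed colimit of projective ones. The delicate point is that directed colimits are \emph{not} exact in $\R\Contra$, so one cannot simply commute $L_i(\N\ocn_\R{-})$ past the colimit in the ambient category; the argument must be carried out within the subcategory $\R\Contra_\flat$, where directed colimits \emph{are} exact, using the separatedness and completeness of flat contramodules over a ring with countable base. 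This balancedness, together with the closure properties themselves, is exactly what is worked out in \cite[Sections~6\+-7]{PR}, which I would invoke to complete the hard direction.
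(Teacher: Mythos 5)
Your proposal is correct in its formal skeleton, but as a proof it ultimately stands or falls with the final citation, and here it is worth noting that the paper's own ``proof'' is nothing but that citation: the lemma is \cite[Lemma~8.4(a)]{Pflcc}, which in turn rests on \cite[Section~6]{PR} --- the very source you invoke for your hard direction. The parts you carry out yourself are fine: the generating property via free contramodules, the ``easy'' implication of the derived-functor characterization, and the closure properties via the long exact sequence of $L_i(\N\ocn_\R{-})$ (in fact only the vanishing of $L_1(\N\ocn_\R\fF)$ for flat $\fF$ is needed; closure under kernels then follows by a snake-lemma chase rather than dimension shifting, which is worth knowing since iterating $L_{i+1}(\fF)\cong L_i(\Omega)$ already presupposes flatness of the syzygy~$\Omega$).

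The genuine gap is in the one substantive idea you sketch for the balancedness statement. Writing a flat contramodule as a directed colimit of projective ones is \emph{not} available under the stated hypothesis of a countable base of open \emph{right} ideals: that representation is proved in \cite{Pflcc} only for bases of two-sided ideals, via the projective-system description of Lemma~\ref{flat-contramodules-described}, and no analogue is known for right-ideal bases (cf.\ the Question closing this paper and \cite[Questions~9.8, 11.2, and~13.3]{Pflcc}). There is also a circularity lurking in your fallback plan of ``working within $\R\Contra_\flat$, where directed colimits are exact'': the exact structure on $\R\Contra_\flat$ and the exactness results of Lemma~\ref{flat-contramodules-exactness-properties} presuppose precisely the closure properties you are trying to prove, and conversely the natural derivation of your balancedness statement (apply $\N\ocn_\R{-}$, exact on flats, to a free resolution with flat syzygies) uses the resolving property as input. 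So an independent argument, as in \cite[Section~6]{PR} --- which proceeds by direct analysis with the countable base and the completeness of contramodules, not via colimits of projectives --- cannot be bypassed; and since \cite{PR} establishes the closure properties themselves, your derived-functor detour, while sound, adds nothing over the bare citation with which the paper itself is content.
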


\begin{proof}
 This is~\cite[Lemma~8.4(a)]{Pflcc}.
\end{proof}

 In particular, it follows from Lemma~\ref{flat-contramodules-resolving}
that the full subcategory $\R\Contra_\flat$ inherits an exact category
structure from the abelian exact structure on $\R\Contra$.
 We will consider $\R\Contra_\flat$ as an exact category with this
exact category structure.
 Recall that the full subcategory $\R\Contra_\flat$ is preserved by
the directed colimits in $\R\Contra$.

\begin{lem} \label{flat-contramodules-exactness-properties}
 Let\/ $\R$ be a complete, separated topological ring with
a \emph{countable} base of neighborhoods of zero consisting of open
right ideals.
 In this context: \par
\textup{(a)} For any discrete right\/ $\R$\+module\/ $\N$,
the contratensor product functor\/ $\N\ocn_\R\nobreak{-}\,\:
\allowbreak\R\Contra\rarrow\Ab$ restricted to the full subcategory
$\R\Contra_\flat\subset\R\Contra$ is exact on\/ $\R\Contra_\flat$.
 In particular, for any open right ideal\/ $\fI\subset\R$,
the reduction functor\/ $\fF\longmapsto\fF/(\fI\tim\fI)$ is exact
on\/ $\R\Contra_\flat$. \par
\textup{(b)} The directed colimit functors are exact in\/
$\R\Contra_\flat$ (but \emph{not} in\/ $\R\Contra$).
\end{lem}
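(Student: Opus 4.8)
The plan is to reduce the whole lemma to one special case: that for every open right ideal $\fI\subset\R$ and every short exact sequence $0\rarrow\fF'\rarrow\fF\rarrow\fF''\rarrow0$ in the exact category $\R\Contra_\flat$, the reduced sequence $0\rarrow\fF'/(\fI\tim\fF')\rarrow\fF/(\fI\tim\fF)\rarrow\fF''/(\fI\tim\fF'')\rarrow0$ is exact. Since $(\R/\fI)\ocn_\R{-}$ is right exact and $(\R/\fI)\ocn_\R\fF\simeq\fF/(\fI\tim\fF)$, this amounts to injectivity of $\fF'/(\fI\tim\fF')\rarrow\fF/(\fI\tim\fF)$, i.e.\ to the identity $\fF'\cap(\fI\tim\fF)=\fI\tim\fF'$ inside $\fF$. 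Note that all three contramodules are genuinely flat, because $\R\Contra_\flat$ is resolving (Lemma~\ref{flat-contramodules-resolving}) and hence closed under kernels of admissible epimorphisms.

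First I would pass from the modules $\R/\fI$ to an arbitrary discrete right $\R$\+module $\N$ in part~(a). Again $\N\ocn_\R{-}$ is right exact, so only the injectivity of $\N\ocn_\R\fF'\rarrow\N\ocn_\R\fF$ is in question; write $K_\N$ for its kernel. The point is that, for each of the three \emph{flat} contramodules $\fF^{?}\in\{\fF',\fF,\fF''\}$, the functor $\N\longmapsto\N\ocn_\R\fF^{?}$ is exact on $\Discr\R$ by the very definition of flatness. Hence any short exact sequence $0\rarrow\N'\rarrow\N\rarrow\N''\rarrow0$ in $\Discr\R$ yields a termwise short exact sequence of the three-term complexes $\N\ocn_\R\fF'\rarrow\N\ocn_\R\fF\rarrow\N\ocn_\R\fF''$; since the only nonvanishing homology of each such complex is $K$ (the higher homology vanishes by right exactness), the associated long exact sequence collapses to $0\rarrow K_{\N'}\rarrow K_\N\rarrow K_{\N''}\rarrow0$. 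Thus $K$ is an exact additive functor on $\Discr\R$ that moreover preserves coproducts and directed colimits (as $\ocn_\R$ does, and directed colimits are exact in $\Ab$ and so commute with kernels). Granting $K_{\R/\fI}=0$ for all $\fI$, additivity gives $K_{(\R/\fI)^{\oplus k}}=0$; right exactness of $K$ propagates this along a surjection $(\R/\fI)^{\oplus k}\twoheadrightarrow M$ (with $\fI$ a suitable open right ideal annihilating finitely many generators) to every finitely generated discrete $M$; and finally $K_\N=\varinjlim_\alpha K_{M_\alpha}=0$ upon writing $\N$ as the directed union of its finitely generated submodules. This settles part~(a) modulo the special case.

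The special case is the crux, and this is where I expect the real difficulty to lie and where the countability hypothesis is indispensable. The identity $\fF'\cap(\fI\tim\fF)=\fI\tim\fF'$ says that forming $\fI\tim{-}$ is compatible with passage to the subcontramodule $\fF'$; for flat contramodules over a ring with a countable base this is provided by the structure theory of~\cite[Section~6]{PR} and~\cite[Sections~8\+-10]{Pflcc}, where one uses the tower of reductions $\fF/(\fI_n\tim\fF)$ along a countable base together with the completeness and separatedness of flat contramodules, the surjectivity of the transition maps forced by countability supplying the Mittag--Leffler type vanishing that upgrades the automatic right exactness to genuine exactness. I would cite this input rather than reprove it.

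Part~(b) then follows formally. Given a directed system of short exact sequences in $\R\Contra_\flat$, its colimit $0\rarrow\varinjlim\fF'_\gamma\rarrow\varinjlim\fF_\gamma\rarrow\varinjlim\fF''_\gamma\rarrow0$ has flat terms (flatness is preserved by directed colimits) and is exact at its two right-hand terms, directed colimits being right exact. For injectivity on the left, apply $(\R/\fI)\ocn_\R{-}$: as this functor preserves directed colimits, the reduced colimit sequence is the directed colimit of the reduced sequences, each exact by the special case, hence itself exact; in particular each map $\varinjlim\fF'_\gamma/(\fI\tim{-})\rarrow\varinjlim\fF_\gamma/(\fI\tim{-})$ is injective. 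Now an element in the kernel of $\varinjlim\fF'_\gamma\rarrow\varinjlim\fF_\gamma$ has vanishing image in every reduction of the target, so by injectivity of these reduced maps it already vanishes in every reduction of the source, i.e.\ it lies in $\fI\tim(\varinjlim\fF'_\gamma)$ for every $\fI$; since $\varinjlim\fF'_\gamma$ is flat and therefore separated, it is zero. Hence the colimit sequence is short exact, while the parenthetical failure in $\R\Contra$ is accounted for by~\cite[Examples~4.4]{PR}.
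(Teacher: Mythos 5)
Your proposal is correct, but it should be said up front that the paper gives no argument for this lemma at all: its proof consists of the bare citations \cite[Lemma~8.4(b)]{Pflcc} for part~(a) and \cite[Lemma~8.8]{Pflcc} for part~(b). So your write-up is more detailed than the paper's, and the real comparison is with the proofs in the cited sources. Your formal skeleton is sound. In part~(a), the kernel functor $K_\N=\ker(\N\ocn_\R\fF'\to\N\ocn_\R\fF)$ is indeed exact in $\N$ (termwise exactness of the short sequence of three-term complexes is precisely the definition of flatness of $\fF'$, $\fF$, $\fF''$, and right exactness of the contratensor product kills the other two homologies), it preserves directed colimits, and the bootstrap from $K_{\R/\fI}=0$ through finitely generated discrete modules (annihilators of elements of a discrete module are open right ideals, and finite intersections of open right ideals are open) up to all of $\Discr\R$ is correct; note that by the same quotient trick, exactness of $K$ reduces the cyclic case for an arbitrary open right ideal $\fI$ to the case of ideals in the countable base, so your cited crux need only be invoked for base ideals. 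Your deduction of part~(b) --- right exactness of $\varinjlim$, flatness of the colimit terms, commutation of $(\R/\fI)\ocn_\R{-}$ with $\varinjlim$, exactness of directed colimits in $\Ab$, and finally separatedness of the flat contramodule $\varinjlim\fF'_\gamma$ to annihilate the kernel --- is essentially the mechanism of the actual proof of \cite[Lemma~8.8]{Pflcc}, so there you have independently reconstructed the literature argument. The single non-self-contained point, the identity $\fF'\cap(\fI\tim\fF)=\fI\tim\fF'$ for a short exact sequence of flat contramodules, is correctly isolated as the crux where countability genuinely enters, and citing \cite[Section~6]{PR} and \cite[Section~8]{Pflcc} for it is fair, since this is exactly the input the paper itself leans on. One wording quibble on your sketch of that crux: surjectivity of the transition maps in the reduction towers $\fF/(\fI_n\tim\fF)$ is automatic (all terms are quotients of one contramodule), and is not ``forced by countability''; what countability buys is that the inverse limit is sequential, so that surjective towers have vanishing $\varprojlim^1$, which combined with completeness and separatedness of flat contramodules yields the exactness. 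In sum: correct, with a cleaner self-contained decomposition than the paper's citations, at the price of one honestly flagged external input.
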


\begin{proof}
 Part~(a) is~\cite[Lemma~8.4(b)]{Pflcc}.
 Part~(b) is~\cite[Lemma~8.8]{Pflcc}.
\end{proof}

\begin{lem} \label{flat-contramodules-described}
 Let\/ $\R$ be a complete, separated topological ring, and let\/
$\R\supset\fI_1\supset\fI_2\supset\fI_3\supset\dotsb$ be
a \emph{countable} base of neighborhoods of zero\/ in $\R$ consisting
of open \emph{two-sided} ideals\/ $\fI_n$, \,$n\ge1$.
 Denote by $R_n=\R/\fI_n$ the corresponding discrete quotient rings.
 Then the rule assigning to a flat left\/ $\R$\+contramodule\/ $\fF$
the collection of $R_n$\+modules $F_n=\fF/(\fI_n\tim\fF)$ is
an equivalence between the category of flat left\/ $\R$\+contramodules\/
$\R\Contra_\flat$ and the category of the following sets of data:
\begin{enumerate}
\item for every $n\ge1$, a flat left $R_n$\+module $F_n$ is given;
\item for every $n\ge1$, an isomorphism of left $R_n$\+modules
$F_n\simeq R_n\ot_{R_{n+1}}F_{n+1}$ is given.
\end{enumerate}
\end{lem}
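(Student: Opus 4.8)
The plan is to realise the assignment $\fF\mapsto(F_n)_{n\ge1}$, with $F_n=\fF/(\fI_n\tim\fF)$, as a functor $\Phi$ from $\R\Contra_\flat$ to the category $\sC$ whose objects are the systems of data (1)--(2) and whose morphisms are the families $(f_n\colon F_n\to G_n)$ of $R_n$\+linear maps compatible with the isomorphisms~(2); then I would prove that $\Phi$ is fully faithful and essentially surjective. Throughout I use the identification $\fF/(\fI_n\tim\fF)=(\R/\fI_n)\ocn_\R\fF=R_n\ocn_\R\fF$ and the flatness criterion stating that $\fF$ is flat if and only if each reduction $F_n$ is a flat $R_n$\+module, both recalled in Section~\ref{topological-rings-secn}. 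For well-definedness of $\Phi$, each $F_n$ is a flat $R_n$\+module by that criterion, and for the datum~(2) I would use the transitivity isomorphism $R_n\ocn_\R\fF\simeq R_n\ot_{R_{n+1}}(R_{n+1}\ocn_\R\fF)$, which I would check directly from the cokernel definition of the contratensor product (or from its right exactness together with the presentation $R_n=R_{n+1}/(\fI_n/\fI_{n+1})$); this yields $F_n\simeq R_n\ot_{R_{n+1}}F_{n+1}$ naturally in~$\fF$.

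For full faithfulness the key input is that flat $\R$\+contramodules are separated and complete for a countable base, so the completion map is a bijection $\fF\cong\varprojlim_n F_n$. Injectivity of $\Hom^\R(\fF,\fG)\to\Hom_\sC(\Phi\fF,\Phi\fG)$ is immediate from separatedness of $\fG$: a contramodule map inducing zero on every $G_n$ must vanish. For fullness, given a compatible family $(f_n\colon F_n\to G_n)$, I would let $f\colon\fF\to\fG$ be the unique map of abelian groups with $(\fG\to G_n)\circ f=f_n\circ(\fF\to F_n)$ for all $n$ (it exists and is unique since $\fG$ is separated and complete), and then verify that $f$ is a contramodule morphism by composing the two sides of the contraaction identity with each reduction $\fG\to G_n$; they agree because each reduction $\fF\to F_n$ is a contramodule morphism and each $f_n$ is $R_n$\+linear, and separatedness of $\fG$ promotes this level-wise agreement to an equality.

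For essential surjectivity, given a system $(F_n)\in\sC$ I would put $X=\varprojlim_n F_n$ (as a set) and form $\R[[X]]$; the tautological $R_n$\+linear maps $R_n[X]=R_n\ocn_\R\R[[X]]\to F_n$ are surjective (the transition maps $F_{n+1}\to F_n$ are surjective, whence $\varprojlim_n F_n\to F_n$ is onto over the countable tower) and compatible with base change, so they define a surjection $\Phi(\R[[X]])\twoheadrightarrow(F_n)$ in~$\sC$. Its kernel $(K_n)$, $K_n=\ker(R_n[X]\to F_n)$, again lies in~$\sC$: each $K_n$ is flat by dimension shifting, as $\operatorname{Tor}_1^{R_n}(K_n,{-})\simeq\operatorname{Tor}_2^{R_n}(F_n,{-})=0$, and the base-change isomorphisms follow by applying $R_n\ot_{R_{n+1}}({-})$ to the level $n{+}1$ short exact sequence and using $\operatorname{Tor}_1^{R_{n+1}}(R_n,F_{n+1})=0$. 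Repeating this for $(K_n)$ gives $\Phi(\R[[Y]])\twoheadrightarrow(K_n)$, and the composite $(R_n[Y])\to(R_n[X])$ in~$\sC$ has image $(K_n)$; by full faithfulness on the free (hence flat) contramodules it lifts to a contramodule map $g\colon\R[[Y]]\to\R[[X]]$. Setting $\fF=\coker(g)$ and applying the right exact functor $R_n\ocn_\R({-})$ to $\R[[Y]]\to\R[[X]]\to\fF\to0$ gives $\coker(g_n)=R_n[X]/K_n=F_n$, so $\Phi(\fF)\simeq(F_n)$ compatibly with the transition data, and $\fF$ is flat by the flatness criterion. Fully faithful plus essentially surjective then yields the asserted equivalence.

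The main obstacle is this essential-surjectivity step, and within it the verification that the kernel system $(K_n)$ truly lies in~$\sC$ --- the flatness of the $K_n$ and the coherence of their base-change isomorphisms --- together with the bookkeeping confirming that $\coker(g)$ reproduces the prescribed system with its transition isomorphisms. These reduce to standard homological algebra over the discrete rings $R_n$ (dimension shifting and $\operatorname{Tor}$\+vanishing), made available by the flatness criterion; the genuinely contramodule-specific content is confined to the free computation $R_n\ocn_\R\R[[X]]\simeq R_n[X]$, the right exactness of the contratensor product, and the separatedness and completeness of flat contramodules.
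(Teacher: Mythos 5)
Your proof is correct, but note that the paper itself gives no argument here: its proof is a one-line citation of \cite[Lemma~8.6(b)]{Pflcc}. The proof in that reference goes through the projective limit construction: the inverse functor is $(F_n)_{n\ge1}\longmapsto\varprojlim_n F_n$, and the delicate point there is to show that the reductions of the limit recover the given system, i.e., that $\fI_n\tim\varprojlim_m F_m$ coincides with $\ker(\varprojlim_m F_m\to F_n)$, which requires a separate analysis of countable surjective towers. Your route is genuinely different in exactly this respect: you never compute $\fI_n\tim\varprojlim_m F_m$. Instead you resolve the given system by two levelwise-free systems $(R_n[X])$ and $(R_n[Y])$ with $X=\varprojlim_n F_n$ and $Y=\varprojlim_n K_n$, lift the comparison morphism through full faithfulness of your functor on free (hence flat, hence separated and complete) contramodules, and define $\fF=\coker(g)$, computing its reductions purely from the right exactness of $R_n\ocn_\R({-})$ and the identity $R_n\ocn_\R\R[[X]]\simeq R_n[X]$. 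The homological verifications are sound: $K_n$ is flat since $\operatorname{Tor}_2^{R_n}(F_n,{-})=0$, the base-change isomorphisms for $(K_n)$ follow from $\operatorname{Tor}_1^{R_{n+1}}(R_n,F_{n+1})=0$, and the surjectivity of $\varprojlim_m F_m\to F_n$ holds because the transition maps in any object of your data category are the quotient maps $F_{n+1}\to F_{n+1}/(\fI_n/\fI_{n+1})F_{n+1}$ and the tower is countable. What your approach buys is a self-contained argument whose contramodule-specific inputs are confined to the short list you give; what the limit-based approach buys is the explicit description $\fF\simeq\varprojlim_n F_n$, which in your treatment is recovered only a posteriori from separatedness and completeness of flat contramodules. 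Your fullness verification is also correct as stated: reducing both sides of the contraaction square modulo $\fI_n$ works because the contraaction on the discrete quotient $F_n$ factors through $R_n[F_n]$, so $R_n$-linearity of $f_n$ is enough, and separatedness of the target promotes the levelwise equality to an actual one.

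One point you should make explicit: the flatness criterion at the end of Section~\ref{topological-rings-secn} quantifies over \emph{all} open two-sided ideals $\fI\subset\R$, whereas you check flatness of the reductions of $\fF=\coker(g)$ only along the base $(\fI_n)_{n\ge1}$. This suffices, but say why: any open two-sided ideal $\fI$ contains some $\fI_n$, and then $\fF/(\fI\tim\fF)\simeq(\R/\fI)\otimes_{R_n}F_n$ by the same transitivity computation you use for the datum~(2), so it is a base change of a flat $R_n$-module and hence flat over $\R/\fI$. With this one-line addition, the argument is complete.
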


\begin{proof}
 This is~\cite[Lemma~8.6(b)]{Pflcc}.
\end{proof}

\begin{prop} \label{presentable-contramodules}
 Let $\kappa$~be a regular cardinal, and let\/ $\R$ be a complete,
separated topological ring with a base of neighborhoods of zero of
cardinality smaller than~$\kappa$, consisting of open right ideals.
 Then the abelian category of left\/ $\R$\+contramodules\/
$\R\Contra$ is locally $\kappa$\+presentable.
 The $\kappa$\+presentable objects of\/ $\R\Contra$ are precisely
all the cokernels of morphisms of free\/ $\R$\+contramodules spanned
by sets of cardinality smaller than~$\kappa$.
\end{prop}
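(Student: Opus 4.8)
The plan is to work with the description of $\R\Contra$ as the category of modules over the monad $X\longmapsto\R[[X]]$ on $\Sets$, and to deduce everything from a single combinatorial observation: the functor $\R[[{-}]]\:\Sets\rarrow\Sets$ preserves $\kappa$\+directed colimits. To see this, fix a base $\{\fI_s\}_{s\in S}$ of neighborhoods of zero consisting of open right ideals, with $|S|<\kappa$, and recall that $\R[[X]]=\varprojlim_{s}(\R/\fI_s)[X]$. Given a zero-convergent family $\xi=\sum_x r_x x\in\R[[X]]$, its image in each $(\R/\fI_s)[X]$ is supported on a finite subset $F_s\subseteq X$; since $\R$ is separated, the actual support $\{x:r_x\ne0\}$ is contained in $F=\bigcup_{s\in S}F_s$, and, as $\kappa$ is regular, $|F|\le\sum_{s\in S}|F_s|<\kappa$. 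Thus every element of $\R[[X]]$ factors through $\R[[F]]\rarrow\R[[X]]$ for some subset $F$ of cardinality $<\kappa$, and the usual support bookkeeping then shows that for any $\kappa$\+directed diagram $(X_d)$ the canonical map $\varinjlim_d\R[[X_d]]\rarrow\R[[\varinjlim_d X_d]]$ is bijective.

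Granting this, the local $\kappa$\+presentability of $\R\Contra$ (already recorded in Section~\ref{topological-rings-secn}) follows from the general theory of~\cite{AR}: the monad $\R[[{-}]]$ is $\kappa$\+accessible, hence its category of modules over the locally $\kappa$\+presentable category $\Sets$ is again locally $\kappa$\+presentable, and the forgetful functor $U\:\R\Contra\rarrow\Sets$ creates $\kappa$\+directed colimits. It follows at once that a free contramodule $\R[[X]]$ with $|X|<\kappa$ is $\kappa$\+presentable: by the adjunction isomorphism $\Hom^\R(\R[[X]],{-})\simeq\Hom_\Sets(X,U({-}))$, and both $U$ and $\Hom_\Sets(X,{-})$ (the latter because $X$ is $\kappa$\+presentable in $\Sets$) preserve $\kappa$\+directed colimits. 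Since $\kappa$\+presentable objects are closed under $\kappa$\+small colimits, every cokernel of a morphism $\R[[Y]]\rarrow\R[[X]]$ with $|X|,|Y|<\kappa$ is $\kappa$\+presentable; this proves one inclusion.

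For the converse, let $\fP$ be $\kappa$\+presentable. Choose a free presentation $\R[[W]]\rarrow\R[[\fP]]\rarrow\fP\rarrow0$ (the surjection $\R[[\fP]]\rarrow\fP$ comes from the adjunction, and $\R[[W]]\rarrow\ker$ from enough projectives). Indexing by the $\kappa$\+directed poset of compatible pairs of subsets $X\subseteq\fP$ and $W'\subseteq W$ of cardinality $<\kappa$, and using that colimits commute with cokernels together with $\varinjlim_X\R[[X]]=\R[[\fP]]$ (as $\R[[{-}]]$ is a left adjoint preserving $\kappa$\+directed colimits), one writes $\fP$ as the $\kappa$\+directed colimit of the objects $Q_{X,W'}=\coker(\R[[W']]\rarrow\R[[X]])$. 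Since $\fP$ is $\kappa$\+presentable, the identity map $\id_\fP$ factors through some $Q_{X,W'}$, exhibiting $\fP$ as a retract of a cokernel of free contramodules on sets of cardinality $<\kappa$; compare Proposition~\ref{accessible-subcategory}.

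It remains to upgrade ``retract'' to ``cokernel''. Write $Q=Q_{X,W'}=\coker(\R[[Y]]\xrightarrow{f}\R[[X]])$ and $Q=\fP\oplus\fP''$; the projection yields a surjection $\R[[X]]\rarrow\fP$ whose kernel $\fN$ sits in a short exact sequence $0\rarrow\im f\rarrow\fN\rarrow\fP''\rarrow0$. Here $\im f$ is $\kappa$\+generated as a quotient of $\R[[Y]]$, and $\fP''$ is $\kappa$\+generated as a retract of the $\kappa$\+generated object $Q$; using the projectivity of free contramodules to lift the generators of $\fP''$, the extension $\fN$ is $\kappa$\+generated, say by a surjection $\R[[Y']]\rarrow\fN$ with $|Y'|<\kappa$. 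Then $\fP=\coker(\R[[Y']]\rarrow\R[[X]])$ with $|X|,|Y'|<\kappa$, as desired. The main obstacle is precisely this last passage from a retract to an honest presentation; everything preceding it is driven by the support estimate of the first paragraph together with the standard monad-theoretic input from~\cite{AR}.
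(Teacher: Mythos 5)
Your proposal is correct, but it is worth noting that the paper itself gives no argument at all for this proposition: its ``proof'' is a citation to~\cite[Proposition~9.1(a)]{Pflcc}. So what you have done is reconstruct a self-contained proof, and your route is essentially the standard one used in that reference: the support-cardinality estimate showing that every element of $\R[[X]]$ is supported in a subset of cardinality~$<\kappa$ (using the separatedness of $\R$, a base of size~$<\kappa$, and the regularity of~$\kappa$), hence that the monad $\R[[{-}]]$ is $\kappa$\+accessible; then the general theory of accessible monads from~\cite{AR} for local $\kappa$\+presentability; then the adjunction $\Hom^\R(\R[[X]],{-})\simeq\Hom_\Sets(X,U({-}))$ for the $\kappa$\+presentability of small free contramodules; and finally the retract-to-cokernel upgrade. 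Two remarks on the places you compress. First, the ``usual support bookkeeping'' for the bijectivity of $\varinjlim_d\R[[X_d]]\rarrow\R[[\varinjlim_d X_d]]$ hides the only genuinely delicate point: the transition maps $X_d\rarrow X_{d'}$ need not be injective, and the induced maps $\R[[f]]$ involve coefficient sums that are topological limits of finite partial sums; injectivity works because a support of size~$<\kappa$ has at most $<\kappa$ pairs of elements identified in the colimit, so by $\kappa$\+directedness the fibers of the support stabilize at some stage, after which the fiberwise sums literally coincide with those computed in the colimit. This deserves to be spelled out rather than waved at. Second, your final paragraph is not a routine afterthought but the step that makes the characterization an equality rather than ``retracts of cokernels'': the observation that the kernel $\fN$ of the induced surjection $\R[[X]]\rarrow\fP$ is an extension of the $\kappa$\+generated contramodule $\fP''$ by the $\kappa$\+generated contramodule $\im f$, hence itself $\kappa$\+generated (lifting generators through the projective $\R[[Z]]$), is exactly the right trick, and you execute it correctly. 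With the injectivity bookkeeping filled in, the proof is complete and matches, in substance, the argument of the cited source.
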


\begin{proof}
 This is~\cite[Proposition~9.1(a)]{Pflcc}.
\end{proof}

 Assuming that the topological ring $\R$ has a countable base of
neighborhoods of zero consisting of open right ideals, we will say that
a left $\R$\+contramodule $\fP$ is \emph{countably generated} if $\fP$
is a quotient contramodule of a free contramodule $\R[[X]]$ spanned by
a countable set~$X$.
 This is equivalent to $\fP$ being an $\aleph_1$\+generated object of
$\R\Contra$ in the sense of~\cite[Definition~1.67]{AR}.

\begin{prop} \label{flat-contramodules-accessible}
 Let\/ $\R$ be a complete, separated topological ring, and let\/
$\R\supset\fI_1\supset\fI_2\supset\fI_3\supset\dotsb$ be
a \emph{countable} base of neighborhoods of zero\/ in $\R$ consisting
of open \emph{two-sided} ideals\/ $\fI_n$, \,$n\ge1$.
 Denote by $R_n=\R/\fI_n$ the corresponding discrete quotient rings.
 Then the category\/ $\R\Contra_\flat$ of flat left\/
$\R$\+contramodules is\/ $\aleph_1$\+accessible.
 Furthermore, \par
\textup{(a)} a flat left\/ $\R$\+contramodule is\/
$\aleph_1$\+presentable as an object of\/ $\R\Contra_\flat$ if and only
if it is\/ $\aleph_1$\+presentable as an object of\/ $\R\Contra$; \par
\textup{(b)} a flat left\/ $\R$\+contramodule\/ $\fF$ is\/
$\aleph_1$\+presentable if and only if the $R_n$\+module
$F_n=\fF/(\fI_n\tim\fF)$ is countably presentable for every $n\ge1$.
\end{prop}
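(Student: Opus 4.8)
The plan is to prove Proposition~\ref{flat-contramodules-accessible} by first establishing $\aleph_1$\+accessibility of $\R\Contra_\flat$, then settling the two comparison statements (a) and~(b). For the accessibility, I would invoke Lemma~\ref{flat-contramodules-described}, which identifies $\R\Contra_\flat$ with the category of compatible systems $(F_n)_{n\ge1}$ of flat $R_n$\+modules satisfying $F_n\simeq R_n\ot_{R_{n+1}}F_{n+1}$. The category of flat modules over a discrete ring is $\aleph_1$\+accessible (indeed, finitely accessible, since every flat module is a directed colimit of finitely generated free modules, and more to the point every flat module is a directed union of its countably generated flat submodules), and a countable inverse-limit-type gluing of $\aleph_1$\+accessible categories along the exact base-change functors $R_n\ot_{R_{n+1}}{-}$ should remain $\aleph_1$\+accessible. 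Concretely, I would produce a set of $\aleph_1$\+presentable generators by taking those systems $(F_n)$ in which every $F_n$ is a countably presentable flat $R_n$\+module, and show every flat contramodule is an $\aleph_1$\+directed colimit of such; the exactness of directed colimits in $\R\Contra_\flat$ (Lemma~\ref{flat-contramodules-exactness-properties}(b)) together with the exactness of the reduction functors $\fF\mapsto\fF/(\fI_n\tim\fF)$ (Lemma~\ref{flat-contramodules-exactness-properties}(a)) is what makes the colimit computation on the $F_n$ level behave well.

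For part~(b), the cleanest route is to characterize $\aleph_1$\+presentability of $\fF$ in $\R\Contra_\flat$ through the equivalence of Lemma~\ref{flat-contramodules-described}. Since the functors $\fF\mapsto F_n=\fF/(\fI_n\tim\fF)$ commute with directed colimits (they are exact and computed termwise), an object $\fF$ is $\aleph_1$\+presentable precisely when its image under the equivalence is $\aleph_1$\+presentable in the glued category, and the $\aleph_1$\+presentable objects there are exactly the systems $(F_n)$ with each $F_n$ a countably presentable flat $R_n$\+module. One direction is formal: if $\fF$ is $\aleph_1$\+presentable then so is each $F_n$, because the reduction functor has a right adjoint (or at least preserves $\aleph_1$\+directed colimits and is essentially surjective enough to transport presentability). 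The other direction requires showing that if all $F_n$ are countably presentable, then $\fF=\varprojlim_n F_n$ (reconstructed via the gluing data) is countably presentable as a contramodule; here the compatibility isomorphisms $F_n\simeq R_n\ot_{R_{n+1}}F_{n+1}$ let one descend a countable presentation of each $F_n$ to a coherent countable presentation of the whole system.

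For part~(a), I would use Proposition~\ref{accessible-subcategory}: since $\R\Contra_\flat=\varinjlim_{(\aleph_1)}\sT\subset\R\Contra$ for a suitable set $\sT$ of countably presentable flat contramodules (a resolving, directed-colimit-closed subcategory by Lemmas~\ref{flat-contramodules-resolving} and~\ref{flat-contramodules-exactness-properties}), the proposition tells us that the $\aleph_1$\+presentable objects of the subcategory $\R\Contra_\flat$ are exactly the retracts of objects from $\sT$, which are $\aleph_1$\+presentable in $\R\Contra$ by construction. Conversely, a flat contramodule that is $\aleph_1$\+presentable in the ambient $\R\Contra$ is a fortiori $\aleph_1$\+presentable in the full subcategory $\R\Contra_\flat$, since the inclusion preserves $\aleph_1$\+directed colimits and $\Hom$ is computed the same way. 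This makes (a) essentially a formal consequence of the accessibility statement once the set of generators is identified.

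The main obstacle I anticipate is the accessibility of the glued category and the precise matching of $\aleph_1$\+presentable objects in part~(b): one must verify that the base-change functors $R_n\ot_{R_{n+1}}{-}$ interact correctly with countable presentations, so that a system of countably presentable flat modules genuinely corresponds to a countably presentable (equivalently, $\aleph_1$\+generated and $\aleph_1$\+coherent) flat contramodule, rather than merely a countably generated one. The subtlety is that presentability is a stronger condition than generation, and controlling the relations coherently across the countable tower $(R_n)$ while preserving flatness is where the real work lies; I expect to lean on the $\aleph_1$\+presentability results for $\R\Contra$ from Proposition~\ref{presentable-contramodules} and the accessibility machinery of~\cite{Pacc} to push this through.
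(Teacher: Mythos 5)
Your proposal follows, in outline, the same route as the paper's proof. The paper proves this proposition simply by citing \cite[Lemmas~9.4 and~9.5, and Theorem~10.1]{Pflcc}, and the argument there is precisely your plan: identify $\R\Contra_\flat$, via Lemma~\ref{flat-contramodules-described}, with the limit of the countable tower of categories $R_n\Modl_\flat$ along the base-change functors $R_n\ot_{R_{n+1}}{-}$, and invoke the results of~\cite{Pacc} on countable limits of $\aleph_1$\+accessible categories -- which apply because the base-change functors preserve directed colimits and take countably presented flat modules to countably presented flat modules -- to conclude that the limit is $\aleph_1$\+accessible with $\aleph_1$\+presentable objects exactly the systems whose components are countably presentable. (The introduction of the present paper confirms this: the $\aleph_1$\+accessibility results of~\cite{Pflcc} were ``based on the paper~\cite{Pacc}'' and used the description of flat contramodules as systems of flat modules over the discrete quotient rings.) Your treatment of part~(a) via Proposition~\ref{accessible-subcategory}, with $\sT$ a set of flat contramodules countably presentable in the ambient $\R\Contra$, is likewise how presentability is transported between $\R\Contra_\flat$ and $\R\Contra$; the ``hard'' step you defer to the machinery of~\cite{Pacc} is exactly the step the paper defers to~\cite{Pflcc}.

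Two local justifications in your sketch are wrong as stated, though both are repairable without changing the architecture. First, it is \emph{not} true over a general ring that every flat module is a directed union of its countably generated flat submodules: submodules of flat modules need not be flat, the L\"owenheim--Skolem-type bound for pure submodules involves the cardinality of $R$ (so countably generated pure submodules need not exist when $R$ is uncountable), and countably \emph{generated} flat modules need not be countably \emph{presented}, which is the condition you actually need. The correct statement, and all you need, is that $R\Modl_\flat$ is finitely accessible by Lazard's theorem, hence $\aleph_1$\+accessible, with $\aleph_1$\+presentable objects the countably presented flat modules. Second, in part~(b) the implication ``$\fF$ is $\aleph_1$\+presentable $\Rightarrow$ each $F_n$ is countably presentable'' should not be argued from ``the reduction functor has a right adjoint'': the reduction $\fF\longmapsto\fF/(\fI_n\tim\fF)$ is a \emph{left} adjoint, whose right adjoint is the inclusion $R_n\Modl\rarrow\R\Contra$, and a left adjoint preserves $\kappa$\+presentable objects only when its right adjoint preserves $\kappa$\+directed colimits -- which is not known here, since directed colimits in $\R\Contra$ are poorly behaved. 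The clean argument uses Proposition~\ref{presentable-contramodules} instead: an $\aleph_1$\+presentable object $\fF$ of $\R\Contra$ is the cokernel of a morphism of free contramodules $\R[[Y]]\rarrow\R[[X]]$ with $X$, $Y$ countable, and applying the right exact reduction functor produces a countable presentation $R_n[Y]\rarrow R_n[X]\rarrow F_n\rarrow0$.
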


\begin{proof}
 This is~\cite[Lemmas~9.4 and~9.5, and Theorem~10.1]{Pflcc}.
\end{proof}

\begin{prop} \label{projective-dimension-of-flat-countably-presented}
 Let\/ $\R$ be a complete, separated topological ring with
a \emph{countable} base of neighborhoods of zero consisting of open
\emph{two-sided} ideals.
 Then any countably presentable flat\/ $\R$\+contramodule has projective
dimension\/~$\le\nobreak1$ in the abelian category\/ $\R\Contra$.
\end{prop}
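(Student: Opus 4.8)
The plan is to exhibit an explicit projective resolution of length~$1$. First I would resolve $\fF$ by a free contramodule: choose an epimorphism $\R[[Y]]\rarrow\fF$ (possible since $\fF$ is countably presentable, hence countably generated, with $Y$ countable) and let $\fK$ be its kernel. By Lemma~\ref{flat-contramodules-resolving} the subcategory $\R\Contra_\flat$ is resolving, so $\fK$ is again flat, and we obtain a short exact sequence $0\rarrow\fK\rarrow\R[[Y]]\rarrow\fF\rarrow0$ of flat contramodules. Reducing modulo each open ideal $\fI_n$ is exact on flat contramodules by Lemma~\ref{flat-contramodules-exactness-properties}(a), so for every $n\ge1$ we get a short exact sequence of $R_n$\+modules $0\rarrow K_n\rarrow R_n[Y]\rarrow F_n\rarrow0$, where $K_n=\fK/(\fI_n\tim\fK)$ and $F_n=\fF/(\fI_n\tim\fF)$. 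By Proposition~\ref{flat-contramodules-accessible}(b) each $F_n$ is a countably presentable flat $R_n$\+module, hence of projective dimension $\le1$ (a countably generated flat module has projective dimension at most one); as $R_n[Y]$ is free, the kernel $K_n$ is therefore a projective $R_n$\+module. Thus $\fK$ is a flat $\R$\+contramodule all of whose discrete reductions $K_n$ are projective, and it remains to prove the key point: such a contramodule is projective. Granting this, $0\rarrow\fK\rarrow\R[[Y]]\rarrow\fF\rarrow0$ is the desired projective resolution of length~$1$.

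To prove that a flat contramodule $\fK$ with all $K_n$ projective is itself projective, I would split a free cover. Since $\R$ has a countable base and $\fK$ is flat, $\fK$ is separated and complete, so $\fK=\varprojlim_n K_n$; likewise $\R[[X]]=\varprojlim_n R_n[X]$ for any set $X$, and morphisms $\fK\rarrow\fK$ and $\fK\rarrow\R[[X]]$ are the inverse limits over~$n$ of $R_n$\+module morphisms $K_n\rarrow K_n$ and $K_n\rarrow R_n[X]$ (using $\Hom^\R(\fK,M)=\Hom_{R_n}(K_n,M)$ for any $\fI_n$\+killed module~$M$). Choose an epimorphism $p\:\R[[X]]\rarrow\fK$; reducing gives epimorphisms $p_n\:R_n[X]\rarrow K_n$, which split because $K_n$ is projective. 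The goal is to choose the splittings $s_n\:K_n\rarrow R_n[X]$ \emph{compatibly}, that is, so that $s_n$ is the reduction of $s_{n+1}$ for every~$n$; then $s=\varprojlim_n s_n$ is a contramodule morphism with $ps=\varprojlim_n p_ns_n=\id_\fK$, so $p$ splits and $\fK$ is a direct summand of the free contramodule $\R[[X]]$, hence projective.

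The main work is the inductive construction of the compatible splittings, and this is the step I expect to be the crux. Given $s_n$, the splittings of $p_{n+1}$ form a torsor under $\Hom_{R_{n+1}}(K_{n+1},\ker p_{n+1})$, and reduction modulo $\fI_n$ carries splittings of $p_{n+1}$ to splittings of $p_n$. To hit the prescribed $s_n$ I must show this reduction map is surjective, which comes down to surjectivity of $\Hom_{R_{n+1}}(K_{n+1},\ker p_{n+1})\rarrow\Hom_{R_{n+1}}(K_{n+1},\ker p_n)=\Hom_{R_n}(K_n,\ker p_n)$, the last identification being the adjunction for the $\fI_n$\+killed module $\ker p_n$ via $K_n=R_n\ot_{R_{n+1}}K_{n+1}$. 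This map is surjective because $K_{n+1}$ is projective and $\ker p_{n+1}\rarrow\ker p_n$ is surjective, the latter following from exactness of reduction applied to the split (hence flat) sequence $0\rarrow\ker p_{n+1}\rarrow R_{n+1}[X]\rarrow K_{n+1}\rarrow0$. Everything else is formal; the essential inputs are the projectivity of the reductions $K_{n+1}$, the adjunction computing $\Hom$ out of $K_{n+1}$ into $\fI_n$\+killed modules, and the separatedness and completeness of flat contramodules, which is exactly what lets the level-wise splittings assemble into an honest contramodule splitting.
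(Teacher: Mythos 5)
Your proof is correct, and it is worth noting at the outset that the paper does not prove this proposition internally at all: its entire proof is the citation to~\cite[Corollary~9.7]{Pflcc}. Your blind reconstruction follows what is in substance the standard route in this circle of ideas, built from exactly the toolkit the paper imports from~\cite{Pflcc}. You reduce the claim to projectivity of the first syzygy $\fK$ in $0\rarrow\fK\rarrow\R[[Y]]\rarrow\fF\rarrow0$ (flat by Lemma~\ref{flat-contramodules-resolving}); you identify the reductions $K_n$ as first syzygies of the countably presentable flat $R_n$\+modules $F_n$, using Proposition~\ref{flat-contramodules-accessible}(b) and exactness of reduction on $\R\Contra_\flat$ (Lemma~\ref{flat-contramodules-exactness-properties}(a)), so that each $K_n$ is projective; and you then prove the key criterion that a flat contramodule with projective reductions is projective by inductively lifting splittings up the countable tower. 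The crux of that last step is handled soundly: the torsor structure on splittings, the identification $\ker p_n\simeq R_n\ot_{R_{n+1}}\ker p_{n+1}$ (valid because the sequence $0\rarrow\ker p_{n+1}\rarrow R_{n+1}[X]\rarrow K_{n+1}\rarrow0$ is split), the resulting surjectivity of $\ker p_{n+1}\rarrow\ker p_n$, projectivity of $K_{n+1}$ to lift against it, and the assembly of the compatible splittings into a genuine contramodule morphism via $\Hom^\R(\fK,\R[[X]])\simeq\varprojlim_n\Hom_{R_n}(K_n,R_n[X])$, which rests on Lemma~\ref{flat-contramodules-described} and on separatedness and completeness of flat contramodules. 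This Mittag--Leffler-type lifting against projective reductions is the same mechanism the paper itself uses in the Remark at the end of Section~\ref{acyclic-are-contraacyclic-secn} to get surjective transition maps in the tower $\varprojlim_n\Hom_{R_n}(\fP^\bu/\fI_n\tim\fP^\bu,\>\fF^\bu/\fI_n\tim\fF^\bu)$, so your argument is fully consonant with the paper's methods; what you gain over the paper's presentation is a self-contained proof, at the cost of re-proving a criterion (flat with projective reductions implies projective) that is part of the cited theory.

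One statement should be corrected, though it does not affect the argument: the classical module-theoretic fact is that a countably \emph{presented} flat module has projective dimension at most one (by Lazard's theorem such a module is the colimit of a countable chain of finitely generated free modules, and the telescope sequence $0\rarrow\bigoplus_k P_k\rarrow\bigoplus_k P_k\rarrow\varinjlim_k P_k\rarrow0$ gives the bound); countable generation alone is not the standard hypothesis and should not be asserted. This is harmless here, since Proposition~\ref{flat-contramodules-accessible}(b) gives you countable presentability of each $F_n$, which is what your application actually uses.
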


\begin{proof}
 This is~\cite[Corollary~9.7]{Pflcc}.
\end{proof}

\Section{Acyclic Complexes of Flat Contramodules are Contraacyclic}
\label{acyclic-are-contraacyclic-secn}

 Let $\R$ be a complete, separated topological ring with a countable
base of neighborhoods of zero consisting of open right ideals.
 We will say that a complex of flat left $\R$\+contramodules $\fF^\bu$
is \emph{pure acyclic} if it is acyclic in the exact category
$\R\Contra_\flat$ \,\cite[Section~11]{Pflcc}.
 Equivalently, a complex of flat $\R$\+contramodules $\fF^\bu$ is pure
acyclic if and only if $\fF^\bu$ is acyclic in the abelian category
$\R\Contra$ \emph{and} the $\R$\+contramodules of cocycles of $\fF^\bu$
are flat.
 Let us denote the full subcategory of pure acyclic complexes of
flat left $\R$\+contramodules by $\Ac(\R\Contra_\flat)\subset
\Com(\R\Contra_\flat)\subset\Com(\R\Contra)$.

 The aim of this section is to prove the following theorem.

\begin{thm} \label{pure-acyclic-are-contraacyclic}
 Let\/ $\R$ be a complete, separated topological ring with
a \emph{countable} base of neighborhoods of zero consisting of
open \emph{two-sided} ideals.
 Then, for any complex of projective left\/ $\R$\+contramodules\/
$\fP^\bu$ and any pure acyclic complex of flat left\/
$\R$\+contramodules\/ $\fF^\bu$, any morphism of complexes\/
$\fP^\bu\rarrow\fF^\bu$ is homotopic to zero.
 In other words, any pure acyclic complex of flat\/ $\R$\+contramodules
is contraacyclic (as a complex in\/ $\R\Contra$, or equivalently,
in\/ $\R\Contra_\flat$).
\end{thm}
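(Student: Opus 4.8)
The plan is to prove that any morphism of complexes $\fP^\bu\rarrow\fF^\bu$, with $\fP^\bu$ a complex of projectives and $\fF^\bu$ pure acyclic, is null-homotopic. First I would reduce to the case of a \emph{bounded-below} complex of projectives. The key structural facts I will lean on are Proposition~\ref{flat-contramodules-accessible}, which says $\R\Contra_\flat$ is $\aleph_1$\+accessible with $\aleph_1$\+presentable objects detectable degreewise, and Proposition~\ref{projective-dimension-of-flat-countably-presented}, which bounds the projective dimension of countably presentable flat contramodules by~$1$. The strategy is to approximate $\fP^\bu$ by an $\aleph_1$\+directed system of subcomplexes of countably presentable flat contramodules, exploiting the fact (Lemma~\ref{flat-contramodules-exactness-properties}(b)) that directed colimits are exact in $\R\Contra_\flat$, so that a null-homotopy constructed at each stage can be assembled in the colimit.

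\smallskip

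\textbf{Building a homotopy on the pieces.} Since $\fF^\bu$ is pure acyclic, it is a directed colimit of complexes drawn from $\varinjlim_{(\aleph_1)}$ of countably presentable flat contramodules, and more importantly its cocycles are flat. I would filter the complex $\fP^\bu$ itself, or equivalently filter the pair $(\fP^\bu,\fF^\bu)$, by countably presentable subcomplexes $\fP_\xi^\bu\subset\fP^\bu$ whose image lands in a correspondingly small piece of $\fF^\bu$. On each such countably presentable piece the relevant $\Ext$ vanishing is available: a countably presentable flat contramodule has projective dimension $\le 1$, so in the exact category of flat contramodules the obstruction groups $\Ext^{\ge 2}$ vanish, and a pure acyclic complex restricted to such pieces becomes, by Lemma~\ref{psemi-remark21}, absolutely acyclic in the appropriate bounded-resolution sense. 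The point is that $\Hom_{\Hot}$ into a pure acyclic complex from a complex of projectives can be computed via the $\Ext^1$-comparison of Lemma~\ref{Ext-1-as-homotopy-Hom}, and the low projective dimension forces these to vanish degreewise.

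\smallskip

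\textbf{Passing to the colimit.} Having produced compatible null-homotopies on an $\aleph_1$\+directed system of countably presentable subcomplexes whose colimit is $\fP^\bu$, I would pass to the colimit to obtain a null-homotopy of the original morphism. The exactness of $\aleph_1$\+directed colimits in $\R\Contra_\flat$ (Lemma~\ref{flat-contramodules-exactness-properties}(b)) is exactly what guarantees that the colimit of the homotopy data is again a genuine contracting homotopy and that no acyclicity is lost in the limit. The equivalence of the two phrasings (contraacyclic in $\R\Contra$ versus in $\R\Contra_\flat$) is immediate because the projective objects coincide in the two categories, as noted in the introduction.

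\smallskip

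\textbf{The main obstacle.} I expect the genuine difficulty to be the \emph{compatibility and coherence} of the homotopies across the $\aleph_1$\+directed system: constructing a homotopy on each countably presentable piece is governed by the projective-dimension-$\le 1$ bound, but the homotopies chosen on different pieces need not agree on overlaps, and the passage to the colimit requires either a canonical choice or a transfinite-induction argument (along a smooth chain) that corrects the discrepancies at each successor and limit step. Managing this coherence—ensuring the partial homotopies form an actual directed system so their colimit exists and contracts the whole morphism—is where the real work lies, and it is precisely here that the countability hypothesis on the base of neighborhoods of zero, which makes $\R\Contra_\flat$ manageably $\aleph_1$\+accessible, becomes indispensable.
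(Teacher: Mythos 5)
Your plan contains a genuine gap, and it is exactly the one you flag at the end: there is no way to ``pass to the colimit'' of null-homotopies constructed separately on the pieces of an $\aleph_1$-directed system. Being homotopic to zero is not a property preserved by directed colimits of maps: the null-homotopies of the restrictions $f|_{\fP^\bu_\xi}$ form a projective system of nonempty sets (torsors over the groups of degree~$-1$ self-maps), and its inverse limit can perfectly well be empty, so no transfinite correction procedure is available in general. Indeed, condition~(iii) of Neeman's theorem quoted in the introduction shows that pure acyclic complexes of flat modules are precisely the directed colimits of \emph{contractible} complexes of projectives, and yet they are almost never contractible --- so the very statement you are trying to prove is a standing counterexample to the gluing principle your third step relies on. The exactness of directed colimits in $\R\Contra_\flat$ (Lemma~\ref{flat-contramodules-exactness-properties}(b)) does not help here: in the paper it is used only to know that the class of pure acyclic complexes is closed under directed colimits in $\Com(\R\Contra)$, never to assemble homotopy data. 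Relatedly, your intermediate claim that the projective-dimension-$\le 1$ bound ``forces these to vanish degreewise'' via Lemma~\ref{Ext-1-as-homotopy-Hom} does not describe a valid mechanism.

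The paper dissolves the coherence problem by splitting the argument into two steps, neither of which requires compatible homotopies. First, Lemma~\ref{Ext-1-as-homotopy-Hom} converts the assertion into the vanishing of $\Ext^1_{\Com(\R\Contra)}(\fP^\bu,\fF^\bu[-1])$, which is a \emph{property} closed under transfinitely iterated extensions in the first argument by the Eklof lemma (Lemma~\ref{eklof-lemma}); combined with Proposition~\ref{complexes-of-projectives-filtered-by}, this reduces to the case where $\fP^\bu$ is a complex of free contramodules with countable sets of generators --- the transfinite assembly happens at the level of $\Ext$ groups, where no choices need to agree. Second, smallness is exploited on the \emph{source} rather than by filtering it: the reduced $\fP^\bu$ is countably presentable in $\Com(\R\Contra)$ (Propositions~\ref{complexes-presentable-accessible}(a) and~\ref{presentable-contramodules}), and since $\Ac(\R\Contra_\flat)$ is $\aleph_1$-accessible (Proposition~\ref{pure-acyclic-complexes-accessible}) with directed colimits agreeing with those of $\Com(\R\Contra)$, the single morphism $\fP^\bu\rarrow\fF^\bu$ factors through \emph{one} pure acyclic complex $\fH^\bu$ of countably presentable flat contramodules. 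That complex is absolutely acyclic by Proposition~\ref{projective-dimension-of-flat-countably-presented} and Lemma~\ref{psemi-remark21}, so the map $\fP^\bu\rarrow\fH^\bu$ is null-homotopic by Lemma~\ref{absolutely-acyclic-are-contraacyclic}(a), and one homotopy suffices. Note also that Proposition~\ref{pure-acyclic-complexes-accessible} --- that $\fF^\bu$ is a directed colimit of \emph{pure acyclic} complexes of small flats, not merely of small complexes --- is a nontrivial input your sketch does not secure.
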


 We start with collecting some results from~\cite{PS4}
and~\cite{Pflcc} on which the proof of
Theorem~\ref{pure-acyclic-are-contraacyclic} is based.

\begin{prop} \label{complexes-of-projectives-filtered-by}
 Let $\kappa$~be a regular cardinal and\/ $\sB$ be an abelian category
with infinite coproducts and a $\kappa$\+presentable projective
generator~$P$ (these conditions imply that the category\/ $\sB$ is
locally $\kappa$\+presentable).
 Denote by\/ $\sS$ the set of all (representatives of isomorphism
classes of) bounded below complexes in\/ $\sB$ whose terms are
coproducts of less then~$\kappa$ copies of~$P$.
 Then any complex of projective objects in\/ $\sB$ is a direct summand
of a complex filtered by complexes from\/~$\sS$.
\end{prop}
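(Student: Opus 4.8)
The plan is to reduce first to complexes whose terms are coproducts of copies of $P$ (``free'' objects), and then to construct the required filtration by hand via transfinite recursion; the ``direct summand'' clause will absorb the difference between projective and free objects.

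\emph{Reduction to free terms.}
Since $P$ is a projective generator, every projective object of $\sB$ is a direct summand of a free object. Given a complex of projectives $Q^\bu$, I would choose for each $n$ an object $R^n$ with $Q^n\oplus R^n$ free, and let $R^\bu$ be the complex with terms $R^n$ and \emph{zero} differentials. Then $Q^\bu$ is a direct summand, as a complex, of $Q^\bu\oplus R^\bu$, whose terms $Q^n\oplus R^n$ are all free. Hence it suffices to prove that every complex $F^\bu$ with free terms already lies in $\Fil(\sS)$; the given $Q^\bu$ will then automatically be a direct summand of the $\Fil(\sS)$\+object $Q^\bu\oplus R^\bu$.

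\emph{The filtration.}
Write $F^n=P^{(X_n)}$. The essential input from local $\kappa$\+presentability is twofold: each $F^n$ is the $\kappa$\+directed colimit $\varinjlim_Z P^{(Z)}$ over the sub-coproducts indexed by subsets $Z\subseteq X_n$ of cardinality $<\kappa$ (regularity of $\kappa$ makes this indexing poset $\kappa$\+directed), and any morphism from the $\kappa$\+presentable object $P$ into $F^{n+1}$ factors through one such $P^{(Z)}$. Consequently, for a single generator $x\in X_n$ the differential carries the corresponding copy of $P$ in $F^n$ into a sub-coproduct of $F^{n+1}$ spanned by fewer than $\kappa$ generators. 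I would then build a smooth chain of subcomplexes $F_i^\bu\subseteq F^\bu$ with $F_i^n=P^{(Y_i^n)}$, $Y_i^n\subseteq X_n$ increasing, as follows. Fix an enumeration of $\bigsqcup_n X_n$. At a successor step, take the least generator $x\in X_n$ not yet present, adjoin it in degree $n$, and close up under the differential: adjoin the $<\kappa$ generators supporting its image in degree $n+1$, then those supporting their images in degree $n+2$, and so on. Because the differential only raises degree, this closure adds generators solely in degrees $\ge n$, and in each such degree fewer than $\kappa$ of them. At limit steps, take unions.

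\emph{Verification and main obstacle.}
Each $F_i^\bu$ is a genuine subcomplex precisely because the adjoined generators were closed under the differential, and the successor subquotient $F_{i+1}^\bu/F_i^\bu$ has terms $P^{(Y_{i+1}^n\setminus Y_i^n)}$, which are coproducts of fewer than $\kappa$ copies of $P$ and vanish below degree $n$; thus it is bounded below and lies in $\sS$. The enumeration guarantees $\bigcup_i Y_i^n=X_n$, so $\varinjlim_i F_i^\bu=F^\bu$, exhibiting $F^\bu\in\Fil(\sS)$ and, with the reduction above, proving the proposition. I expect the delicate point to be the bookkeeping in the successor step, where one must simultaneously keep each $F_i^\bu$ a subcomplex, keep every increment bounded below, and keep the number of generators adjoined in each degree below $\kappa$. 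Regularity of $\kappa$ is exactly what reconciles these demands: the closure branches over countably many degrees $\ge n$ with fewer than $\kappa$ generators appearing in each, a union that stays within a single sub-coproduct in every degree (while the total number of generators adjoined may be infinite, which is harmless since $\sS$ constrains only the individual terms, not the length of the complex).
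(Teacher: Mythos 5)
Your argument is correct and is essentially the proof the paper relies on: the paper performs the same reduction (every complex of projectives is a direct summand of a complex with free terms) and then cites \cite[Proposition~4.3]{Sto0} for $\kappa=\aleph_0$ and \cite[second proof of Proposition~7.2]{PS4} for uncountable~$\kappa$, where precisely your transfinite closure construction is carried out---adjoining one generator at a time and closing under the differential, using $\kappa$\+presentability of $P$ to factor differentials through sub-coproducts of size $<\kappa$ and regularity of~$\kappa$ to keep each degree of every successor quotient below~$\kappa$. So the proposal matches the paper's approach, merely spelling out in full the construction that the paper delegates to the references.
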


\begin{proof}
 Notice, first of all, that any projective object in $\sB$ is a direct
summand of a coproduct of copies of~$P$; hence any complex of
projective objects in $\sB$ is a direct summand of a complex whose
components are coproducts of copies of~$P$.
 For the purposes of this paper, we are really interested in the case
$\kappa=\aleph_1$.
 When $\kappa=\aleph_0$, the category $\sB$ is the module category
$\sB\simeq R\Modl$, where $R$ is (the opposite ring to)
the endomorphism ring of the small projective generator $P\in\sB$.
 This case is covered by~\cite[Proposition~4.3]{Sto0}
(cf.~\cite[Construction~4.3]{Neem}).
 In the case of an uncountable regular cardinal~$\kappa$, the desired
assertion was established in~\cite[second proof of
Proposition~7.2]{PS4}.
\end{proof}

\begin{prop} \label{pure-acyclic-complexes-accessible}
 Let\/ $\R$ be a complete, separated topological ring with
a \emph{countable} base of neighborhoods of zero consisting of
open \emph{two-sided} ideals.
 Then the category\/ $\Ac(\R\Contra_\flat)$ of pure acyclic complexes
of flat left\/ $\R$\+contramodules is\/ $\aleph_1$\+accessible.
 A pure acyclic complex of flat\/ $\R$\+contramodules\/ $\fH^\bu$
is\/ $\aleph_1$\+presentable as an object of\/ $\Ac(\R\Contra_\flat)$
if and only if all the\/ $\R$\+contramodules\/ $\fH^n$, \,$n\in\boZ$,
are countably presentable.
\end{prop}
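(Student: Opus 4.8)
The plan is to realize $\Ac(\R\Contra_\flat)$ as a subcategory of the form $\varinjlim_{(\aleph_1)}\sT$ inside an ambient $\aleph_1$\+accessible category of complexes, and then read off both assertions from Proposition~\ref{accessible-subcategory}. First I note that $\R\Contra_\flat$ is $\aleph_1$\+accessible (Proposition~\ref{flat-contramodules-accessible}) and, being closed under directed colimits in the locally presentable category $\R\Contra$, admits colimits of countable chains. Hence Proposition~\ref{complexes-presentable-accessible}(b), applied with $\kappa=\aleph_1$ and $\lambda=\aleph_0$, tells us that $\Com(\R\Contra_\flat)$ is $\aleph_1$\+accessible and that its $\aleph_1$\+presentable objects are exactly the complexes all of whose terms are countably presentable flat contramodules. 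Let $\sT\subseteq\Com(\R\Contra_\flat)$ be the set of isomorphism classes of pure acyclic complexes with all terms countably presentable; by the above these are $\aleph_1$\+presentable objects of $\Com(\R\Contra_\flat)$.

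The crux is the identification $\Ac(\R\Contra_\flat)=\varinjlim_{(\aleph_1)}\sT$. The inclusion $\supseteq$ is immediate: directed colimits are exact in $\R\Contra_\flat$ (Lemma~\ref{flat-contramodules-exactness-properties}(b)), so an $\aleph_1$\+directed colimit of pure acyclic complexes is again acyclic in $\R\Contra_\flat$, i.e.\ pure acyclic. Granting the reverse inclusion, Proposition~\ref{accessible-subcategory} gives at once that $\Ac(\R\Contra_\flat)$ is $\aleph_1$\+accessible and that its $\aleph_1$\+presentable objects are the retracts of objects of $\sT$. Since $\aleph_1$\+presentability is inherited by retracts, every such retract has all terms countably presentable, which proves the ``only if'' part of the presentability characterization. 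Conversely, an object of $\sT$ is already $\aleph_1$\+presentable in $\Com(\R\Contra_\flat)$, and $\Ac(\R\Contra_\flat)$ is closed under $\aleph_1$\+directed colimits there, so each object of $\sT$ is $\aleph_1$\+presentable in $\Ac(\R\Contra_\flat)$; this yields the ``if'' part.

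It remains to prove $\Ac(\R\Contra_\flat)\subseteq\varinjlim_{(\aleph_1)}\sT$, for which I would use the factorization criterion in the last assertion of Proposition~\ref{accessible-subcategory}. Thus, given a pure acyclic complex $\fF^\bu$ and a morphism $\fS^\bu\to\fF^\bu$ from a complex $\fS^\bu$ of countably presentable flat contramodules, the goal is to factor it through a pure acyclic complex with countably presentable terms. I would construct such a complex as the union of an increasing countable chain of countably presentable subcomplexes $\fH^\bu_{(0)}\subseteq\fH^\bu_{(1)}\subseteq\dotsb\subseteq\fF^\bu$: start with a countably presentable subcomplex containing the image of $\fS^\bu$ (available since $\fF^\bu$ is the $\aleph_1$\+directed union of its countably presentable subcomplexes), and at each step enlarge $\fH^\bu_{(k)}$ by (a) adjoining, for each degree $j$, a countably presentable flat subobject of $\fF^{j-1}$ mapping onto a chosen countable generating family of the degree\+$j$ cocycles of $\fH^\bu_{(k)}$ (using acyclicity of $\fF^\bu$), so that the union becomes acyclic, and (b) enlarging the cocycle subobjects so that in the union they become flat subcontramodules of the flat cocycles of $\fF^\bu$. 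The union $\fH^\bu$ then has countably presentable terms, is acyclic, and has flat cocycles, hence lies in $\sT$ and factors the given map.

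The main obstacle is step~(b): guaranteeing that the countable subcomplex we build is genuinely \emph{pure} acyclic, i.e.\ that its cocycles are flat, and not merely that it is acyclic in $\R\Contra$. Here I would invoke the description of flat $\R$\+contramodules as compatible systems of flat modules $F_n=\fF/(\fI_n\tim\fF)$ over the discrete quotient rings $R_n=\R/\fI_n$ (Lemma~\ref{flat-contramodules-described}) together with the presentability criterion of Proposition~\ref{flat-contramodules-accessible}(b), reducing the problem to the classical fact that a countably generated subset of a flat module over a ring is contained in a countably generated pure, hence flat, submodule, applied compatibly over the tower $(R_n)_{n\ge1}$. Organizing these module\+level purity closures so that they can be carried out simultaneously in all degrees $j\in\boZ$ and across all levels $n$ of the tower, and so that the countable iteration of steps~(a) and~(b) terminates in a subcomplex with all the desired properties, is the technical heart of the argument.
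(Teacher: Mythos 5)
Your first two paragraphs are sound and match the natural organization of such a proof: once one knows $\Ac(\R\Contra_\flat)=\varinjlim_{(\aleph_1)}\sT$, Proposition~\ref{accessible-subcategory}, together with the closure of $\Ac(\R\Contra_\flat)$ under directed colimits in $\Com(\R\Contra_\flat)$ (Lemma~\ref{flat-contramodules-exactness-properties}(b)) and the identification of the $\aleph_1$\+presentable objects of $\Com(\R\Contra_\flat)$ (Proposition~\ref{complexes-of-flats-accessible}, which is exactly your combination of Propositions~\ref{flat-contramodules-accessible} and~\ref{complexes-presentable-accessible}(b)), yields both assertions; the only point you leave implicit is that a retract of an object of $\sT$ is again pure acyclic, which is easy. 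Note, however, that the paper itself proves this proposition by citing \cite[Corollary~11.4]{Pflcc}, and, as the introduction explains, the method there rests on the $\aleph_1$\+accessibility machinery for limits of accessible categories from \cite{Pacc} applied to the description of flat contramodules as towers of flat $R_n$\+modules (Lemma~\ref{flat-contramodules-described}) --- not on an elementwise closure argument of the kind you sketch. So your route, even if completed, would be a genuinely different and substantially harder proof.

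The sketch in your last two paragraphs has two concrete gaps, beyond the honestly acknowledged ``technical heart.'' First, step~(a) presumes that the cocycles $Z^j(\fH^\bu_{(k)})$ of an intermediate countably presentable subcomplex admit a countable generating family. This is unjustified: $Z^j(\fH^\bu_{(k)})$ is the kernel of a morphism from a countably presentable object to the flat contramodule $Z^{j+1}(\fF^\bu)$, and kernels of morphisms between countably presentable (contra)modules need not be countably generated --- already for modules, e.g.\ the kernel of multiplication by~$y$ on $R=k[y,x_i:i<\omega_1]/(yx_i)$ is not countably generated. The classical module-level proofs (Enochs--Garc\'\i a Rozas, Gillespie, \v St\!'ov\'\i\v cek) circumvent exactly this by using \emph{categorical purity in the locally finitely presentable category} $\Com(R\Modl)$, where a countably presented pure subcomplex of a pure acyclic complex of flats is automatically again one; degreewise purification does not suffice, and no analogous finitely accessible purity theory exists for $\R\Contra$. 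Second, and more fundamentally, ``the union of an increasing countable chain of subcomplexes'' is not a legitimate operation inside $\R\Contra$: countable directed colimits are not exact there (\cite[Examples~4.4]{PR}), the set-theoretic union of a chain of subcontramodules is not closed under the infinite summation operations, and its contramodule closure acquires new elements (infinite sums $\sum_i r_ix_i$ with $x_i$ from ever-later stages), so a cocycle of the closure need not be a cocycle at any finite stage and stage-wise boundary-killing does not make the union acyclic; nor do cocycles commute with the colimit of the chain unless each stage is itself already pure acyclic, which is circular. To repair the plan, the \emph{entire} construction --- not only your purification step~(b) --- must be transported through Lemma~\ref{flat-contramodules-described} to the towers of flat $R_n$\+modules $\fF^j/(\fI_n\tim\fF^j)$, where unions, purifications, lifting along the surjections $F_{n+1}\to F_n$, and boundary-killing are performed at the module level (with surjective transition maps guaranteeing exactness of the countable inverse limit), and the cocycle issue must still be resolved there via complex-level purity. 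As written, the steps you do spell out would fail, and the deferred bookkeeping contains the actual content of \cite[Corollary~11.4]{Pflcc}.
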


\begin{proof}
 This is~\cite[Corollary~11.4]{Pflcc}.
\end{proof}

\begin{proof}[Proof of Theorem~\ref{pure-acyclic-are-contraacyclic}]
 By Lemma~\ref{Ext-1-as-homotopy-Hom}, we have
$$
 \Hom_{\Hot(\R\Contra)}(\fP^\bu,\fF^\bu)\simeq
 \Ext^1_{\Com(\R\Contra)}(\fP^\bu,\fF^\bu[-1]).
$$
 Proposition~\ref{complexes-of-projectives-filtered-by} for
$\sB=\R\Contra$, \ $\kappa=\aleph_1$, and $P=\R=\R[\{*\}]\in
\R\Contra$ tells us that $\fP^\bu$ is a direct summand of a complex
filtered by complexes of free $\R$\+contramodules with (at most)
countable sets of generators.
 In view of Lemma~\ref{eklof-lemma}, the question reduces to the case
when $\fP^\bu$ is a complex of free $\R$\+contramodules with countable
sets of generators.

 Then, by Propositions~\ref{complexes-presentable-accessible}(a)
and~\ref{presentable-contramodules}, the complex $\fP^\bu$ is
a countably presentable object of the category $\sK=\Com(\R\Contra)$.
 On the other hand, by
Proposition~\ref{pure-acyclic-complexes-accessible}, the complex
$\fF^\bu$ is an $\aleph_1$\+directed colimit of pure acyclic complexes
of countably presentable flat $\R$\+contramodules.
 Notice that the full subcategory of pure acyclic complexes of flat
$\R$\+contramodules is closed under directed colimits in
$\Com(\R\Contra)$ by
Lemma~\ref{flat-contramodules-exactness-properties}(b); so
the directed colimits in this full subcategory agree with the ones
in $\Com(\R\Contra)$.
 Consequently, any morphism of complexes $\fP^\bu\rarrow\fF^\bu$
factorizes through some pure acyclic complex of countably presentable
flat $\R$\+contramodules~$\fH^\bu$.

 It remains to recall that the homological dimension of the exact
category of countably presentable flat $\R$\+contramodules does not
exceed~$1$ by
Proposition~\ref{projective-dimension-of-flat-countably-presented}.
 By Lemma~\ref{psemi-remark21}, it follows that the complex $\fH^\bu$
is absolutely acyclic in the exact category of countably presentable
flat $\R$\+contramodules, and consequently also in $\R\Contra_\flat$
and in $\R\Contra$.
 By Lemma~\ref{absolutely-acyclic-are-contraacyclic}(a), any morphism
of complexes $\fP^\bu\rarrow\fH^\bu$ is homotopic to zero.
\end{proof}

\begin{rem}
 Theorem~\ref{pure-acyclic-are-contraacyclic} is a contramodule
generalization of the similar assertion in Neeman's
paper~\cite[Theorem~8.6]{Neem}, which is the module version.
 Our proof of Theorem~\ref{pure-acyclic-are-contraacyclic} is
inspired by, but independent of the proof of the related assertion
in~\cite{Neem}.

 Alternatively, it is possible to deduce our
Theorem~\ref{pure-acyclic-are-contraacyclic} from Neeman's result.
 To this end, one argues as follows.
 Let $\R\supset\fI_1\supset\fI_2\supset\fI_3\supset\dotsb$ be
a countable base of neighborhoods of zero in $\R$ consisting of
open two-sided ideals $\fI_n$, \,$n\ge1$
(as in Lemma~\ref{flat-contramodules-described}
and Proposition~\ref{flat-contramodules-accessible}).
 Put $R_n=\R/\fI_n$.

 Then, by Lemma~\ref{flat-contramodules-described}, for any flat
left $\R$\+contramodules $\fP$ and $\fF$, there is a natural isomorphism
of abelian groups
$$
 \Hom^\R(\fP,\fF)\simeq\varprojlim\nolimits_{n\ge1}
 \Hom_{R_n}(\fP/\fI_n\tim\fP,\>\fF/\fI_n\tim\fF),
$$
where the transition maps in the projective system are induced by
the functors $R_n\ot_{R_{n+1}}{-}\,\:R_{n+1}\Modl\rarrow R_n\Modl$.
 When the $\R$\+contramodule $\fP$ is projective, the $R_n$\+modules
$\fP/\fI_n\tim\fP$ are projective, too, and it follows that any
$R_n$\+module map $\fP/\fI_n\tim\fP\rarrow\fF/\fI_n\tim\fF$ can be
lifted to an $R_{n+1}$\+module map $\fP/\fI_{n+1}\tim\fP\rarrow
\fF/\fI_{n+1}\tim\fF$.
 So the transition maps in the projective system are surjective.

 Now, for any complex of projective left $\R$\+contramodules $\fP^\bu$
and any complex of flat left $\R$\+contramodules $\fF^\bu$, we have
a natural isomorphism of complexes of abelian groups
$$
 \Hom^{\R,\bu}(\fP^\bu,\fF^\bu)\simeq\varprojlim\nolimits_{n\ge1}
 \Hom_{R_n}^\bu(\fP^\bu/\fI_n\tim\fP^\bu,\>\fF^\bu/\fI_n\tim\fF^\bu),
$$
where the transition maps are surjective in every degree.
 Whenever the complex of flat $\R$\+contramodules $\fF^\bu$ is pure
acyclic, so are the complexes of flat $R_n$\+modules $\fF/\fI_n\tim\fF$
(by Lemma~\ref{flat-contramodules-exactness-properties}(a)), so
the complexes
$\Hom_{R_n}^\bu(\fP^\bu/\fI_n\tim\fP^\bu,\>\fF^\bu/\fI_n\tim\fF^\bu)$
are acyclic by~\cite[Theorem~8.6]{Neem}.
 It remains to point out that the projective limit of a sequence
(indexed by the positive integers) of termwise surjective maps of
acyclic complexes of abelian groups is an acyclic complex again.
 Thus the complex $\Hom^{\R,\bu}(\fP^\bu,\fF^\bu)$ is acyclic whenever
the complex $\fF^\bu$ is pure acyclic.
\end{rem}

\Section{Contraacyclic Complexes of Flat Contramodules are Acyclic}
\label{contraacyclic-are-acyclic-secn}

 The aim of this section is to prove the following theorem, which
provides the converse implication to
Theorem~\ref{pure-acyclic-are-contraacyclic}.

\begin{thm} \label{contraacyclic-are-pure-acyclic}
 Let\/ $\R$ be a complete, separated topological ring with
a \emph{countable} base of neighborhoods of zero consisting of
open \emph{two-sided} ideals.
 Let\/ $\fF^\bu$ be a complex of flat left\/ $\R$\+contramodules.
 Assume that, for every complex of projective left\/
$\R$\+contramodules\/ $\fP^\bu$, all morphisms of complexes\/
$\fP^\bu\rarrow\fF^\bu$ are homotopic to zero.
 Then\/ $\fF^\bu$ is a pure acyclic complex.
 In other words, any contraacyclic complex of flat\/ $\R$\+contramodules
is pure acyclic.
\end{thm}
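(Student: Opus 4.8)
The plan is to deduce pure acyclicity of $\fF^\bu$ from the accessibility presentation of pure acyclic complexes, combining Proposition~\ref{pure-acyclic-complexes-accessible} with the factorization criterion of Proposition~\ref{accessible-subcategory}. I would work in the $\aleph_1$\+accessible category $\sK=\Com(\R\Contra_\flat)$ (Proposition~\ref{complexes-presentable-accessible}(b)), whose $\aleph_1$\+presentable objects are the complexes of countably presentable flat contramodules. Let $\sT$ be the set of pure acyclic complexes of countably presentable flat contramodules; by Proposition~\ref{pure-acyclic-complexes-accessible} these are exactly the $\aleph_1$\+presentable objects of $\Ac(\R\Contra_\flat)$, and since $\Ac(\R\Contra_\flat)$ is closed under $\aleph_1$\+directed colimits in $\sK$ (Lemma~\ref{flat-contramodules-exactness-properties}(b)), one has $\Ac(\R\Contra_\flat)=\varinjlim_{(\aleph_1)}\sT$ as a full subcategory of $\sK$. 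By the last assertion of Proposition~\ref{accessible-subcategory}, it then suffices to prove that every morphism $\phi\:\fS^\bu\rarrow\fF^\bu$ with $\fS^\bu$ a complex of countably presentable flat contramodules factorizes through an object of~$\sT$.

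The crucial step is to resolve the source $\fS^\bu$ conveniently. Each term $\fS^k$ is a countably presentable flat contramodule, hence has projective dimension $\le1$ by Proposition~\ref{projective-dimension-of-flat-countably-presented}. I would choose a complex of countably generated projective contramodules $\fP^\bu$ with a termwise surjective morphism of complexes $\fP^\bu\rarrow\fS^\bu$ (for instance a projective object of $\Com(\R\Contra)$ covering $\fS^\bu$); its kernel $\fQ^\bu$ is again a complex of projectives, being termwise the kernel of an epimorphism from a projective onto an object of projective dimension $\le1$. This yields a termwise admissible short exact sequence $0\rarrow\fQ^\bu\rarrow\fP^\bu\rarrow\fS^\bu\rarrow0$ in $\R\Contra_\flat$ with $\fP^\bu$ and $\fQ^\bu$ complexes of projectives. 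Putting $\widetilde\fP^\bu=\Tot(\fQ^\bu\rarrow\fP^\bu)$, one obtains a complex of countably generated projective contramodules together with a morphism $\rho\:\widetilde\fP^\bu\rarrow\fS^\bu$ whose cone is $\Tot(\fQ^\bu\rarrow\fP^\bu\rarrow\fS^\bu)$, the total complex of the above short exact sequence. The latter is pure acyclic with countably presentable terms, so $\mathrm{cone}(\rho)\in\sT$.

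Now the hypothesis enters. Since $\widetilde\fP^\bu$ is a complex of projective contramodules and $\fF^\bu$ is assumed contraacyclic, the composition $\phi\rho\:\widetilde\fP^\bu\rarrow\fF^\bu$ is homotopic to zero. A contracting homotopy for $\phi\rho$ is precisely the data extending $\phi$ to a morphism of complexes $\Phi\:\mathrm{cone}(\rho)\rarrow\fF^\bu$ along the canonical inclusion $\iota\:\fS^\bu\rarrow\mathrm{cone}(\rho)$, with $\phi=\Phi\iota$. This factorizes $\phi$ through the pure acyclic complex $\mathrm{cone}(\rho)\in\sT$, as Proposition~\ref{accessible-subcategory} demands; hence $\fF^\bu\in\Ac(\R\Contra_\flat)$ is pure acyclic. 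Note that this argument does not invoke Theorem~\ref{pure-acyclic-are-contraacyclic}, so the two implications are proved independently.

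I expect the main obstacle to lie in the middle step rather than in any single computation: one must simultaneously exploit the finite homological dimension of countably presentable flat contramodules to resolve $\fS^\bu$ by an \emph{honest} complex of projectives whose comparison map has a \emph{pure} (not merely contra\+) acyclic cone, and match this against the accessibility generators of $\Ac(\R\Contra_\flat)$. It is essential that the cone be pure acyclic and countably presentable so that it lands in the generating set~$\sT$: applying instead a contraderived projective resolution of $\fF^\bu$ itself, as in Theorem~\ref{becker-contraderived-of-lpacepo}, would only produce a contraacyclic cone and render the argument circular. The passage from a contracting homotopy of $\phi\rho$ to the genuine factorization $\phi=\Phi\iota$ is then a routine application of the universal property of the mapping cone, once the description of $\mathrm{cone}(\rho)$ as the totalization of a short exact sequence is in hand.
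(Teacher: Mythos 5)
Your argument is correct, and it reaches the factorization required by Proposition~\ref{accessible-subcategory} by a genuinely different mechanism than the paper does. The skeleton is shared: both proofs work in the $\aleph_1$\+accessible category $\sK=\Com(\R\Contra_\flat)$, identify $\Ac(\R\Contra_\flat)$ with $\varinjlim_{(\aleph_1)}\sT$ for $\sT$ the set of pure acyclic complexes of countably presentable flat contramodules, and reduce everything to factorizing test morphisms from $\aleph_1$\+presentable objects through~$\sT$. But the paper obtains the factorization only after a preparatory replacement: using Corollary~\ref{into-contractible-of-flat-cotorsion} it embeds $\fF^\bu$ into a contractible complex of flat cotorsion contramodules, passes to the cokernel $\fG^\bu$ (which changes neither contraacyclicity nor pure acyclicity), so that the terms of $\fG^\bu$ become quotients of cotorsion contramodules; this extra structure feeds Lemma~\ref{Ext-1-vanishes} and Corollary~\ref{orthogonal-to-total-complexes}, which combined with contraacyclicity show that the \emph{whole} complex $\Hom^{\R,\bu}(\fH^\bu,\fG^\bu)$ is acyclic, whence every test morphism is null-homotopic and factors through the contractible cone of the identity. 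You keep $\fF^\bu$ untouched and instead factor the given morphism $\phi\:\fS^\bu\rarrow\fF^\bu$ through the pure acyclic cone of the projective approximation $\rho\:\widetilde\fP^\bu\rarrow\fS^\bu$, invoking the contraacyclicity hypothesis exactly once, in the form $\phi\rho\simeq0$. Your observation that a contracting homotopy for $\phi\rho$ is literally the datum of a strict morphism of complexes $\Phi\:\mathrm{cone}(\rho)\rarrow\fF^\bu$ with $\phi=\Phi\iota$ is correct, and the strictness matters: Proposition~\ref{accessible-subcategory} demands a factorization in $\sK$, not merely in $\Hot(\R\Contra_\flat)$, and your explicit formula delivers it. What your route buys is that the cotorsion machinery (Proposition~\ref{cotorsion-preenvelope}, Lemma~\ref{Ext-1-vanishes}, Corollaries~\ref{orthogonal-to-total-complexes} and~\ref{into-contractible-of-flat-cotorsion}) drops out of this theorem's proof entirely; what the paper's route buys is the stronger intermediate statement that, after the reduction, all morphisms $\fH^\bu\rarrow\fG^\bu$ are null-homotopic, and an argument aligned with the cotorsion-pair techniques used in the later sections anyway.

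Three small points you leave implicit, all repairable. First, for $\mathrm{cone}(\rho)\simeq\Tot(\fQ^\bu\to\fP^\bu\to\fS^\bu)$ to lie in $\sT$, the terms of the kernel complex $\fQ^\bu$ must be countably \emph{presentable}; projectivity follows from dimension shifting as you say, but countable generation of the kernel is a separate fact, for which the paper cites \cite[Lemma~11.1]{Pflcc}. Second, pure acyclicity of the totalization of a termwise exact sequence of complexes of flat contramodules deserves a word, since acyclicity here means acyclicity in the exact category $\R\Contra_\flat$, i.e., with flat contramodules of cocycles; this is the same standard fact the paper uses, e.g., in the proof of Corollary~\ref{becker-coderived-of-flats}. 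Third, the $\aleph_1$\+accessibility of $\Com(\R\Contra_\flat)$ with the stated presentable objects is Proposition~\ref{complexes-of-flats-accessible}, which requires Proposition~\ref{flat-contramodules-accessible} in addition to Proposition~\ref{complexes-presentable-accessible}(b). None of these affects the soundness of your argument.
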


 Once again, we need to collect some results from~\cite{PR}
and~\cite{Pflcc} on which the proof of
Theorem~\ref{contraacyclic-are-pure-acyclic} is based, and also
prove some auxiliary lemmas.
 Let us start with the following definition.

 A left $\R$\+contramodule $\fC$ is said to be
\emph{cotorsion}~\cite[Definition~7.3]{PR} if one has
$\Ext^{\R,1}(\fF,\fC)=0$ for every flat left $\R$\+contramodule~$\fF$.
 We denote the full subcategory of cotorsion left
$\R$\+contramodules by $\R\Contra^\cot\subset\R\Contra$.
 The following lemma may help the reader to feel more comfortable.

\begin{lem} \label{cotorsion-contramodules-higher-ext}
 Let\/ $\R$ be a complete, separated topological ring with
a \emph{countable} base of neighborhoods of zero consisting of
open right ideals.
 Then one has\/ $\Ext^{\R,n}(\fF,\fC)=0$ for any flat left\/
$\R$\+contramodule\/ $\fF$, any cotorsion left\/ $\R$\+contramodule\/
$\fC$, and all $n\ge1$.
\end{lem}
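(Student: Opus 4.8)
The plan is to derive this as an immediate consequence of Lemma~\ref{right-orthogonal-to-self-resolving}, applied to the abelian (hence exact) category $\sK=\R\Contra$ together with the full subcategory $\sF=\R\Contra_\flat$ of flat left $\R$\+contramodules. The guiding observation is that the two conditions in play are precisely two orthogonality classes for this pair: by definition, a left $\R$\+contramodule $\fC$ is cotorsion exactly when $\fC\in(\R\Contra_\flat)^{\perp_1}$, whereas the asserted higher vanishing $\Ext^{\R,n}(\fF,\fC)=0$ for all flat $\fF$ and all $n\ge1$ is nothing but the statement that $\fC\in(\R\Contra_\flat)^{\perp_{\ge1}}$. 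Thus it suffices to establish the equality $(\R\Contra_\flat)^{\perp_1}=(\R\Contra_\flat)^{\perp_{\ge1}}$ inside $\R\Contra$, and to note that the Yoneda $\Ext$ groups $\Ext^n_\sK$ of the exact category $\sK=\R\Contra$ agree with the groups $\Ext^{\R,n}$ computed in the abelian category.

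To apply Lemma~\ref{right-orthogonal-to-self-resolving}, I would verify its hypotheses for $\sF=\R\Contra_\flat$: namely, that this subcategory is self-generating and closed under kernels of admissible epimorphisms in $\R\Contra$. Both properties are furnished by Lemma~\ref{flat-contramodules-resolving}, which states that $\R\Contra_\flat$ is a \emph{resolving} subcategory of $\R\Contra$. Indeed, a resolving subcategory is by definition generating (and hence, as noted in the text, self-generating) and closed under kernels of admissible epimorphisms. The hypotheses of the present lemma---a countable base of neighborhoods of zero consisting of open right ideals---match those of Lemma~\ref{flat-contramodules-resolving} verbatim, so no strengthening to two-sided ideals is required at this stage.

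With the hypotheses confirmed, Lemma~\ref{right-orthogonal-to-self-resolving} delivers $(\R\Contra_\flat)^{\perp_1}=(\R\Contra_\flat)^{\perp_{\ge1}}$, which is exactly the claim: the $\Ext^1$\+vanishing defining cotorsion contramodules automatically propagates to all higher degrees. I do not anticipate any genuine obstacle in this argument; the whole content lies in recognizing the cotorsion class as the right $\Ext^1$\+orthogonal of a resolving subcategory. The only point calling for a little care is the routine bookkeeping that identifies the classes $(\R\Contra_\flat)^{\perp_1}$ and $(\R\Contra_\flat)^{\perp_{\ge1}}$ with the $n=1$ and $n\ge1$ Ext-vanishing conditions, which is exactly where the cited lemma does its work.
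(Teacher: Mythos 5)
Your proposal is correct and coincides with the paper's own argument: the paper's proof of this lemma is literally ``Compare Lemma~\ref{flat-contramodules-resolving} with Lemma~\ref{right-orthogonal-to-self-resolving},'' which is exactly your route of feeding the resolving subcategory $\R\Contra_\flat\subset\R\Contra$ into the equality $\sF^{\perp_1}=\sF^{\perp_{\ge1}}$. Your verification of the hypotheses (resolving implies self-generating and closed under kernels of admissible epimorphisms, with only right ideals needed) is the same bookkeeping the paper leaves implicit.
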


\begin{proof}
 Compare Lemma~\ref{flat-contramodules-resolving} with
Lemma~\ref{right-orthogonal-to-self-resolving}.
\end{proof}

\begin{prop} \label{cotorsion-preenvelope}
 Let\/ $\R$ be a complete, separated topological ring with
a \emph{countable} base of neighborhoods of zero consisting of
open right ideals.
 Then every left\/ $\R$\+contramodule\/ $\fM$ can be included into
a short exact sequence\/ $0\rarrow\fM\rarrow\fC\rarrow\fG\rarrow0$
in\/ $\R\Contra$ with a cotorsion\/ $\R$\+contramodule\/ $\fC$
and a flat\/ $\R$\+con\-tra\-mod\-ule\/~$\fG$.
\end{prop}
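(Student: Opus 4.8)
The assertion is the ``preenvelope half'' of the statement that $(\R\Contra_\flat,\R\Contra^\cot)$ is a complete cotorsion pair in $\R\Contra$, and the plan is to obtain it from the Eklof--Trlifaj small object argument applied to a set of flat contramodules cogenerating the cotorsion class. First I would fix, once and for all, a set $\sS$ of representatives of the isomorphism classes of countably presentable flat left $\R$\+contramodules; this is indeed a set, since by Proposition~\ref{flat-contramodules-accessible} the category $\R\Contra_\flat$ is $\aleph_1$\+accessible, so its $\aleph_1$\+presentable objects form an essentially small class. Every member of $\sS$ is by construction flat.

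Running the standard transfinite small object argument over the set $\sS$ inside the cocomplete abelian category $\R\Contra$, I would produce for the given contramodule $\fM$ a short exact sequence $0\rarrow\fM\rarrow\fC\rarrow\fG\rarrow0$ in which $\fC$ lies in $\sS^{\perp_1}$ and $\fG$ is filtered by the contramodules from $\sS$, that is $\fG\in\Fil(\sS)$. Since $\R\Contra_\flat$ is resolving (Lemma~\ref{flat-contramodules-resolving}), hence closed under extensions, and is closed under directed colimits in $\R\Contra$, it is closed under transfinitely iterated extensions, so the filtered contramodule $\fG$ is automatically flat. Thus $\fG$ already has the required property, and it only remains to upgrade the conclusion $\fC\in\sS^{\perp_1}$ to $\fC\in\R\Contra^\cot$.

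For this I would prove the identity $\sS^{\perp_1}=\R\Contra^\cot$. One inclusion is immediate: every $S\in\sS$ is flat, so $\R\Contra^\cot=(\R\Contra_\flat)^{\perp_1}\subseteq\sS^{\perp_1}$. For the reverse inclusion, suppose $\Ext^{\R,1}(S,\fC)=0$ for all $S\in\sS$, and let $\fF$ be an arbitrary flat $\R$\+contramodule; I must show $\Ext^{\R,1}(\fF,\fC)=0$. By the Eklof lemma (Lemma~\ref{eklof-lemma}), the class ${}^{\perp_1}\{\fC\}$ is closed under transfinitely iterated extensions, so it suffices to know that $\fF\in\Fil(\sS)$, i.e.\ that every flat contramodule is a transfinite extension of countably presentable flat contramodules. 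This deconstructibility of $\R\Contra_\flat$ is the crux of the argument; I would derive it from the $\aleph_1$\+accessibility of $\R\Contra_\flat$ (Proposition~\ref{flat-contramodules-accessible}) together with the exactness of directed colimits in $\R\Contra_\flat$ (Lemma~\ref{flat-contramodules-exactness-properties}(b)), building the filtration as a continuous chain of countably generated flat subcontramodules by an argument of Hill-lemma type, so that each successive quotient is countably presentable and flat.

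The main obstacle is precisely this last deconstructibility step: turning the $\aleph_1$\+directed colimit presentation of a flat contramodule supplied by accessibility into an honest continuous well-ordered filtration with countably presentable flat quotients. Once $\sS^{\perp_1}=\R\Contra^\cot$ is in hand, the sequence $0\rarrow\fM\rarrow\fC\rarrow\fG\rarrow0$ produced above has $\fC$ cotorsion and $\fG$ flat, which is the desired conclusion.
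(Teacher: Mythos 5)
There is a genuine gap, and you have located it yourself: the deconstructibility of $\R\Contra_\flat$, i.e., the identity $\R\Contra_\flat=\Fil(\sS)$ (up to direct summands) for a suitable set $\sS$, is the entire content of the argument, and your proposed derivation of it does not work. $\aleph_1$\+accessibility presents an arbitrary flat contramodule as an $\aleph_1$\+directed colimit of countably presentable flat ones, but the transition morphisms in such a diagram need not be monomorphisms, and there is no general mechanism converting an accessible presentation into a continuous well-ordered filtration with prescribed successive quotients; the Hill lemma runs in the opposite direction (it starts from a given filtration and manufactures a large family of subobjects --- it does not build a filtration out of a colimit diagram). The known proofs of flat deconstructibility --- \cite[Lemma~1 and Proposition~2]{BBE} for modules, and its contramodule version \cite[Corollary~7.6]{PR}, stated in this paper as Proposition~\ref{flat-contramodules-deconstructible} --- proceed by purity/cardinality arguments on the structure of flat (contra)modules, not through accessibility; indeed, the accessibility results of~\cite{Pflcc} postdate the deconstructibility results you would be asking them to imply.

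There is also a loss of generality: your source for the set $\sS$ and for the accessibility input is Proposition~\ref{flat-contramodules-accessible}, which requires a countable base of open \emph{two-sided} ideals, whereas the proposition being proved (like Proposition~\ref{flat-contramodules-deconstructible}) assumes only open \emph{right} ideals --- whether $\aleph_1$\+accessibility of $\R\Contra_\flat$ extends to the right-ideal case is precisely the open question raised at the end of the paper. If you replace your deconstructibility sketch by a citation of Proposition~\ref{flat-contramodules-deconstructible}, producing a set $\sS_0$ with $\R\Contra_\flat=\Fil(\sS_0)$, then the rest of your plan --- run the small object argument of Theorem~\ref{eklof-trlifaj-theorem} on $\sS_0$ to get $0\rarrow\fM\rarrow\fC\rarrow\fG\rarrow0$ with $\fC\in\sS_0^{\perp_1}$ and $\fG\in\Fil(\sS_0)$ flat, and identify $\sS_0^{\perp_1}=\R\Contra^\cot$ via Lemma~\ref{eklof-lemma} --- becomes correct, and is in substance exactly the proof of \cite[Corollary~7.8]{PR}, which is what the paper cites for this proposition.
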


\begin{proof}
 This is a part of~\cite[Corollary~7.8]{PR} (see the terminology
in~\cite[Section~3]{PR} or in Section~\ref{cotorsion-pairs-secn}
below).
\end{proof}

\begin{lem} \label{Ext-1-vanishes}
 Let\/ $\R$ be a complete, separated topological ring with
a \emph{countable} base of neighborhoods of zero consisting of
open right ideals.
 Let\/ $\fG$ be a left\/ $\R$\+contramodule that is a quotient
contramodule of a cotorsion\/ $\R$\+contramodule.
 Then one has\/ $\Ext^{\R,1}(\fH,\fG)=0$ for every flat left\/
$\R$\+contramodule\/ $\fH$ of projective dimension\/~$1$.
\end{lem}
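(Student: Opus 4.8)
The plan is to exploit a presentation of $\fG$ as a quotient of a cotorsion contramodule and then run the long exact sequence of $\Ext$ groups, squeezing $\Ext^{\R,1}(\fH,\fG)$ between two terms that vanish for independent reasons. First I would fix a cotorsion left $\R$\+contramodule $\fC$ admitting an epimorphism $\fC\twoheadrightarrow\fG$ in $\R\Contra$, which exists by the hypothesis on $\fG$. Denoting by $\fK$ the kernel, this gives a short exact sequence
$$
 0\rarrow\fK\rarrow\fC\rarrow\fG\rarrow0
$$
in the abelian category $\R\Contra$.

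Next I would apply the cohomological functor $\Ext^{\R,*}(\fH,{-})$ to this short exact sequence and read off the relevant three\+term segment of the resulting long exact sequence,
$$
 \Ext^{\R,1}(\fH,\fC)\rarrow\Ext^{\R,1}(\fH,\fG)
 \rarrow\Ext^{\R,2}(\fH,\fK).
$$
The strategy is then to argue that both flanking terms are zero. The left\+hand term $\Ext^{\R,1}(\fH,\fC)$ vanishes because $\fH$ is flat and $\fC$ is cotorsion; this is immediate from the definition of a cotorsion contramodule (and, for emphasis, from Lemma~\ref{cotorsion-contramodules-higher-ext}, which also records the vanishing in all positive degrees). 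The right\+hand term $\Ext^{\R,2}(\fH,\fK)$ vanishes because $\fH$ has projective dimension at most~$1$ in $\R\Contra$, so that $\Ext^{\R,n}(\fH,{-})=0$ for all $n\ge2$ and in particular for $n=2$, regardless of the second argument $\fK$.

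Exactness of the displayed segment then forces $\Ext^{\R,1}(\fH,\fG)=0$, which is the desired conclusion. There is no genuine obstacle in this argument: the two hypotheses on $\fH$ and $\fG$ are each tailored to kill exactly one of the two neighboring $\Ext$ groups, and the whole proof is a formal consequence of the long exact sequence together with the already\+established vanishing statements. The only point worth double\+checking is the bookkeeping of which variable of $\Ext$ the projective dimension of $\fH$ controls, namely that projective dimension $\le1$ of the \emph{first} argument $\fH$ yields the vanishing of $\Ext^{\R,2}(\fH,{-})$ in the second argument; this is standard, since the projective dimension bounds the length of a projective resolution of $\fH$ used to compute these groups.
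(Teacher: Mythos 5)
Your proof is correct and is essentially identical to the paper's own argument: both take the short exact sequence $0\rarrow\fK\rarrow\fC\rarrow\fG\rarrow0$ with $\fC$ cotorsion, apply $\Ext^{\R,*}(\fH,{-})$, and squeeze $\Ext^{\R,1}(\fH,\fG)$ between $\Ext^{\R,1}(\fH,\fC)$, which vanishes by the definition of a cotorsion contramodule since $\fH$ is flat, and $\Ext^{\R,2}(\fH,\fK)$, which vanishes because $\fH$ has projective dimension at most~$1$. There is nothing to correct.
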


\begin{proof}
 Let $0\rarrow\fM\rarrow\fC\rarrow\fG\rarrow0$ be a short exact
sequence in $\R\Contra$ with a cotorsion $\R$\+contramodule~$\fC$.
 Then we have a long exact sequence of abelian groups
$\dotsb\rarrow\Ext^{\R,1}(\fH,\fC)\rarrow\Ext^{\R,1}(\fH,\fG)
\rarrow\Ext^{\R,2}(\fH,\fM)\rarrow\dotsb$, implying the desired
$\Ext^{\R,1}$ vanishing.
\end{proof}

\begin{cor} \label{orthogonal-to-total-complexes}
 Let\/ $\R$ be a complete, separated topological ring with
a \emph{countable} base of neighborhoods of zero consisting of
open right ideals.
 Let\/ $\fG^\bu$ be a complex of left\/ $\R$\+contramodules whose
terms are quotient contramodules of cotorsion\/ $\R$\+contramodules.
 Let\/ $0\rarrow\fP^\bu\rarrow\fQ^\bu\rarrow\fH^\bu\rarrow0$ be
a short exact sequence of complexes of left\/ $\R$\+contramodules
such that\/ $\fH^\bu$ is a complex of flat\/ $\R$\+contramodules of
projective dimension at most\/~$1$.
 Denote by\/ $\fT^\bu=\Tot(\fP^\bu\to\fQ^\bu\to\fH^\bu)$ the related
total complex.
 Then every morphism of complexes\/ $\fT^\bu\rarrow\fG^\bu$ is
homotopic to zero.
\end{cor}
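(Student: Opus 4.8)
The plan is to show that the entire Hom complex $\Hom^{\R,\bu}(\fT^\bu,\fG^\bu)$ is acyclic; since its degree-zero cohomology is $\Hom_{\Hot(\R\Contra)}(\fT^\bu,\fG^\bu)$, this is equivalent to the assertion that every morphism $\fT^\bu\rarrow\fG^\bu$ is homotopic to zero. First I would use the fact that $\fT^\bu$ is, by definition, the total complex of the three-row bicomplex associated to the short exact sequence $0\rarrow\fP^\bu\rarrow\fQ^\bu\rarrow\fH^\bu\rarrow0$, in order to rewrite the Hom complex out of it. Because there are only three rows, there is no coproduct-versus-product subtlety, and $\Hom^{\R,\bu}(\fT^\bu,\fG^\bu)$ is naturally isomorphic to the total complex of the three-term complex of complexes of abelian groups
$$\Hom^{\R,\bu}(\fH^\bu,\fG^\bu)\rarrow\Hom^{\R,\bu}(\fQ^\bu,\fG^\bu)\rarrow\Hom^{\R,\bu}(\fP^\bu,\fG^\bu),$$
where the two arrows are induced by the morphisms $\fP^\bu\rarrow\fQ^\bu\rarrow\fH^\bu$ (and the order is reversed by contravariance).

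The key step is to check that this three-term sequence is in fact a \emph{short exact} sequence of complexes. A short exact sequence of complexes is degreewise short exact, so for each pair of integers $a$, $b$ the sequence $0\rarrow\fP^a\rarrow\fQ^a\rarrow\fH^a\rarrow0$ is exact in $\R\Contra$. Applying the contravariant left-exact functor $\Hom^\R({-},\fG^b)$ yields an exact sequence
$$0\rarrow\Hom^\R(\fH^a,\fG^b)\rarrow\Hom^\R(\fQ^a,\fG^b)\rarrow\Hom^\R(\fP^a,\fG^b)\rarrow\Ext^{\R,1}(\fH^a,\fG^b).$$
Here I would invoke Lemma~\ref{Ext-1-vanishes}: the contramodule $\fH^a$ is flat of projective dimension at most~$1$, while $\fG^b$ is a quotient contramodule of a cotorsion contramodule, whence $\Ext^{\R,1}(\fH^a,\fG^b)=0$ (the case when $\fH^a$ is projective being trivial). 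Consequently the map $\Hom^\R(\fQ^a,\fG^b)\rarrow\Hom^\R(\fP^a,\fG^b)$ is surjective, and the three-term sequence of Hom groups is short exact in every bidegree.

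Granting this degreewise exactness, the displayed three-term complex of complexes is a short exact sequence of complexes of abelian groups, whose total complex is precisely $\Hom^{\R,\bu}(\fT^\bu,\fG^\bu)$. Now the total complex of any short exact sequence of complexes of abelian groups is acyclic --- the elementary observation recorded in the proof of Lemma~\ref{absolutely-acyclic-are-contraacyclic}(a). Therefore $\Hom^{\R,\bu}(\fT^\bu,\fG^\bu)$ is acyclic, and in particular its degree-zero cohomology $\Hom_{\Hot(\R\Contra)}(\fT^\bu,\fG^\bu)$ vanishes, proving the claim. I expect the main technical point to be the bookkeeping in the first paragraph: confirming that passing to $\Hom^{\R,\bu}({-},\fG^\bu)$ out of the total complex of the three-row bicomplex reproduces exactly the total complex of the reversed three-term Hom complex, with the signs matching up. Once that identification is in place, the argument reduces entirely to the $\Ext^1$-vanishing of Lemma~\ref{Ext-1-vanishes} together with the triviality that totals of short exact sequences of complexes are acyclic.
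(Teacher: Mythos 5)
Your proposal is correct and takes essentially the same route as the paper: the paper's proof likewise invokes Lemma~\ref{Ext-1-vanishes} to obtain the short exact sequence of complexes $0\rarrow\Hom^{\R,\bu}(\fH^\bu,\fG^\bu)\rarrow\Hom^{\R,\bu}(\fQ^\bu,\fG^\bu)\rarrow\Hom^{\R,\bu}(\fP^\bu,\fG^\bu)\rarrow0$ and then applies the dual of the argument in Lemma~\ref{absolutely-acyclic-are-contraacyclic}(a) to conclude that $\Hom^{\R,\bu}(\fT^\bu,\fG^\bu)$ is acyclic. The details you spell out --- the degreewise $\Ext^1$\+vanishing (including the trivial projective case) and the identification of the Hom complex out of the total complex with the total complex of the reversed Hom sequence --- are precisely the steps the paper leaves implicit.
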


\begin{proof}
 In view of Lemma~\ref{Ext-1-vanishes}, we have a short exact
sequence of complexes of abelian groups $0\rarrow\Hom^{\R,\bu}(\fH^\bu,
\fG^\bu)\rarrow\Hom^{\R,\bu}(\fQ^\bu,\fG^\bu)\rarrow
\Hom^{\R,\bu}(\fP^\bu,\fG^\bu)\rarrow0$.
 So the dual argument to the proof of
Lemma~\ref{absolutely-acyclic-are-contraacyclic}(a) applies, proving
that the complex of abelian groups $\Hom^{\R,\bu}(\fT^\bu,\fG^\bu)$
is acyclic.
\end{proof}

\begin{cor} \label{into-contractible-of-flat-cotorsion}
 Let\/ $\R$ be a complete, separated topological ring with
a \emph{countable} base of neighborhoods of zero consisting of
open right ideals.
 Then every complex of flat left\/ $\R$\+contramodules\/ $\fF^\bu$
can be included into a short exact sequence\/ $0\rarrow\fF^\bu
\rarrow\fC^\bu\rarrow\fG^\bu\rarrow0$ in\/ $\Com(\R\Contra)$ with
a contractible complex of flat cotorsion\/ $\R$\+contramodules\/
$\fC^\bu$ and a complex of\/ flat\/ $\R$\+contramodules\/~$\fG^\bu$.
\end{cor}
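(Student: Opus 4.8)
The plan is to build $\fC^\bu$ as the canonical embedding of $\fF^\bu$ into a contractible complex assembled from termwise cotorsion preenvelopes. The guiding principle is that the injective objects of the exact category $\R\Contra_\flat$ are exactly the flat cotorsion contramodules, so what we are after is the flat-contramodule analogue of the standard embedding of a complex into a contractible complex of injectives.

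First I would apply Proposition~\ref{cotorsion-preenvelope} to each term separately: for every $n\in\boZ$ choose a short exact sequence $0\rarrow\fF^n\overset{\iota^n}\rarrow\fJ^n\rarrow\fB^n\rarrow0$ in $\R\Contra$ with $\fJ^n$ cotorsion and $\fB^n$ flat. Since $\fF^n$ and $\fB^n$ are flat and the full subcategory $\R\Contra_\flat$ is closed under extensions (Lemma~\ref{flat-contramodules-resolving}), the middle term $\fJ^n$ is flat, hence flat cotorsion. Now set $\fC^\bu=\bigoplus_{n\in\boZ}D_{n-1,n}^\bu(\fJ^n)$ (in the disk-complex notation of Lemma~\ref{Ext-from-disk-complex}); concretely $\fC^n=\fJ^n\oplus\fJ^{n+1}$ with differential $(a,b)\mapsto(b,0)$, so $\fC^\bu$ is contractible and its terms are flat cotorsion (finite direct sums of flat cotorsion contramodules). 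The embedding $\fF^\bu\rarrow\fC^\bu$ is the chain map whose degree\+$n$ component is $\phi^n\colon x\mapsto(\iota^n x,\ \iota^{n+1}d_{\fF}^n x)$; this commutes with the differentials (using $d^{n+1}d^n=0$) and is termwise injective because $\iota^n$ is a monomorphism.

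The remaining, and genuinely essential, point is that the cokernel $\fG^\bu$ is termwise flat. Because flat contramodules are \emph{not} closed under cokernels of monomorphisms, this does not follow formally from $\fF^n$ and $\fC^n$ being flat; instead I would exhibit $\fG^n$ as an extension of flats. The projection $\fC^n=\fJ^n\oplus\fJ^{n+1}\rarrow\fJ^n\rarrow\fB^n$ annihilates $\im\phi^n$, inducing a surjection $\fG^n\rarrow\fB^n$. Its kernel equals $(\iota^n(\fF^n)\oplus\fJ^{n+1})/\im\phi^n$, and under the identification $\iota^n\colon\fF^n\cong\iota^n(\fF^n)$ the map $x\mapsto(x,\iota^{n+1}d^n x)$ is a split injection with cokernel $\fJ^{n+1}$; hence this kernel is isomorphic to $\fJ^{n+1}$. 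Thus $0\rarrow\fJ^{n+1}\rarrow\fG^n\rarrow\fB^n\rarrow0$ is exact, and since $\fJ^{n+1}$ and $\fB^n$ are flat, closure of $\R\Contra_\flat$ under extensions (Lemma~\ref{flat-contramodules-resolving}) forces $\fG^n$ to be flat. This verification of the flatness of the cokernel is where I expect the only real work to lie; everything else is the formal machinery of contractible complexes, and the resulting termwise short exact sequence $0\rarrow\fF^\bu\rarrow\fC^\bu\rarrow\fG^\bu\rarrow0$ is exactly the asserted short exact sequence in $\Com(\R\Contra)$.
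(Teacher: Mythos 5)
Your proposal is correct and follows essentially the same route as the paper's own proof: the same termwise application of Proposition~\ref{cotorsion-preenvelope} combined with Lemma~\ref{flat-contramodules-resolving}, the same contractible complex with degree\+$n$ term $\fJ^n\oplus\fJ^{n+1}$, the same embedding $x\mapsto(\iota^n x,\,\iota^{n+1}d^n x)$, and the same short exact sequences $0\rarrow\fJ^{n+1}\rarrow\fG^n\rarrow\fB^n\rarrow0$ establishing flatness of the cokernel terms. The only difference is that you spell out the split-graph computation identifying the kernel of $\fG^n\rarrow\fB^n$ with $\fJ^{n+1}$, which the paper asserts without detail.
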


\begin{proof}
 This is a corollary of Proposition~\ref{cotorsion-preenvelope}.
 For every cohomological degree $n\in\boZ$, choose a short exact
sequence $0\rarrow\fF^n\rarrow\overline\fC^n\rarrow
\overline\fG^n\rarrow0$ in $\R\Contra$ with a cotorsion
$\R$\+contramodule $\overline\fC^n$ and a flat
$\R$\+contramodule~$\overline\fG^n$.
 By Lemma~\ref{flat-contramodules-resolving}, the $\R$\+contramodules
$\overline\fC^n$ are flat as extensions of flat $\R$\+contramodules
$\fF^n$ and~$\overline\fG^n$.

 Let $\fC^\bu$ be the direct sum of contractible two-term complexes
$0\rarrow\overline\fC^n\overset\id\rarrow\overline\fC^n\rarrow0$
with the cohomological grading defined by the rule that
$\fC^n=\overline\fC^n\oplus\overline\fC^{n+1}$.
 Let $\fF^\bu\rarrow\fC^\bu$ be the morphism of complexes whose
components are the monomorphisms $\fF^n\rarrow\overline\fC^n$
and the compositions $\fF^n\rarrow\fF^{n+1}\rarrow\overline\fC^{n+1}$.
 Then $\fF^\bu\rarrow\fC^\bu$ is a (termwise) monomorphism of
complexes of $\R$\+contramodules.
 Let $\fG^\bu$ be the cokernel complex.
 Then there are short exact sequences of $\R$\+contramodules
$0\rarrow\overline\fC^{n+1}\rarrow\fG^n\rarrow\overline\fG^n\rarrow0$.
 Once again, Lemma~\ref{flat-contramodules-resolving} tells us that
the $\R$\+contramodules $\fG^n$ are flat as extensions of flat
$\R$\+contramodules.
\end{proof}

\begin{prop} \label{complexes-of-flats-accessible}
 Let\/ $\R$ be a complete, separated topological ring with
a \emph{countable} base of neighborhoods of zero consisting of
open \emph{two-sided} ideals.
 Then the category\/ $\Com(\R\Contra_\flat)$ of complexes of flat
left\/ $\R$\+contramodules is\/ $\aleph_1$\+accessible.
 A complex of flat\/ $\R$\+contramodules\/ $\fH^\bu$ is\/
$\aleph_1$\+presentable as an object of\/ $\Com(\R\Contra_\flat)$ if
and only if all the\/ $\R$\+contramodules\/ $\fH^n$, \,$n\in\boZ$,
are countably presentable.
\end{prop}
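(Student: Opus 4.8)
The plan is to deduce this proposition directly from the general accessibility result for categories of complexes, Proposition~\ref{complexes-presentable-accessible}(b), applied with $\kappa=\aleph_1$ and $\lambda=\aleph_0$ to the additive category $\sK=\R\Contra_\flat$. For this I need to check two hypotheses: that $\R\Contra_\flat$ is $\aleph_1$\+accessible, and that colimits of $\aleph_0$\+indexed chains (that is, countable chains) exist in~$\R\Contra_\flat$. Recall that $\R\Contra_\flat$ inherits an exact, and in particular additive, structure from $\R\Contra$ by Lemma~\ref{flat-contramodules-resolving}, so it is a legitimate choice for~$\sK$.

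The first hypothesis is precisely the content of Proposition~\ref{flat-contramodules-accessible}. For the second, recall that $\R\Contra$ is a locally $\kappa$\+presentable abelian category by Proposition~\ref{presentable-contramodules}, hence cocomplete, and that the full subcategory $\R\Contra_\flat\subset\R\Contra$ is closed under directed colimits (as recalled after Lemma~\ref{flat-contramodules-resolving}, and sharpened in Lemma~\ref{flat-contramodules-exactness-properties}(b)). Therefore the colimit of any countable chain of flat $\R$\+contramodules, formed in $\R\Contra$, again lands in $\R\Contra_\flat$; so such colimits exist in $\R\Contra_\flat$ and agree with the ones computed in $\R\Contra$. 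This closure check is the only hypothesis that is not already packaged as a cited result, and it is the least automatic step, though it is immediate from the closure property just invoked.

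With both hypotheses verified, Proposition~\ref{complexes-presentable-accessible}(b) yields at once that $\Com(\R\Contra_\flat)$ is $\aleph_1$\+accessible, and that its $\aleph_1$\+presentable objects are exactly the complexes $\fH^\bu$ all of whose terms $\fH^n$ are $\aleph_1$\+presentable as objects of the exact category $\R\Contra_\flat$. It then remains only to translate the term-wise condition into the stated one. By Proposition~\ref{flat-contramodules-accessible}(a), a flat $\R$\+contramodule is $\aleph_1$\+presentable as an object of $\R\Contra_\flat$ if and only if it is $\aleph_1$\+presentable as an object of $\R\Contra$, which is the meaning of \emph{countably presentable}. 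Substituting this equivalence in each cohomological degree $n\in\boZ$ identifies the $\aleph_1$\+presentable objects of $\Com(\R\Contra_\flat)$ as precisely the complexes of flat $\R$\+contramodules with countably presentable terms, which completes the proof.
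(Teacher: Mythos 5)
Your proof is correct and follows essentially the same route as the paper, which likewise deduces the proposition from Proposition~\ref{complexes-presentable-accessible}(b) (with $\kappa=\aleph_1$, $\lambda=\aleph_0$) combined with Proposition~\ref{flat-contramodules-accessible}(a), citing \cite[Proposition~10.2]{Pflcc}. Your only addition is the explicit (and correct) verification that colimits of countable chains exist in $\R\Contra_\flat$, which the paper leaves implicit.
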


\begin{proof}
 This is~\cite[Proposition~10.2]{Pflcc}.
 The assertion follows from
Propositions~\ref{flat-contramodules-accessible}(a)
and~\ref{complexes-presentable-accessible}(b) (with $\kappa=\aleph_1$
and $\lambda=\aleph_0$).
\end{proof}

\begin{proof}[Proof of Theorem~\ref{contraacyclic-are-pure-acyclic}]
 Taking $\fP^\bu=\R[[\{*\}]]$ to be the one-term complex corresponding
to the free $\R$\+contramodule with one generator $\R[[\{*\}]]=\R$,
one can immediately see that any complex $\fF^\bu$ satisfying
the assumption of the theorem is acyclic in the abelian category
$\R\Contra$.
 Our task is to prove that the complex $\fF^\bu$ is pure acyclic, i.~e.,
acyclic in the exact category $\R\Contra_\flat$.

 Let\/ $\fF^\bu$ be a complex of flat left $\R$\+contramodules.
 Using Corollary~\ref{into-contractible-of-flat-cotorsion}, we can find
a short exact sequence of complexes of $\R$\+contramodules
$0\rarrow\fF^\bu\rarrow\fC^\bu\rarrow\fG^\bu\rarrow0$, where $\fC^\bu$
is a contractible complex of flat cotorsion $\R$\+contramodules
and $\fG^\bu$ is a complex of flat $\R$\+contramodules.
 Then the complex $\fC^\bu$, being a contractible complex of flat
contramodules, is both pure acyclic and contraacyclic.

 Since the class of all acyclic complexes in an exact category is
closed under kernels of termwise admissible epimorphisms and
cokernels of termwise admissible monomorphisms, the complex $\fF^\bu$
is pure acyclic if and only if the complex $\fG^\bu$ is pure acyclic.
 On the other hand, in view of
Lemma~\ref{absolutely-acyclic-are-contraacyclic}(a), the complex 
$\fF^\bu$ is contraacyclic if and only if the complex $\fG^\bu$
is contraacyclic.
 So we can consider $\fG^\bu$ instead of $\fF^\bu$ and assume that
$\fG^\bu$ is contraacyclic.
 Our aim is to prove that $\fG^\bu$ is pure acyclic.
 What we have achieved here is that we know additionally that all
the terms of the complex $\fG^\bu$ are quotient contramodules of
cotorsion contramodules.

 By Proposition~\ref{complexes-of-flats-accessible}, we have
an $\aleph_1$\+accessible category $\sK=\Com(\R\Contra_\flat)$.
 Furthermore, the $\aleph_1$\+presentable objects of $\sK$ are
precisely all the complexes of countably presentable flat
$\R$\+contramodules.
 The full subcategory of pure acyclic complexes of flat
$\R$\+contramodules is closed under directed colimits in $\sK$
by Lemma~\ref{flat-contramodules-exactness-properties}(b).
 Denote by $\sT\subset\sK$ the set of all (representatives of
isomorphism classes of) pure acyclic complexes of countably
presentable flat $\R$\+contramodules.
 Then all the objects from $\sT$ are $\aleph_1$\+presentable in~$\sK$.
 By Proposition~\ref{pure-acyclic-complexes-accessible}, the full
subcategory $\varinjlim_{(\aleph_1)}\sT\subset\sK$ coincides with
the full subcategory of pure acyclic complexes of flat
$\R$\+contramodules.
 The complex $\fG^\bu$ is an object of~$\sK$.
 Now Proposition~\ref{accessible-subcategory} tells us that, in order
to prove that $\fG^\bu$ belongs to $\varinjlim_{(\aleph_1)}\sT$,
it suffices to check that, for every complex of countably presentable
flat $\R$\+contramodules $\fH^\bu$, any morphism of complexes
$\fH^\bu\rarrow\fG^\bu$ factorizes through a complex from~$\sT$.

 It is easy to represent $\fH^\bu$ as a quotient complex of
a complex of (countably generated) projective
$\R$\+contramodules~$\fQ^\bu$.
 So we have a short exact sequence of complexes of $\R$\+contramodules
$0\rarrow\fP^\bu\rarrow\fQ^\bu\rarrow\fH^\bu\rarrow0$.
 By Proposition~\ref{projective-dimension-of-flat-countably-presented},
\,$\fP^\bu$ is also a complex of projective $\R$\+contramodules
(in fact, of countably generated projective $\R$\+contramodules,
by~\cite[Lemma~11.1]{Pflcc}).
 Put $\fT^\bu=\Tot(\fP^\bu\to\fQ^\bu\to\fH^\bu)$.
 Then the complex of abelian groups $\Hom^{\R,\bu}(\fT^\bu,\fG^\bu)$ is
acyclic by Corollary~\ref{orthogonal-to-total-complexes}.
 The complexes of abelian groups $\Hom^{\R,\bu}(\fP^\bu,\fG^\bu)$ and
$\Hom^{\R,\bu}(\fQ^\bu,\fG^\bu)$ are acyclic, since the complex
$\fG^\bu$ is contraacyclic in $\R\Contra$ by assumption.
 Thus the complex of abelian groups $\Hom^{\R,\bu}(\fH^\bu,\fG^\bu)$ is
acyclic, too.

 We have shown that every morphism of complexes $f\:\fH^\bu\rarrow
\fG^\bu$ is homotopic to zero.
 Thus the morphism~$f$ factorizes through the cone of the identity
endomorphism of the complex~$\fH^\bu$.
 The latter cone is a contractible (hence pure acyclic) complex of
countably presentable flat left $\R$\+contramodules.
 So it belongs to~$\sT$.
\end{proof}

\Section{Cotorsion Pairs} \label{cotorsion-pairs-secn}

 This section continues the discussion of category-theoretic background
that was started in Section~\ref{contraderived-secn}.
 Given an exact category $\sK$, a pair of classes of objects
$(\sF,\sC)$ in $\sK$ is said to be a \emph{cotorsion pair} if
$\sC=\sF^{\perp_1}$ and $\sF={}^{\perp_1}\sC$.

 For any class of objects $\sS\subset\sK$, the pair of classes
$\sC=\sS^{\perp_1}$ and $\sF={}^{\perp_1}\sC$ is a cotorsion pair
in~$\sK$.
 The cotorsion pair $(\sF,\sC)$ is said to be \emph{generated} by
the class of objects~$\sS$.

 Dually to the definition in Section~\ref{contraderived-secn}, a class
of objects $\sC\subset\sK$ is said to be \emph{cogenerating} if for
every object $K\in\sK$ there exists an object $C\in\sC$ together with
an admissible monomorphism $K\rarrow C$ in~$\sK$.

 Let $(\sF,\sC)$ be a cotorsion pair in $\sK$ such that the class $\sF$
is generating and the class $\sC$ is cogenerating in~$\sK$.
 Then the cotorsion pair $(\sF,\sC)$ is said to be \emph{hereditary}
if any one of the following equivalent conditions holds:
\begin{enumerate}
\item the class $\sF$ is closed under kernels of admissible epimorphisms
in~$\sK$;
\item the class $\sC$ is closed under cokernels of admissible
monomorphisms in~$\sK$;
\item $\Ext^2_\sK(F,C)=0$ for all $F\in\sF$ and $C\in\sC$;
\item $\Ext^n_\sK(F,C)=0$ for all $F\in\sF$, \ $C\in\sC$, and $n\ge1$.
\end{enumerate}
 The nontrivial implications are (1)~$\Longrightarrow$~(4) and
(2)~$\Longrightarrow$~(4); they hold by
Lemma~\ref{right-orthogonal-to-self-resolving} and its dual version.

 A cotorsion pair $(\sF,\sC)$ is $\sK$ is said to be \emph{complete} if
for every object $K\in\sK$ there exist (admissible) short exact
sequences
\begin{gather}
 0\lrarrow C'\lrarrow F\lrarrow K\lrarrow 0,
 \label{special-precover-sequence} \\
 0\lrarrow K\lrarrow C\lrarrow F'\lrarrow 0
 \label{special-preenvelope-sequence}
\end{gather}
in~$\sK$ with objects $F$, $F'\in\sF$ and $C$, $C'\in\sC$.
 The short exact sequence~\eqref{special-precover-sequence} is said to
be a \emph{special precover sequence}.
 The short exact sequence~\eqref{special-preenvelope-sequence} is said
to be a \emph{special preenvelope sequence}.

 The following result is known classically as the \emph{Eklof--Trlifaj
theorem}~\cite[Theorems~2 and~10]{ET}.
 Here, given a class of objects $\sF\subset\sK$, we denote by
$\sF^\oplus\subset\sK$ the class of all direct summands of
the objects from~$\sF$.
 The notation $\Fil(\sS)\subset\sK$ was introduced in
Section~\ref{contraderived-secn}.

\begin{thm} \label{eklof-trlifaj-theorem}
 Let\/ $\sB$ be a locally presentable abelian category and\/ $\sS\subset
\sB$ be a \emph{set} of objects.
 Consider the cotorsion pair $(\sF,\sC)$ in\/ $\sB$ generated
by\/~$\sS$.
 In this context: \par
\textup{(a)} if the class\/ $\sF$ is generating in\/ $\sB$ and
the class\/ $\sC$ is cogenerating in\/ $\sB$, then the cotorsion
pair $(\sF,\sC)$ is complete; \par
\textup{(b)} if the class\/ $\Fil(\sS)$ is generating in\/ $\sB$,
then\/ $\sF=\Fil(\sS)^\oplus$.
\end{thm}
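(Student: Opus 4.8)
The plan is to run the classical Eklof--Trlifaj machinery, adapted to the locally presentable setting. One inclusion of part~(b) comes for free: since $\sC=\sS^{\perp_1}$, each $S\in\sS$ lies in ${}^{\perp_1}\sC=\sF$, so the Eklof lemma (Lemma~\ref{eklof-lemma}) gives $\Fil(\sS)\subseteq{}^{\perp_1}\sC=\sF$, and as $\sF$ is cut out by $\Ext^1$\+vanishing it is closed under direct summands; hence $\Fil(\sS)^\oplus\subseteq\sF$. Everything else will follow from a single transfinite construction of special preenvelopes, together with the generating hypotheses.

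The heart of the argument is the construction of special preenvelope sequences. Using local presentability, I would first fix a regular cardinal $\kappa$ for which every object of the set $\sS$ is $\kappa$-presentable. Given $K\in\sB$, I would build a smooth chain $K=K_0\rarrow K_1\rarrow\dotsb$ in which, at each successor step, $K_{\alpha+1}$ is the middle term of the ``universal'' extension $0\rarrow K_\alpha\rarrow K_{\alpha+1}\rarrow\coprod_{(S,\xi)}S\rarrow0$ whose class in $\Ext^1(\coprod_{(S,\xi)}S,\,K_\alpha)\cong\prod_{(S,\xi)}\Ext^1(S,K_\alpha)$ is the tautological family indexed by all pairs $(S,\xi)$ with $S\in\sS$ and $\xi\in\Ext^1(S,K_\alpha)$; at limit ordinals one passes to the colimit. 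By the long exact sequence, pulling the universal class back along a coproduct injection $S\rarrow\coprod_{(S,\xi)}S$ recovers $\xi$, so the transition map $\Ext^1(S,K_\alpha)\rarrow\Ext^1(S,K_{\alpha+1})$ kills every class present at stage~$\alpha$. Each cokernel $K_{\alpha+1}/K_\alpha$ is a coproduct of copies of objects of $\sS$, so $F'=K_\kappa/K$ is filtered by objects of $\sS$, i.e.\ $F'\in\Fil(\sS)$. Setting $C=K_\kappa$, this produces the desired sequence $0\rarrow K\rarrow C\rarrow F'\rarrow0$ with $F'\in\Fil(\sS)\subseteq\sF$.

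\textbf{The main obstacle} is the termination of this construction, namely that $C=K_\kappa$ really lands in $\sC$, i.e.\ $\Ext^1(S,K_\kappa)=0$ for all $S\in\sS$. Since a successor step annihilates the classes already present, it suffices to know that every $\xi\in\Ext^1(S,K_\kappa)$ originates at some stage $K_\alpha$ with $\alpha<\kappa$; this is exactly where the $\kappa$-presentability of $S$ must be used to control the $\kappa$-directed colimit $K_\kappa=\varinjlim_{\alpha<\kappa}K_\alpha$. When $\sB$ has enough $\kappa$-presentable projectives, one can argue this by choosing a presentation of $S$ by $\kappa$-presentable projectives and commuting the relevant $\Hom$-groups (hence the $\Ext^1$) past the colimit; the general locally presentable case requires the finer accessibility estimates for $\Ext$ in such categories. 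I expect this origination step to be the only genuinely delicate point.

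Granting special preenvelopes, the rest is formal. For the special precover sequence in part~(a): given $K$, choose (using that $\sF$ is generating) an epimorphism $F_0\rarrow K$ with $F_0\in\sF$ and kernel $N$, take a special preenvelope $0\rarrow N\rarrow C\rarrow F''\rarrow0$ of $N$, and form the pushout of $F_0\larrow N\rarrow C$. The pushout $P$ fits into $0\rarrow C\rarrow P\rarrow K\rarrow0$ and into $0\rarrow F_0\rarrow P\rarrow F''\rarrow0$; as $F_0,F''\in\sF$ and $\sF$ is closed under extensions, $P\in\sF$, and the first sequence is the required special precover. Together with the preenvelope construction this gives completeness, i.e.\ part~(a). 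Finally, for the reverse inclusion of part~(b), I would run the same pushout with $\Fil(\sS)$ in place of $\sF$ (legitimate since $\Fil(\sS)$ is generating and closed under extensions), obtaining for each $F\in\sF$ a special precover $0\rarrow C\rarrow F'\rarrow F\rarrow0$ with $F'\in\Fil(\sS)$ and $C\in\sC$. Because $F\in{}^{\perp_1}\sC$, one has $\Ext^1(F,C)=0$, so this sequence splits and exhibits $F$ as a direct summand of $F'\in\Fil(\sS)$; thus $\sF\subseteq\Fil(\sS)^\oplus$, which combined with the first paragraph yields $\sF=\Fil(\sS)^\oplus$.
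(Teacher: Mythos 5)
Your overall skeleton (transfinite preenvelope construction, then the Salce pushout trick for the precover in~(a), then splitting the precover of $F\in\sF$ for~(b)) is the right classical route, and the two formal halves are fine: $\Fil(\sS)^\oplus\subseteq\sF$ via Lemma~\ref{eklof-lemma}, and the pushout/splitting arguments are correct as stated. But the core of your proof --- the existence of special preenvelope sequences --- has genuine gaps in this generality, and you have mislocated the delicate point. A locally presentable abelian category is neither AB4 nor AB5; indeed the ambient example of this paper, $\R\Contra$, is explicitly stated to have non-exact directed colimits (\cite[Examples~4.4]{PR}). This breaks your construction in two places besides the one you flagged. First, the ``universal extension'' step requires the comparison map $\Ext^1_\sB\bigl(\coprod_{(S,\xi)}S,\>K_\alpha\bigr)\rarrow\prod_{(S,\xi)}\Ext^1_\sB(S,K_\alpha)$ to be \emph{surjective}, so that the tautological family is realized by an actual extension. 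This map is always injective (component splittings assemble into a splitting), but surjectivity is proved by taking the coproduct of representing extensions and pushing out along the codiagonal, and the step ``a coproduct of monomorphisms is a monomorphism'' is exactly AB4, unavailable here. Second, at limit stages and at the final stage you need the canonical maps $K=K_0\rarrow K_\kappa$ (and $K_\alpha\rarrow K_\kappa$) to be monomorphisms so that $0\rarrow K\rarrow C\rarrow F'\rarrow0$ is even a short exact sequence; a colimit of a smooth chain of monomorphisms need not have this property without AB5-type exactness. Third, the origination step you did flag ($\Ext^1(S,-)$ commuting with the $\kappa$-directed colimit of the chain) is left unresolved except under an ``enough $\kappa$-presentable projectives'' hypothesis that a general locally presentable abelian category does not satisfy. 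Tellingly, your preenvelope construction uses \emph{neither} of the hypotheses ``$\sF$ generating'' and ``$\sC$ cogenerating'': if it worked as written, part~(a) would hold for every set-generated cotorsion pair with no side conditions, which is precisely what is \emph{not} available outside the Grothendieck setting. Those hypotheses are in the statement because the correct proof needs them to circumvent the failures above.

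For comparison, the paper proves this theorem by citation: part~(a) is \cite[Corollary~3.6]{PR} and part~(b) is \cite[Theorem~4.8(a,d)]{PR} (see also \cite[Theorems~3.3 and~3.4]{PS4}). There the approximation sequences are produced by a small object argument run on a set of monomorphisms manufactured from the generating/cogenerating data, arranged so that only pushouts of monomorphisms (which are monomorphisms in any abelian category) and carefully controlled transfinite compositions are used --- no exactness of coproducts or of chain colimits is ever invoked. It is also why the paper's notion of filtration is ``transfinitely iterated extension in the sense of the directed colimit'': the Eklof lemma (\cite[Lemma~4.5]{PR}, Lemma~\ref{eklof-lemma} here) is the half of the classical machinery that survives the lack of AB5, and your first paragraph, which relies only on it, is the part of your argument that is fully correct. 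To repair your proof you would have to replace the universal-extension chain by the \cite{PR}-style small object argument, at which point the generating and cogenerating hypotheses enter exactly where your current draft is silent.
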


\begin{proof}
 Part~(a) is~\cite[Corollary~3.6]{PR}.
 Part~(b) is~\cite[Theorem~4.8(a,d)]{PR}.
 See also~\cite[Theorems~3.3 and~3.4]{PS4}.
\end{proof}

 The following result is closely related to
Theorem~\ref{becker-contraderived-of-lpacepo}.

\begin{thm} \label{contraderived-cotorsion-pair-for-lpacepo}
 Let\/ $\sB$ be a locally presentable abelian category with enough
projective objects.
 Then the pair of classes of complexes of projective objects\/
$\sF=\Com(\sB_\proj)$ and contraacyclic complexes\/
$\sC=\Ac^\bctr(\sB)$ is a hereditary complete cotorsion pair $(\sF,\sC)$
in the abelian category of complexes\/ $\Com(\sB)$.
\end{thm}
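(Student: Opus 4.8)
The plan is to realize $(\sF,\sC)$ as the cotorsion pair generated by a \emph{set} of complexes, to identify both of its classes, and to deduce completeness from the Eklof--Trlifaj Theorem~\ref{eklof-trlifaj-theorem}; here I use that $\Com(\sB)$ is again a locally presentable abelian category, so that theorem applies.

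First I would pin down the right-hand class. For any $P^\bu\in\Com(\sB_\proj)$ and any complex $B^\bu$ one has $\Ext^1_\sB(P^n,B^n)=0$ for every $n$, since $P^n$ is projective; hence Lemma~\ref{Ext-1-as-homotopy-Hom} yields $\Ext^1_{\Com(\sB)}(P^\bu,B^\bu)\simeq\Hom_{\Hot(\sB)}(P^\bu,B^\bu[1])$. As the shifts $P^\bu\mapsto P^\bu[\pm1]$ preserve the class $\Com(\sB_\proj)$, the vanishing of $\Ext^1_{\Com(\sB)}(P^\bu,B^\bu)$ for all $P^\bu\in\Com(\sB_\proj)$ is equivalent to the vanishing of $\Hom_{\Hot(\sB)}(Q^\bu,B^\bu)$ for all $Q^\bu\in\Com(\sB_\proj)$, that is, to every chain map from a complex of projectives into $B^\bu$ being null-homotopic. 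This is exactly the condition $B^\bu\in\Ac^\bctr(\sB)$, so $\sF^{\perp_1}=\sC$.

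Next I would manufacture the generating set. For a regular cardinal $\kappa$ large enough that $\sB$ has a $\kappa$\+presentable projective generator, Proposition~\ref{complexes-of-projectives-filtered-by} furnishes a set $\sS\subset\Com(\sB_\proj)$ such that every complex of projectives is a direct summand of a complex filtered by~$\sS$. Conversely, any $\sS$\+filtered complex lies in $\Com(\sB_\proj)$: the successive quotients are complexes of projectives, so in each cohomological degree the filtration is termwise split in $\sB$ and every term of the colimit is a coproduct of projectives. Thus $\Fil(\sS)^\oplus=\Com(\sB_\proj)=\sF$. Since $\sS\subset\sF$, we have $\sC=\sF^{\perp_1}\subseteq\sS^{\perp_1}$; conversely, if $\Ext^1_{\Com(\sB)}(S,B^\bu)=0$ for all $S\in\sS$, the Eklof Lemma~\ref{eklof-lemma} propagates this vanishing to all of $\Fil(\sS)$ and, passing to direct summands, to all of $\sF$, whence $\sS^{\perp_1}\subseteq\sF^{\perp_1}=\sC$. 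So $\sS^{\perp_1}=\sC$, and the cotorsion pair generated by $\sS$ is $({}^{\perp_1}\sC,\sC)$; by Theorem~\ref{eklof-trlifaj-theorem}(b) its left class is $\Fil(\sS)^\oplus=\sF$ (here $\Fil(\sS)$ is generating because complexes of projectives are). Hence $(\sF,\sC)$ is a cotorsion pair with $\sF={}^{\perp_1}\sC$.

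It remains to establish completeness and hereditariness. The class $\sF=\Com(\sB_\proj)$ is generating since $\sB$ has enough projectives, and $\sC=\Ac^\bctr(\sB)$ is cogenerating because every complex admits a (termwise split) monomorphism into a contractible, hence contraacyclic, complex, for instance the cone of its identity map; so Theorem~\ref{eklof-trlifaj-theorem}(a) gives completeness. For hereditariness I would verify condition~(1): in a short exact sequence $0\rarrow K^\bu\rarrow L^\bu\rarrow M^\bu\rarrow0$ in $\Com(\sB)$ with $L^\bu,M^\bu\in\Com(\sB_\proj)$, projectivity of each $M^n$ splits the sequence termwise, so $K^n$ is a summand of the projective $L^n$ and $K^\bu\in\Com(\sB_\proj)$. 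The one genuinely substantive ingredient is the deconstructibility statement $\Com(\sB_\proj)=\Fil(\sS)^\oplus$ coming from Proposition~\ref{complexes-of-projectives-filtered-by} (in the tradition of Neeman and \v St\!'ov\'\i\v cek); everything else is formal bookkeeping with orthogonality classes, so I expect that deconstructibility step to be the main obstacle.
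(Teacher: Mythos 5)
Your proof is correct and takes essentially the same route as the paper's: the paper proves this theorem simply by citing \cite[Theorem~7.3]{PS4}, whose argument is precisely the one you reconstruct --- generate the cotorsion pair by the set $\sS$ supplied by Proposition~\ref{complexes-of-projectives-filtered-by} (itself quoted from \cite{PS4}), identify the right-hand class as $\Ac^\bctr(\sB)$ via Lemma~\ref{Ext-1-as-homotopy-Hom} together with shift-invariance of $\Com(\sB_\proj)$, and then apply Lemma~\ref{eklof-lemma} and Theorem~\ref{eklof-trlifaj-theorem} to obtain $\sF=\Fil(\sS)^\oplus=\Com(\sB_\proj)$, completeness, and (by your termwise splitting observation) hereditariness. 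Your one tacit step, the existence of a $\kappa$\+presentable projective generator of $\sB$, is standard (cover a strong generating set of $\kappa$\+presentable objects by projectives and take the coproduct, enlarging~$\kappa$ if needed) and is likewise presupposed in the formulation of Proposition~\ref{complexes-of-projectives-filtered-by}.
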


\begin{proof}
 This is~\cite[Theorem~7.3]{PS4}.
 For a generalization, see~\cite[Theorem~6.16]{PS5}.
\end{proof}

 Let $(\sF,\sC)$ be a complete cotorsion pair in $\sK$ and
$\sE\subset\sK$ be a full subcategory closed under extensions.
 We will say that the cotorsion pair $(\sF,\sC)$ \emph{restricts} to
a complete cotorsion pair in $\sE$ if the pair of classes $\sE\cap\sF$
and $\sE\cap\sC$ is a complete cotorsion pair in (the inherited exact
structure on)~$\sE$.

\begin{lem} \label{cotorsion-pair-restricts}
 Let\/ $\sK$ be an exact category and\/ $\sE\subset\sK$ be a full
subcategory closed under extensions.
 Let $(\sF,\sC)$ be a complete cotorsion pair in\/~$\sK$.
 In this setting: \par
\textup{(a)} if\/ $\sE$ is closed under kernels of admissible
epimorphisms in\/ $\sK$ and\/ $\sF\subset\sE$, then the cotorsion pair
$(\sF,\sC)$ in\/ $\sK$ restricts to a complete cotorsion pair
$(\sF,\,\sE\cap\sC)$ in\/~$\sE$; \par
\textup{(b)} if\/ $\sE$ is closed under cokernels of admissible
monomorphisms in\/ $\sK$ and\/ $\sC\subset\sE$, then the cotorsion pair
$(\sF,\sC)$ in\/ $\sK$ restricts to a complete cotorsion pair
$(\sE\cap\sF,\,\sC)$ in\/~$\sE$. \par
 Furthermore, in both cases the restricted cotorsion pair
$(\sE\cap\sF,\,\sE\cap\sC)$ in\/ $\sE$ is hereditary whenever
the cotorsion pair $(\sF,\sC)$ in\/ $\sK$ is hereditary.
\end{lem}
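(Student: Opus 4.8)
Part~(b) is part~(a) applied to the opposite exact category $\sK^\rop$: there the cotorsion pair becomes $(\sC,\sF)$, closure of $\sE$ under cokernels of admissible monomorphisms becomes closure of $\sE^\rop$ under kernels of admissible epimorphisms, and the hypothesis $\sC\subset\sE$ becomes the containment of the left class of the pair in the subcategory. So I would treat only part~(a). Throughout I use the standard fact that, for an extension-closed full subcategory $\sE\subset\sK$ carrying the inherited exact structure, one has $\Ext^1_\sE(X,Y)=\Ext^1_\sK(X,Y)$ for all $X$, $Y\in\sE$: an extension of $X$ by $Y$ formed in $\sK$ has its middle term in $\sE$ by extension-closedness, hence is an admissible extension in $\sE$, and conversely. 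Thus the orthogonality relations computed in $\sE$ are the restrictions to $\sE$ of those computed in~$\sK$.

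In part~(a) one has $\sF\subset\sE$, whence $\sE\cap\sF=\sF$, and the goal is to show that $(\sF,\,\sE\cap\sC)$ is a complete cotorsion pair in~$\sE$. The orthogonality $\sF^{\perp_1}=\sE\cap\sC$, computed in $\sE$, is immediate from the agreement of the $\Ext^1$ groups. For the reverse relation ${}^{\perp_1}(\sE\cap\sC)=\sF$, the inclusion $\sF\subset{}^{\perp_1}(\sE\cap\sC)$ is clear; for the opposite inclusion I would take $X\in\sE$ with $\Ext^1_\sK(X,{-})$ vanishing on $\sE\cap\sC$ and invoke completeness of $(\sF,\sC)$ in $\sK$ to obtain a special precover sequence $0\rarrow C'\rarrow F\rarrow X\rarrow0$ with $F\in\sF$ and $C'\in\sC$. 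As $F\in\sF\subset\sE$ and $\sE$ is closed under kernels of admissible epimorphisms, the term $C'$ lies in $\sE$, hence in $\sE\cap\sC$; therefore $\Ext^1_\sK(X,C')=0$, the sequence splits, and $X$ is a direct summand of~$F$. Since the left class of a cotorsion pair, being ${}^{\perp_1}\sC$, is closed under direct summands, it follows that $X\in\sF$.

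Completeness of the restricted pair in $\sE$ is then read off from completeness of $(\sF,\sC)$ in~$\sK$. For $K\in\sE$, the special precover sequence $0\rarrow C'\rarrow F\rarrow K\rarrow0$ has $F\in\sF\subset\sE$, so its kernel $C'$ lies in $\sE\cap\sC$ exactly as above, and the whole sequence lives in~$\sE$. The special preenvelope sequence $0\rarrow K\rarrow C\rarrow F'\rarrow0$ has $K\in\sE$ and $F'\in\sF\subset\sE$; since $\sE$ is closed under extensions, its middle term $C$ lies in $\sE\cap\sC$, and this sequence too lives in~$\sE$. This completes part~(a).

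For the heredity claim, note that the completeness just established makes the left class generating and the right class cogenerating in $\sE$ (via the two special sequences), so the equivalent characterizations of heredity are available for the restricted pair. In case~(a) I would check condition~(1): for an admissible epimorphism in $\sE$ with both terms in $\sE\cap\sF=\sF$, the kernel formed in $\sE$ coincides with the kernel formed in $\sK$, which lies in $\sF$ because $(\sF,\sC)$ is hereditary in $\sK$; hence $\sE\cap\sF$ is closed under kernels of admissible epimorphisms in~$\sE$. Case~(b) is handled dually, via condition~(2) and closure of $\sC$ under cokernels of admissible monomorphisms in~$\sK$. The point to watch throughout is that each auxiliary object supplied by completeness in $\sK$ must actually land in $\sE$; this is precisely where the one-sided closure hypothesis combines with closure under extensions, and it is also what forces the relevant special sequence to split in the orthogonality argument. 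I expect this bookkeeping to be the only real obstacle, the rest being formal.
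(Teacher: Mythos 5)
Your proof is correct and matches the intended argument: the paper gives no proof beyond citing \cite[Lemmas~1.5 and~1.6]{Pal}, and your reasoning --- the agreement $\Ext^1_\sE=\Ext^1_\sK$ on an extension-closed subcategory, locating $C'$ (resp.\ $C$) in $\sE$ via the one-sided closure (resp.\ extension-closure) hypothesis, the splitting argument for ${}^{\perp_1}(\sE\cap\sC)\subset\sF$, duality for part~(b), and verifying heredity through condition~(1) (resp.~(2)) --- is exactly the standard argument those cited lemmas carry out. No gaps.
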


\begin{proof}
 This is~\cite[Lemmas~1.5 and~1.6]{Pal}.
\end{proof}

 An exact category $\sE$ is said to have \emph{exact directed colimits}
if all directed colimits exist in $\sE$ and preserve (admissible)
short exact sequences.
 The following important proposition goes back
to~\cite[proofs of Lemma~5.2 and Proposition~5.3]{Sto},
\cite[Propositions~3.1 and~3.2]{Gil4},
and~\cite[proofs of Lemma~4.6 and Theorem~4.7]{BCE}.

\begin{prop} \label{transfinite-extensions-directed-colimits}
 Let\/ $\sE$ be an exact category with exact directed colimits and\/
$\sA\subset\sE$ be a full subcategory closed under extensions and
cokernels of admissible monomorphisms in\/~$\sE$.
 Then\/ $\sA$ is closed under transfinitely iterated extensions if and
only if\/ $\sA$ is closed under directed colimits in\/~$\sE$.
\end{prop}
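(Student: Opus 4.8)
The statement is an equivalence, and the two implications are of quite different character, so I would treat them separately. For the implication ``closed under directed colimits $\Rightarrow$ closed under transfinitely iterated extensions'', I would argue by transfinite induction along a filtration. Given an $\alpha$\+indexed filtration $(F_i\to F_j)_{0\le i<j<\alpha}$ with $F_0=0$, admissible monomorphisms $F_i\to F_{i+1}$, and all quotients $S_i=F_{i+1}/F_i$ in $\sA$, one shows $F_i\in\sA$ for every~$i$: at a successor step $F_{i+1}$ is an extension of $S_i\in\sA$ by $F_i\in\sA$, hence lies in $\sA$ by closure under extensions, while at a limit ordinal $j$ one has $F_j=\varinjlim_{i<j}F_i$, a directed colimit of objects of $\sA$, which lies in $\sA$ by hypothesis. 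Passing to the top gives $F=\varinjlim_{i<\alpha}F_i\in\sA$. This direction uses only the \emph{existence}, not the exactness, of directed colimits.

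The substantive implication is ``closed under transfinitely iterated extensions $\Rightarrow$ closed under directed colimits'', and here the plan has two ingredients. First, coproducts of objects of $\sA$ already lie in $\sA$: well\+ordering the index set, the partial coproducts form a smooth chain whose successive subquotients are the individual summands, so any coproduct $\bigoplus_i A_i$ with $A_i\in\sA$ is a transfinitely iterated extension of the $A_i$ and hence belongs to $\sA$. Second, I would realize a directed colimit $A=\varinjlim_{i\in I}A_i$ of objects of $\sA$ through \emph{admissible} conflations built from the exactness of directed colimits. The prototype is a sequential colimit, where the mapping\+telescope sequence
$$0\rarrow\bigoplus_{n\ge0}A_n\rarrow\bigoplus_{n\ge0}A_n\rarrow\varinjlim_n A_n\rarrow0$$
is a directed colimit of the (split) finite telescopes, hence a conflation in $\sE$; since both outer terms are coproducts of $A_i$'s and therefore lie in $\sA$ by the first ingredient, the colimit is the cokernel of an admissible monomorphism with both ends in $\sA$, and so lies in $\sA$ by closure under cokernels of admissible monomorphisms.

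For a general directed poset $I$ the single telescope does not suffice (the naive bar presentation is not a two\+term conflation, because of the triangle relations among the transition maps), so I would reduce to the smooth\+chain situation: choose a continuous well\+ordered exhaustion of $I$ and consider the associated smooth chain of partial colimits $B_\beta=\varinjlim_{i\in I_\beta}A_i$, arranging that each transition $B_\beta\to B_{\beta+1}$ is an admissible monomorphism whose cokernel is (a telescope\+type quotient of) objects of $\sA$, and hence lies in $\sA$. Then $A$ is exhibited as a transfinitely iterated extension of objects of $\sA$, and closure under transfinite extensions finishes the argument. The main obstacle is precisely this last construction in the non\+abelian exact setting: verifying that the partial\+colimit transitions are \emph{admissible} monomorphisms and correctly identifying their cokernels. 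In an abelian category this is immediate from the exactness of filtered colimits, but in a general exact category it is exactly here that the hypothesis of exact directed colimits is indispensable — one presents each relevant sequence as a directed colimit of split conflations over finite subdiagrams and invokes exactness to conclude it is a conflation. This is the technical heart supplied by the cited results of \v St\!'ov\'\i\v cek, Gillespie, and Bazzoni--Cort\'es-Izurdiaga--Estrada, and I would import it at this point rather than reprove it.
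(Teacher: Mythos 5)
The paper offers no argument of its own for this proposition: its entire proof is the citation to \cite[Proposition~8.1]{PS6}, with the lineage through \cite[proofs of Lemma~5.2 and Proposition~5.3]{Sto}, \cite[Propositions~3.1 and~3.2]{Gil4}, and \cite[proofs of Lemma~4.6 and Theorem~4.7]{BCE} indicated beforehand, so your decision to import the technical core from those same sources is consistent with what the paper does. Your easy direction is correct (transfinite induction along the filtration: closure under extensions at successor steps, closure under directed colimits at limit steps, the latter applicable by smoothness of the chain). So are the two standard ingredients you supply for the converse: closure under coproducts via the smooth chain of partial coproducts with split (hence admissible) inclusions, and the $\omega$\+telescope conflation $0\rarrow\bigoplus_n A_n\rarrow\bigoplus_n A_n\rarrow\varinjlim_n A_n\rarrow0$, exhibited as a directed colimit of split conflations over finite stages and finished by one application of closure under cokernels of admissible monomorphisms.

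The one wrong turn is your plan for a general directed poset $I$: taking a continuous well-ordered exhaustion $I=\bigcup_\beta I_\beta$ and ``arranging that each transition $B_\beta\rarrow B_{\beta+1}$ between partial colimits is an admissible monomorphism'' with cokernel in $\sA$. The transition maps between partial colimits of a directed system are not monomorphisms in general, and they cannot be arranged to be: the obvious abelian-category fix (replace $B_\beta$ by its image in $A$) produces successive subquotients with no reason to lie in $\sA$, and in an exact category images are not even available. Nor does a ``telescope-type'' device rescue a long chain: for a chain $(B_\beta)_{\beta<\lambda}$ with $\lambda>\omega$, the map $1-t$ (where $t$~is the shift along successor steps) on $\bigoplus_{\beta<\lambda}B_\beta$ is still an admissible monomorphism, but its cokernel only computes colimits along successor steps, so it decomposes over the blocks $[\mu,\mu+\omega)$ and is \emph{not} $\varinjlim_{\beta<\lambda}B_\beta$. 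The cited proofs take a different route: when $I$ has countable cofinality one passes to a cofinal chain of the objects $A_i$ \emph{themselves} and applies the telescope directly; in general one inducts on the cardinality of $I$ and, at the crucial step, presents $A=\varinjlim_{i\in I}A_i$ as the cokernel of an admissible monomorphism $K\rarrow\bigoplus_i A_i$ in which $K$ is itself transfinitely filtered by (finite coproducts of) the objects~$A_i$, the admissibility of this conflation being obtained, via exactness of directed colimits, as a directed colimit of split conflations over finite subdiagrams. So the telescope is not merely a prototype awaiting a long-chain analogue; it is the actual template, with closure under cokernels of admissible monomorphisms invoked exactly once at the end. Since you explicitly flagged this step as the one you would import rather than reprove, your proposal is serviceable as a plan, but its description of the imported step should be corrected along these lines.
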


\begin{proof}
 This is~\cite[Proposition~8.1]{PS6}.
\end{proof}

 In the following lemma, the class of objects $\sF={}^{\perp_1}\sC
\subset\sK$ is viewed as a full subcategory of $\sK$ endowed with
the exact structure inherited from~$\sK$.

\begin{lem} \label{hom-from-absolutely-acyclic-in-cotorsion-pair}
 Let\/ $\sK$ be an exact category and\/ $\sC\subset\sK$ be a full
subcategory.
 Put\/ $\sF={}^{\perp_1}\sC\subset\sK$.
 Let $C^\bu$ be a complex in\/~$\sC$.
 In this context: \par
\textup{(a)} For any (termwise admissible) short exact sequence\/
$0\rarrow F^\bu\rarrow G^\bu\rarrow H^\bu\rarrow0$ of complexes in\/
$\sF$, the short sequence of complexes of abelian groups\/ $0\rarrow
\Hom_\sK^\bu(H^\bu,C^\bu)\rarrow\Hom_\sK^\bu(G^\bu,C^\bu)\rarrow
\Hom_\sK^\bu(F^\bu,C^\bu)\rarrow0$ is exact. \par
\textup{(b)} For any absolutely acyclic complex $A^\bu$ in\/ $\sF$,
any morphism of complexes $A^\bu\rarrow C^\bu$ is homotopic to zero.
\end{lem}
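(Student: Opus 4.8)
The plan is to establish part~(a) first, since part~(b) will follow from it together with the structural description of absolutely acyclic complexes. For part~(a), I would argue in each cohomological degree separately. In degree~$n$ the three terms of the sequence are the products $\prod_{p\in\boZ}\Hom_\sK(X^p,C^{p+n})$ for $X^\bu\in\{H^\bu,G^\bu,F^\bu\}$, so it suffices to check that for every pair of integers $p$ and $q$ the sequence
\[
 0\rarrow\Hom_\sK(H^p,C^q)\rarrow\Hom_\sK(G^p,C^q)\rarrow\Hom_\sK(F^p,C^q)\rarrow0
\]
is exact, and then take products over~$p$ (a product of short exact sequences of abelian groups is exact). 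This sequence is the beginning of the long exact sequence obtained by applying $\Hom_\sK({-},C^q)$ to the admissible short exact sequence $0\rarrow F^p\rarrow G^p\rarrow H^p\rarrow0$, and its first potentially nonzero continuation is $\Ext^1_\sK(H^p,C^q)$, which vanishes because $H^p\in\sF={}^{\perp_1}\sC$ and $C^q\in\sC$. This is the heart of the matter: the orthogonality $\sF={}^{\perp_1}\sC$ plays here exactly the role that projectivity of $P^\bu$ plays in Lemma~\ref{absolutely-acyclic-are-contraacyclic}(a). The only point demanding a little care is that $\Hom_\sK^\bu$ involves an \emph{infinite product} in each degree, so one genuinely needs exactness of products of short exact sequences of abelian groups.

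For part~(b), I would introduce the class $\sX$ of all complexes $A^\bu$ in $\sF$ for which the complex of abelian groups $\Hom_\sK^\bu(A^\bu,C^\bu)$ is acyclic. Since homotopy classes of morphisms $A^\bu\rarrow C^\bu$ are computed by $H^0\Hom_\sK^\bu(A^\bu,C^\bu)$, it is enough to show that every absolutely acyclic complex in $\sF$ lies in $\sX$. Here I would invoke the characterization of absolutely acyclic complexes as the objects of the minimal full subcategory of $\Com(\sF)$ containing the contractible complexes and closed under extensions and direct summands, and verify that $\sX$ enjoys these three closure properties.

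Checking the three properties is purely formal once part~(a) is available. A contracting homotopy on $A^\bu$ induces a contracting homotopy on $\Hom_\sK^\bu(A^\bu,C^\bu)$, so contractible complexes lie in $\sX$; closure under direct summands holds because the contravariant functor $\Hom_\sK^\bu({-},C^\bu)$ is additive, so it carries a direct-sum decomposition of $A^\bu$ to one of the $\Hom$ complex, and a direct summand of an acyclic complex is acyclic; and closure under extensions follows by applying part~(a) to a termwise admissible short exact sequence $0\rarrow{A'}^\bu\rarrow A^\bu\rarrow{A''}^\bu\rarrow0$ of complexes in $\sF$ and reading off the long exact cohomology sequence of the resulting short exact sequence of $\Hom$ complexes. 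Hence $\sX$ contains all absolutely acyclic complexes in~$\sF$; in particular, for such $A^\bu$ one has $H^0\Hom_\sK^\bu(A^\bu,C^\bu)=0$, which is precisely the asserted null-homotopy. (Equivalently, one may use the other definition of absolute acyclicity: part~(a) identifies $\Hom_\sK^\bu(\Tot(F^\bu\to G^\bu\to H^\bu),C^\bu)$ with the total complex of a short exact sequence of complexes of abelian groups, hence acyclic, and the objects annihilated by $H^\bu\Hom_\sK^\bu({-},C^\bu)$ form a thick subcategory of $\Hot(\sF)$ containing all such total complexes.) I expect no serious obstacle beyond part~(a): the entire force of the lemma is concentrated in the $\Ext^1$-vanishing, while part~(b) is bookkeeping.
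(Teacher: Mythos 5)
Your proof is correct and takes essentially the same route as the paper, whose entire argument is that part~(a) is immediate from the degreewise $\Ext^1$-vanishing and that part~(b) follows from part~(a) by the (dualized, generalized) mechanism of Lemma~\ref{absolutely-acyclic-are-contraacyclic}(a): the complex $\Hom_\sK^\bu(\Tot(F^\bu\to G^\bu\to H^\bu),C^\bu)$ is the total complex of a short exact sequence of complexes of abelian groups, hence acyclic --- which is exactly your closing parenthetical. Your main line through the equivalent characterization of absolute acyclicity (contractible complexes, extensions, direct summands) is just the other formulation the paper itself records via~\cite[Proposition~8.12]{PS5}, and your closure checks for it are sound.
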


\begin{proof}
 This is a generalization of the dual version of
Lemma~\ref{absolutely-acyclic-are-contraacyclic}(a).
 Part~(a) is obvious; and to prove part~(b), one can observe that
acyclicity of the complex of abelian groups $\Hom_\sK^\bu(A^\bu,C^\bu)$
follows from part~(a).
\end{proof}

 In the next lemma, both the classes $\sF$ and $\sC\subset\sK$ are
viewed as full subcategories endowed with the exact structures
inherited from~$\sK$.

\begin{lem} \label{acyclics-orthogonal-to-acyclics}
 Let\/ $\sK$ be an exact category and $(\sF,\sC)$ be a cotorsion pair
in\/~$\sK$.
 Let $F^\bu$ be an acyclic complex in the exact category\/ $\sF$ and
$C^\bu$ be an acyclic complex in the exact category\/~$\sC$.
 Then, viewing both $F^\bu$ and $C^\bu$ as complexes in\/ $\sK$,
any morphism of complexes $F^\bu\rarrow C^\bu$ is homotopic to zero.
\end{lem}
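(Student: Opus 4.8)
The plan is to prove that every morphism of complexes $f\:F^\bu\rarrow C^\bu$ is homotopic to zero; the only property of the cotorsion pair that I will use is the orthogonality relation $\Ext^1_\sK(F,C)=0$ for $F\in\sF$ and $C\in\sC$ (which is just the definition $\sC=\sF^{\perp_1}$, so heredity is not needed, as only first Ext groups will occur). Since $F^\bu$ is acyclic in the exact category $\sF$, its objects of cocycles $Z^n$ lie in $\sF$ and fit into admissible short exact sequences $0\rarrow Z^n\rarrow F^n\rarrow Z^{n+1}\rarrow0$; likewise the cocycles $W^n$ of $C^\bu$ lie in $\sC$, with admissible short exact sequences $0\rarrow W^n\rarrow C^n\rarrow W^{n+1}\rarrow0$. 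The difficulty to keep in mind is that a naive degree\+by\+degree construction of a contracting homotopy produces a correction term coupling each degree to the next, and since the complexes are unbounded there is no extremal degree from which to launch an induction.

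The device that breaks this coupling is to pass to the subcomplex of cocycles of $F^\bu$ equipped with the \emph{zero} differential, which I denote by $Z^\bu$ (so $(Z^\bu)^n=Z^n$); the quotient complex $F^\bu/Z^\bu$, also with zero differential, is then $Z^\bu[1]$, and $0\rarrow Z^\bu\rarrow F^\bu\rarrow Z^\bu[1]\rarrow0$ is a termwise admissible short exact sequence of complexes. The advantage of the zero\+differential complexes is that homotopies out of them decouple in each degree. First I would verify $\Hom_{\Hot(\sK)}(Z^\bu,C^\bu)=0$: a chain map $Z^\bu\rarrow C^\bu$ is merely a family of maps $Z^n\rarrow W^n$ (they land in cocycles because the source differential is zero), and each such map is a coboundary exactly when it lifts along $C^{n-1}\rarrow W^n$, which it does because the obstruction sits in $\Ext^1_\sK(Z^n,W^{n-1})=0$. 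The same computation after a shift gives $\Hom_{\Hot(\sK)}(Z^\bu[1],C^\bu)=0$. Both acyclicity hypotheses are used here: one needs $W^{n-1}\in\sC$ and $Z^n\in\sF$.

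With these two vanishings I would assemble the homotopy in three steps. Restricting $f$ to the subcomplex $Z^\bu$ gives a null\+homotopic map, so there are $t^n\:Z^n\rarrow C^{n-1}$ with $f^n|_{Z^n}=d_C^{n-1}t^n$. I would then extend each $t^n$ along the admissible monomorphism $Z^n\rarrow F^n$ to a map $s^n\:F^n\rarrow C^{n-1}$, which is possible since the obstruction lies in $\Ext^1_\sK(Z^{n+1},C^{n-1})=0$. A short check shows that $f'=f-(d_Cs+sd_F)$ vanishes on the subcomplex $Z^\bu$ (using $s^n|_{Z^n}=t^n$ and that $d_F$ kills cocycles), so $f'$ factors as $f'=\gamma\beta$ through the projection $\beta\:F^\bu\rarrow Z^\bu[1]$ for some chain map $\gamma\:Z^\bu[1]\rarrow C^\bu$. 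By the second vanishing $\gamma$ is null\+homotopic, say $\gamma=d_Cu$ with $u\:Z^\bu[1]\rarrow C^\bu$ of degree $-1$; since $\beta$ is a chain map and $Z^\bu[1]$ carries the zero differential, $\gamma\beta=d_C(u\beta)=d_C(u\beta)+(u\beta)d_F$, so $f=(d_Cs+sd_F)+\bigl(d_C(u\beta)+(u\beta)d_F\bigr)$ is a coboundary, as desired.

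I expect the only real obstacle to be the coherence of the homotopy across degrees, and the decisive point is the reduction, via the zero\+differential complexes $Z^\bu$ and $Z^\bu[1]$, to the two degreewise vanishings; these in turn rest solely on the single relation $\Ext^1_\sK(\sF,\sC)=0$ supplied by the cotorsion pair, through the long exact sequences attached to the cocycle short exact sequences of $C^\bu$ and $F^\bu$. The remaining points are routine: that $f'=\gamma\beta$ is genuinely a factorization of chain maps, and that $\gamma\beta$ equals $d_C(u\beta)+(u\beta)d_F$, both following from $\beta$ being a morphism of complexes.
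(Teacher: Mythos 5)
Your proof is correct, and it is complete at the level of detail given. For comparison: the paper does not prove this lemma itself but cites \cite[Lemma~3.9]{Gil} and \cite[Lemma~B.1.8]{Pcosh}, and the argument in those sources is in substance the same as yours --- a degreewise lifting along the cocycle sequences $0\rarrow W^{n-1}\rarrow C^{n-1}\rarrow W^n\rarrow0$ of $C^\bu$ combined with an extension step along $0\rarrow Z^n\rarrow F^n\rarrow Z^{n+1}\rarrow0$, using only the orthogonality $\Ext^1_\sK(F,C)=0$. What your write-up adds is the packaging via the zero-differential complexes $Z^\bu$ and $Z^\bu[1]$ and the termwise admissible sequence $0\rarrow Z^\bu\rarrow F^\bu\rarrow Z^\bu[1]\rarrow0$, which makes the correction-term bookkeeping (the usual sticking point for unbounded complexes, since there is no extremal degree to induct from) completely transparent: homotopies out of a zero-differential complex decouple degree by degree, so both vanishings $\Hom_{\Hot(\sK)}(Z^\bu,C^\bu)=0$ and $\Hom_{\Hot(\sK)}(Z^\bu[1],C^\bu)=0$ reduce to single-degree lifting problems. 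The exact-category technicalities all check out: $W^n$ is the kernel of~$d_C^n$ (since the mono part of the factorization of~$d_C^n$ is monic), so a chain map from a zero-differential complex factors through cocycles; the terms $C^{n-1}$ lie in $\sC$ and $F^n$ in $\sF$ by the very hypothesis that these are complexes \emph{in} the respective subcategories, so the extension obstruction $\Ext^1_\sK(Z^{n+1},C^{n-1})$ vanishes; and the single identity $\beta^{n+1}d_F^n=0$ (from $\beta$ being a chain map onto a zero-differential quotient) both makes $\gamma$ a chain map and yields $\gamma\beta=d_C(u\beta)+(u\beta)d_F$, so $s+u\beta$ is the desired contracting homotopy. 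You are also right that neither completeness nor heredity of the cotorsion pair is used, which matches the generality in which the lemma is stated.
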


\begin{proof}
 This is~\cite[Lemma~3.9]{Gil} or~\cite[Lemma~B.1.8]{Pcosh}.
\end{proof}

\Section{Cotorsion Pair with Complexes of Cotorsion Contramodules}
\label{complexes-of-cotorsion-secn}

 The aim of this section is to prove a contramodule version of
the theorem of Bazzoni, Cort\'es-Izurdiaga, and
Estrada~\cite[Theorem~5.3]{BCE}.
 The argument in~\cite{BCE} is based on cotorsion periodicity together
with~\cite[Corollary~4.10]{Gil}.
 The proof of the latter uses considerations of deconstructibility in
the category of complexes.

 Such deconstructibility results have been established in full
generality for Grothendieck abelian categories~\cite[Section~4]{Sto0},
which does not cover our context.
 (Indeed, the directed colimits in $\R\Contra$ are not exact, while
$\R\Contra_\flat$ is not an abelian category.)
 Our proof in this section does not use that much knowledge about
deconstructibility, but is based on accessibility considerations
instead. {\hbadness=1550\par}

 We start with the following proposition going back
to~\cite[Lemma~1 and Proposition~2]{BBE}.
 The definition of a deconstructible class was given in
Section~\ref{contraderived-secn}.

\begin{prop} \label{flat-contramodules-deconstructible}
 Let\/ $\R$ be a complete, separated topological ring with
a \emph{countable} base of neighborhoods of zero consisting of open
right ideals.
 Then the class of all flat left\/ $\R$\+contramodules\/
$\R\Contra_\flat$ is deconstructible in\/ $\R\Contra$.
\end{prop}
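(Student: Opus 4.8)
The plan is to produce a \emph{set} $\sS$ of flat left $\R$\+contramodules with $\R\Contra_\flat=\Fil(\sS)$. I would take $\sS$ to be a set of representatives of the isomorphism classes of countably generated flat left $\R$\+contramodules. Every countably generated $\R$\+contramodule is a quotient of the free contramodule $\R[[X]]$ on a fixed countable set $X$, so there are only a set of such objects up to isomorphism, and $\sS$ is legitimately a set; all its members are flat.

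The inclusion $\Fil(\sS)\subseteq\R\Contra_\flat$ is the easy half. By Lemma~\ref{flat-contramodules-resolving} the class $\R\Contra_\flat$ is closed under extensions in $\R\Contra$, and it is closed under directed colimits (Lemma~\ref{flat-contramodules-exactness-properties}(b)). An object filtered by objects of $\sS$ is assembled from~$0$ by extensions with flat quotients at successor steps and by directed colimits at limit steps, so a straightforward transfinite induction keeps every term of the filtration, and hence its colimit, inside $\R\Contra_\flat$ (this is the Eklof lemma, cf.\ Lemma~\ref{eklof-lemma}, specialized to the flat class).

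The substance is the reverse inclusion $\R\Contra_\flat\subseteq\Fil(\sS)$: every flat $\R$\+contramodule is filtered by countably generated flat ones. Following the scheme of Bican, El~Bashir, and Enochs, the crux is a \emph{swelling lemma}: for a flat contramodule $\fF$ and any countable subset $S\subseteq\fF$, there is a countably generated subcontramodule $\fN\subseteq\fF$ with $S\subseteq\fN$ that is \emph{pure} in $\fF$, meaning that $\N\ocn_\R\fN\rarrow\N\ocn_\R\fF$ is injective for every discrete right $\R$\+module~$\N$. Granting this, a diagram chase with the contratensor product using flatness of $\fF$ shows $\fF/\fN$ is flat, and then Lemma~\ref{flat-contramodules-resolving} applied to $0\rarrow\fN\rarrow\fF\rarrow\fF/\fN\rarrow0$ shows $\fN$ is flat as well. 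Well-ordering a set of contramodule generators of $\fF$ and iterating the swelling lemma, I would build a smooth increasing chain $0=\fN_0\subseteq\fN_1\subseteq\dotsb\subseteq\fF$ of countably generated pure flat subcontramodules exhausting $\fF$; at limit steps one takes the directed colimit, which by the exactness of directed colimits in $\R\Contra_\flat$ (Lemma~\ref{flat-contramodules-exactness-properties}(b)) is again an admissible monomorphism into $\fF$ with flat quotient, i.e.\ a pure flat subcontramodule. Since each $\fN_i$ is pure in $\fF$, it is pure in $\fN_{i+1}$, so $0\rarrow\fN_{i+1}/\fN_i\rarrow\fF/\fN_i\rarrow\fF/\fN_{i+1}\rarrow0$ has flat outer terms, whence Lemma~\ref{flat-contramodules-resolving} forces the countably generated subquotient $\fN_{i+1}/\fN_i$ to be flat, hence isomorphic to an object of $\sS$. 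Thus $\fF\in\Fil(\sS)$.

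The main obstacle is the swelling lemma itself, where the infinite summation operations of contramodules and the countable base of neighborhoods of zero enter. I would prove it by a countable closure construction. Starting from $S_0=S$, build an increasing chain of countable subsets $S_0\subseteq S_1\subseteq\dotsb$ of $\fF$ so that, for each $k$, every finite system of equations detecting injectivity of $\N\ocn_\R({-})$ for the discrete test modules $\N=\R/\fI_n$ (with $\R\supset\fI_1\supset\fI_2\supset\dotsb$ a countable base of open right ideals) that involves only elements already listed in $S_k$ and is solvable in $\fF$ has a solution recorded in $S_{k+1}$; flatness of $\fF$ guarantees solvability in $\fF$, while the countability of the indexing data keeps each $S_{k+1}$ countable. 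The subcontramodule $\fN$ generated by $\bigcup_k S_k$ — the image of the contraaction map $\R[[\bigcup_k S_k]]\rarrow\fF$ — is then countably generated, and the closure conditions are arranged to make it pure in $\fF$. The genuinely delicate points, which I expect to absorb most of the work, are that the closure must be taken compatibly with the contraaction (infinite zero\+convergent sums of listed elements must again lie in $\fN$) and that purity, tested against all discrete right modules, has to be secured by only countably many closure conditions; reconciling these is precisely the contramodule counterpart of \cite[Lemma~1 and Proposition~2]{BBE}.
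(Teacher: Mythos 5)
The paper does not prove this proposition internally: it simply cites \cite[Corollary~7.6]{PR} (with \cite[Corollary~13.9]{PPT} as a generalization), and your outline is a reasonable reconstruction of the Bican--El~Bashir--Enochs scheme \cite{BBE} that the cited proof adapts. Your easy half is fine (closure of $\R\Contra_\flat$ under extensions and directed colimits, Lemmas~\ref{flat-contramodules-resolving} and~\ref{flat-contramodules-exactness-properties}(b), yields closure under transfinitely iterated extensions), and your claim that purity of $\fN$ in a flat $\fF$ forces $\fF/\fN$ flat does indeed follow from a diagram chase using only right exactness of $\ocn_\R$ in both variables, flatness of $\fF$, and purity applied to the quotient module $\N''=\N/\N'$. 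But precisely there lies the first genuine gap: the chase needs injectivity of $\N\ocn_\R\fN\rarrow\N\ocn_\R\fF$ for \emph{all} discrete right $\R$\+modules $\N$, whereas your closure construction only records conditions for the cyclic test modules $\N=\R/\fI_n$, i.e., only secures $\fN\cap(\fI_n\tim\fF)=\fI_n\tim\fN$. Already for modules over a ring, cyclic purity is strictly weaker than purity, and here no Tor-style long exact sequence is available to repair the reduction: the contratensor product is merely right exact, there is no established balanced derived functor (the category $\Discr\R$ has no projectives in general), and the proposition assumes only a base of \emph{right} ideals, so one cannot even pass to the discrete quotient rings $R_n$ and run \cite{BBE} levelwise as one could with two-sided ideals via Lemma~\ref{flat-contramodules-described}. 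Likewise, your step ``flatness of $\fF$ guarantees solvability'' presupposes a Villamayor-type equational characterization of contramodule flatness that you never formulate and that is not available off the shelf. In short, the swelling lemma --- which you yourself identify as absorbing ``most of the work'' --- is exactly the content supplied by \cite[Lemma~7.1--Corollary~7.6]{PR}, and your proposal defers rather than proves it.

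There is also a concrete structural slip in the transfinite chain. A smooth chain of \emph{countably generated} pure flat subcontramodules can never exhaust an uncountably generated $\fF$: past the first $\omega_1$ steps the terms $\fN_i$ contain uncountably many of the chosen generators. What the filtration actually requires is that the consecutive quotients $\fN_{i+1}/\fN_i$ be countably generated, and your absolute swelling lemma cannot deliver this, since its input must be a countable subset while $\fN_{i+1}$ must contain the (typically uncountable) $\fN_i$. The standard repair, which is the ``Proposition~2'' maneuver of \cite{BBE}, is to apply the swelling lemma inside the flat quotient $\fF/\fN_i$ to the image of the next generator, obtaining a countably generated pure flat subcontramodule there, and let $\fN_{i+1}$ be its preimage; then $\fN_{i+1}/\fN_i$ is countably generated and flat, $\fN_{i+1}$ is flat as an extension (Lemma~\ref{flat-contramodules-resolving}), and purity of $\fN_{i+1}$ in $\fF$ follows since $\fF/\fN_{i+1}\simeq(\fF/\fN_i)/(\fN_{i+1}/\fN_i)$ is flat. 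This is fixable, but as written your construction is internally inconsistent (it claims both that the terms and that the quotients are countably generated), and together with the unproven swelling lemma it leaves the proposal well short of a complete proof.
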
 

\begin{proof}
 This is~\cite[Corollary~7.6]{PR}; see also~\cite[Corollary~13.9]{PPT}
for a generalization.
\end{proof}

 Let $\R$ be a complete, separated topological ring with a countable
base of neighborhoods of zero consisting of open right ideals.
 The notation $\Ac(\R\Contra_\flat)$ was introduced in
Section~\ref{acyclic-are-contraacyclic-secn}, and the notation
$\R\Contra^\cot$ in Section~\ref{contraacyclic-are-acyclic-secn}.

 By~\cite[Corollary~7.8]{PR}, the pair of classes
$\sF=\R\Contra_\flat$ and $\sC=\R\Contra^\cot$ is a complete cotorsion
pair $(\sF,\sC)$ in the abelian category $\sB=\R\Contra$
(cf.\ Proposition~\ref{cotorsion-preenvelope}).
 By Lemma~\ref{flat-contramodules-resolving} or
Lemma~\ref{cotorsion-contramodules-higher-ext}, this cotorsion pair is
also hereditary.
 In the case of modules over an associative ring $R$, these results go
back to~\cite[Theorems~2 and~10]{ET} and~\cite[Proposition~2
and Theorem~3]{BBE}.

 The following
Theorem~\ref{pure-acyclic-flat-arbitrary-cotorsion-cotorsion-pair}
is our contramodule generalization of~\cite[Theorem~5.3]{BCE} (see
also~\cite[Theorem~3.3]{CET} or~\cite[Lemma~5.1.1(b)]{Pcosh} for
a quasi-coherent sheaf version).
 In order to prove it, we need more restrictive assumptions.

\begin{rem}
 Before formulating the theorem, let us say a few words about
relevant references.
 In the paper of Bazzoni, Cort\'es-Izurdiaga, and Estrada~\cite{BCE},
the assertion of~\cite[Theorem~5.3]{BCE} is deduced from the cotorsion
periodicity theorem~\cite[Theorem~5.1(2)]{BCE} and a result of
Gillespie~\cite[Corollary~4.10]{Gil}.
 In the context of quasi-coherent sheaves, the assertion
of~\cite[Lemma~5.1.1(b)]{Pcosh} is likewise deduced from
the cotorsion periodicity theorem for quasi-coherent sheaves.
 Both of these results for quasi-coherent sheaves were first stated
by Christensen, Estrada, and Thompson~\cite[Theorem~3.3]{CET}, but
the proof contained a gap.
 In fact, the argument in~\cite[first paragraph of the proof of
Lemma~3.2]{CET} is erroneous.
 Another proof of the cotorsion periodicity for quasi-coherent sheaves
appeared in the paper~\cite[Theorem~10.2 and Corollary~10.4]{PS6} by
the present author and \v St\!'ov\'\i\v cek, while the error
in~\cite{CET} was subsequently corrected by Estrada, Gillespie, and
Odaba\c si in~\cite[Theorem~6.3(2) and Remark~6.7]{EGilO}.
 See~\cite[Remark~10.3]{PS6} for a discussion.
\end{rem}

\begin{thm} \label{pure-acyclic-flat-arbitrary-cotorsion-cotorsion-pair}
 Let\/ $\R$ be a complete, separated topological ring with
a \emph{countable} base of neighborhoods of zero consisting of open
\emph{two-sided} ideals.
 Then the pair of classes of pure acyclic complexes of flat
contramodules\/ $\sF=\Ac(\R\Contra_\flat)$ and arbitrary complexes
of cotorsion contramodules\/ $\sC=\Com(\R\Contra^\cot)$ is a hereditary
complete cotorsion pair $(\sF,\sC)$ in the abelian category of complexes
of left\/ $\R$\+contramodules\/ $\sB=\Com(\R\Contra)$.
\end{thm}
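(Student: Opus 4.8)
The plan is to exhibit $(\sF,\sC)$ as the cotorsion pair generated by a single set and then to identify the two classes. Let $\sS\subset\Com(\R\Contra)$ be a set of representatives of the isomorphism classes of pure acyclic complexes of countably presentable flat $\R$\+contramodules (a set by Propositions~\ref{pure-acyclic-complexes-accessible} and~\ref{complexes-of-flats-accessible}), and let $(\sF',\sC')$ be the cotorsion pair it generates, so that $\sC'=\sS^{\perp_1}$ and $\sF'={}^{\perp_1}\sC'$. The projective objects of $\Com(\R\Contra)$ are direct sums of disk complexes $D^\bu_{n,n+1}(\fP)$ on projective contramodules; choosing the $\fP$ countably generated (hence countably presentable, by Proposition~\ref{presentable-contramodules}) exhibits a generating set of $\Com(\R\Contra)$ lying in $\sS$, so $\Fil(\sS)$ is generating. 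Theorem~\ref{eklof-trlifaj-theorem} then applies: part~(a) makes the cotorsion pair $(\sF',\sC')$ complete (its right class is also cogenerating, since every complex admits an admissible monomorphism into a complex of cotorsion contramodules assembled from termwise cotorsion preenvelopes as in Corollary~\ref{into-contractible-of-flat-cotorsion}), and part~(b) identifies $\sF'=\Fil(\sS)^\oplus$. It remains to prove $\sF'=\Ac(\R\Contra_\flat)$ and $\sC'=\Com(\R\Contra^\cot)$, and then to check heredity.

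I would identify the left class first. The inclusion $\Fil(\sS)^\oplus\subseteq\Ac(\R\Contra_\flat)$ holds because $\Ac(\R\Contra_\flat)$ is closed under transfinitely iterated extensions and direct summands; here the key input is that directed colimits are exact in $\R\Contra_\flat$ (Lemma~\ref{flat-contramodules-exactness-properties}(b)), so that smooth chains computed at limit ordinals stay pure acyclic (cf.\ Proposition~\ref{transfinite-extensions-directed-colimits}). The reverse inclusion $\Ac(\R\Contra_\flat)\subseteq\Fil(\sS)^\oplus$ is a deconstructibility statement: every pure acyclic complex of flats must be written as a transfinitely iterated extension of objects of $\sS$. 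This is where I would lean on accessibility rather than on the Grothendieck-category machinery that is unavailable here: Proposition~\ref{pure-acyclic-complexes-accessible} presents $\Ac(\R\Contra_\flat)$ as an $\aleph_1$\+accessible category with $\sS$ as its $\aleph_1$\+presentable objects, and, together with Proposition~\ref{accessible-subcategory} and the exactness of directed colimits, this lets one refine the canonical $\aleph_1$\+directed presentation of an object into a smooth filtration with subquotients in~$\sS$. This settles $\sF'=\Ac(\R\Contra_\flat)$; in particular $\sC'=\sS^{\perp_1}=\Ac(\R\Contra_\flat)^{\perp_1}$, and testing against the disk complexes $D^\bu_{n,n+1}(\fF)$ of \emph{arbitrary} flat contramodules $\fF$ (which lie in $\Ac(\R\Contra_\flat)$) shows, via Lemma~\ref{Ext-from-disk-complex}, that every complex in $\sC'$ has cotorsion terms; hence $\sC'\subseteq\Com(\R\Contra^\cot)$.

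The heart of the matter, and the step I expect to be the main obstacle, is the opposite inclusion $\Com(\R\Contra^\cot)\subseteq\sC'=\Ac(\R\Contra_\flat)^{\perp_1}$: that \emph{every} complex of cotorsion contramodules is right $\Ext^1$\+orthogonal to \emph{every} pure acyclic complex of flats. Equivalently, by Lemma~\ref{Ext-1-as-homotopy-Hom} (whose hypothesis holds since flat terms have no $\Ext^1$ into cotorsion terms), every chain map from a pure acyclic flat complex $\fF^\bu$ to a complex of cotorsion contramodules $\fC^\bu$ is null-homotopic. This is the contramodule analogue of the theorem of Bazzoni, Cort\'es-Izurdiaga, and Estrada \cite[Theorem~5.3]{BCE} (proved there through \cite[Corollary~4.10]{Gil}), and it is exactly where cotorsion periodicity is indispensable. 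Concretely, I would splice termwise cotorsion preenvelopes of the (flat) contramodules of cocycles of $\fF^\bu$ into an acyclic complex of cotorsion contramodules receiving $\fF^\bu$; the cotorsion periodicity theorem \cite[Corollary~12.4]{Pflcc} guarantees that the contramodules of cocycles of this complex are again cotorsion, so it is acyclic in the exact category $\R\Contra^\cot$, and Lemma~\ref{acyclics-orthogonal-to-acyclics} applied to the cotorsion pair $(\R\Contra_\flat,\R\Contra^\cot)$ disposes of the resulting map between an acyclic complex of flats and an acyclic complex of cotorsion contramodules. The delicate part, which I expect to require the most care, is the reduction of an arbitrary map into the \emph{non-acyclic} complex $\fC^\bu$ to this acyclic-versus-acyclic situation; the building blocks of $\sS$ that happen to have countably presentable cocycles can alternatively be handled directly, since they are absolutely acyclic (their exact category has homological dimension $\le1$ by Proposition~\ref{projective-dimension-of-flat-countably-presented}, so Lemma~\ref{psemi-remark21} and Lemma~\ref{hom-from-absolutely-acyclic-in-cotorsion-pair}(b) apply as in the proof of Theorem~\ref{pure-acyclic-are-contraacyclic}), but periodicity is genuinely needed to reach the general cocycles.

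Finally, heredity is the easy part: by the criterion recalled before the theorem it suffices to check that $\sC=\Com(\R\Contra^\cot)$ is closed under cokernels of admissible monomorphisms in $\Com(\R\Contra)$. This is a termwise statement, and it follows because the cotorsion pair $(\R\Contra_\flat,\R\Contra^\cot)$ in $\R\Contra$ is itself hereditary (Lemma~\ref{cotorsion-contramodules-higher-ext} and Lemma~\ref{right-orthogonal-to-self-resolving}): a cokernel of a monomorphism between cotorsion contramodules is again cotorsion, so the quotient of an admissible monomorphism of complexes of cotorsion contramodules has cotorsion terms in every degree.
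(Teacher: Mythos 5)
Your scaffolding (generate a cotorsion pair by a set, verify generating/cogenerating, apply Theorem~\ref{eklof-trlifaj-theorem}, identify the two classes, check heredity termwise) matches the paper, but there is a genuine gap at the step you rely on to identify the left class: the inclusion $\Ac(\R\Contra_\flat)\subseteq\Fil(\sS)^\oplus$. Propositions~\ref{pure-acyclic-complexes-accessible} and~\ref{accessible-subcategory} only present a pure acyclic complex of flats as an $\aleph_1$\+\emph{directed colimit} of objects of $\sS$; the transition morphisms in such a diagram need not be monomorphisms, and there is no general mechanism for ``refining'' a directed colimit into a smooth filtration with prescribed subquotients. That conversion is exactly the deconstructibility of $\Ac(\R\Contra_\flat)$ in $\Com(\R\Contra)$, which is precisely what is unavailable in this setting (the Grothendieck-category machinery of~\cite{Sto0} does not apply, since directed colimits are not exact in $\R\Contra$ and $\R\Contra_\flat$ is not abelian); the paper's preamble to this section says explicitly that deconstructibility in the category of complexes is being \emph{replaced} by accessibility, not derived from it. The paper's actual route inverts your order: it takes for $\sS$ the \emph{disk complexes} on a deconstructing set of flat contramodules (Proposition~\ref{flat-contramodules-deconstructible}), so that $\sC'=\Com(\R\Contra^\cot)$ falls out immediately from Lemmas~\ref{eklof-lemma}, \ref{Ext-from-disk-complex}, and~\ref{Ext-1-as-homotopy-Hom}; then, instead of filtering pure acyclic complexes, it proves $\Ac(\R\Contra_\flat)\subseteq\sF'$ by showing that the left class $\sF'={}^{\perp_1}\sC'$ is \emph{closed under directed colimits} in $\Com(\R\Contra_\flat)$ --- combining closure under cokernels of admissible monomorphisms (via Lemma~\ref{hom-from-absolutely-acyclic-in-cotorsion-pair}(a) and Lemma~\ref{Ext-1-as-homotopy-Hom}) and under transfinitely iterated extensions (Lemma~\ref{eklof-lemma}) with Proposition~\ref{transfinite-extensions-directed-colimits} --- and then feeding in the countably presentable pure acyclic complexes, which lie in $\sF'$ because they are absolutely acyclic (Proposition~\ref{projective-dimension-of-flat-countably-presented}, Lemmas~\ref{psemi-remark21} and~\ref{hom-from-absolutely-acyclic-in-cotorsion-pair}(b)). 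You cite all of these last ingredients, but only in passing; the directed-colimit-closure argument for $\sF'$ is the missing idea that makes the hard inclusion go through without deconstructing $\Ac(\R\Contra_\flat)$.

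A secondary problem is your ``heart of the matter,'' the inclusion $\Com(\R\Contra^\cot)\subseteq\sC'$ via cotorsion periodicity: it is both incomplete and unnecessary. Incomplete, because you yourself leave unresolved the reduction from a map into a \emph{non-acyclic} complex $\fC^\bu$ to the acyclic-versus-acyclic situation of Lemma~\ref{acyclics-orthogonal-to-acyclics} --- and that reduction is the content of the \cite[Corollary~4.10]{Gil}-type arguments, which again rest on deconstructibility in the category of complexes. Unnecessary, because $\sC'=\sS^{\perp_1}$ by construction, so one only needs orthogonality against your set $\sS$, whose members are absolutely acyclic in $\R\Contra_\flat$ and hence handled directly by Lemmas~\ref{hom-from-absolutely-acyclic-in-cotorsion-pair}(b) and~\ref{Ext-1-as-homotopy-Hom}. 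In particular, contrary to your claim that periodicity is ``indispensable'' here, the paper's proof of this theorem does not invoke~\cite[Corollary~12.4]{Pflcc} at all; cotorsion periodicity enters only later, in the proof of Theorem~\ref{arbitrary-flat-contraacyclic-cotors-cotorsion-pair}. Your generating and cogenerating verifications, the application of Theorem~\ref{eklof-trlifaj-theorem}, the identification $\sC'\subseteq\Com(\R\Contra^\cot)$ via disk complexes, and the heredity argument are all sound and essentially as in the paper.
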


\begin{proof}
 By Proposition~\ref{flat-contramodules-deconstructible}, there exists
a set of flat left $\R$\+contramodules $\sS_0\subset\R\Contra_\flat$
such that $\R\Contra_\flat=\Fil(\sS_0)$.
 Denote by $\sS\subset\Com(\R\Contra)$ the set of all contractible
two-term complexes of (flat) left $\R$\+contramodules
$$
 \dotsb\lrarrow0\lrarrow\fS\overset\id\lrarrow\fS\lrarrow0\lrarrow\dotsb
$$
(situated in arbitrary cohomological degrees) with $\fS\in\sS_0$.
 Then all contractible two-term complexes of flat left
$\R$\+contramodules, and in particular, all contractible two-term
complexes of projective left $\R$\+contramodules belong to $\Fil(\sS)$.
 By~\cite[Lemma~4.6(d)]{PR}, we have $\Fil(\Fil(\sS))=\Fil(\sS)$;
hence the class $\Fil(\sS)$ is closed under coproducts in~$\sB$.
 So all contractible complexes of projective left $\R$\+contramodules
belong to~$\Fil(\sS)$.
 It follows easily that the class $\Fil(\sS)$ is generating in~$\sB$.

 Let $(\sF',\sC')$ denote the cotorsion pair generated by $\sS$
in~$\sB$.
 By Lemmas~\ref{eklof-lemma} and~\ref{Ext-from-disk-complex} (for
$i=1$), we have $\sC'\subset\Com(\R\Contra^\cot)$.
 Conversely, by Lemma~\ref{Ext-1-as-homotopy-Hom},
$\Com(\R\Contra^\cot)\subset\sC'$.
 We have shown that $\sC'=\Com(\R\Contra^\cot)=\sC$. 

 The construction from the proof of
Corollary~\ref{into-contractible-of-flat-cotorsion}
(based on Proposition~\ref{cotorsion-preenvelope}) shows that
any complex of $\R$\+contramodules can be embedded as a subcomplex
into a complex of cotorsion $\R$\+contramodules.
 So the class $\sC'$ is cogenerating in $\Com(\R\Contra)$.
 Therefore, both parts of Theorem~\ref{eklof-trlifaj-theorem} are
applicable, and we can conclude that $(\sF',\sC')$ is a complete
cotorsion pair in $\sB$ with $\sF'=\Fil(\sS)$.

 Since the class of flat $\R$\+contramodules $\R\Contra_\flat$ is
closed under extensions and directed colimits in $\R\Contra$
(see Section~\ref{topological-rings-secn} and
Lemma~\ref{flat-contramodules-resolving}), it is also closed under
transfinitely iterated extensions.
 Therefore, we have $\sF'\subset\Com(\R\Contra_\flat)$.
 Since the class of acyclic complexes in any exact category is closed
under extensions, and the directed colimit functors are exact
in $\R\Contra_\flat$ by
Lemma~\ref{flat-contramodules-exactness-properties}(b), it follows
that, moreover, $\sF'\subset\Ac(\R\Contra_\flat)$.

 Let us show that the full subcategory $\sF'$ is closed under
directed colimits in $\Com(\R\Contra_\flat)$.
 Indeed, in order to prove that $\sF'$ is closed under cokernels of
admissible monomorphisms in $\Com(\R\Contra_\flat)$, consider a short
exact sequence $0\rarrow\fF^\bu\rarrow\fG^\bu\rarrow\fH^\bu\rarrow0$
in $\Com(\R\Contra_\flat)$.
 Then, for any complex $\fC^\bu\in\Com(\R\Contra^\cot)$, we have
a short exact sequence of complexes of abelian groups
$0\rarrow\Hom^{\R,\bu}(\fH^\bu,\fC^\bu)\rarrow\Hom^{\R,\bu}
(\fG^\bu,\fC^\bu)\rarrow\Hom^{\R,\bu}(\fF^\bu,\fC^\bu)\rarrow0$
by Lemma~\ref{hom-from-absolutely-acyclic-in-cotorsion-pair}(a).
 Therefore, if the complexes $\Hom^{\R,\bu}(\fG^\bu,\fC^\bu)$ and
$\Hom^{\R,\bu}(\fF^\bu,\fC^\bu)$ are acyclic, then so is
the complex $\Hom^{\R,\bu}(\fH^\bu,\fC^\bu)$.
 In view of Lemma~\ref{Ext-1-as-homotopy-Hom}, this means that one
has $\fH^\bu\in\sF'={}^{\perp_1}\sC$ whenever $\fF^\bu\in\sF'$
and $\fG^\bu\in\sF'$.
 On the other hand, the full subcategory $\sF'$ is closed under
transfinitely iterated extensions in $\sB=\Com(\R\Contra)$, hence also
in $\Com(\R\Contra_\flat)$, by Lemma~\ref{eklof-lemma}.
 Applying Proposition~\ref{transfinite-extensions-directed-colimits},
we come to the desired conclusion that $\sF'$ is closed under directed
colimits in $\Com(\R\Contra_\flat)$.

 By Proposition~\ref{pure-acyclic-complexes-accessible}, all
the complexes belonging to $\Ac(\R\Contra_\flat)$ are directed
colimits of pure acyclic complexes of countably presentable flat
$\R$\+contramodules.
 By Proposition~\ref{projective-dimension-of-flat-countably-presented}
and Lemma~\ref{psemi-remark21}, all pure acyclic complexes of
countably presentable flat $\R$\+contramodules are absolutely
acyclic as complexes in $\R\Contra_\flat$.
 By Lemmas~\ref{hom-from-absolutely-acyclic-in-cotorsion-pair}(b)
and~\ref{Ext-1-as-homotopy-Hom}, all absolutely acyclic complexes
in $\R\Contra_\flat$ belong to $\sF'={}^{\perp_1}\sC$.
 Thus all pure acyclic complexes of countably presentable flat
$\R$\+contramodules belong to~$\sF'$.
 Using the result of the previous paragraph, we can conclude that
$\Ac(\R\Contra_\flat)\subset\sF'$.
 We have shown that $\sF'=\Ac(\R\Contra_\flat)=\sF$.

 So we have constructed the desired complete cotorsion pair $(\sF,\sC)$
in~$\sB$.
 It remains to observe that the cotorsion pair $(\sF,\sC)$ is
hereditary, because the class $\sC$ is closed under cokernels of
admissible monomorphisms in~$\sB$.
 This follows from the fact that the class $\R\Contra^\cot$ is closed
under cokernels of admissible monomorphisms in $\R\Contra$ (see
the remarks preceding this theorem).
\end{proof}

\begin{cor} \label{acyclic-in-flats=orthogonal-to-all-of-cotorsion}
 Let\/ $\R$ be a complete, separated topological ring with
a \emph{countable} base of neighborhoods of zero consisting of open
\emph{two-sided} ideals.
 Let\/ $\fF^\bu$ be a complex of flat left\/ $\R$\+contramodules.
 Then\/ $\fF^\bu$ is a pure acyclic complex if and only if, for every
complex of cotorsion left\/ $\R$\+contramodules\/ $\fC^\bu$, all
morphisms of complexes\/ $\fF^\bu\rarrow\fC^\bu$ are homotopic to zero.
\end{cor}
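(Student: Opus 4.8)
The plan is to read off the corollary directly from the cotorsion pair constructed in Theorem~\ref{pure-acyclic-flat-arbitrary-cotorsion-cotorsion-pair}, translating between $\Ext^1$ in the abelian category of complexes and $\Hom$ in the homotopy category. Writing $(\sF,\sC)=(\Ac(\R\Contra_\flat),\Com(\R\Contra^\cot))$ for that cotorsion pair, I recall that it gives $\sF={}^{\perp_1}\sC$ inside $\Com(\R\Contra)$. Hence, for a complex of flat contramodules $\fF^\bu$, being pure acyclic is the same as the condition $\fF^\bu\in\sF={}^{\perp_1}\sC$, that is,
\[
 \Ext^1_{\Com(\R\Contra)}(\fF^\bu,\fC^\bu)=0
 \quad\text{for every }\fC^\bu\in\Com(\R\Contra^\cot).
\]
So the whole task reduces to matching this family of $\Ext^1$-vanishing conditions with the family of homotopy-triviality conditions appearing in the statement.

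The key observation is that Lemma~\ref{Ext-1-as-homotopy-Hom} applies to the pair $(\fF^\bu,\fC^\bu)$. Indeed, every term $\fF^n$ is flat and every term $\fC^m$ is cotorsion, so $\Ext^{\R,1}(\fF^n,\fC^m)=0$ by the defining property of cotorsion contramodules (Lemma~\ref{cotorsion-contramodules-higher-ext}); in particular the termwise hypothesis of Lemma~\ref{Ext-1-as-homotopy-Hom} holds. This yields a natural isomorphism
\[
 \Ext^1_{\Com(\R\Contra)}(\fF^\bu,\fC^\bu)\simeq
 \Hom_{\Hot(\R\Contra)}(\fF^\bu,\fC^\bu[1]).
\]

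From here I would run both implications at once. The class $\Com(\R\Contra^\cot)$ is manifestly stable under the shift $\fC^\bu\mapsto\fC^\bu[1]$, since shifting merely reindexes cotorsion terms; thus as $\fC^\bu$ ranges over $\Com(\R\Contra^\cot)$ so does $\fC^\bu[1]$. By the displayed isomorphism, the condition that $\Ext^1_{\Com(\R\Contra)}(\fF^\bu,\fC^\bu)=0$ for all $\fC^\bu\in\Com(\R\Contra^\cot)$ is therefore equivalent to the condition that $\Hom_{\Hot(\R\Contra)}(\fF^\bu,\fC^\bu)=0$ for all $\fC^\bu\in\Com(\R\Contra^\cot)$, i.e.\ that every morphism of complexes $\fF^\bu\rarrow\fC^\bu$ is homotopic to zero. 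Combining this equivalence with the identification ``$\fF^\bu$ pure acyclic $\Longleftrightarrow$ $\fF^\bu\in{}^{\perp_1}\sC$'' from Theorem~\ref{pure-acyclic-flat-arbitrary-cotorsion-cotorsion-pair} gives the claimed statement.

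I do not expect a genuine obstacle here: all the substantive content is already packaged in Theorem~\ref{pure-acyclic-flat-arbitrary-cotorsion-cotorsion-pair}, and this corollary is purely a reformulation. The only points needing a moment of care are verifying the termwise $\Ext^1$-vanishing between flat and cotorsion contramodules (so that Lemma~\ref{Ext-1-as-homotopy-Hom} is legitimately applicable) and noting the shift-stability of $\Com(\R\Contra^\cot)$, which is what makes the two families of test complexes coincide and lets both directions of the equivalence fall out of the single isomorphism above.
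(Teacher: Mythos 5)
Your proposal is correct and is essentially the paper's own proof: the paper's argument for Corollary~\ref{acyclic-in-flats=orthogonal-to-all-of-cotorsion} is precisely ``follows from Theorem~\ref{pure-acyclic-flat-arbitrary-cotorsion-cotorsion-pair} in view of Lemma~\ref{Ext-1-as-homotopy-Hom},'' which you have spelled out faithfully, including the two verifications the paper leaves implicit (the termwise $\Ext^1$\+vanishing making Lemma~\ref{Ext-1-as-homotopy-Hom} applicable, and the shift-stability of $\Com(\R\Contra^\cot)$ that converts the $\Ext^1$\+orthogonality $\fF^\bu\in{}^{\perp_1}\sC$ into homotopy-triviality of all morphisms $\fF^\bu\rarrow\fC^\bu$).
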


\begin{proof}
 Follows from
Theorem~\ref{pure-acyclic-flat-arbitrary-cotorsion-cotorsion-pair}
in view of Lemma~\ref{Ext-1-as-homotopy-Hom}.
\end{proof}

\Section{Coderived Category of Flat Contramodules}
\label{coderived-of-flat-secn}

 We start with spelling out the definition dual to the one in
Section~\ref{contraderived-secn}.
 Let $\sE$ be an exact category with enough injective objects.
 A complex $A^\bu$ in $\sE$ is said to be \emph{coacyclic}
(\emph{in the sense of Becker}~\cite[Proposition~1.3.6(2)]{Bec})
if, for every complex of injective objects $J^\bu$ in $\sE$,
the complex of abelian groups $\Hom_\sE^\bu(A^\bu,J^\bu)$ is acyclic.
 The full subcategory of coacyclic complexes is denoted by
$\Ac^\bco(\sE)\subset\Hot(\sE)$ or $\Ac^\bco(\sE)\subset\Com(\sE)$.
 The triangulated Verdier quotient category
$$
 \sD^\bco(\sE)=\Hot(\sE)/\Ac^\bco(\sE)
$$
is called the (\emph{Becker}) \emph{coderived category} of~$\sE$.

 In this section, we are interested in the exact category of flat
$\R$\+contramodules $\sE=\R\Contra_\flat$.
 Consider the hereditary complete cotorsion pair $\sF=\R\Contra_\flat$
and $\sC=\R\Contra^\cot$ in the ambient abelian category
$\sB=\R\Contra$ (as per the discussion in
Section~\ref{complexes-of-cotorsion-secn}).
 Let us introduce the notation $\R\Contra_\flat^\cot$ for
the intersection of the two full subcategories
$$
 \R\Contra_\flat^\cot=\R\Contra_\flat\cap\R\Contra^\cot
 \subset\R\Contra.
$$
 By Lemma~\ref{cotorsion-pair-restricts}(a), the cotorsion pair
$(\sF,\sC)$ in $\sB$ restricts to a hereditary complete cotorsion pair
($\R\Contra_\flat$, $\R\Contra_\flat^\cot)$ in the exact subcategory
$\R\Contra_\flat=\sE\subset\sB$.
 In other words, this means that there are enough injective objects
in the exact category $\R\Contra_\flat$, and these are precisely all
the flat cotorsion $\R$\+contramodules.

\begin{cor} \label{coacyclic=acyclic-in-flats}
 Let\/ $\R$ be a complete, separated topological ring with
a \emph{countable} base of neighborhoods of zero consisting of open
\emph{two-sided} ideals.
 Let\/ $\fF^\bu$ be a complex of flat left\/ $\R$\+contramodules.
 Then\/ $\fF^\bu$ is a pure acyclic complex if and only if, for every
complex of flat cotorsion left\/ $\R$\+contramodules\/ $\fG^\bu$, all
morphisms of complexes\/ $\fF^\bu\rarrow\fG^\bu$ are homotopic to zero.
 In other words, a complex is coacyclic in the exact category\/
$\R\Contra_\flat$ if and only if it is acyclic in\/ $\R\Contra_\flat$.
\end{cor}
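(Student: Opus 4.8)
The plan is to deduce the corollary from the hereditary complete cotorsion pair of Theorem~\ref{pure-acyclic-flat-arbitrary-cotorsion-cotorsion-pair} by restricting it from the abelian category $\Com(\R\Contra)$ to the exact category $\Com(\R\Contra_\flat)$, and then converting the resulting $\Ext^1$\+orthogonality into a homotopy condition. First I would observe that $\Com(\R\Contra_\flat)$ is closed under kernels of admissible epimorphisms in $\Com(\R\Contra)$: indeed, $\R\Contra_\flat$ is resolving in $\R\Contra$ by Lemma~\ref{flat-contramodules-resolving}, and kernels of epimorphisms of complexes are computed termwise. Since the left-hand class $\sF=\Ac(\R\Contra_\flat)$ of the cotorsion pair $(\Ac(\R\Contra_\flat),\,\Com(\R\Contra^\cot))$ is already contained in $\Com(\R\Contra_\flat)$, Lemma~\ref{cotorsion-pair-restricts}(a) applies and shows that this cotorsion pair restricts to a hereditary complete cotorsion pair $(\Ac(\R\Contra_\flat),\,\Com(\R\Contra_\flat^\cot))$ in $\Com(\R\Contra_\flat)$, the right-hand class being $\Com(\R\Contra_\flat)\cap\Com(\R\Contra^\cot)=\Com(\R\Contra_\flat^\cot)$.

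The key point is that the left-hand class of the restricted pair is unchanged: it is still exactly the class of pure acyclic complexes. Consequently, a complex of flat contramodules $\fF^\bu$ is pure acyclic if and only if $\Ext^1_{\Com(\R\Contra_\flat)}(\fF^\bu,\fG^\bu)=0$ for every complex of flat cotorsion contramodules $\fG^\bu$. Thus the essential content is that it suffices to test orthogonality against the injective objects $\R\Contra_\flat^\cot$ of the exact category $\R\Contra_\flat$, rather than against all complexes of cotorsion contramodules as in Corollary~\ref{acyclic-in-flats=orthogonal-to-all-of-cotorsion}; the restriction lemma is precisely what licenses this reduction.

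Finally I would convert the $\Ext^1$\+vanishing into the homotopy statement. Since $\Com(\R\Contra_\flat)$ is closed under extensions in $\Com(\R\Contra)$, the two groups $\Ext^1_{\Com(\R\Contra_\flat)}(\fF^\bu,\fG^\bu)$ and $\Ext^1_{\Com(\R\Contra)}(\fF^\bu,\fG^\bu)$ agree. For every $n$ the term $\fF^n$ is flat and $\fG^n$ is cotorsion, so $\Ext^{\R,1}(\fF^n,\fG^n)=0$, and Lemma~\ref{Ext-1-as-homotopy-Hom} yields a natural isomorphism $\Ext^1_{\Com(\R\Contra)}(\fF^\bu,\fG^\bu)\simeq\Hom_{\Hot(\R\Contra)}(\fF^\bu,\fG^\bu[1])$. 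Hence $\fF^\bu$ is pure acyclic if and only if $\Hom_{\Hot(\R\Contra)}(\fF^\bu,\fG^\bu[1])=0$ for all complexes of flat cotorsion contramodules $\fG^\bu$; since the cohomological shift permutes this class of test complexes, the condition is the same as saying that every morphism $\fF^\bu\rarrow\fG^\bu$ into such a complex is homotopic to zero, which is exactly coacyclicity in $\R\Contra_\flat$.

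I expect the only genuinely delicate step to be the identification of the restricted left-hand class with the pure acyclic complexes, that is, recognizing that shrinking the right-hand class to the flat cotorsion complexes does not enlarge the orthogonal left-hand class once one simultaneously passes to the ambient exact category $\Com(\R\Contra_\flat)$. Everything else is bookkeeping: the termwise $\Ext^1$\+vanishing needed for Lemma~\ref{Ext-1-as-homotopy-Hom} is just the cotorsion property, and the shift-invariance of the test class is immediate. One direction, pure acyclic $\Rightarrow$ coacyclic, also follows at once from Corollary~\ref{acyclic-in-flats=orthogonal-to-all-of-cotorsion} since flat cotorsion complexes are in particular complexes of cotorsion contramodules, but the cotorsion-pair argument delivers both implications uniformly.
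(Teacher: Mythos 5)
Your proposal is correct and essentially identical to the paper's proof: both restrict the cotorsion pair $(\Ac(\R\Contra_\flat),\,\Com(\R\Contra^\cot))$ of Theorem~\ref{pure-acyclic-flat-arbitrary-cotorsion-cotorsion-pair} to the exact subcategory $\Com(\R\Contra_\flat)$ via Lemma~\ref{cotorsion-pair-restricts}(a), identify the right-hand class as $\Com(\R\Contra_\flat^\cot)$, and translate the resulting $\Ext^1$\+orthogonality into the homotopy statement using Lemma~\ref{Ext-1-as-homotopy-Hom}. The one step you flag as delicate---that the left-hand class is unchanged---is in fact immediate, since Lemma~\ref{cotorsion-pair-restricts}(a) asserts outright that the restricted pair is $(\sF,\,\sE\cap\sC)$ whenever $\sF\subset\sE$.
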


\begin{proof}
 Consider the abelian category $\sB=\Com(\R\Contra)$ and the hereditary
complete cotorsion pair $\sF=\Ac(\R\Contra_\flat)$ and
$\sC=\Com(\R\Contra^\cot)$ in $\sB$, as per
Theorem~\ref{pure-acyclic-flat-arbitrary-cotorsion-cotorsion-pair}.
 By Lemma~\ref{cotorsion-pair-restricts}(a), the cotorsion pair
$(\sF,\sC)$ in $\sB$ restricts to a hereditary complete cotorsion pair
$(\sF,\,\sE\cap\sC)$ in the exact subcategory $\Com(\R\Contra_\flat)
=\sE\subset\sB$.
 Now $\sE\cap\sC=\Com(\R\Contra_\flat^\cot)$ is the full subcategory of
all complexes of flat cotorsion $\R$\+contramodules.
 So we have $\sF={}^{\perp_1}(\sE\cap\sC)$ in the exact category~$\sE$.
 In view of Lemma~\ref{Ext-1-as-homotopy-Hom}, the assertion of
the corollary follows.
\end{proof}

\begin{cor} \label{becker-coderived-of-flats}
 Let\/ $\R$ be a complete, separated topological ring with
a \emph{countable} base of neighborhoods of zero consisting of open
\emph{two-sided} ideals.
 Then the inclusion of additive/exact categories\/
$\R\Contra_\flat^\cot\rarrow\R\Contra_\flat$ induces a triangulated
equivalence
$$
 \Hot(\R\Contra_\flat^\cot)\simeq
 \sD^\bco(\R\Contra_\flat)=\sD(\R\Contra_\flat).
$$
\end{cor}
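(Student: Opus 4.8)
The plan is to deduce the statement from the abstract d\'evissage principle of Lemma~\ref{pkoszul-lemma16}, using the complete cotorsion pair already available in $\Com(\R\Contra_\flat)$. The equality $\sD^\bco(\R\Contra_\flat)=\sD(\R\Contra_\flat)$ requires no work: Corollary~\ref{coacyclic=acyclic-in-flats} says that the coacyclic complexes in the exact category $\R\Contra_\flat$ are exactly the pure acyclic ones, so $\Ac^\bco(\R\Contra_\flat)=\Ac(\R\Contra_\flat)$ and the two Verdier quotients of $\Hot(\R\Contra_\flat)$ coincide. It remains to establish the equivalence $\Hot(\R\Contra_\flat^\cot)\simeq\sD^\bco(\R\Contra_\flat)$ induced by the inclusion.

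First I would set $\sH=\Hot(\R\Contra_\flat)$, let $\sX=\Ac^\bco(\R\Contra_\flat)$ be the thick subcategory of coacyclic complexes, and let $\sF=\Hot(\R\Contra_\flat^\cot)$ be the full triangulated subcategory spanned by complexes of flat cotorsion contramodules (it is triangulated because $\R\Contra_\flat^\cot$ is closed under finite direct sums). Recall from the discussion preceding Corollary~\ref{coacyclic=acyclic-in-flats} that the injective objects of $\R\Contra_\flat$ are precisely the flat cotorsion contramodules, so $\sF$ is the homotopy category of complexes of injectives. I would then invoke the dual form of Lemma~\ref{pkoszul-lemma16} (equivalently, apply the lemma in the opposite category $\sH^\sop$, in which $\sX$ and $\sF$ remain thick and triangulated). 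Its hypothesis now reads: for every $\fF^\bu\in\sH$ there is a morphism $\fF^\bu\rarrow\fC^\bu$ with $\fC^\bu\in\sF$ whose cone lies in $\sX$.

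To verify this hypothesis I would use the hereditary complete cotorsion pair $(\Ac(\R\Contra_\flat),\,\Com(\R\Contra_\flat^\cot))$ in $\Com(\R\Contra_\flat)$ produced in the proof of Corollary~\ref{coacyclic=acyclic-in-flats} (by restricting the cotorsion pair of Theorem~\ref{pure-acyclic-flat-arbitrary-cotorsion-cotorsion-pair} via Lemma~\ref{cotorsion-pair-restricts}(a)). Its completeness supplies, for each complex of flat contramodules $\fF^\bu$, a special preenvelope sequence $0\rarrow\fF^\bu\rarrow\fC^\bu\rarrow\fA^\bu\rarrow0$ with $\fC^\bu$ a complex of flat cotorsion contramodules and $\fA^\bu$ pure acyclic. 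The canonical morphism $\mathrm{Cone}(\fF^\bu\to\fC^\bu)\rarrow\fA^\bu$ in $\sH$ has cone isomorphic, up to shift, to the total complex $\Tot(\fF^\bu\to\fC^\bu\to\fA^\bu)$, which is absolutely acyclic and hence coacyclic (by Lemma~\ref{hom-from-absolutely-acyclic-in-cotorsion-pair} with $\sK=\R\Contra_\flat$ and $\sC=\R\Contra_\flat^\cot$). Since $\fA^\bu$ is itself coacyclic by Corollary~\ref{coacyclic=acyclic-in-flats}, the two-out-of-three property of the thick subcategory $\sX$ forces $\mathrm{Cone}(\fF^\bu\to\fC^\bu)\in\sX$. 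Lemma~\ref{pkoszul-lemma16} then yields $\sF/(\sX\cap\sF)\simeq\sH/\sX=\sD^\bco(\R\Contra_\flat)$.

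It remains to check that $\sX\cap\sF=0$, i.e.\ that every coacyclic complex of flat cotorsion contramodules is contractible; then $\sF/(\sX\cap\sF)=\sF=\Hot(\R\Contra_\flat^\cot)$, and tracing through Lemma~\ref{pkoszul-lemma16} shows the resulting equivalence is the one induced by the inclusion, as desired. This last step is where coacyclicity is used directly: if $\fG^\bu$ is a coacyclic complex of injective objects, then taking the testing complex of injectives in the definition to be $\fG^\bu$ itself shows that $\Hom_{\R\Contra_\flat}^\bu(\fG^\bu,\fG^\bu)$ is acyclic, so $\id_{\fG^\bu}$ is null-homotopic and $\fG^\bu$ is contractible. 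I expect the only genuinely delicate points to be the cone identification in the previous paragraph---making sure the preenvelope cokernel $\fA^\bu$ really represents $\mathrm{Cone}(\fF^\bu\to\fC^\bu)$ modulo $\sX$---and correctly tracking the variance when passing to the dual form of Lemma~\ref{pkoszul-lemma16}; the rest is bookkeeping with a cotorsion pair that is already in hand.
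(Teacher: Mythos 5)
Your proposal is correct and follows essentially the same route as the paper's own proof: the equality $\sD^\bco(\R\Contra_\flat)=\sD(\R\Contra_\flat)$ from Corollary~\ref{coacyclic=acyclic-in-flats}, then a special preenvelope sequence $0\rarrow\fF^\bu\rarrow\fG^\bu\rarrow\fH^\bu\rarrow0$ coming from the cotorsion pair $(\Ac(\R\Contra_\flat),\allowbreak\Com(\R\Contra_\flat^\cot))$ restricted from Theorem~\ref{pure-acyclic-flat-arbitrary-cotorsion-cotorsion-pair}, with the cone of $\fF^\bu\rarrow\fG^\bu$ shown coacyclic via the total complex. The only cosmetic differences are that you route the d\'evissage explicitly through the dual of Lemma~\ref{pkoszul-lemma16} together with the observation that $\sX\cap\sF=0$ (a coacyclic complex of injectives is contractible, by testing against itself), where the paper instead notes full faithfulness ``immediately from the definitions,'' and that you identify the cone modulo $\sX$ by a two-out-of-three argument where the paper concludes directly from pure acyclicity of the total complex; both are sound.
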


\begin{proof}
 Corollary~\ref{coacyclic=acyclic-in-flats} tells us that
$\sD^\bco(\R\Contra_\flat)=\sD(\R\Contra_\flat)$.
 It follows immediately from the definitions that the functor
$\Hot(\R\Contra_\flat^\cot)\rarrow\sD^\bco(\R\Contra_\flat)$ is fully
faithful.
 In other to prove that it is a triangulated equivalence, it remains
to construct for every complex of flat $\R$\+contramodules $\fF^\bu$
a complex of flat cotorsion $\R$\+contramodules $\fG^\bu$ together
with a morphism of complexes $\fF^\bu\rarrow\fG^\bu$ with
a pure acyclic cone.

 For this purpose, we use
Theorem~\ref{pure-acyclic-flat-arbitrary-cotorsion-cotorsion-pair}.
 As explained in the proof of
Corollary~\ref{coacyclic=acyclic-in-flats}, the theorem implies
that the pair of classes $\Ac(\R\Contra_\flat)$ and
$\Com(\R\Contra_\flat^\cot)$ is a hereditary complete cotorsion pair
in $\Com(\R\Contra_\flat)$.
 Consider a special preenvelope
sequence~\eqref{special-preenvelope-sequence} from
Section~\ref{cotorsion-pairs-secn} for the object
$\fF^\bu\in\Com(\R\Contra_\flat)$ with respect to this complete
cotorsion pair.
 We get a short exact sequence of complexes of flat $\R$\+contramodules
$0\rarrow\fF^\bu\rarrow\fG^\bu\rarrow\fH^\bu\rarrow0$ with
$\fG^\bu\in\Com(\R\Contra_\flat^\cot)$ and
$\fH^\bu\in\Ac(\R\Contra_\flat)$.
 Since the total complex $\Tot(\fF^\bu\to\fG^\bu\to\fH^\bu)$ is pure
acyclic, it follows that the cone of the morphism of complexes
$\fF^\bu\rarrow\fG^\bu$ also belongs to $\Ac(\R\Contra_\flat)$.
\end{proof}

\Section{Contraderived Category of Cotorsion Contramodules}
\label{contraderived-of-cotorsion-secn}

 The aim of this section is to prove a contramodule generalization
of~\cite[Corollary~7.21]{Pphil}.
 The argument in~\cite[Section~7.7]{Pphil} for the existence of
the relevant cotorsion pair uses considerations of deconstructibility
in the category of complexes.
 As in Section~\ref{complexes-of-cotorsion-secn} above, our proof in
this section partly substitutes information about deconstructibility
by an accessibility argument.

 Let $\R$ be a complete, separated topological ring with a countable
base of neighborhoods of zero consisting of open right ideals.
 We are interested in the exact category of
cotorsion $\R$\+contramodules $\sE=\R\Contra^\cot$ (with the exact
structure inherited from the abelian exact structure on $\R\Contra$).
 Consider the hereditary complete cotorsion pair $\sF=\R\Contra_\flat$
and $\sC=\R\Contra^\cot$ in the abelian category $\sB=\R\Contra$.
 By Lemma~\ref{cotorsion-pair-restricts}(b), the cotorsion pair
$(\sF,\sC)$ in $\sB$ restricts to a hereditary complete cotorsion
pair ($\R\Contra_\flat^\cot$, $\R\Contra^\cot$) in the exact category
$\R\Contra^\cot=\sE\subset\sB$.
 In other words, this means that there are enough projective objects
in the exact category $\R\Contra^\cot$, and these are precisely all
the flat cotorsion $\R$\+contramodules.

 In the following theorem, which is our contramodule generalization
of a construction of complete cotorsion pair from~\cite[proof of
Theorem~7.19]{Pphil}, we need more restrictive assumptions.
 (See also~\cite[Lemma~5.1.9(b)]{Pcosh} for a quasi-coherent sheaf
version, and~\cite[Proposition~3.2, Theorem~5.5, and Section~5.3]{Gil2}
or~\cite[Lemma~4.9]{Gil3} for an abstract  approach covering
Grothendieck categories.)

\begin{thm} \label{arbitrary-flat-contraacyclic-cotors-cotorsion-pair}
 Let\/ $\R$ be a complete, separated topological ring with
a \emph{countable} base of neighborhoods of zero consisting of open
\emph{two-sided} ideals.
 Then the pair of classes of arbitrary complexes of flat contramodules\/
$\sF=\Com(\R\Contra_\flat)$ and complexes of cotorsion contramodules
\emph{contraacyclic as complexes in} $\R\Contra$, i.~e.,
$\sC=\Com(\R\Contra^\cot)\cap\Ac^\bctr(\R\Contra)$, is a hereditary
complete cotorsion pair $(\sF,\sC)$ in the abelian category of
complexes of left\/ $\R$\+contramodules\/ $\sB=\Com(\R\Contra)$.
\end{thm}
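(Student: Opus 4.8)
The plan is to prove first, by a direct homological argument, that $(\sF,\sC)$ is a cotorsion pair, and only afterwards to obtain completeness from the Eklof--Trlifaj theorem applied to an explicit generating set; this isolates the conceptual core from the set-theoretic bookkeeping. The heart of the matter is the orthogonality $\Com(\R\Contra_\flat)\subset{}^{\perp_1}\sC$: for every complex of flat contramodules $\fF^\bu$ and every contraacyclic complex of cotorsion contramodules $\fC^\bu$ one has $\Ext^1_{\Com(\R\Contra)}(\fF^\bu,\fC^\bu)=0$. To establish this I would use Theorem~\ref{contraderived-cotorsion-pair-for-lpacepo} to choose a special precover sequence $0\rarrow\fA^\bu\rarrow\fP^\bu\rarrow\fF^\bu\rarrow0$ with $\fP^\bu$ a complex of projectives and $\fA^\bu$ contraacyclic. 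As $\R\Contra_\flat$ is resolving (Lemma~\ref{flat-contramodules-resolving}), $\fA^\bu$ is termwise flat, hence pure acyclic by Theorem~\ref{contraacyclic-are-pure-acyclic}. Since the terms of $\fF^\bu$ are flat and those of $\fC^\bu$ cotorsion, applying $\Hom^{\R,\bu}({-},\fC^\bu)$ produces a termwise exact short exact sequence of Hom-complexes whose middle term $\Hom^{\R,\bu}(\fP^\bu,\fC^\bu)$ is acyclic (because $\fC^\bu$ is contraacyclic). The associated cohomology long exact sequence yields $\Hom_{\Hot(\R\Contra)}(\fF^\bu,\fC^\bu[1])\simeq\Hom_{\Hot(\R\Contra)}(\fA^\bu,\fC^\bu)$, and the latter vanishes by Corollary~\ref{acyclic-in-flats=orthogonal-to-all-of-cotorsion}, $\fA^\bu$ being pure acyclic and $\fC^\bu$ a complex of cotorsion contramodules. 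Lemma~\ref{Ext-1-as-homotopy-Hom} then identifies the left-hand group with $\Ext^1_{\Com(\R\Contra)}(\fF^\bu,\fC^\bu)$, as desired.

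Granting this, the cotorsion pair structure is formal. Testing an object of $\sF^{\perp_1}$ against the contractible flat complexes $D^\bu_{n,n+1}(\fF)\in\sF$ and using Lemma~\ref{Ext-from-disk-complex} shows its terms are cotorsion, while $\Com(\R\Contra_\proj)\subset\sF$ together with Theorem~\ref{contraderived-cotorsion-pair-for-lpacepo} shows it is contraacyclic; hence $\sF^{\perp_1}\subset\sC$, the reverse inclusion being the orthogonality above. Dually, for $\fM^\bu\in{}^{\perp_1}\sC$ I would test against the contractible complexes $D^\bu_{n,n+1}(\fC)$ with $\fC$ cotorsion (these lie in $\sC$); the version of Lemma~\ref{Ext-from-disk-complex} with the disk in the second argument then forces every term $\fM^n$ into ${}^{\perp_1}(\R\Contra^\cot)=\R\Contra_\flat$, so ${}^{\perp_1}\sC=\Com(\R\Contra_\flat)=\sF$. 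Thus $(\sF,\sC)$ is a cotorsion pair; it is hereditary because $\sF=\Com(\R\Contra_\flat)$ is closed under kernels of admissible epimorphisms, $\R\Contra_\flat$ being resolving.

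For completeness I would realize $(\sF,\sC)$ as generated by a set. Fix $\sS_0$ with $\R\Contra_\flat=\Fil(\sS_0)$ (Proposition~\ref{flat-contramodules-deconstructible}) and let $\sS$ comprise all sphere complexes $S^\bu_n(\fS)$ and disk complexes $D^\bu_{n,n+1}(\fS)$ with $\fS\in\sS_0$; the sphere complexes, absent from the analogous set in Theorem~\ref{pure-acyclic-flat-arbitrary-cotorsion-cotorsion-pair}, are what allow non-acyclic complexes into $\Fil(\sS)$. The class $\Fil(\sS)$ is generating, and by the construction in the proof of Corollary~\ref{into-contractible-of-flat-cotorsion} the class $\sS^{\perp_1}$ is cogenerating, so Theorem~\ref{eklof-trlifaj-theorem} will deliver completeness once $\sS^{\perp_1}=\sC$ is verified. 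The inclusion $\sC\subset\sS^{\perp_1}$ is clear from $\sS\subset\sF$. For the converse the disk complexes again force the terms cotorsion; to obtain contraacyclicity I would observe that every bounded-below complex of countably generated free contramodules lies in $\Fil(\sS)$ (an elementary, well-ordered filtration of its terms, using disks along the differential and spheres in the remaining directions), whence by Proposition~\ref{complexes-of-projectives-filtered-by} every complex of projectives lies in $\Fil(\sS)^\oplus$; the Eklof lemma~\ref{eklof-lemma} then gives $\Ext^1_{\Com(\R\Contra)}(\fP^\bu,\fC^\bu)=0$ for all such $\fP^\bu$ and all $\fC^\bu\in\sS^{\perp_1}$, and Lemma~\ref{Ext-1-as-homotopy-Hom} turns this into contraacyclicity of $\fC^\bu$.

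I expect the main obstacle to be precisely this passage from termwise data to genuine contraacyclicity. Contraacyclicity is governed by acyclicity of the product-total complex $\Hom^{\R,\bu}(\fP^\bu,\fC^\bu)$ of an unbounded bicomplex, which does \emph{not} follow from acyclicity of its columns; this is the same phenomenon that makes Theorem~\ref{pure-acyclic-are-contraacyclic} nontrivial. The device that sidesteps it is Proposition~\ref{complexes-of-projectives-filtered-by}, reducing an arbitrary complex of projectives to a transfinite extension of bounded-below complexes---for which totalization is harmless---thereby replacing the deconstructibility of $\Com(\R\Contra)$, unavailable here since directed colimits in $\R\Contra$ are not exact, with an accessibility/filtration argument in the spirit of Theorem~\ref{pure-acyclic-flat-arbitrary-cotorsion-cotorsion-pair}.
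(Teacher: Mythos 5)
Your first two paragraphs are essentially sound, and the key orthogonality argument is in substance the paper's own: the paper likewise takes a special precover sequence coming from Theorem~\ref{contraderived-cotorsion-pair-for-lpacepo} (restricted to termwise flat complexes), identifies the kernel term as pure acyclic via Theorems~\ref{pure-acyclic-are-contraacyclic} and~\ref{contraacyclic-are-pure-acyclic}, and kills the relevant Hom complex -- where you quote Corollary~\ref{acyclic-in-flats=orthogonal-to-all-of-cotorsion}, the paper uses cotorsion periodicity together with Lemma~\ref{acyclics-orthogonal-to-acyclics}; both work, and your identification of the two orthogonal classes via disks on either side is fine. The genuine gap is in your third paragraph: the set $\sS$ of spheres and disks over $\sS_0$ does \emph{not} generate this cotorsion pair, and the auxiliary claim on which everything rests -- that every bounded-below complex of countably generated free contramodules lies in $\Fil(\sS)$ -- is false. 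Take $\R=k[x]/(x^2)$ with the discrete topology (a legitimate instance of the hypotheses: the base $\{0\}$ is countable and two-sided, and contramodules are just modules); here all flat modules are projective and all modules are cotorsion. Let $Y^\bu=(\dotsb\xrightarrow{x}\R\xrightarrow{x}\R\xrightarrow{x}\dotsb)$. Its terms are cotorsion, and since $Y^\bu$ is acyclic while every $\fS\in\sS_0$ is projective, every morphism $\fS\rarrow Z^{n}(Y^\bu)$ lifts along $Y^{n-1}\twoheadrightarrow Z^n(Y^\bu)$; via Lemma~\ref{Ext-1-as-homotopy-Hom} this gives $\Ext^1_{\Com(\R\Contra)}(S_n(\fS),Y^\bu)=0$ and $\Ext^1_{\Com(\R\Contra)}(D^\bu_{n,n+1}(\fS),Y^\bu)=0$ for all $n$, so $Y^\bu\in\sS^{\perp_1}$ \emph{whatever} $\sS_0$ is. But $Y^\bu$ is not contraacyclic: a null-homotopy of $\id_{Y^\bu}$ would yield $1=xh^0+h^1x\in(x)$ in degree~$0$. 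So $\sS^{\perp_1}\supsetneq\sC$, and the Eklof--Trlifaj theorem applied to your $\sS$ delivers completeness of the \emph{wrong} cotorsion pair; your paragraphs 1--2 alone establish that $(\sF,\sC)$ is a hereditary cotorsion pair but not that it is complete.

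The same example refutes the bounded-below claim directly: for $\fP^\bu=(\R\xrightarrow{x}\R\xrightarrow{x}\dotsb)$ concentrated in degrees $\ge0$, the degreewise-identity morphism $\fP^\bu\rarrow Y^\bu$ is not null-homotopic (same computation $1\notin(x)$), so $\Ext^1_{\Com(\R\Contra)}(\fP^\bu,Y^\bu[-1])\simeq\Hom_{\Hot(\R\Contra)}(\fP^\bu,Y^\bu)\ne0$ with $Y^\bu[-1]\in\sS^{\perp_1}$; by Lemma~\ref{eklof-lemma} this is incompatible with $\fP^\bu\in\Fil(\sS)$. The brutal-truncation filtration with sphere quotients exists for bounded-\emph{above} complexes, where the truncations $\sigma_{\ge k}$ exhaust the complex as $k$ decreases; for bounded-below, unbounded-above complexes -- precisely the ones Proposition~\ref{complexes-of-projectives-filtered-by} produces -- no sphere/disk filtration exists in general, and this is not a bookkeeping issue but the actual content of the theorem: termwise and cocycle-wise conditions cannot detect contraacyclicity. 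The paper's proof circumvents this exactly at the point where you go astray: its generating set is $\sS_1\cup\sS_2$ with $\sS_2$ consisting of \emph{all unbounded} complexes of countably presentable flat contramodules (a set, by Proposition~\ref{complexes-of-flats-accessible}); the identification $\sF'=\Com(\R\Contra_\flat)$ is then obtained not through filtrations by small pieces but through closure of $\sF'$ under directed colimits (via Proposition~\ref{transfinite-extensions-directed-colimits} and Lemma~\ref{hom-from-absolutely-acyclic-in-cotorsion-pair}(a)), together with $\aleph_1$\+accessibility; and contraacyclicity of the right-hand class follows because all complexes of projectives lie in $\sF'$, with $\sC'\supset\sC$ then supplied by the orthogonality argument you correctly gave in your first paragraph.
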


\begin{proof}
 The argument bears some similarity to the proof of
Theorem~\ref{pure-acyclic-flat-arbitrary-cotorsion-cotorsion-pair}.
 By Proposition~\ref{flat-contramodules-deconstructible}, there exists
a set of flat $\R$\+contramodules $\sS_0\subset\R\Contra_\flat$
such that $\R\Contra_\flat=\Fil(\sS_0)$.
 Denote by $\sS_1\subset\Com(\R\Contra)$ the set of all contractible
two-term complexes of (flat) $\R$\+contramodules
$$
 \dotsb\lrarrow0\lrarrow\fS\overset\id\lrarrow\fS\lrarrow0\lrarrow\dotsb
$$
(situated in arbitrary cohomological degrees) with $\fS\in\sS_0$.
 Let $\sS_2\subset\Com(\R\Contra)$ be the set of (representatives of
the isomorphism classes of) all complexes of countably presentable
flat $\R$\+contramodules.
 By Proposition~\ref{complexes-of-flats-accessible}, all the complexes
of flat $\R$\+contramodules are directed colimits of complexes
from~$\sS_2$.
 Put $\sS=\sS_1\cup\sS_2$.
 As explained in the proof of
Theorem~\ref{pure-acyclic-flat-arbitrary-cotorsion-cotorsion-pair},
the class $\Fil(\sS)$ (and even $\Fil(\sS_1)$) is generating in~$\sB$.

 Let $(\sF',\sC')$ denote the cotorsion pair generated by $\sS$
in~$\sB$.
 By Lemmas~\ref{eklof-lemma} and~\ref{Ext-from-disk-complex} (for
$i=1$), we have $\sC'\subset\Com(\R\Contra^\cot)$.
 Conversely, by Lemma~\ref{Ext-1-as-homotopy-Hom}, all contractible
complexes of cotorsion left $\R$\+contramodules belong to~$\sC'$.

 The construction from the proof of
Corollary~\ref{into-contractible-of-flat-cotorsion} (based on
Proposition~\ref{cotorsion-preenvelope}) shows that any complex of
$\R$\+contramodules can be embedded as a subcomplex into
a contractible complex of cotorsion $\R$\+contramodules.
 So the class $\sC'$ is cogenerating in $\Com(\R\Contra)$.
 Therefore, both parts of Theorem~\ref{eklof-trlifaj-theorem} are
applicable, and we have shown that $(\sF',\sC')$ is a complete
cotorsion pair in $\sB$ with $\sF'=\Fil(\sS)$.
 Similarly to the proof of
Theorem~\ref{pure-acyclic-flat-arbitrary-cotorsion-cotorsion-pair},
it follows that $\sF'\subset\Com(\R\Contra_\flat)$.

 Continuing to argue similarly to the proof of
Theorem~\ref{pure-acyclic-flat-arbitrary-cotorsion-cotorsion-pair},
one shows that the full subcategory $\sF'$ is closed under directed
colimits in $\Com(\R\Contra_\flat)$.
 Therefore, $\sF'=\Com(\R\Contra_\flat)=\sF$ (since $\sS_2\subset\sF'$).
 In particular, all the complexes of projective left $\R$\+contramodules
belong to~$\sF'$.
 By the definition and in view of Lemma~\ref{Ext-1-as-homotopy-Hom},
it follows that $\sC'\subset\Ac^\bctr(\R\Contra)$.

 Conversely, let $\fC^\bu$ be a complex of cotorsion $\R$\+contramodules
that is contraacyclic as a complex in $\R\Contra$.
 Let $\fF^\bu$ be a complex of flat left $\R$\+contramodules.
 Let us show that $\Ext^1_\sB(\fF^\bu,\fC^\bu)=0$.
 To this end, we will need to recall yet another complete
cotorsion pair.

 According to Theorem~\ref{contraderived-cotorsion-pair-for-lpacepo},
the pair of classes ($\Com(\R\Contra_\proj)$, $\Ac^\bctr(\R\Contra)$)
is a hereditary complete cotorsion pair in $\Com(\R\Contra)$.
 By Lemma~\ref{cotorsion-pair-restricts}(a), this cotorsion pair
restricts to a hereditary complete cotorsion pair in the exact
subcategory $\Com(\R\Contra_\flat)\subset\Com(\R\Contra)$ formed by
the pair of classes $\Com(\R\Contra_\proj)$ and
$\Com(\R\Contra_\flat)\cap\Ac^\bctr(\R\Contra)$.
 According to Theorems~\ref{pure-acyclic-are-contraacyclic}
and~\ref{contraacyclic-are-pure-acyclic}, we have
$\Com(\R\Contra_\flat)\cap\Ac^\bctr(\R\Contra)=\Ac(\R\Contra_\flat)$.
 We have shown that the pair of classes
($\Com(\R\Contra_\proj)$, $\Ac(\R\Contra_\flat)$) is a hereditary
complete cotorsion pair in $\Com(\R\Contra_\flat)$. {\hfuzz=4.2pt\par}

 For our purposes, we need either a special precover
sequence~\eqref{special-precover-sequence}, or a special preenvelope
sequence~\eqref{special-preenvelope-sequence} for the complex $\fF^\bu$
with respect to this cotorsion pair in $\Com(\R\Contra_\flat)$.
 Both are equally suitable.
 For example, let $0\rarrow\fH^\bu\rarrow\fP^\bu\rarrow\fF^\bu\rarrow0$
be a special precover sequence; so $\fP^\bu\in\Com(\R\Contra_\proj)$
and $\fH^\bu\in\Ac(\R\Contra_\flat)$.
 Since $\fF^\bu\in\Com(\R\Contra_\flat)$ and $\fC^\bu\in
\Com(\R\Contra^\cot)$, we have a short exact sequence of complexes of
abelian groups $0\rarrow\Hom^{\R,\bu}(\fF^\bu,\fC^\bu)\rarrow
\Hom^{\R,\bu}(\fP^\bu,\fC^\bu)\rarrow\Hom^{\R,\bu}(\fH^\bu,\fC^\bu)
\rarrow0$.

 Now the complex $\Hom^{\R,\bu}(\fP^\bu,\fC^\bu)$ is acyclic since
$\fC^\bu\in\Ac^\bctr(\R\Contra)$.
 Furthermore, the complex $\fC^\bu$ is acyclic in $\R\Contra$ by
Lemma~\ref{contraacyclic-in-abelian-are-acyclic}, hence it is also
acyclic in $\R\Contra^\cot$ by the cotorsion periodicity
theorem for contramodules~\cite[Corollary~12.4]{Pflcc}.
 As the complex $\fH^\bu$ is acyclic in $\R\Contra_\flat$, it follows
by virtue of Lemma~\ref{acyclics-orthogonal-to-acyclics} that
the complex $\Hom^{\R,\bu}(\fH^\bu,\fC^\bu)$ is acyclic.
 Therefore, the complex $\Hom^{\R,\bu}(\fF^\bu,\fC^\bu)$ is
acyclic, too.
 Applying Lemma~\ref{Ext-1-as-homotopy-Hom}, we conclude that
$\Ext^1_\sB(\fF^\bu,\fC^\bu)=0$.
 Thus $\sC'=\Com(\R\Contra^\cot)\cap\Ac^\bctr(\R\Contra)=\sC$.

 We have obtained the desired complete cotorsion pair $(\sF,\sC)$
in $\Com(\R\Contra)$, and it only remains to show that it is hereditary.
 Indeed, the class $\sF$ is closed under kernels of admissible
epimorphisms in $\sB$ by Lemma~\ref{flat-contramodules-resolving}.
\end{proof}

\begin{cor} \label{cotorsion-ctracycl-in-all=ctracycl-in-cotorsion}
 Let\/ $\R$ be a complete, separated topological ring with
a \emph{countable} base of neighborhoods of zero consisting of open
\emph{two-sided} ideals.
 Let\/ $\fC^\bu$ be a complex of cotorsion left\/ $\R$\+contramodules.
 Then the following two conditions are equivalent:
\begin{enumerate}
\item for every complex of projective left\/ $\R$\+contramodules\/
$\fP^\bu$, any morphism of complexes\/ $\fP^\bu\rarrow\fC^\bu$ is
homotopic to zero;
\item for every complex of flat cotorsion left\/ $\R$\+contramodules\/
$\fG^\bu$, any morphism of complexes\/ $\fG^\bu\rarrow\fC^\bu$ is
homotopic to zero.
\end{enumerate}
 In other words, $\fC^\bu$ is contraacyclic as a complex in\/
$\R\Contra$ if and only if\/ $\fC^\bu$ is contraacyclic as a complex
in\/ $\R\Contra^\cot$.
\end{cor}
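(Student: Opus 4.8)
The plan is to translate both conditions into $\Ext^1$\+vanishing statements in the abelian category $\Com(\R\Contra)$ and then to play off the two cotorsion pairs already at our disposal. The basic observation is that every term of a complex of projective or of flat cotorsion contramodules is flat, while every term of $\fC^\bu$ is cotorsion; hence $\Ext^{\R,1}(\fP^n,\fC^m)=0=\Ext^{\R,1}(\fG^n,\fC^m)$ for all indices, and Lemma~\ref{Ext-1-as-homotopy-Hom} identifies the homotopy $\Hom$ groups occurring in~(1) and~(2) with $\Ext^1_{\Com(\R\Contra)}({-},\fC^\bu[-1])$. Since the classes $\Com(\R\Contra_\proj)$ and $\Com(\R\Contra_\flat^\cot)$ are closed under the shift, running over all shifts turns condition~(1) into the assertion that $\Ext^1_{\Com(\R\Contra)}(\fP^\bu,\fC^\bu)=0$ for all $\fP^\bu\in\Com(\R\Contra_\proj)$, and condition~(2) into $\Ext^1_{\Com(\R\Contra)}(\fG^\bu,\fC^\bu)=0$ for all $\fG^\bu\in\Com(\R\Contra_\flat^\cot)$.

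Next I would invoke Theorem~\ref{arbitrary-flat-contraacyclic-cotors-cotorsion-pair}. Since $\fC^\bu$ lies in $\Com(\R\Contra^\cot)$, condition~(1) (i.e., $\fC^\bu\in\Ac^\bctr(\R\Contra)$) is equivalent to $\fC^\bu$ belonging to the right-hand class $\sC=\Com(\R\Contra^\cot)\cap\Ac^\bctr(\R\Contra)$ of that cotorsion pair, and hence to the vanishing of $\Ext^1_{\Com(\R\Contra)}(\fF^\bu,\fC^\bu)$ for \emph{every} complex of flat contramodules $\fF^\bu$ (with $\Ext^{\ge1}$ vanishing as well, by heredity). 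Granting this stronger reformulation of~(1), the implication (1)~$\Longrightarrow$~(2) is immediate, as complexes of flat cotorsion contramodules are a fortiori complexes of flat contramodules.

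The substantive direction is (2)~$\Longrightarrow$~(1), and here I would build a flat cotorsion coresolution. Given an arbitrary complex of flat contramodules $\fF^\bu$, I apply the complete cotorsion pair $(\Ac(\R\Contra_\flat),\Com(\R\Contra_\flat^\cot))$ in $\Com(\R\Contra_\flat)$ — the restricted pair produced in Corollary~\ref{coacyclic=acyclic-in-flats} and used in the proof of Corollary~\ref{becker-coderived-of-flats} — to obtain a short exact sequence $0\rarrow\fF^\bu\rarrow\fG^\bu\rarrow\fH^\bu\rarrow0$ with $\fG^\bu\in\Com(\R\Contra_\flat^\cot)$ and $\fH^\bu\in\Ac(\R\Contra_\flat)$. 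This is a termwise admissible short exact sequence of flat contramodules, so it is exact already in the abelian category $\Com(\R\Contra)$, and applying $\Ext^*_{\Com(\R\Contra)}({-},\fC^\bu)$ yields the fragment $\Ext^1_{\Com(\R\Contra)}(\fG^\bu,\fC^\bu)\rarrow\Ext^1_{\Com(\R\Contra)}(\fF^\bu,\fC^\bu)\rarrow\Ext^2_{\Com(\R\Contra)}(\fH^\bu,\fC^\bu)$. The left group vanishes by condition~(2), while the right group vanishes because $(\Ac(\R\Contra_\flat),\Com(\R\Contra^\cot))$ is a hereditary cotorsion pair by Theorem~\ref{pure-acyclic-flat-arbitrary-cotorsion-cotorsion-pair}, giving $\Ext^{\ge1}_{\Com(\R\Contra)}(\fH^\bu,\fC^\bu)=0$. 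Thus $\Ext^1_{\Com(\R\Contra)}(\fF^\bu,\fC^\bu)=0$ for every flat $\fF^\bu$, which by the reformulation of the previous paragraph is precisely condition~(1).

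The crux, and the only place where anything nontrivial happens, is this last implication: condition~(2) controls only maps out of flat \emph{cotorsion} complexes, whereas~(1) demands orthogonality to all complexes of flat contramodules. The bridge is the special preenvelope with pure acyclic cokernel, and the decisive input is the $\Ext^2$\+vanishing furnished by the heredity of the cotorsion pair of Theorem~\ref{pure-acyclic-flat-arbitrary-cotorsion-cotorsion-pair}, which annihilates the connecting term. The one point I would be careful to verify is that the sequence $0\rarrow\fF^\bu\rarrow\fG^\bu\rarrow\fH^\bu\rarrow0$, although constructed inside the exact category $\Com(\R\Contra_\flat)$, is genuinely short exact in $\Com(\R\Contra)$ — which holds since the inherited exact structure on $\R\Contra_\flat$ makes admissible short exact sequences termwise exact in $\R\Contra$ — so that the long exact $\Ext$ sequence is legitimate.
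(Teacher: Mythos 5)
Your proof is correct, and it reaches the same class equality as the paper, but by a genuinely different mechanism. The paper's own proof is a two-step formality: it takes the hereditary complete cotorsion pair $\bigl(\Com(\R\Contra_\flat),\,\Com(\R\Contra^\cot)\cap\Ac^\bctr(\R\Contra)\bigr)$ of Theorem~\ref{arbitrary-flat-contraacyclic-cotors-cotorsion-pair} and restricts it to the exact subcategory $\sE=\Com(\R\Contra^\cot)$ via Lemma~\ref{cotorsion-pair-restricts}(b) (applicable since $\sE$ is closed under cokernels of admissible monomorphisms and the right-hand class lies in~$\sE$); this immediately identifies $\Com(\R\Contra^\cot)\cap\Ac^\bctr(\R\Contra)$ with $\Com(\R\Contra_\flat^\cot)^{\perp_1}$ inside $\sE$, and Lemma~\ref{Ext-1-as-homotopy-Hom} translates. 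You use the same central input --- condition~(1) is membership of $\fC^\bu$ in the right-hand class of the cotorsion pair of Theorem~\ref{arbitrary-flat-contraacyclic-cotors-cotorsion-pair}, i.e., $\Ext^1$-orthogonality to \emph{all} complexes of flat contramodules --- but you replace the citation of the restriction lemma by a hands-on dimension shift: a special preenvelope $0\rarrow\fF^\bu\rarrow\fG^\bu\rarrow\fH^\bu\rarrow0$ from the restricted pair $(\Ac(\R\Contra_\flat),\,\Com(\R\Contra_\flat^\cot))$ in $\Com(\R\Contra_\flat)$, with the connecting term killed by $\Ext^2_{\Com(\R\Contra)}(\fH^\bu,\fC^\bu)=0$, which the heredity of the pair in Theorem~\ref{pure-acyclic-flat-arbitrary-cotorsion-cotorsion-pair} does supply (in the paper's definition, heredity includes the $\Ext^{\ge1}$-vanishing form, the left class being generating and the right class cogenerating). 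All your checkpoints are sound: the shift argument and the termwise $\Ext^1$-vanishing needed for Lemma~\ref{Ext-1-as-homotopy-Hom} are in order, and the preenvelope sequence is genuinely exact in the abelian category $\Com(\R\Contra)$. In effect you re-prove by hand precisely the instance of Lemma~\ref{cotorsion-pair-restricts}(b) that the paper invokes. The trade-off: the paper's route also delivers \emph{completeness} of the restricted pair $(\Com(\R\Contra_\flat^\cot),\,\Ac^\bctr(\R\Contra^\cot))$ in $\Com(\R\Contra^\cot)$, which is reused for the special precover sequence in the proof of Corollary~\ref{becker-contraderived-of-cotorsion}, whereas your argument yields only the orthogonality statement needed here; in exchange, your version is more explicit and self-contained at this spot, and it records the slightly stronger conclusion $\Ext^{\ge1}_{\Com(\R\Contra)}(\fF^\bu,\fC^\bu)=0$ for every complex of flat contramodules~$\fF^\bu$.
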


\begin{proof}
 Consider the abelian category $\sB=\Com(\R\Contra)$ and the hereditary
complete cotorsion pair $\sF=\Com(\R\Contra_\flat)$ and
$\sC=\Com(\R\Contra^\cot)\cap\Ac^\bctr(\R\Contra)$ in $\sB$, as per
Theorem~\ref{arbitrary-flat-contraacyclic-cotors-cotorsion-pair}.
 By Lemma~\ref{cotorsion-pair-restricts}(b), the cotorsion pair
$(\sF,\sC)$ in $\sB$ restricts to a hereditary complete cotorsion pair
$(\sE\cap\sF,\,\sC)$ in the exact subcategory $\Com(\R\Contra^\cot)
=\sE\subset\sB$.
 Now $\sE\cap\sF=\Com(\R\Contra_\flat^\cot)$ is the full subcategory of
all complexes of flat cotorsion $\R$\+contramodules.
 So we have $\sC=(\sE\cap\sF)^{\perp_1}$ in the exact category~$\sE$.
 In view of Lemma~\ref{Ext-1-as-homotopy-Hom}, the assertion of
the corollary follows.
\end{proof}

\begin{cor} \label{becker-contraderived-of-cotorsion}
 Let\/ $\R$ be a complete, separated topological ring with
a \emph{countable} base of neighborhoods of zero consisting of open
\emph{two-sided} ideals.
 Then the inclusion of additive/exact categories\/ $\R\Contra_\flat^\cot
\rarrow\R\Contra^\cot$ induces a triangulated equivalence
$$
 \Hot(\R\Contra_\flat^\cot)\simeq\sD^\bctr(\R\Contra^\cot).
$$
\end{cor}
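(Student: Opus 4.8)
The plan is to apply Lemma~\ref{pkoszul-lemma16} to the triangulated category $\sH=\Hot(\R\Contra^\cot)$, its thick subcategory of contraacyclic complexes $\sX=\Ac^\bctr(\R\Contra^\cot)$, and the full triangulated subcategory $\sF=\Hot(\R\Contra_\flat^\cot)$, viewed inside $\sH$ via the termwise inclusion (this image is closed under cones, since flat cotorsion contramodules are closed under finite direct sums). Recall from the discussion opening this section that the projective objects of the exact category $\R\Contra^\cot$ are precisely the flat cotorsion contramodules; thus $\sD^\bctr(\R\Contra^\cot)=\sH/\sX$ by definition, and $\sF$ is the homotopy category of complexes of projectives. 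The first thing to record is that $\sX\cap\sF=0$ in $\sH$: if a complex $\fG^\bu$ of flat cotorsion contramodules is contraacyclic, then taking $\fP^\bu=\fG^\bu$ in the definition shows that $\id_{\fG^\bu}$ is homotopic to zero, so $\fG^\bu$ is contractible. Hence the left-hand side of the equivalence furnished by Lemma~\ref{pkoszul-lemma16} is just $\sF=\Hot(\R\Contra_\flat^\cot)$, and the equivalence it produces is exactly the functor induced by the inclusion $\R\Contra_\flat^\cot\rarrow\R\Contra^\cot$.

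The remaining hypothesis of Lemma~\ref{pkoszul-lemma16} is that every $\fC^\bu\in\sH$ receives a morphism $\fG^\bu\rarrow\fC^\bu$ from an object $\fG^\bu\in\sF$ whose cone lies in $\sX$. To produce it I would invoke the cotorsion pair machinery of the preceding section. By Theorem~\ref{arbitrary-flat-contraacyclic-cotors-cotorsion-pair} together with Lemma~\ref{cotorsion-pair-restricts}(b) --- exactly as carried out in the proof of Corollary~\ref{cotorsion-ctracycl-in-all=ctracycl-in-cotorsion} --- the complete cotorsion pair $(\Com(\R\Contra_\flat),\,\Com(\R\Contra^\cot)\cap\Ac^\bctr(\R\Contra))$ in $\Com(\R\Contra)$ restricts to a hereditary complete cotorsion pair in the exact category $\Com(\R\Contra^\cot)$ whose left class is $\Com(\R\Contra_\flat^\cot)$ and whose right class is $\Com(\R\Contra^\cot)\cap\Ac^\bctr(\R\Contra)$; and by Corollary~\ref{cotorsion-ctracycl-in-all=ctracycl-in-cotorsion} this right class coincides with $\Ac^\bctr(\R\Contra^\cot)=\sX$. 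Thus for the object $\fC^\bu$ I can take a special precover sequence~\eqref{special-precover-sequence} with respect to this cotorsion pair, obtaining a short exact sequence $0\rarrow\fA^\bu\rarrow\fG^\bu\rarrow\fC^\bu\rarrow0$ of complexes of cotorsion contramodules with $\fG^\bu\in\Com(\R\Contra_\flat^\cot)$ and $\fA^\bu\in\Ac^\bctr(\R\Contra^\cot)$.

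It then remains to check that the cone of $\fG^\bu\rarrow\fC^\bu$ lies in $\sX$. Here I would argue that the total complex $\Tot(\fA^\bu\to\fG^\bu\to\fC^\bu)$ is contraacyclic in $\R\Contra^\cot$ by Lemma~\ref{absolutely-acyclic-are-contraacyclic}(a), which applies since $\R\Contra^\cot$ has enough projectives. Consequently the short exact sequence becomes a distinguished triangle $\fA^\bu\rarrow\fG^\bu\rarrow\fC^\bu\rarrow\fA^\bu[1]$ in $\sD^\bctr(\R\Contra^\cot)$; equivalently, the cone of $\fG^\bu\rarrow\fC^\bu$ sits in a triangle in $\Hot(\R\Contra^\cot)$ with $\fA^\bu[1]$ and a shift of the total complex, both of which are contraacyclic, so the cone is contraacyclic because $\sX$ is thick. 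Since $\fA^\bu$ vanishes in $\sD^\bctr(\R\Contra^\cot)$, the map $\fG^\bu\rarrow\fC^\bu$ is an isomorphism there. With all hypotheses of Lemma~\ref{pkoszul-lemma16} verified, it yields the desired equivalence $\Hot(\R\Contra_\flat^\cot)=\sF\simeq\sH/\sX=\sD^\bctr(\R\Contra^\cot)$. The main obstacle is not any of these formal steps but the prerequisite that the correct complete cotorsion pair exist in $\Com(\R\Contra^\cot)$ and that contraacyclicity computed inside the non-abelian exact subcategory $\R\Contra^\cot$ agree with contraacyclicity in the ambient abelian category $\R\Contra$: because $\R\Contra^\cot$ is not abelian, Theorem~\ref{becker-contraderived-of-lpacepo} does not apply directly, and it is precisely Theorem~\ref{arbitrary-flat-contraacyclic-cotors-cotorsion-pair} and Corollary~\ref{cotorsion-ctracycl-in-all=ctracycl-in-cotorsion} that remove this difficulty.
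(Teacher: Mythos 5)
Your proposal is correct and follows essentially the same route as the paper's proof: the key steps in both are the restricted hereditary complete cotorsion pair $(\Com(\R\Contra_\flat^\cot),\,\Ac^\bctr(\R\Contra^\cot))$ in $\Com(\R\Contra^\cot)$ obtained from Theorem~\ref{arbitrary-flat-contraacyclic-cotors-cotorsion-pair} via Lemma~\ref{cotorsion-pair-restricts}(b) and Corollary~\ref{cotorsion-ctracycl-in-all=ctracycl-in-cotorsion}, then a special precover sequence for $\fC^\bu$ whose total complex is contraacyclic by Lemma~\ref{absolutely-acyclic-are-contraacyclic}(a), making the cone of $\fG^\bu\rarrow\fC^\bu$ contraacyclic. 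The only cosmetic difference is that you package the conclusion through Lemma~\ref{pkoszul-lemma16} together with the (correct) observation that $\sX\cap\sF=0$, whereas the paper argues directly that the functor is fully faithful and then essentially surjective up to contraacyclic cones --- the same formal content.
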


\begin{proof}
 Corollary~\ref{cotorsion-ctracycl-in-all=ctracycl-in-cotorsion} tells
us that $\Ac^\bctr(\R\Contra^\cot)=\Com(\R\Contra^\cot)\cap
\Ac^\bctr(\R\Contra)$.
 It follows immediately from the definitions that the functor
$\Hot(\R\Contra_\flat^\cot)\rarrow\sD^\bctr(\R\Contra^\cot)$ is fully
faithful.
 In order to prove that it is a triangulated equivalence, it remains
to construct for every complex of cotorsion $\R$\+contramodules
$\fC^\bu$ a complex of flat cotorsion $\R$\+contramodules $\fG^\bu$
together with a morphism of complexes $\fG^\bu\rarrow\fC^\bu$ with
a contraacyclic cone.

 For this purpose, we use
Theorem~\ref{arbitrary-flat-contraacyclic-cotors-cotorsion-pair}.
 As explained in the proof of
Corollary~\ref{cotorsion-ctracycl-in-all=ctracycl-in-cotorsion},
the theorem implies that the pair of classes
$\Com(\R\Contra_\flat^\cot)$ and $\Ac^\bctr(\R\Contra^\cot)$ is
a hereditary complete cotorsion pair in $\Com(\R\Contra^\cot)$.
 Consider a special precover sequence~\eqref{special-precover-sequence}
for the object $\fC^\bu\in\Com(\R\Contra^\cot)$ with respect to this
complete cotorsion pair.
 We get a short exact sequence of complexes of cotorsion
$\R$\+contramodules $0\rarrow\fB^\bu\rarrow\fG^\bu\rarrow\fC^\bu
\rarrow0$ with $\fG^\bu\in\Com(\R\Contra_\flat^\cot)$ and
$\fB^\bu\in\Ac^\bctr(\R\Contra^\cot)$.
 Since the total complex $\Tot(\fB^\bu\to\fG^\bu\to\fC^\bu)$ is
contraacyclic in $\R\Contra^\cot$ by
Lemma~\ref{absolutely-acyclic-are-contraacyclic}(a), it follows that
the cone of the morphism of complexes $\fG^\bu\rarrow\fC^\bu$ also
belongs to $\Ac^\bctr(\R\Contra^\cot)$.
\end{proof}

\Section{Five Constructions of the Contraderived Category}
\label{five-constructions-secn}

 Now we can formulate the theorem promised in the introduction,
summarizing our results generalizing Neeman's
theorem~\cite[Theorem~8.6]{Neem} and the related implications of
the theorem of Bazzoni, Cort\'es-Izurdiaga, and
Estrada~\cite[Theorem~5.3]{BCE}.

\begin{thm} \label{main-theorem}
 Let\/ $\R$ be a complete, separated topological ring with
a \emph{countable} base of neighborhoods of zero consisting of
open \emph{two-sided} ideals.
 Let\/ $\fF^\bu$ be a complex of flat left\/ $\R$\+contramodules.
 Then the following six conditions are equivalent:
\begin{enumerate}
\renewcommand{\theenumi}{\roman{enumi}${}^{\mathrm{c}}$}
\item for every complex of projective left\/ $\R$\+contramodules\/
$\fP^\bu$, any morphism of complexes of\/ $\R$\+contramodules\/
$\fP^\bu\rarrow\fF^\bu$ is homotopic to zero;
\item $\fF^\bu$ is an acyclic complex of\/ $\R$\+contramodules with
flat\/ $\R$\+contramodules of cocycles;
\item $\fF^\bu$ can be obtained from contractible complexes of flat\/
$\R$\+contramodules using extensions and directed colimits;
\item $\fF^\bu$ is an\/ $\aleph_1$\+directed colimit of total complexes
of short exact sequences of complexes of countably presentable flat\/
$\R$\+contramodules;
\item for every complex of cotorsion left\/ $\R$\+contramodules\/
$\fC^\bu$, any morphism of complexes of\/ $\R$\+contramodules\/ $\fF^\bu
\rarrow\fC^\bu$ is homotopic to zero;
\item for every complex of flat cotorsion left\/ $\R$\+contramodules\/
$\fG^\bu$, any morphism of complexes of\/ $\R$\+contramodules\/ $\fF^\bu
\rarrow\fG^\bu$ is homotopic to zero.
\end{enumerate}
\end{thm}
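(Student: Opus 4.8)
The plan is to treat condition~(ii${}^{\mathrm{c}}$)---that $\fF^\bu$ be pure acyclic, i.e., acyclic in the exact category $\R\Contra_\flat$, in the terminology of Section~\ref{acyclic-are-contraacyclic-secn}---as the central condition, and to show that each of the remaining five conditions is equivalent to it. Three of these equivalences have already been packaged as results earlier in the paper, so at the level of this theorem the proof is essentially a matter of assembly: I will invoke each packaged result in turn and verify that its statement matches the corresponding numbered condition verbatim.

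For the equivalence (i${}^{\mathrm{c}}$)~$\Longleftrightarrow$~(ii${}^{\mathrm{c}}$) I first observe that condition~(i${}^{\mathrm{c}}$) is precisely the assertion that $\fF^\bu$ is contraacyclic as a complex in $\R\Contra$; this is the same as being contraacyclic in $\R\Contra_\flat$, since the two notions agree for complexes of flat contramodules because the projective objects of $\R\Contra_\flat$ and of $\R\Contra$ coincide. The implication (ii${}^{\mathrm{c}}$)~$\Longrightarrow$~(i${}^{\mathrm{c}}$) is then exactly Theorem~\ref{pure-acyclic-are-contraacyclic}, and the converse (i${}^{\mathrm{c}}$)~$\Longrightarrow$~(ii${}^{\mathrm{c}}$) is Theorem~\ref{contraacyclic-are-pure-acyclic}.

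For the last two conditions I appeal to the cotorsion-pair machinery of Sections~\ref{complexes-of-cotorsion-secn}--\ref{coderived-of-flat-secn}: the equivalence (ii${}^{\mathrm{c}}$)~$\Longleftrightarrow$~(v${}^{\mathrm{c}}$) is Corollary~\ref{acyclic-in-flats=orthogonal-to-all-of-cotorsion}, and (ii${}^{\mathrm{c}}$)~$\Longleftrightarrow$~(vi${}^{\mathrm{c}}$) is Corollary~\ref{coacyclic=acyclic-in-flats}, which says that a complex of flat contramodules is coacyclic in $\R\Contra_\flat$ exactly when it is acyclic there. Finally, the chain (ii${}^{\mathrm{c}}$)~$\Longleftrightarrow$~(iii${}^{\mathrm{c}}$)~$\Longleftrightarrow$~(iv${}^{\mathrm{c}}$) is the content of~\cite[Theorem~13.2]{Pflcc}, which I quote directly.

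Since every individual equivalence has already been established in the preceding sections, there is no genuine remaining obstacle at the level of this theorem; the real work lies earlier. If I had to identify the technical heart, it would be the implication (ii${}^{\mathrm{c}}$)~$\Longrightarrow$~(i${}^{\mathrm{c}}$) of Theorem~\ref{pure-acyclic-are-contraacyclic}, whose proof rests on reducing an arbitrary pure acyclic complex to an $\aleph_1$\+directed colimit of pure acyclic complexes of countably presentable flat contramodules (Proposition~\ref{pure-acyclic-complexes-accessible}) and then exploiting that such small complexes live in an exact category of homological dimension~$\le1$ (Proposition~\ref{projective-dimension-of-flat-countably-presented}), so that they are absolutely acyclic (Lemma~\ref{psemi-remark21}) and hence receive no nonzero homotopy classes of maps from complexes of projectives (Lemma~\ref{absolutely-acyclic-are-contraacyclic}(a)). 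The only care needed in the present assembly is to confirm that the notion of ``contraacyclic'' in condition~(i${}^{\mathrm{c}}$) and of ``pure acyclic'' in condition~(ii${}^{\mathrm{c}}$) are stable under passage between the ambient abelian category $\R\Contra$ and the exact category $\R\Contra_\flat$, which is immediate from the coincidence of projective objects noted above together with Lemma~\ref{absolutely-acyclic-are-contraacyclic}(a).
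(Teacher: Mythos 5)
Your proposal is correct and matches the paper's own proof essentially verbatim: the paper likewise uses condition~(ii${}^{\mathrm{c}}$) as the hub, citing Theorem~\ref{pure-acyclic-are-contraacyclic} for (ii${}^{\mathrm{c}}$)\,$\Rightarrow$\,(i${}^{\mathrm{c}}$), Theorem~\ref{contraacyclic-are-pure-acyclic} for the converse, \cite[Theorem~13.2]{Pflcc} for (ii${}^{\mathrm{c}}$)\,$\Leftrightarrow$\,(iii${}^{\mathrm{c}}$)\,$\Leftrightarrow$\,(iv${}^{\mathrm{c}}$), Corollary~\ref{acyclic-in-flats=orthogonal-to-all-of-cotorsion} for (v${}^{\mathrm{c}}$), and Corollary~\ref{coacyclic=acyclic-in-flats} for (vi${}^{\mathrm{c}}$). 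Your additional remarks on the agreement of contraacyclicity in $\R\Contra$ and in $\R\Contra_\flat$ (via the coincidence of projective objects) are accurate and consistent with the discussion in the paper's introduction.
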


\begin{proof}
 The implication
(ii$^{\mathrm{c}}$)~$\Longrightarrow$~(i$^{\mathrm{c}}$) is
Theorem~\ref{pure-acyclic-are-contraacyclic}.

 The implication
(i$^{\mathrm{c}}$)~$\Longrightarrow$~(ii$^{\mathrm{c}}$) is
Theorem~\ref{contraacyclic-are-pure-acyclic}.

 The equivalences (ii$^{\mathrm{c}}$) $\Longleftrightarrow$
(iii$^{\mathrm{c}}$) $\Longleftrightarrow$ (iv$^{\mathrm{c}}$)
are provided by~\cite[Theorem~13.2]{Pflcc}.

 The equivalence (ii$^{\mathrm{c}}$) $\Longleftrightarrow$
(v$^{\mathrm{c}}$) is
Corollary~\ref{acyclic-in-flats=orthogonal-to-all-of-cotorsion}.

 The equivalence (ii$^{\mathrm{c}}$) $\Longleftrightarrow$
(vi$^{\mathrm{c}}$) is
Corollary~\ref{coacyclic=acyclic-in-flats}.
\end{proof}

 Finally, we deduce a corollary about an equivalence of the ``flat'', 
``projective'', ``flat cotorsion'', ``Verdier quotient by
contraacyclic'', and ``Verdier quotient of cotorsion by contraacyclic''
constructions of the contraderived category $\sD^\bctr(\R\Contra)$.
 It should be compared with~\cite[Theorems~4.12, 4.13, and~5.7]{PS7}.

\begin{cor}
 Let\/ $\R$ be a complete, separated topological ring with
a \emph{countable} base of neighborhoods of zero consisting of
open \emph{two-sided} ideals.
 Then the inclusions of additive/exact/abelian categories\/
$\R\Contra_\proj\rarrow\R\Contra_\flat\rarrow\R\Contra$ induce
equivalences of triangulated categories
$$
 \Hot(\R\Contra_\proj)\simeq\sD(\R\Contra_\flat)=
 \sD^\bctr(\R\Contra_\flat)\simeq\sD^\bctr(\R\Contra).
$$
 Furthermore, the inclusions of additive/exact/abelian categories\/
$\R\Contra_\flat^\cot\rarrow\R\Contra_\flat$ and\/
$\R\Contra_\flat^\cot\rarrow\R\Contra^\cot\rarrow\R\Contra$
induce equivalences of triangulated categories
$$
 \Hot(\R\Contra_\flat^\cot)\simeq\sD^\bco(\R\Contra_\flat)=
 \sD(\R\Contra_\flat)
$$
and
$$
 \Hot(\R\Contra_\flat^\cot)\simeq\sD^\bctr(\R\Contra^\cot)
 \simeq\sD^\bctr(\R\Contra).
$$
\end{cor}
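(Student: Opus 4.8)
The plan is to assemble the three displayed chains from the equivalences already established, using Lemma~\ref{pkoszul-lemma16} to compare homotopy categories with (contra)derived categories and Theorem~\ref{main-theorem} to match up the competing notions of acyclicity. First I would record the identifications that hold on the nose. The equivalence of conditions (i${}^{\mathrm c}$) and~(ii${}^{\mathrm c}$) in Theorem~\ref{main-theorem} (that is, Theorems~\ref{pure-acyclic-are-contraacyclic} and~\ref{contraacyclic-are-pure-acyclic} together) says that a complex of flat $\R$\+contramodules is pure acyclic precisely when it is contraacyclic; hence $\Ac(\R\Contra_\flat)=\Ac^\bctr(\R\Contra_\flat)$ inside $\Hot(\R\Contra_\flat)$, and therefore $\sD(\R\Contra_\flat)=\sD^\bctr(\R\Contra_\flat)$. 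The equivalence $\Hot(\R\Contra_\proj)\simeq\sD^\bctr(\R\Contra)$ is Theorem~\ref{becker-contraderived-of-lpacepo}.

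The crux of the first chain is the equivalence $\sD^\bctr(\R\Contra_\flat)\simeq\sD^\bctr(\R\Contra)$ induced by the inclusion $\R\Contra_\flat\rarrow\R\Contra$, and this is the step I expect to require real work. I would deduce it from Lemma~\ref{pkoszul-lemma16} applied with $\sH=\Hot(\R\Contra)$, \ $\sX=\Ac^\bctr(\R\Contra)$, and $\sF=\Hot(\R\Contra_\flat)$; the last is a full triangulated subcategory because flat contramodules are closed under finite direct sums. The hypothesis to verify is that every complex $\fB^\bu$ in $\R\Contra$ receives a morphism $\fP^\bu\rarrow\fB^\bu$ from a complex of flats with contraacyclic cone. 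For this I invoke the complete cotorsion pair $(\Com(\R\Contra_\proj),\,\Ac^\bctr(\R\Contra))$ of Theorem~\ref{contraderived-cotorsion-pair-for-lpacepo} to obtain a special precover sequence $0\rarrow\fA^\bu\rarrow\fP^\bu\rarrow\fB^\bu\rarrow0$ with $\fP^\bu\in\Com(\R\Contra_\proj)$ and $\fA^\bu\in\Ac^\bctr(\R\Contra)$. By Lemma~\ref{absolutely-acyclic-are-contraacyclic}(a) the total complex of this short exact sequence is contraacyclic, so the sequence becomes a distinguished triangle $\fA^\bu\rarrow\fP^\bu\rarrow\fB^\bu\rarrow\fA^\bu[1]$ in $\sD^\bctr(\R\Contra)$; as $\fA^\bu$ vanishes there, the map $\fP^\bu\rarrow\fB^\bu$ is an isomorphism in $\sD^\bctr(\R\Contra)$, and consequently its cone taken in $\Hot(\R\Contra)$ belongs to the thick subcategory $\Ac^\bctr(\R\Contra)$. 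Lemma~\ref{pkoszul-lemma16} now gives $\Hot(\R\Contra_\flat)/\bigl(\Ac^\bctr(\R\Contra)\cap\Hot(\R\Contra_\flat)\bigr)\simeq\sD^\bctr(\R\Contra)$, and since a complex of flats is contraacyclic in $\R\Contra_\flat$ exactly when it is contraacyclic in $\R\Contra$ (the two categories have the same projectives), the left-hand side is $\sD^\bctr(\R\Contra_\flat)$. Together with the first paragraph this yields the entire first chain, the remaining equivalence $\Hot(\R\Contra_\proj)\simeq\sD(\R\Contra_\flat)$ following by composition.

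The second displayed chain is exactly Corollary~\ref{becker-coderived-of-flats}, so nothing further is needed. For the third chain, the first equivalence $\Hot(\R\Contra_\flat^\cot)\simeq\sD^\bctr(\R\Contra^\cot)$ is Corollary~\ref{becker-contraderived-of-cotorsion}; it remains to produce $\sD^\bctr(\R\Contra^\cot)\simeq\sD^\bctr(\R\Contra)$ compatibly with the inclusion $\R\Contra^\cot\rarrow\R\Contra$. I would obtain this by composing the equivalences already in hand,
$$
 \sD^\bctr(\R\Contra^\cot)\simeq\Hot(\R\Contra_\flat^\cot)
 \simeq\sD(\R\Contra_\flat)\simeq\sD^\bctr(\R\Contra),
$$
where the first isomorphism is the inverse of Corollary~\ref{becker-contraderived-of-cotorsion}, the second is Corollary~\ref{becker-coderived-of-flats}, and the third is the first chain. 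To check that the composite is the functor induced by the inclusion, I would evaluate on a complex $\fG^\bu$ of flat cotorsion $\R$\+contramodules: each arrow carries the class of $\fG^\bu$ to the class of the same complex in its target, and since by Corollary~\ref{becker-contraderived-of-cotorsion} every object of $\sD^\bctr(\R\Contra^\cot)$ is isomorphic to the image of such a $\fG^\bu$, the composite coincides on objects with the inclusion-induced functor.

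The main obstacle is the construction carried out in the second paragraph: exhibiting, for an arbitrary complex of contramodules, a termwise projective (hence flat) ``resolution'' whose cone is contraacyclic, which is what feeds Lemma~\ref{pkoszul-lemma16}. Once this is in place, the rest is bookkeeping with the previously established equivalences, the only point demanding attention being the verification at the level of objects that the chained equivalences of the third chain are indeed induced by the stated inclusion functors.
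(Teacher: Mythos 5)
Your proposal is correct in substance and assembles the same ingredients as the paper, but it organizes the first chain in the opposite order. The paper first proves $\Hot(\R\Contra_\proj)\simeq\sD(\R\Contra_\flat)$ directly, applying Lemma~\ref{pkoszul-lemma16} inside $\Hot(\R\Contra_\flat)$ with $\sX=\Ac(\R\Contra_\flat)$: the ``more information'' implicit in Theorem~\ref{becker-contraderived-of-lpacepo} supplies, for every complex, a complex of projectives mapping to it with contraacyclic cone, and Theorem~\ref{main-theorem}\,(i$^{\mathrm{c}}$)\,$\Leftrightarrow$\,(ii$^{\mathrm{c}}$) is used twice---once to see that the cone is acyclic in $\R\Contra_\flat$, and once more (equivalently, \cite[Proposition~12.1]{Pflcc}) to see that complexes of projectives acyclic in $\R\Contra_\flat$ are contractible, which identifies $\sF/(\sX\cap\sF)$ with $\Hot(\R\Contra_\proj)$; the equivalence $\sD^\bctr(\R\Contra_\flat)\simeq\sD^\bctr(\R\Contra)$ then falls out by two-out-of-three. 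You instead prove $\sD^\bctr(\R\Contra_\flat)\simeq\sD^\bctr(\R\Contra)$ directly, applying Lemma~\ref{pkoszul-lemma16} in $\Hot(\R\Contra)$ and manufacturing the needed resolutions from the special precover sequences of the cotorsion pair of Theorem~\ref{contraderived-cotorsion-pair-for-lpacepo} together with Lemma~\ref{absolutely-acyclic-are-contraacyclic}(a); this is exactly the mechanism behind the paper's remark that Theorem~\ref{becker-contraderived-of-lpacepo} ``provides more information than it explicitly says,'' so you have in effect reproved that remark rather than quoted it---a legitimate and slightly more self-contained variant. A small economy of your route: since full faithfulness of $\Hot(\R\Contra_\proj)\rarrow\sD^\bctr(\R\Contra)$ is built into Theorem~\ref{becker-contraderived-of-lpacepo}, you never need the contractibility of pure acyclic complexes of projectives. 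Your treatment of the second chain and of the equality $\sD(\R\Contra_\flat)=\sD^\bctr(\R\Contra_\flat)$ via Theorems~\ref{pure-acyclic-are-contraacyclic} and~\ref{contraacyclic-are-pure-acyclic} coincides with the paper's.

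One point in your third chain needs repair. Before you can compare your composite equivalence with ``the functor induced by the inclusion $\R\Contra^\cot\rarrow\R\Contra$,'' you must know that this functor exists, i.e., that the composition $\Hot(\R\Contra^\cot)\rarrow\sD^\bctr(\R\Contra)$ annihilates $\Ac^\bctr(\R\Contra^\cot)$. This is not formal: contraacyclicity in the exact category $\R\Contra^\cot$ is tested against complexes of flat cotorsion contramodules (the projective objects of $\R\Contra^\cot$), whereas contraacyclicity in $\R\Contra$ is tested against complexes of projective contramodules, and neither class of test complexes contains the other (projective contramodules need not be cotorsion). The required inclusion $\Ac^\bctr(\R\Contra^\cot)\subset\Ac^\bctr(\R\Contra)$ is precisely one half of Corollary~\ref{cotorsion-ctracycl-in-all=ctracycl-in-cotorsion}, which the paper cites at exactly this point for well-definedness and which rests on Theorem~\ref{arbitrary-flat-contraacyclic-cotors-cotorsion-pair}; a single citation fixes your argument. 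Once the functor is known to exist, your verification ``on objects'' can be replaced by the cleaner observation that the triangle of inclusion-induced functors commutes on the nose at the level of homotopy categories, so the paper's two-out-of-three argument (both $\Hot(\R\Contra_\flat^\cot)\rarrow\sD^\bctr(\R\Contra^\cot)$ and the composite $\Hot(\R\Contra_\flat^\cot)\rarrow\sD^\bctr(\R\Contra_\flat)\rarrow\sD^\bctr(\R\Contra)$ are equivalences) applies verbatim; mere agreement on a dense class of objects would not by itself suffice to transfer the property of being an equivalence.
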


\begin{proof}
 The functor $\Hot(\R\Contra_\proj)\rarrow\sD^\bctr(\R\Contra)$ is
a triangulated equivalence by
Theorem~\ref{becker-contraderived-of-lpacepo}.
 In fact, Theorem~\ref{becker-contraderived-of-lpacepo}
provides more information than it explicitly says.
 For every complex of left $\R$\+contramodules $\fM^\bu$, there exists
a complex of projective left $\R$\+contramodules $\fP^\bu$ together
with a morphism of complexes $\fP^\bu\rarrow\fM^\bu$ with
a contraacyclic cone.

 The assertion that the functor $\sD(\R\Contra_\proj)\rarrow
\sD(\R\Contra_\flat)$ is a triangulated equivalence is a corollary
of Theorem~\ref{main-theorem}\,%
(i$^{\mathrm{c}}$)\,$\Leftrightarrow$\,(ii$^{\mathrm{c}}$).
 We apply Lemma~\ref{pkoszul-lemma16}.
 Consider a complex of flat left\/ $\R$\+contramodules~$\fF^\bu$.
 Then, as a particular case of the previous paragraph, there exists
a complex of projective left $\R$\+contramodules $\fP^\bu$ together
with a morphism of complexes $\fP^\bu\rarrow\fF^\bu$ with
a contraacyclic cone.
 The cone, being a contraacyclic complex of flat $\R$\+contramodules,
is consequently an acyclic complex in the exact category
$\R\Contra_\flat$.
 So $\fP^\bu\rarrow\fF^\bu$ is an isomorphism in $\sD(\R\Contra_\flat)$.
 Furthermore, every complex of projective $\R$\+contramodules that is
acyclic in $\R\Contra_\flat$ is contractible, again
by Theorem~\ref{main-theorem}\,%
(i$^{\mathrm{c}}$)\,$\Leftrightarrow$\,(ii$^{\mathrm{c}}$) or
by~\cite[Proposition~12.1]{Pflcc}.
 The desired conclusion follows.

 One has $\sD^\bctr(\R\Contra_\flat)=\sD(\R\Contra_\flat)=
\sD^\bco(\R\Contra_\flat)$ by
Theorem~\ref{main-theorem}\,%
(i$^{\mathrm{c}}$)\,$\Leftrightarrow$\,(ii$^{\mathrm{c}}$)%
\,$\Leftrightarrow$\,(vi$^{\mathrm{c}}$).
 The triangulated equivalence
$\Hot(\R\Contra_\flat^\cot)\simeq\sD(\R\Contra_\flat)$ is the result
of Corollary~\ref{becker-coderived-of-flats}.
 The triangulated equivalence
$\Hot(\R\Contra_\flat^\cot)\simeq\sD^\bctr(\R\Contra^\cot)$ is
the result of Corollary~\ref{becker-contraderived-of-cotorsion}.
{\hbadness=1050\par}

 Finally, the triangulated functor
$\sD^\bctr(\R\Contra^\cot)\rarrow\sD^\bctr(\R\Contra)$
induced by the inclusion of exact/abelian categories
$\R\Contra^\cot\rarrow\R\Contra$ is well-defined by
Corollary~\ref{cotorsion-ctracycl-in-all=ctracycl-in-cotorsion}.
 This functor is a triangulated equivalence, since both the composition
$\Hot(\R\Contra_\flat^\cot)\rarrow\sD^\bctr(\R\Contra_\flat)\rarrow
\sD^\bctr(\R\Contra)$ and the functor $\Hot(\R\Contra_\flat^\cot)
\rarrow\sD^\bctr(\R\Contra^\cot)$ are triangulated equivalences.
\end{proof}

\begin{qst}
 It would be very interesting to extend the result of
Theorem~\ref{main-theorem} (or some parts of it) to complete, separated
topological rings $\R$ with a countable base of neighborhoods of zero
consisting of open \emph{right} ideals.
 Is such a generalization true?
 See~\cite[Questions~9.8, 11.2, and~13.3]{Pflcc} for further discussion.
\end{qst}

\bigskip

\end{document}